\newtheorem{theorem}{Theorem}[section]
\newtheorem{lemma}[theorem]{Lemma}
\newtheorem{proposition}[theorem]{Proposition}
\newtheorem{corollary}[theorem]{Corollary}
\newtheorem*{MainTheorem}{Main Theorem}
\theoremstyle{remark}
\newtheorem{remark}[theorem]{Remark}
\newtheorem*{claim*}{Claim}
\newcommand{\C}{\ensuremath{\mathbb{C}}}
\newcommand{\R}{\ensuremath{\mathbb{R}}}
\newcommand{\g}[1]{\ensuremath{\mathfrak{#1}}}
\newcommand{\II}{\ensuremath{I\!I}}
\DeclareMathOperator{\tr}{tr}
\DeclareMathOperator{\sech}{sech}
\DeclareMathOperator{\csch}{csch}
\DeclareMathOperator{\Ad}{Ad}
\DeclareMathOperator{\Exp}{Exp}
\DeclareMathOperator{\spann}{span}
\newcommand{\Ss}{\ensuremath{\mathcal{S}}}
\begin{document}
\title[Isoparametric hypersurfaces in complex hyperbolic spaces]{Isoparametric hypersurfaces\\ in complex hyperbolic spaces}

\author[J.\ C.\ D\'{\i}az-Ramos]{Jos\'{e} Carlos D\'{\i}az-Ramos}
\author[M.\ Dom\'{\i}nguez-V\'{a}zquez]{Miguel Dom\'{\i}nguez-V\'{a}zquez}
\author[V.\ Sanmart\'{\i}n-L\'{o}pez]{V\'{\i}ctor Sanmart\'{\i}n-L\'{o}pez}

\address{Department of Geometry and Topology, Universidade de Santiago de Compostela, Spain}
\email{josecarlos.diaz@usc.es}
\address{ICMAT - Instituto de Ciencias Matem\'aticas (CSIC-UAM-UC3M-UCM), Madrid, Spain.}
\email{miguel.dominguez@icmat.es}
\address{Department of Geometry and Topology, Universidade de Santiago de Compostela, Spain}
\email{victor.sanmartin@usc.es}

\thanks{The authors have been supported by projects MTM2016-75897-P (AEI/FEDER, UE), EM2014/009, GRC2013-045 and MTM2013-41335-P with FEDER funds (Spain). The second author has received funding from IMPA (Brazil), from the ICMAT Severo Ochoa project SEV-2015-0554 (MINECO, Spain), and from the European Union's Horizon 2020 research and innovation programme under the Marie Sk{\l}odowska-Curie grant agreement No.~745722. The third author has been supported by an FPU fellowship and by Fundaci\'{o}n Barri\'{e} de la Maza (Spain)}

\begin{abstract}
We classify isoparametric hypersurfaces in complex hyperbolic spaces.
\end{abstract}


\subjclass[2010]{53C40, 53B25, 53C12, 53C24}

\keywords{Complex hyperbolic space, isoparametric hypersurface, K\"{a}hler angle}

\maketitle


\vspace{-2ex}
\enlargethispage{\baselineskip}

\section{Introduction and main result}\label{sect:Intro}

An isoparametric hypersurface of a Riemannian manifold is a hypersurface such that all its sufficiently close parallel hypersurfaces have constant mean curvature.
In this paper, we prove the following classification result (see below for the explanation of the examples):

\begin{MainTheorem}
Let $M$ be a connected real hypersurface in the complex hyperbolic space $\C H^n$, $n\geq 2$. Then, $M$ is isoparametric if and only if $M$ is congruent to an open part of:
\begin{enumerate}[{\rm (i)}]
\item a tube around a totally geodesic complex hyperbolic space $\C H^k$, $k\in\{0,\dots,n-1\}$, or\label{th:main:chk}

\item a tube around a totally geodesic real hyperbolic space $\R H^n$, or\label{th:main:rhn}

\item a horosphere, or\label{th:main:horosphere}

\item a ruled homogeneous minimal Lohnherr hypersurface $W^{2n-1}$, or some of its equidistant hypersurfaces, or\label{th:main:w}

\item a tube around a ruled homogeneous minimal Berndt-Br\"{u}ck submanifold $W^{2n-k}_\varphi$, for $k\in\{2,\dots,n-1\}$, $\varphi\in(0,\pi/2]$, where $k$ is even if $\varphi\neq\pi/2$, or\label{th:main:wphi}

\item a tube around a ruled homogeneous minimal submanifold $W_{\g{w}}$, for some proper real subspace $\g{w}$ of $\g{g}_\alpha\cong\C^{n-1}$ such that $\g{w}^\perp$, the orthogonal complement of $\g{w}$ in $\g{g}_\alpha$, has nonconstant K\"{a}hler angle.\label{th:main:ww}
\end{enumerate}
\end{MainTheorem}

We say that $M$ is an (extrinsically) homogeneous submanifold of a Riemannian manifold~$\bar{M}$ if for each pair of points $p$, $q\in M$, there exists an isometry $g\colon\bar{M}\to\bar{M}$ such that $g(M)=M$ and $g(p)=q$. Cases (\ref{th:main:chk}), (\ref{th:main:rhn}), and (\ref{th:main:horosphere}) are the standard examples of homogeneous Hopf hypersurfaces in $\C H^n$, also known as the examples of Montiel's list~\cite{Mo85}. We briefly explain the examples in~(\ref{th:main:w}), (\ref{th:main:wphi}), and~(\ref{th:main:ww}) of the Main Theorem. For more details we refer to Subsection~\ref{sec:examples:W}.

Let $\g{g}=\g{su}(1,n)$ be the Lie algebra of the isometry group of $\C H^n$, $n\geq 2$, $K$ the isotropy group at $o\in\C H^n$, and $\g{k}=\g{u}(n)$ its Lie algebra, which is a maximal compact subalgebra of~$\g{g}$. Let $\g{g}=\g{k}\oplus\g{p}$ be the Cartan decomposition of $\g{g}$ with respect to $o\in\C H^n$. We consider $\g{g}=\g{g}_{-2\alpha}\oplus\g{g}_{-\alpha}\oplus\g{g}_0
\oplus\g{g}_\alpha\oplus\g{g}_{2\alpha}$ the root space decomposition of $\g{g}$ with respect to a maximal abelian subspace $\g{a}$ of $\g{p}$. It turns out that $\g{a}$ and $\g{g}_{2\alpha}$ are $1$-dimensional, and that $\g{g}_\alpha$ is an $(n-1)$-dimensional complex vector space with respect to a certain complex structure $J$ induced by $\C H^n$.

Let $\g{w}$ be a real subspace of $\g{g}_\alpha$, that is, a subspace of $\g{g}_\alpha$ with the underlying structure of real vector space. We define the Lie subalgebra $\g{s}_\g{w}$ of $\g{g}$ by $\g{s}_{\g{w}}=\g{a}\oplus\g{w}\oplus\g{g}_{2\alpha}$, and denote by $S_{\g{w}}$ the connected closed subgroup of $SU(1,n)$ whose Lie algebra is $\g{s}_{\g{w}}$. Then, we define $W_{\g{w}}$ as the orbit through $o$ of the subgroup $S_{\g{w}}$. It was shown in~\cite{DD12} that $W_{\g{w}}$ is a homogeneous minimal submanifold of $\C H^n$, and that the tubes around it are isoparametric hypersurfaces of $\C H^n$. We denote by $\g{w}^\perp$ the orthogonal complement of $\g{w}$ in $\g{g}_{\alpha}$.

If $\g{w}$ is a hyperplane of $\g{g}_\alpha$, then $W_{\g{w}}$ is a real hypersurface of $\C H^n$ denoted by $W^{2n-1}$, and it was shown in~\cite{Be98} that the equidistant hypersurfaces to $W^{2n-1}$ are homogeneous. If $\g{w}^\perp$ has constant K\"{a}hler angle, that is, for each nonzero $\xi\in\g{w}^\perp$ the angle $\varphi$ between $J\xi$ and $\g{w}^\perp$ is independent of $\xi$, then the corresponding $W_{\g{w}}$ is denoted by $W^{2n-k}_\varphi$. Here $k$ is the codimension of $\g{w}$ in $\g{g}_\alpha$, and it can be proved~\cite{BB01} that $k$ is even if $\varphi\neq \pi/2$. Moreover, it follows from~\cite{BB01} that the tubes around $W_\varphi^{2n-k}$ are homogeneous. If $\varphi=0$, the submanifold~$W_0^{2n-k}$ is a totally geodesic complex hyperbolic space and we recover the examples in~(\ref{th:main:chk}).

If $\g{w}^\perp$ does not have constant K\"{a}hler angle, then the tubes around $W_{\g{w}}$ are not homogeneous (indeed, they have nonconstant principal curvatures) but are still isoparametric~\cite{DD12}. Taking into account that the examples (\ref{th:main:chk}), (\ref{th:main:rhn}) and (\ref{th:main:horosphere}) are known to be homogeneous, and the fact that homogeneous hypersurfaces are always isoparametric, a consequence of our result is the classification of homogeneous hypersurfaces in complex hyperbolic spaces:

\begin{corollary}\cite{BT07}
A real hypersurface of $\C H^n$, $n\geq 2$, is homogeneous if and only if it is congruent to one of the examples~(\ref{th:main:chk}) through~(\ref{th:main:wphi}) in the Main Theorem.
\end{corollary}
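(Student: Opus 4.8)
The plan is to obtain the Corollary as an immediate consequence of the Main Theorem, the only additional ingredient being the classical principle that an extrinsically homogeneous hypersurface has constant principal curvatures. First I would recall this principle: if $g$ is an isometry of $\C H^n$ with $g(M)=M$ and $g(p)=q$, then $dg_p$ conjugates the shape operator of $M$ at $p$ to that at $q$, so the eigenvalues do not depend on the point; in particular the mean curvature is constant, and applying the same observation to the parallel hypersurfaces of $M$ (which are again orbits of the same group acting on $\C H^n$) shows that $M$ is isoparametric. One may instead simply invoke the fact, noted in the Introduction, that homogeneous hypersurfaces are isoparametric.

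For the direct implication, I would assume $M$ homogeneous, hence isoparametric, so that by the Main Theorem $M$ is congruent to an open part of one of the examples (i)--(vi). Then I would invoke the fact recalled just before the statement, and established in~\cite{DD12}, that the hypersurfaces in (vi) --- the tubes around $W_{\g{w}}$ with $\g{w}^\perp$ of nonconstant K\"ahler angle --- have nonconstant principal curvatures, so by the principle above they are not homogeneous. Hence $M$ is congruent to an open part of one of the examples (i)--(v).

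For the converse, I would verify that each of (i)--(v) is homogeneous, which amounts to collecting known results: (i), (ii), (iii) are the hypersurfaces of Montiel's list~\cite{Mo85}, whose homogeneity is classical; in (iv), $W^{2n-1}$ is by construction the orbit through $o$ of the subgroup $S_{\g{w}}$ with $\g{w}$ a hyperplane of $\g{g}_\alpha$, hence homogeneous, and its equidistant hypersurfaces were shown to be homogeneous in~\cite{Be98}; in (v), the tubes around $W^{2n-k}_\varphi$ were shown to be homogeneous in~\cite{BB01}. Since the statement is up to congruence and an open part of a homogeneous hypersurface lies in a homogeneous one, this completes the argument.

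I do not expect a genuine obstacle here: with the Main Theorem in hand, the proof is essentially bookkeeping. The one point that needs care is confirming that the family (vi) is disjoint, up to congruence, from the homogeneous families, which is precisely the nonconstancy of its principal curvatures.
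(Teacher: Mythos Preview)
Your proposal is correct and follows essentially the same reasoning as the paper: the argument given in the paragraph immediately preceding the Corollary is precisely that homogeneous implies isoparametric, that the examples in~(vi) have nonconstant principal curvatures (hence are not homogeneous) by~\cite{DD12}, and that the examples (i)--(v) are known to be homogeneous from~\cite{Mo85},~\cite{Be98}, and~\cite{BB01}. Your write-up is in fact slightly more detailed than the paper's, which treats the Corollary as an immediate consequence and does not spell out the converse direction.
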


For $n=2$, $\g{g}_\alpha$ is a complex line and thus the examples~(\ref{th:main:wphi}) and~(\ref{th:main:ww}) are not possible. Compare also with the classification of real hypersurfaces in $\C H^2$ with constant principal curvatures~\cite{BD07}.

\begin{corollary}
An isoparametric hypersurface in $\C H^2$ is an open part of a homogeneous hypersurface.
\end{corollary}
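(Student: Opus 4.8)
The plan is to derive this corollary as an immediate specialization of the Main Theorem, combined with the homogeneity of the examples that survive in the case $n=2$. First I would set $n=2$ in the Main Theorem. Then $\g{g}_\alpha$ has complex dimension $n-1=1$, so it is a complex line, and the index set $\{2,\dots,n-1\}$ in item~(\ref{th:main:wphi}) is empty; hence no example of type $W^{2n-k}_\varphi$ with $k\ge 2$ exists, and item~(\ref{th:main:wphi}) cannot occur.

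Next I would rule out item~(\ref{th:main:ww}). A proper real subspace $\g{w}$ of $\g{g}_\alpha\cong\C$ is either $\{0\}$ or a real line. In the first case $\g{w}^\perp=\g{g}_\alpha$ has constant K\"ahler angle $0$; in the second case $\g{w}^\perp$ is a real line, which has constant K\"ahler angle $\pi/2$. Either way $\g{w}^\perp$ has constant K\"ahler angle, so the hypothesis of item~(\ref{th:main:ww}) is never satisfied when $n=2$, and that case is excluded as well.

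Consequently, by the Main Theorem, every isoparametric hypersurface of $\C H^2$ is congruent to an open part of one of the examples in~(\ref{th:main:chk}), (\ref{th:main:rhn}), (\ref{th:main:horosphere}), or~(\ref{th:main:w}). Each of these is homogeneous: the examples in~(\ref{th:main:chk})--(\ref{th:main:horosphere}) form Montiel's list of homogeneous Hopf hypersurfaces, and the Lohnherr hypersurface $W^{2n-1}$ together with its equidistant hypersurfaces in~(\ref{th:main:w}) are homogeneous by~\cite{Be98}; all of them also appear in the list of the preceding Corollary. Since an open part of a homogeneous hypersurface is precisely the conclusion sought, this finishes the argument.

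The only point requiring care --- and hence the (mild) main obstacle --- is the bookkeeping that no non-homogeneous isoparametric hypersurface slips through: one must verify that for $n=2$ the parameter range in~(\ref{th:main:wphi}) is genuinely empty and that \emph{every} admissible $\g{w}^\perp$ in~(\ref{th:main:ww}) has constant K\"ahler angle, so that the non-homogeneous family of item~(\ref{th:main:ww}) is vacuous in dimension $2$. Once this is checked, the corollary follows at once from the Main Theorem and the homogeneity of the remaining examples.
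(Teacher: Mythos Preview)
Your argument is correct and matches the paper's own justification: the paper simply observes, immediately before the corollary, that for $n=2$ the space $\g{g}_\alpha$ is a complex line, so cases~(\ref{th:main:wphi}) and~(\ref{th:main:ww}) do not occur, leaving only the homogeneous examples~(\ref{th:main:chk})--(\ref{th:main:w}). Your proposal supplies a bit more detail on why~(\ref{th:main:ww}) is vacuous, but the approach is identical.
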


Nevertheless, for $n\geq 3$ there are inhomogeneous examples: one family up to congruence for $\C H^3$, and infinitely many for $\C H^n$, $n\geq 4$.

Since the examples in~(\ref{th:main:ww}) of the Main Theorem are the only ones that do not have constant principal curvatures we also get:

\begin{corollary}\label{th:cpc}
An isoparametric hypersurface of $\C H^n$ has constant principal curvatures if and only if it is an open part of a homogeneous hypersurface of $\C H^n$.
\end{corollary}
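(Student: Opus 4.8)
The plan is to read the statement off the Main Theorem together with facts already recalled in the introduction, so that essentially no new computation is required. First I would dispose of the implication ``homogeneous $\Rightarrow$ constant principal curvatures'': if $M$ is an open part of a homogeneous hypersurface of $\C H^n$, pick points $p$, $q$ on that hypersurface and an ambient isometry $g$ of $\C H^n$ with $g(M)=M$ and $g(p)=q$; then $dg_p$ sends a unit normal at $p$ to a unit normal at $q$ and conjugates the shape operator at $p$ onto the shape operator at $q$, so the unordered family of principal curvatures is the same at $p$ and at $q$. Hence a homogeneous hypersurface has constant principal curvatures, and so does any open part of it. Combined with the fact, recalled in the introduction, that homogeneous hypersurfaces are isoparametric, this settles one direction of the corollary.

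For the converse, assume $M$ is isoparametric with constant principal curvatures. By the Main Theorem, $M$ is congruent to an open part of one of the examples (i)--(vi). In cases (i)--(iii), $M$ is an open part of one of the standard homogeneous Hopf hypersurfaces of Montiel's list; in case (iv), the Lohnherr hypersurface and its equidistant hypersurfaces are homogeneous by~\cite{Be98}; and in case (v), the tubes around $W_\varphi^{2n-k}$ are homogeneous by~\cite{BB01}. Thus in each of the cases (i)--(v), $M$ is an open part of a homogeneous hypersurface, as desired. Finally, case (vi) is ruled out by the hypothesis: for a proper real subspace $\g{w}$ of $\g{g}_\alpha$ whose orthogonal complement $\g{w}^\perp$ has nonconstant K\"ahler angle, the tubes around $W_{\g{w}}$ were shown in~\cite{DD12} to have \emph{nonconstant} principal curvatures, contradicting our assumption. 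Therefore $M$ is an open part of a homogeneous hypersurface, which completes the proof.

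I do not expect a serious obstacle here: the corollary is a formal consequence of the classification in the Main Theorem together with two facts that are already quoted --- that homogeneous hypersurfaces are isoparametric, and that the type (vi) examples fail to have constant principal curvatures (\cite{DD12}). The only point requiring mild care is the elementary observation that ambient isometries preserve the set of principal curvatures, which is precisely what makes ``homogeneous'' imply ``constant principal curvatures'' in the first place.
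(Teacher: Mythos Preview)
Your proof is correct and follows essentially the same approach as the paper: the paper derives the corollary directly from the Main Theorem by noting in one sentence that the examples in~(\ref{th:main:ww}) are precisely those without constant principal curvatures, while the homogeneity of cases~(\ref{th:main:chk})--(\ref{th:main:wphi}) is recalled earlier in the introduction. You have simply spelled out the details more explicitly, including the elementary ``homogeneous $\Rightarrow$ constant principal curvatures'' direction.
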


Another important consequence of our classification is that each isoparametric hypersurface of $\C H^n$ is an open part of a complete, topologically closed, isoparametric hypersurface which, in turn, is a regular leaf of a singular Riemannian foliation on $\C H^n$ whose leaves of maximal dimension are all isoparametric. Corollary~\ref{th:cpc} emphasizes the fact that a hypersurface with constant principal curvatures cannot be a leaf of a singular Riemannian foliation by hypersurfaces with constant principal curvatures unless it is homogeneous. An isoparametric hypersurface in $\C H^n$ determines an isoparametric family of hypersurfaces that fills the whole ambient space and that admits at most one singular leaf. According to our classification, this singular leaf, if it exists, satisfies

\begin{corollary}
The focal submanifold of an isoparametric hypersurface in $\C H^n$ is locally homogeneous.
\end{corollary}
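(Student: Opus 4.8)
The plan is to read the corollary off the Main Theorem together with the known homogeneity of the center submanifolds, so that the argument reduces to a short case check. By the Main Theorem, $M$ is congruent to an open part of one of the examples (i)--(vi), and, as recalled in Section~\ref{sect:Intro}, each of these extends to a complete isoparametric hypersurface that is a regular leaf of a singular Riemannian foliation $\mathcal{F}$ of $\C H^n$; this $\mathcal{F}$ admits at most one singular leaf, and the focal submanifold of $M$ is (an open part of) that leaf, if it exists. So the first step is to identify $\mathcal{F}$ in each case: tubes around a totally geodesic $\C H^k$ in (i), tubes around a totally geodesic $\R H^n$ in (ii), the family of horospheres centered at a fixed point at infinity in (iii), the Lohnherr hypersurface $W^{2n-1}$ together with its equidistant hypersurfaces in (iv), tubes around the Berndt--Br\"uck submanifold $W^{2n-k}_\varphi$ in (v), and tubes around $W_{\g{w}}$ in (vi).

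In cases (iii) and (iv) the foliation $\mathcal{F}$ is regular: every leaf is a hypersurface and there is no singular leaf, so $M$ has no focal submanifold and there is nothing to prove. In the remaining cases (i), (ii), (v), (vi) the foliation $\mathcal{F}$ has exactly one singular leaf, namely the center submanifold $P$ of the tubes --- respectively $P=\C H^k$, $P=\R H^n$, $P=W^{2n-k}_\varphi$, $P=W_{\g{w}}$. Since $M$ is an open part of a tube around $P$ and $\C H^n$ has no conjugate points, the standard description of tubes and their focal sets shows that the focal submanifold of $M$ is an open part of $P$. It therefore suffices to recall that $P$ is a homogeneous submanifold of $\C H^n$ in each of these four cases: the totally geodesic $\C H^k$ and $\R H^n$ are orbits of suitable subgroups of $SU(1,n)$; $W^{2n-k}_\varphi$ is homogeneous by~\cite{BB01}; and $W_{\g{w}}$ is homogeneous, being the orbit $S_{\g{w}}\cdot o$, by~\cite{DD12} (as recalled in Section~\ref{sect:Intro}). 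An open subset of a homogeneous submanifold is locally homogeneous, which proves the claim.

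There is no real obstacle here; the only points needing a little care are the bookkeeping of which examples produce a singular leaf and which do not, and the elementary but essential remark that replacing a complete tube by an open part of it replaces the focal submanifold by an open part of it, so that the global homogeneity of $P$ degrades exactly to local homogeneity of the focal submanifold of $M$. In fact the argument proves slightly more: whenever a focal submanifold exists it is an open part of a \emph{globally} homogeneous submanifold of $\C H^n$, so that even the inhomogeneous isoparametric hypersurfaces of type (vi) have homogeneous focal submanifolds.
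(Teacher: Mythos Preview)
Your proposal is correct and follows exactly the approach implicit in the paper: the corollary is stated there without a separate proof, as an immediate consequence of the Main Theorem, and your case-by-case identification of the singular leaf in each item (i)--(vi), together with the observation that $\C H^k$, $\R H^n$, $W^{2n-k}_\varphi$ and $W_{\g{w}}$ are all orbits of subgroups of $SU(1,n)$, is precisely how the paper intends the reader to see it.
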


We can determine the congruence classes of isoparametric families of hypersurfaces in~$\C H^n$. Note that, apart from the horosphere foliation  $\mathcal{F}_H$, the family $\mathcal{F}_{\R H^n}$ of tubes around a totally geodesic $\R H^n$, and the family $\mathcal{F}_{o}$ of geodesic spheres around any point $o\in\C H^n$, any other family is given by the collection of tubes around a submanifold $W_\g{w}$, where $\g{w}$ is any real subspace of codimension at least one in $\g{g}_\alpha$. Thus, we have

\begin{theorem}\label{th:congruence}
The moduli space of congruence classes of isoparametric families of hypersurfaces of $\C H^n$ is isomorphic to the disjoint union
\[
\{\mathcal{F}_H,\mathcal{F}_{\R H^n},\mathcal{F}_{o}\}\amalg\biggl(
\coprod_{k=0}^{2n-3}G_k(\R^{2n-2})/U(n-1)\biggr),
\]
where $G_k(\R^{2n-2})/U(n-1)$ stands for the orbit space of the standard action of the unitary group $U(n-1)$ on the Grassmannian of real vector subspaces of dimension $k$ of~$\C^{n-1}$.
\end{theorem}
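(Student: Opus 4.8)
The plan is to read the complete list of isoparametric families of $\C H^n$ off the Main Theorem, exhibit the evident map to the claimed moduli space, and verify that it is a bijection on congruence classes; essentially all the work is concentrated in one implication of the congruence statement for the families built from the submanifolds $W_\g{w}$.

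First I would observe that, by the Main Theorem together with the fact recalled in the introduction that every isoparametric hypersurface is an open part of a complete, topologically closed one which is a regular leaf of a singular Riemannian foliation, the isoparametric families of $\C H^n$ are exactly: the horosphere foliation $\mathcal{F}_H$; the family $\mathcal{F}_{\R H^n}$ of tubes around a totally geodesic $\R H^n$; the family $\mathcal{F}_o$ of geodesic spheres around a point (the tubes around $\C H^0$); and, for each real subspace $\g{w}\subsetneq\g{g}_\alpha$, the family $\mathcal{F}_\g{w}$ of tubes around $W_\g{w}$, with the convention that $\mathcal{F}_\g{w}$ is the foliation by the equidistant hypersurfaces of $W^{2n-1}$ when $\g{w}$ is a hyperplane. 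Since $\dim\g{w}=2n-2$ is excluded (it would give $W_\g{w}=\C H^n$) and $\dim\g{w}=0$ yields $W_{\{0\}}=\C H^1$, here $\g{w}$ runs over all real Grassmannians $G_k(\g{g}_\alpha)$, $0\le k\le 2n-3$, under the identification $\g{g}_\alpha\cong\C^{n-1}\cong\R^{2n-2}$. I would then map this set of families to $\{\mathcal{F}_H,\mathcal{F}_{\R H^n},\mathcal{F}_o\}\amalg\coprod_{k=0}^{2n-3}G_k(\R^{2n-2})/U(n-1)$ by sending the three exceptional families to the three labelled points and $\mathcal{F}_\g{w}$ to the class $[\g{w}]$; this is surjective by construction, so it remains to check that it descends to a well-defined injection on congruence classes.

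For the ``if'' direction, realize $U(n-1)$ as the connected subgroup of $K$ centralizing $\g{a}$, so that it fixes $\g{g}_{2\alpha}$ and acts on $\g{g}_\alpha$ in the standard way: if $k\in U(n-1)$ with $\Ad(k)\g{w}=\g{w}'$, then $\Ad(k)\g{s}_\g{w}=\g{s}_{\g{w}'}$, hence $k(W_\g{w})=W_{\g{w}'}$ and $k(\mathcal{F}_\g{w})=\mathcal{F}_{\g{w}'}$. To separate the exceptional families from one another and from all the $\mathcal{F}_\g{w}$, I would use congruence invariants of an isoparametric family. The only ones that are foliations of $\C H^n$ by hypersurfaces are $\mathcal{F}_H$ and $\mathcal{F}_{W^{2n-1}}$ (the case $\dim\g{w}=2n-3$), and these are non-congruent because $W^{2n-1}$ is a minimal leaf while no horosphere is minimal; every other family has a unique singular leaf, which is a point for $\mathcal{F}_o$, a totally geodesic $\R H^n$ for $\mathcal{F}_{\R H^n}$, and $W_\g{w}$ (of dimension $2n-\codim\g{w}\ge 2$, never a point) for $\mathcal{F}_\g{w}$ with $\codim\g{w}\ge 2$. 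Finally $W_\g{w}$ is never congruent to $\R H^n$: every $W_\g{w}$ contains the totally geodesic complex line $\exp(\g{a}\oplus\g{g}_{2\alpha})\cdot o=\C H^1$ and is therefore not totally real, while $\R H^n$ is, and being totally real is a congruence invariant.

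The heart of the matter is the ``only if'' direction for the continuous part: if an isometry $g$ of $\C H^n$ satisfies $g(\mathcal{F}_\g{w})=\mathcal{F}_{\g{w}'}$, then $\g{w}$ and $\g{w}'$ lie in the same $U(n-1)$-orbit. If $\g{w}$ is a hyperplane this is vacuous, since all hyperplanes of $\g{g}_\alpha$ are $U(n-1)$-equivalent (their orthogonal complements being real lines); so suppose $\codim\g{w}\ge 2$. Then $g$ carries the unique singular leaf $W_\g{w}$ of $\mathcal{F}_\g{w}$ onto $W_{\g{w}'}$; composing $g$ with an element of $S_{\g{w}'}$ (which acts transitively on $W_{\g{w}'}$) we may assume $g(o)=o$, and, the antiholomorphic case reducing to the holomorphic one via complex conjugation (which fixes $o$ and $\g{a}$ and preserves $U(n-1)$-orbits of real subspaces of $\g{g}_\alpha$), we may assume $g\in K\cong U(n)$. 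Then $dg_o$ maps $T_oW_\g{w}=\g{a}\oplus\g{w}\oplus\g{g}_{2\alpha}$ onto $\g{a}\oplus\g{w}'\oplus\g{g}_{2\alpha}$, maps $\g{w}^\perp$ onto $(\g{w}')^\perp$, and intertwines the second fundamental forms of $W_\g{w}$ and $W_{\g{w}'}$ at $o$. The crux -- and what I expect to be the main obstacle -- is to recover the line $\g{g}_{2\alpha}$, equivalently the complex line $\g{a}\oplus\g{g}_{2\alpha}$, equivalently its orthogonal complement $\g{g}_\alpha$, as a subspace of $T_oW_\g{w}$ intrinsically attached to the shape operators of $W_\g{w}$ at $o$, using their explicit form computed in \cite{DD12}. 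Granting this, $dg_o$ preserves $\g{g}_\alpha$ and hence restricts there to a complex-linear isometry, i.e.\ an element of $U(n-1)$, which sends $\g{w}=\g{g}_\alpha\cap T_oW_\g{w}$ to $\g{w}'$. Combining the map, its surjectivity, and the two implications above then yields the asserted isomorphism of moduli spaces.
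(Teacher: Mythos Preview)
Your overall framework---enumerate the families via the Main Theorem, separate the three exceptional ones by foliation-versus-singular-leaf and totally-real-versus-not invariants, then reduce the continuous part to congruence of the $W_\g{w}$'s themselves---matches the paper's proof. The divergence is exactly at the step you flag as the crux, and there the paper takes a different and shorter route that avoids your obstacle entirely.

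You propose to recover $\R Z=\g{g}_{2\alpha}$ intrinsically from the shape operators of $W_\g{w}$ at $o$, forcing $dg_o$ to preserve $\g{g}_\alpha$ and hence to restrict there to an element of $U(n-1)$. This can indeed be made to work when $\g{w}^\perp$ is not a complex subspace: using $(JP\xi)^\perp=-\sin^2(\varphi_\xi)\,\xi$, one checks that the relative nullity of $\II$ at $o$ is $\g{c}\ominus\R Z$, and since $dg_o$ also preserves the maximal complex distribution $\g{c}$, the line $\R Z=\g{c}\ominus(\g{c}\ominus\R Z)$ is recovered. But when $\g{w}^\perp$ is complex, $W_\g{w}$ is a totally geodesic $\C H^k$, $\II$ vanishes identically, and the scheme yields nothing; you would have to treat this case separately (it is easy: two complex subspaces of $\g{g}_\alpha$ of equal dimension are $U(n-1)$-equivalent, but it does need to be said). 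So the gap you acknowledge is real, and filling it forces a case split.

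The paper sidesteps both the case split and any appeal to the second fundamental form. Since $\phi_*$ is unitary or anti-unitary on $T_o\C H^n$, it preserves K\"ahler angles, hence carries the K\"ahler-angle decomposition of $T_oW_{\g{w}_1}=\g{a}\oplus\g{w}_1\oplus\g{g}_{2\alpha}$ to that of $T_oW_{\g{w}_2}$. The complex part of $\g{a}\oplus\g{w}_i\oplus\g{g}_{2\alpha}$ is $\g{a}\oplus\g{w}_{i,0}\oplus\g{g}_{2\alpha}$, while the angle-$\varphi$ parts for $\varphi\neq 0$ are precisely the $\g{w}_{i,\varphi}$; comparing dimensions shows that $\g{w}_1$ and $\g{w}_2$ have identical principal K\"ahler angles with identical multiplicities. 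The paper does \emph{not} conclude that $\phi_*$ itself lies in $U(n-1)$; instead it invokes the fact (from~\cite{DDK}, recalled in Subsection~\ref{sec:Kahler_angles}) that two real subspaces of $\g{g}_\alpha\cong\C^{n-1}$ with the same K\"ahler-angle data are $U(n-1)$-equivalent, producing a possibly different $k\in K_0$ with $\Ad(k)\g{w}_1=\g{w}_2$. This is the cleaner path: it uses only the tangent space $T_oW_\g{w}$, works uniformly for all $\g{w}$, and makes the classifying invariant---the K\"ahler-angle spectrum of $\g{w}$---explicit.
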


The study of isoparametric hypersurfaces traces back to the work of Somigliana~\cite{So19}, who studied isoparametric surfaces of the $3$-dimensional Euclidean space in relation to a problem of Geometric Optics. This study was generalized by Segre~\cite{Se38}, who classified isoparametric hypersurfaces in any Euclidean space. It follows from this result that isoparametric hypersurfaces in a Euclidean space $\R^n$ are open parts of affine hyperplanes $\R^{n-1}$, spheres $S^{n-1}$, or generalized cylinders $S^k\times\R^{n-k-1}$, $k\in\{1,\dots,n-2\}$, all of which are homogeneous.

Cartan became interested in this problem and studied it in real space forms. He obtained a fundamental formula relating the principal curvatures and their multiplicities, and derived a classification in real hyperbolic spaces~\cite{Ca38}. In this case, an isoparametric hypersurface of $\R H^n$ is congruent to an open part of a geodesic sphere, a tube around a totally geodesic real hyperbolic space $\R H^k$, $k\in\{1,\dots,n-2\}$, a totally geodesic $\R H^{n-1}$ or one of its equidistant hypersurfaces, or a horosphere. All these examples are homogeneous.

Cartan also made progress in spheres~\cite{Ca39mz}, and succeeded in classifying isoparametric hypersurfaces with one, two or three distinct principal curvatures. However, it turns out that the classification of isoparametric hypersurfaces in spheres is very involved. In fact, its complete classification remains one of the most outstanding problems in Differential Geometry nowadays~\cite{Ya93}. It was a surprise at that moment to find inhomogeneous examples. The most complete list of such examples is due to Ferus, Karcher and M\"{u}nzner~\cite{FKM81}. As of this writing, the classification problem remains still open, although some important progress has been made by Stolz~\cite{St99}, Cecil, Chi and Jensen~\cite{CCJ07}, Immervoll~\cite{Im08} and Chi~\cite{Ch13} for four distinct principal curvatures, and by Dorfmeister and Neher~\cite{DoNe85}, Miyaoka~\cite{Mi13} and Siffert~\cite{Si} for six distinct principal curvatures. See the surveys~\cite{T00} and~\cite{Ce08} for a more detailed story of the problem and related topics.

In real space forms, a hypersurface is isoparametric if and only if it has constant principal curvatures. This is not true in a general Riemannian manifold. Thus, it makes sense to study both isoparametric hypersurfaces or hypersurfaces with constant principal curvatures in complex space forms. The classification of real hypersurfaces with constant principal curvatures in complex projective spaces is known for Hopf hypersurfaces~\cite{Ki86}, and for two or three distinct principal curvatures~\cite{Ta75a},~\cite{Ta75b}; all known examples are open parts of homogeneous hypersurfaces. Using the classification results in spheres, the second author~\cite{Do} derived the classification of isoparametric hypersurfaces in $\C P^n$, $n\neq 15$. A consequence of this classification is that inhomogeneous isoparametric hypersurfaces in $\C P^n$ are relatively common. 

Real hypersurfaces with constant principal curvatures in $\C H^n$ have been classified under the assumption that the hypersurface is Hopf~\cite{Be89}, or if the number of distinct constant principal curvatures is two~\cite{Mo85} or three~\cite{BD06},~\cite{BD07}. All of these examples are again homogeneous.

In this paper we deal with isoparametric hypersurfaces in complex hyperbolic spaces. Apart from the homogenous examples classified by Berndt and Tamaru in~\cite{BT07}, there are also some inhomogeneous examples that were built by the first and second authors in~\cite{DD12}. In the present paper we show that isoparametric hypersurfaces in complex hyperbolic spaces are open parts of the known homogeneous or inhomogeneous examples. To our knowledge, this is the first complete classification in a whole family of Riemannian manifolds since Cartan's classification of isoparametric hypersurfaces in real hyperbolic spaces~\cite{Ca38}.

As we will see in Section~\ref{sec:isoparametric}, the classification of isoparametric hypersurfaces in the complex hyperbolic space $\C H^n$ is intimately related to the study of Lorentzian isoparametric hypersurfaces in the anti-De Sitter space $H_1^{2n+1}$. Following the ideas of Magid in~\cite{M85}, Xiao gave parametrizations of Lorentzian isoparametric hypersurfaces in $H_1^{2n+1}$~\cite{X99}. Burth~\cite{Bu93} pointed out some crucial gaps in Magid's arguments, which Xiao's proof depends on. Furthermore, the classification of isoparametric hypersurfaces in $\C H^n$ does not follow right away from an eventual  classification of Lorentzian isoparametric hypersurfaces in the anti-De Sitter space~$H_1^{2n+1}$, as the projection via the Hopf map $\pi\colon H_1^{2n+1}\to \C H^n$ depends in a very essential way on the complex structure of the semi-Euclidean space $\R^{2n+2}$ where the anti-De Sitter space lies. This is precisely the main difficulty of this approach in the classification of isoparametric submanifolds of complex projective spaces~\cite{Do} using the Hopf map from an odd-dimensional sphere.

Therefore, although the starting point of our arguments is the fact that isoparametric hypersurfaces in $\C H^n$ lift to Lorentzian isoparametric hypersurfaces in $H_1^{2n+1}$, our approach is independent of \cite{M85} and \cite{X99}. The shape operator of a Lorentzian isoparametric hypersurface does not need to be diagonalizable and, indeed, it can adopt four distinct Jordan canonical forms. Using the Lorentzian version of Cartan's fundamental formula, some algebraic arguments, and Gauss and Codazzi equations, we determine the hypersurfaces in $\C H^n$ that lift to Lorentzian hypersurfaces of three of the four types. The remaining case is much more involved. Working in the anti-De Sitter space, we start using Jacobi field theory in order to extract information about the shape operator of the focal submanifold (Proposition~\ref{th:wr}). The key step is to justify the existence of a common eigenvector to all shape operators of the focal submanifold (Proposition~\ref{th:z}). This allows us to define a smooth vector field which is crucial to show that the second fundamental form of the focal set coincides with that of one of the submanifolds $W_{\g{w}}$. After a study of the normal bundle of this focal set, we obtain a reduction of codimension result. Together with a more geometric construction of the submanifolds $W_\g{w}$ (Proposition~\ref{th:construction}), we prove a rigidity result for these submanifolds (Theorem~\ref{th:rigidity}); although the proof of this result is convoluted, it reveals several interesting aspects of the geometry of the ruled minimal submanifolds $W_\g{w}$ in relation to the geometry of the ambient complex hyperbolic space. Altogether, this will allow us to conclude the proof of the Main Theorem.

The paper is organized as follows. In Section~\ref{sec:preliminaries} we introduce the main ingredients to be used in this paper. We start with a quick review of submanifold geometry of semi-Riemannian manifolds (Subsection~\ref{sec:submanifold}), then we describe the complex hyperbolic space and its relation to the anti-De Sitter space (Subsection~\ref{sec:Hopf_map}), the structure of a real subspace of a complex vector space (Subsection~\ref{sec:Kahler_angles}), and the examples of isoparametric hypersurfaces in $\C H^n$ given in the Main Theorem (Subsection~\ref{sec:examples}). Section~\ref{sec:isoparametric} is devoted to presenting Cartan's fundamental formula for Lorentzian space forms and some of its algebraic consequences. It turns out that cases~(\ref{th:main:rhn}) and~(\ref{th:main:horosphere}) can be handled at this point. For the remaining cases, a more thorough study of the focal set is needed, and this is carried out in Section~\ref{sec:type_III}. The ingredients utilized here are the Gauss and Codazzi equations of a hypersurface (Subsection~\ref{sec:codazzi}), Jacobi field theory (Subsection~\ref{sec:jacobi}), and a detailed study of the geometry of the focal submanifold (Subsection~\ref{sec:focal}). In Section~\ref{sec:rigidity} we give a characterization of the submanifolds $W_{\g{w}}$ in terms of their second fundamental form. We need a reduction of codimension argument in Subsection~\ref{sec:normal}, and the proof is concluded in Subsection~\ref{sec:rigidity:proof}. We finish the proofs of the Main Theorem and Theorem~\ref{th:congruence} in Section~\ref{sec:proof}.

\section{Preliminaries}\label{sec:preliminaries}

Let $M$ be a semi-Riemannian manifold. It is assumed that all manifolds in this paper are smooth. We denote by $\langle\,\cdot\,,\,\cdot\rangle$ the semi-Riemannian metric of $M$, and by $R$ its curvature tensor, which is defined by the convention $R(X,Y)=[\nabla_X,\nabla_Y]-\nabla_{[X,Y]}$.

If $p\in M$, $T_p M$ denotes the tangent space at $p$, $TM$ is the tangent bundle of $M$, and $\Gamma(TM)$ is the module of smooth vector fields on $M$. In general, if $\mathcal{D}$ is a distribution along~$M$, we denote by $\Gamma(\mathcal{D})$ the module of sections of $\mathcal{D}$, that is, the vector fields $X\in\Gamma(TM)$ such that $X_p\in\mathcal{D}_p$ for each $p\in M$.

Let $V$ be a vector space with nondegenerate symmetric bilinear form $\langle\cdot,\cdot\rangle$. Recall that $v\in V$ is spacelike, timelike, or null if $\langle v,v\rangle$ is positive, negative, or zero, respectively. We also write $\lVert v\rVert=\sqrt{\lvert \langle v,v\rangle\rvert}$ for $v\in V$. Moreover, if $U$ and $W$ are subspaces of~$V$, we denote $U\ominus W=\{u\in U:\langle u,w\rangle=0,\forall w\in W\}$. We do not require $W\subset U$. This is convenient when dealing with nondefinite scalar products, especially if there are null vectors in $W$. If $\langle\cdot,\cdot\rangle$ is positive definite, this notation stands for the orthogonal complement of $W$ in $U$.

\subsection{Geometry of submanifolds}\label{sec:submanifold}\hfill

Let $(\bar{M},\langle\cdot,\cdot\rangle)$ be a semi-Riemannian manifold and $M$ an embedded submanifold of $\bar{M}$ such that the restriction of $\langle\cdot,\cdot\rangle$ to $M$ is nondegenerate (this is automatically true if $\bar{M}$ is Riemannian). The {normal bundle} of $M$ is denoted by~$\nu M$. Thus, $\Gamma(\nu M)$ denotes the module of all normal vector fields to $M$. A canonical orthogonal decomposition holds at each point $p\in M$, namely, $T_p\bar{M}=T_pM\oplus \nu_p M$. In this work, the symbol $\oplus$ will always denote direct sum (not necessarily orthogonal direct sum).

Let us denote by $\bar{\nabla}$ and $\bar{R}$ the Levi-Civita connection and the curvature tensor of $\bar{M}$, respectively, and by $\nabla$ and $R$ the corresponding objects for $M$. The {second fundamental form}~$\II$
of $M$ is defined by the {Gauss formula}
\[
\bar{\nabla}_XY=\nabla_XY+\II(X,Y)
\]
for any $X$, $Y\in\Gamma(TM)$. Let $\xi\in\Gamma(\nu M)$ be a normal vector field. The {shape operator} $\Ss_\xi$ of $M$ with respect to $\xi$ is the self-adjoint operator on $M$ defined by $\langle \Ss_\xi X,Y\rangle=\langle\II(X,Y),\xi\rangle$, where $X$, $Y\in\Gamma(TM)$. Moreover, denote
by $\nabla^\perp$ the normal connection of~$M$. Then we have the {Weingarten formula}
\[
\bar{\nabla}_X\xi=-\Ss_\xi X+\nabla_X^\perp\xi.
\]
The extrinsic geometry of $M$ is controlled by Gauss, Codazzi and Ricci equations
\begin{align*}
\langle\bar{R}(X,Y)Z,W\rangle
&{}=\langle R(X,Y)Z,W\rangle-\langle\II(Y,Z),\II(X,W)\rangle + \langle\II(X,Z),\II(Y,W)\rangle,\\
\langle\bar{R}(X,Y)Z,\xi\rangle
&{}=\langle(\nabla_X^\perp\II)(Y,Z)-(\nabla_Y^\perp\II)(X,Z),\xi\rangle,\\
\langle R^\perp(X,Y)\xi,\eta\rangle
&{}= \langle\bar{R}(X,Y)\xi,\eta\rangle + \langle[\Ss_\xi,\Ss_\eta]X,Y\rangle,
\end{align*}
where $X$, $Y$, $Z$, $W\in\Gamma(TM)$, $\xi$, $\eta\in\Gamma(\nu M)$, $(\nabla_X^\perp\II)(Y,Z)=\nabla_X^\perp\II(Y,Z)
-\II(\nabla_XY,Z)-\II(Y,\nabla_XZ)$, and $R^\perp$ is the curvature tensor of the normal bundle of $M$, which is defined by $R^\perp(X,Y)\xi=[\nabla_X^\perp,\nabla_Y^\perp]\xi - \nabla_{[X,Y]}^\perp\xi$.

Assume now that $M$ is a hypersurface of $\bar{M}$, that is, a submanifold of codimension one. Locally and up to sign, there is a unique unit normal vector field $\xi\in \Gamma(\nu M)$. We assume here and henceforth that $\xi$ is spacelike, that is, $\langle\xi,\xi\rangle=1$. In this case we write $\Ss=\Ss_\xi$, the shape operator with respect to $\xi$. The Gauss and Weingarten
formulas now read
\begin{align*}
\bar{\nabla}_X Y&{}=\nabla_X Y+\langle \Ss X,Y\rangle\xi,
&\bar{\nabla}_X\xi&{}=-\Ss X.
\end{align*}
Then, the Gauss and Codazzi equations reduce to
\begin{align*}
\langle\bar{R}(X,Y)Z,W\rangle&{}=\langle R(X,Y)Z,W\rangle-\langle \Ss Y,Z\rangle\langle \Ss X,W\rangle
+\langle\Ss X,Z\rangle\langle \Ss Y,W\rangle,\\
\langle\bar{R}(X,Y)Z,\xi\rangle &{}= \langle(\nabla_X\Ss)Y-(\nabla_Y \Ss) X,Z\rangle,
\end{align*}
whereas the Ricci equation does not give further information for hypersurfaces.

The {mean curvature} of a hypersurface $M$ is $h=\tr\Ss$, the trace of its shape operator. For $r\in\R$ we define the map $\Phi^r\colon M\to\bar{M}$ by $\Phi^r(p)=\exp_p(r\xi_p)$, where $\exp$ is the Riemannian exponential map of $\bar{M}$. For a fixed $r$, $\Phi^r(M)$ is not necessarily a submanifold of $\bar{M}$, but at least locally and for $r$ small enough, it is a hypersurface of $\bar{M}$. A parallel hypersurface at a distance $r$ to a given hypersurface $M$ is precisely a hypersurface of the form $\Phi^r(M)$. A hypersurface of $\bar{M}$ is said to be \emph{isoparametric} if it and all its sufficiently close (locally defined) parallel hypersurfaces have constant mean curvature.

We say that $\lambda$ is a principal curvature of a hypersurface $M$ if there exists a nonzero vector field $X\in\Gamma(TM)$ such that $\Ss X=\lambda X$. The vector $X_p$ is then called a principal curvature vector at $p\in M$. By $T_\lambda(p)$ we denote the eigenspace of $\lambda(p)$ at $p$, and we call it the principal curvature space of $\lambda(p)$. Under certain assumptions, $T_\lambda$ defines a smooth distribution along~$M$.
If $M$ is Riemannian, then $\Ss$ is known to be diagonalizable. However, if $M$ is not Riemannian, this is not necessarily true, and the Jordan canonical form of $\Ss$ might have a nondiagonal structure. In such situations it is important to distinguish between the \emph{geometric multiplicity} of a principal curvature $\lambda$, that is, $\dim\ker(\Ss-\lambda)$, and its \emph{algebraic multiplicity} $m_\lambda$, that is, the multiplicity of $\lambda$ as a zero of the characteristic polynomial of $\Ss$. Obviously, the geometric multiplicity is always less or equal than the algebraic multiplicity. In the Riemannian setting both quantities are the same and we simply talk about the multiplicity of $\lambda$. In any case, the number of distinct principal curvatures at $p$ is denoted by $g(p)$. In principle, $g$ does not need to be a constant function.

\subsection{Complex hyperbolic spaces and the Hopf map}\label{sec:Hopf_map}\hfill

We briefly recall the construction of the complex hyperbolic space. In $\C^{n+1}$ we define the flat semi-Riemannian metric given by the formula $\langle z,w\rangle=\textup{Re}\bigl(-z_0\bar{w}_0+\sum_{k=1}^n z_k\bar{w}_k\bigr)$. We consider the anti-De Sitter spacetime of radius $r>0$ as the hypersurface $H_1^{2n+1}(r)=\{z\in\C^{n+1}:\langle z,z\rangle=-r^2\}$. The anti-De Sitter space is a Lorentzian space form of constant negative curvature $c=-4/r^2$. An $S^1$-action can be defined on $H^{2n+1}_1(r)$ by means of $z\mapsto \lambda z$, with $\lambda\in\C$, $\lvert\lambda\rvert=1$. Then, the complex hyperbolic space $\C H^n(c)$ is by definition $H_1^{2n+1}(r)/S^1$, the quotient of the anti-De Sitter spacetime by this $S^1$-action. It is known that $\C H^n(c)$ is a connected, simply connected, K\"{a}hler manifold of complex dimension $n$ and constant holomorphic sectional curvature $c<0$. In what follows we will omit $r$ and $c$ and simply write $H_1^{2n+1}$ and $\C H^n$. If $n=1$, $\C H^1$ is isometric to a real hyperbolic space $\R H^2$ of constant sectional curvature $c$. Thus, throughout this paper we assume $n\geq 2$. If we denote by $J$ the complex structure of $\C H^n$, the curvature tensor $\bar{R}$ of $\C H^n$ reads
\[
\bar{R}(X,Y)Z
=\frac{c}{4}\Bigl(\langle Y,Z\rangle X-\langle X,Z\rangle Y
+\langle JY,Z\rangle JX-\langle JX,Z\rangle JY
-2\langle JX,Y\rangle JZ\Bigr).
\]

One can define a vector field $V$ on $H^{2n+1}_1$ by means of $V_q=i\sqrt{-c}\,q/2$ for each $q\in H^{2n+1}_1$. This vector field is tangent to the $S^1$-flow and $\langle V,V\rangle=-1$. The quotient map $\pi\colon H^{2n+1}_1\to\C H^n$ is called the Hopf map and it is a semi-Riemann\-ian submersion with timelike totally geodesic fibers, whose tangent spaces are generated by the vertical vector field $V$. We have the linear isometry
$T_q H^{2n+1}_1\cong T_{\pi(q)}\C H^n \oplus \R V$,
and the following relations between the Levi-Civita connections $\tilde{\nabla}$ and $\bar{\nabla}$ of $H^{2n+1}_1$ and $\C H^n$, respectively:
\begin{align}
\label{sub1}\tilde{\nabla}_{X^L}Y^L ={}&(\bar{\nabla}_X Y)^L+\frac{\sqrt{-c}}{2}\langle J X^L,Y^L\rangle V,\\
\label{sub2}\tilde{\nabla}_{V}X^L ={}& \tilde{\nabla}_{X^L}V =\frac{\sqrt{-c}}{2}(JX)^L=\frac{\sqrt{-c}}{2}JX^L,
\end{align}
for all $X$, $Y\in\Gamma(T\C H^n)$, and where $X^L$ denotes the horizontal lift of $X$ and $J$ denotes the complex structure on $\C^{n+1}$ as well. These formulas follow from the fundamental equations of semi-Riemannian submersions \cite{ON66}.

Let now $M$ be a real hypersurface in $\C H^n$. Sometimes we say `real' to emphasize that $M$ has real codimension one, as opposed to `complex' codimension one. Then $\tilde{M}=\pi^{-1}(M)$ is a hypersurface in $H^{2n+1}_1$ which is invariant under the $S^1$-action. Thus $\pi|_{\tilde{M}}\colon\tilde{M}\to M$ is a semi-Riemannian submersion with timelike totally geodesic $S^1$-fibers. Conversely, if $\tilde{M}$ is a Lorentzian hypersurface in $H^{2n+1}_1$ which is invariant under the $S^1$-action, then $M=\pi(\tilde{M})$ is a real hypersurface in $\C H^n$, and $\pi|_{\tilde{M}}\colon\tilde{M}\to M$ is a semi-Riemannian submersion with timelike totally geodesic fibers. If $\xi$ is a (local) unit normal vector field to $M$, then $\xi^L$ is a (local) spacelike unit normal vector field to $\tilde{M}$. In order to simplify the notation, we will denote by $\nabla$ the Levi-Civita connections of $M$ and of $\tilde{M}$. Denote by $\Ss$ and $\tilde{\Ss}$ the shape operators of $M$ and $\tilde{M}$, respectively.

The Gauss and Weingarten formulas for the hypersurface $\tilde{M}$ in $H^{2n+1}_1$ are, as we have seen,
$\tilde{\nabla}_X Y=\nabla_X Y +\langle \tilde{\Ss} X, Y\rangle\xi^L$, and $\tilde{\nabla}_X \xi^L=-\tilde{\Ss} X$. Using \eqref{sub1} and \eqref{sub2}, for any $X\in \Gamma(TM)$, we have
\begin{equation}\label{eq:shape}
\tilde{\Ss}X^L =(\Ss X)^L+\frac{\sqrt{-c}}{2}\langle J\xi^L,X^L\rangle V,\qquad
\tilde{\Ss}V=-\frac{\sqrt{-c}}{2}J\xi^L.
\end{equation}
In particular, $\Ss X=\pi_*\tilde{\Ss}X^L$.

Let $X_1,\ldots, X_{2n-1}$ be a local frame on $M$ consisting of principal directions with corresponding principal curvatures $\lambda_1,\ldots, \lambda_{2n-1}$ (obviously, some can be repeated). Then $X_1^L, \ldots, X_{2n-1}^L, V$ is a local frame on $\tilde{M}$ with respect to which $\tilde{\Ss}$ is represented by the matrix
\begin{equation}\label{lift_form}
\begin{pmatrix}
\lambda_1 & & 0& -\frac{b_1\sqrt{-c}}{2}\\
& \ddots& &\vdots\\
0& & \lambda_{2n-1} & -\frac{b_{2n-1}\sqrt{-c}}{2}\\
\frac{b_1\sqrt{-c}}{2} &\cdots &\frac{b_{2n-1}\sqrt{-c}}{2} &0
\end{pmatrix},
\end{equation}
where $b_i=\langle J\xi, X_i\rangle$, $i=1,\ldots, 2n-1$, are $S^1$-invariant functions on (an open set of) $\tilde{M}$.

As a consequence of~\eqref{eq:shape}, $M$ and $\tilde{M}$ have the same mean curvatures. Since horizontal geodesics in $H^{2n+1}_1$ are mapped via $\pi$ to geodesics in $\C H^n$, it follows that $\pi$ maps equidistant hypersurfaces to $\tilde{M}$ to equidistant hypersurfaces to~$M$. Therefore, $M$ is isoparametric if and only if $\tilde{M}$ is isoparametric. This allows us to study isoparametric hypersurfaces in $\C H^n$ by analyzing which Lorentzian isoparametric hypersurfaces in $H^{2n+1}_1$ can result by lifting isoparametric hypersurfaces in $\C H^n$ to the anti-De Sitter space. It is instructive to note that, whereas the isoparametric condition behaves well with respect to the Hopf map, this is not so for the constancy of the principal curvatures of a hypersurface, since the functions $b_i$ might be nonconstant.

The tangent vector field $J\xi$ is called the Reeb or Hopf vector field of $M$. A real hypersurface $M$ in a complex hyperbolic space $\C H^n$ is \emph{Hopf} at a point $p\in M$ if $J\xi_p$ is a principal curvature vector of the shape operator. We say that $M$ is \emph{Hopf} if it is Hopf at all points.

\subsection{Real subspaces of a complex vector space}\label{sec:Kahler_angles}\hfill

In this subsection we compile some information on the structure of a real subspace of a complex vector space $V$. This will be needed to present the examples of isoparametric hypersurfaces introduced in the Main Theorem, and it will also be an important tool in the proof of this classification result. We follow~\cite{DDK}.

Let $W$ be a real subspace of $V$, that is, a subspace of $V$ with the underlying structure of real vector space (as opposed to a complex subspace of $V$). We denote by $J$ the complex structure of $V$, and assume that $V$, as a real vector space, carries an inner product $\langle\,\cdot\,,\,\cdot\,\rangle$ for which $J$ is an isometry.

Let $\xi\in W$ be a nonzero vector. The \emph{K\"{a}hler angle} of $\xi$ with respect to $W$ is the angle $\varphi_\xi\in[0,\pi/2]$ between $J\xi$ and $W$. For each $\xi\in W$, we write $J\xi=F\xi+P\xi$, where $F\xi$ is the orthogonal projection of $J\xi$ onto $W$, and
$P\xi$ is the orthogonal projection of $J\xi$ onto $V\ominus W$, the orthogonal complement of $W$ in $V$. Then, the K\"{a}hler angle of $W$ with respect to $\xi$ is determined by
$\langle F\xi,F\xi\rangle =\cos^2(\varphi_\xi)\langle
\xi,\xi\rangle$. Hence,
if $\xi$ has unit length, $\varphi_\xi$ is determined by the fact
that $\cos(\varphi_\xi)$ is the length of the orthogonal projection of
$J\xi$ onto $W$. Furthermore, it readily follows from $J^2=-I$ that $\langle
P\xi,P\xi\rangle =\sin^2(\varphi_\xi)\langle \xi,\xi\rangle$.

A subspace $W$ of a complex vector space is said to have \emph{constant K\"{a}hler angle} $\varphi\in[0,\pi/2]$ if all nonzero vectors of $W$ have the same K\"{a}hler angle~$\varphi$. In particular, a totally real subspace is a subspace with constant K\"{a}hler angle~$\pi/2$, and a subspace is complex if and only if it has constant K\"{a}hler angle~$0$. It is also known that a subspace $W$ with constant K\"ahler angle has even dimension unless $\varphi=\pi/2$.

Following the ideas in~\cite[Theorem~2.6]{DDK}, we consider the skew-adjoint linear map $F\colon W\to W$, that is $\langle F\xi,\eta\rangle=-\langle\xi,F\eta\rangle$ for any $\xi$, $\eta\in W$, and the symmetric bilinear form $(\xi,\eta)\mapsto \langle F\xi,F\eta\rangle$. Hence, it follows that there is an orthonormal basis $\{\xi_1,\dots,\xi_k\}$ of $W$ and K\"{a}hler angles $\varphi_1,\dots,\varphi_k$ such that $\langle F\xi_i,F\xi_j\rangle=\cos^2(\varphi_i)\delta_{ij}$, for all $i$, $j\in\{1,\dots,k\}$, and where $\delta_{ij}$ is the Kronecker delta. We call $\varphi_1,\dots,\varphi_k$ the \emph{principal K\"{a}hler angles} of $W$, and $\xi_1,\dots,\xi_k$ are called \emph{principal K\"{a}hler vectors}. Moreover, as it is proved in~\cite[Section~2.3]{DDK}, the subspace $W$ can be written as
$W=\oplus_{\varphi\in\Phi}W_\varphi$,
where $\Phi\subset[0,\pi/2]$ is a finite subset, $W_\varphi\neq 0$ for each $\varphi\in\Phi$, and each $W_\varphi$ has constant K\"{a}hler angle $\varphi$. Furthermore, if $\varphi$, $\psi\in\Phi$ and $\varphi\neq\psi$, then $W_\varphi$ and $W_\psi$ are complex-orthogonal, i.e.\ $\C W_\varphi \perp \C W_\psi$. The elements of $\Phi$ are precisely the principal K\"{a}hler angles, the subspaces $W_\varphi$ are called the \emph{principal K\"{a}hler subspaces}, and their dimension is called their multiplicity.

Denote by $W^\perp=V\ominus W$ the orthogonal complement of $W$ in $V$. Then, we can also take the decomposition of $W^\perp$ in subspaces of constant K\"{a}hler angle $W^\perp=\oplus_{\varphi\in\Psi}W_\varphi^\perp$. It is known that $\Phi\setminus\{0\}=\Psi\setminus\{0\}$ and $\dim W_\varphi=\dim W_\varphi^\perp$ for each $\varphi\in\Phi\setminus\{0\}$, that is, except possibly for complex subspaces in $W$ or $W^\perp$, the K\"{a}hler angles of $W$ and $W^\perp$ and their multiplicities are the same. We have $\C W_\varphi=W_\varphi\oplus W_\varphi^\perp$ for $\varphi\in\Phi\setminus\{0\}$, and moreover, $F^2\xi=-\cos^2(\varphi)\xi$ for each $\xi\in W_\varphi$ and each $\varphi\in\Phi$. Conversely, if $\xi\in W$ satisfies $F^2\xi=-\cos^2(\varphi)\xi$, then it follows from the decomposition of $W$ in subspaces of constant K\"{a}hler angle that $\xi\in W_\varphi$.

Finally, two subspaces $W$ and $\hat{W}$ of $V\cong\C^n$ are congruent by an element of $U(n)$ if and only if they have the same principal K\"{a}hler angles with the same multiplicities, that is, if $W=\oplus_{\varphi\in\Phi}W_\varphi$ and $\hat{W}=\oplus_{\varphi\in\Psi}\hat{W}_\varphi$ are as above, then they are congruent by an element of $U(n)$ if and only if $\Phi=\Psi$ and $\dim W_\varphi=\dim \hat{W}_\psi$ whenever $\varphi=\psi$.

\subsection{Examples of isoparametric hypersurfaces in complex hyperbolic spaces}\label{sec:examples}\hfill

\subsubsection{The standard examples}\label{sec:examples:standard}\hfill

The standard set of homogeneous examples of real hypersurfaces in the complex hyperbolic spaces is known as Montiel's list~\cite{Mo85}. Berndt~\cite{Be89} classified these examples:

\begin{theorem}\label{th:Hopf}
Let $M$ be a connected Hopf real hypersurface with constant principal curvatures of the complex hyperbolic space $\C H^n$, $n\geq 2$. Then, $M$ is holomorphically congruent to an open part of:
\begin{enumerate}[{\rm (i)}]
\item a tube around a totally geodesic $\C H^k$, $k\in\{0,\dots,n-1\}$, or

\item a tube around a totally geodesic $\R H^n$, or

\item a horosphere.
\end{enumerate}
\end{theorem}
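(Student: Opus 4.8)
**Proof proposal for Theorem~\ref{th:Hopf} (Berndt's classification of Hopf hypersurfaces with constant principal curvatures in $\C H^n$).**

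The plan is to exploit the Hopf condition to reduce the problem to an ODE-type rigidity statement for the shape operator, and then integrate. First I would set up notation: let $M$ be Hopf with unit normal $\xi$, so that $\Ss J\xi = \mu J\xi$ for some function $\mu$ on $M$; a standard first step is to show that $\mu$ is actually \emph{constant}. This follows by differentiating $\Ss J\xi = \mu J\xi$ and feeding the result into the Codazzi equation $\langle(\nabla_X\Ss)Y - (\nabla_Y\Ss)X, Z\rangle = \langle\bar R(X,Y)Z,\xi\rangle$ for the curvature tensor $\bar R$ of $\C H^n$ written out above; the $J$-terms in $\bar R$ combine with the Kähler identity $\bar\nabla J = 0$ to force $X\mu = (\xi\mu)\langle\xi,X\rangle$ tangentially, and then a second application of Codazzi (or evaluating on a principal vector orthogonal to $J\xi$) kills $\xi\mu$ as well. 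This is the classical Maeda/Ki--Suh type computation.

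The second step is the key structural fact: if $X$ is a principal curvature vector with $\Ss X = \lambda X$ and $X\perp J\xi$, then $JX$ is again a principal curvature vector, with principal curvature
\[
\lambda' = \frac{\lambda\mu + c/2}{2\lambda - \mu}.
\]
One derives this again from Codazzi applied to the triple $(X, JX, \cdot)$ together with $\bar\nabla J = 0$ and the explicit form of $\bar R$; the Hopf condition is exactly what makes the cross-terms collapse so that $\Ss(JX)$ is forced to be a multiple of $JX$. Since the principal curvatures are assumed constant, the map $\lambda\mapsto\lambda'$ is a fixed Möbius transformation permuting the finite set of principal curvatures on $T_\xi^\perp M \ominus \R J\xi$, and elementary analysis of its fixed points and $2$-cycles, combined with the trace/Cartan-type relations, severely constrains the possible spectra: one finds at most two principal curvatures besides $\mu$, paired by $\lambda\mapsto\lambda'$, and the possible values are pinned down in terms of $c$ and $\mu$.

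The third step is the geometric reconstruction. Having determined the possible eigenvalue configurations $(\mu; \lambda, \lambda')$, one checks in each case that the distributions $T_\mu = \R J\xi$, $T_\lambda$, $T_{\lambda'}$ (and in the degenerate case where $\lambda = \lambda'$, the single distribution $T_\lambda$) are integrable with totally geodesic or umbilical leaves, using the Gauss and Weingarten formulas and the now-known connection components $\nabla_X Y$. Integrating the resulting foliation — equivalently, exhibiting $M$ as a tube of the appropriate radius over the focal submanifold obtained by collapsing one of these distributions via $\Phi^r$ — identifies the focal set as a totally geodesic $\C H^k$ (when the complex distribution $T_\lambda\oplus T_{\lambda'}$ degenerates appropriately, $k$ ranging over $0,\dots,n-1$), a totally geodesic $\R H^n$ (when $J$ interchanges $T_\lambda$ and $T_{\lambda'}$ as a totally real configuration), or, in the boundary case where one of the eigenvalue relations degenerates so that no focal point exists at finite distance, a horosphere. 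Matching radii with the explicit principal-curvature functions of the model tubes (computable from Jacobi field theory in $\C H^n$, i.e.\ from $\sqrt{-c}\coth$, $\tanh$, and the constant $\sqrt{-c}/2$) completes the congruence.

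The main obstacle I expect is the bookkeeping in the second step: showing that the constancy of the finitely many principal curvatures, together with the Möbius dynamics of $\lambda\mapsto\lambda'$ \emph{and} the constraint that the eigenspaces have dimensions adding up to $2n-2$, really does exhaust into exactly the three families with no spurious solutions. One must be careful about the limiting case $2\lambda = \mu$ (where $\lambda'$ blows up — corresponding precisely to the horosphere/no-focal-point situation) and about whether $\lambda = \mu/2$ can occur with positive multiplicity; handling these boundary cases cleanly, rather than losing them or double-counting, is the delicate part. Everything else — the Codazzi computations and the final tube reconstruction — is routine given the curvature formula and the submanifold formalism already set up in Section~\ref{sec:preliminaries}.
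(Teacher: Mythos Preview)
The paper does not supply a proof of this theorem. It is quoted as a known result due to Berndt~\cite{Be89} and then used as a black box (in Remark~\ref{rmk:tubes_CHk}, Corollary~\ref{th:typeiv_classification}, and Proposition~\ref{th:different_types}) together with the explicit principal-curvature data recorded in Remark~\ref{rmk:Hopf}. So there is no ``paper's own proof'' to compare your proposal against.

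For what it is worth, your outline is precisely the classical strategy of Berndt's original argument: constancy of the Hopf principal curvature via Codazzi, the relation $(2\lambda-\mu)\lambda' = \lambda\mu + c/2$ forcing a M\"obius permutation of the remaining spectrum, and then identification of the focal set. One small slip in your step~1: $\xi$ is the unit normal, so $\langle\xi,X\rangle=0$ for every tangent $X$; what the Codazzi argument actually gives is $\mathrm{grad}\,\mu = ((J\xi)\mu)\,J\xi$, i.e.\ $X\mu=0$ for $X\perp J\xi$, and a separate step is needed to kill $(J\xi)\mu$. Otherwise the sketch is accurate and matches the literature, but it is not something the present paper undertakes.
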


\begin{remark}\label{rmk:Hopf}
In order to use Theorem~\ref{th:Hopf} efficiently (see for example Corollary~\ref{th:typeiv_classification} and Proposition~\ref{th:different_types}), we need to know the principal curvatures and their multiplicities for a Hopf real hypersurface with constant principal curvatures. These can be found for example in~\cite{Be89} or~\cite{BD09}.

A tube of radius $r>0$ around a totally geodesic $\C H^k$, $k\in\{0,\dots,n-1\}$, has the following principal curvatures:
\begin{align*}
\lambda_1&{}=\frac{\sqrt{-c}}{2}\tanh\Bigl(\frac{r\sqrt{-c}}{2}\Bigr),
&\lambda_2&{}=\frac{\sqrt{-c}}{2}\coth\Bigl(\frac{r\sqrt{-c}}{2}\Bigr),
&\lambda_3&{}=\sqrt{-c}\coth\Bigl(r\sqrt{-c}\Bigr),
\end{align*}
with multiplicities $2k$, $2(n-k-1)$, and $1$. Thus, the number of principal curvatures is $g=2$ if $k=0$ or $k=n-1$, and $g=3$ otherwise. The Hopf vector is associated with $\lambda_3$.

A tube of radius $r>0$ around a totally geodesic $\R H^n$ has three principal curvatures
\begin{align*}
\lambda_1&{}=\frac{\sqrt{-c}}{2}\tanh\Bigl(\frac{r\sqrt{-c}}{2}\Bigr),
&\lambda_2&{}=\frac{\sqrt{-c}}{2}\coth\Bigl(\frac{r\sqrt{-c}}{2}\Bigr),
&\lambda_3&{}=\sqrt{-c}\tanh\Bigl(r\sqrt{-c}\Bigr),
\end{align*}
with multiplicities $n-1$, $n-1$, and $1$, except when $r=\frac{1}{\sqrt{-c}}\log\bigl(2+\sqrt{3}\bigr)$, in which case $\lambda_1=\lambda_3$. The Hopf vector is associated with $\lambda_3$.

Finally, a horosphere has two distinct principal curvatures
\begin{align*}
\lambda_1&{}=\frac{\sqrt{-c}}{2},
&\lambda_2&{}=\sqrt{-c},
\end{align*}
with multiplicities $2(n-1)$ and $1$. The Hopf vector is associated with $\lambda_2$.
\end{remark}

It was believed for some time that, as it is the case for complex projective spaces, the Hopf hypersurfaces with constant principal curvatures (Theorem~\ref{th:Hopf}) should give the list of homogeneous hypersurfaces in complex hyperbolic spaces. However, Lohnherr and Reckziegel found in~\cite{L93} an example of a homogeneous hypersurface that is not Hopf, namely, case~(\ref{th:main:w}) in the Main Theorem. Later, new examples of non-Hopf homogeneous hypersurfaces in complex hyperbolic spaces were found in~\cite{BB01}, and Berndt and Tamaru classified all homogeneous hypersurfaces in~\cite{BT07}. The construction method of these non-Hopf examples was generalized by the first two authors in~\cite{DD12} for the complex hyperbolic space, and in~\cite{DD13} for Damek-Ricci spaces. These examples are in general not homogeneous, but they are isoparametric, and the rest of this section is devoted to present their definition and main properties.

\subsubsection{Tubes around the submanifolds $W_{\g{w}}$}\label{sec:examples:W}\hfill

Before starting with the description of the examples themselves, we need to introduce some concepts related to the algebraic structure of the complex hyperbolic space as a Riemannian symmetric space of rank one and noncompact type. See~\cite{BD09} for further details.

Indeed, $\C H^n$ can be written as $G/K$ where
$G=SU(1,n)$ and $K=S(U(1)U(n))$. We denote by gothic letters the Lie algebras of the corresponding Lie groups. Thus, if
$\g{g}=\g{k}\oplus\g{p}$ is the Cartan decomposition of $\g{g}$ with
respect to a point $o\in\C H^n$, and we choose a maximal abelian subspace
$\g{a}$ of $\g{p}$, it follows that $\g{a}$ is $1$-dimensional. Let
$\g{g}=\g{g}_{-2\alpha}\oplus\g{g}_{-\alpha}\oplus\g{g}_{0}
\oplus\g{g}_{\alpha}\oplus\g{g}_{2\alpha}$ be the root space
decomposition of $\g{g}$ with respect to $o$ and $\g{a}$. We
introduce an ordering in the set of roots so that $\alpha$ is a
positive root. These choices determine a point
at infinity $x$ in the ideal boundary $\C H^n(\infty)$ of $\C H^n$, that is, an equivalence class of geodesics that are asymptotic to the geodesic starting at $o\in\C H^n$, with direction $\g{a}\subset\g{p}\cong T_o\C H^n$ and the orientation determined by the fact that $\alpha$ is positive.
If we define $\g{n}=\g{g}_\alpha\oplus\g{g}_{2\alpha}$, then
$\g{g}=\g{k}\oplus\g{a}\oplus\g{n}$ is the so-called Iwasawa decomposition of
the Lie algebra $\g{g}$ with respect $o\in\C H^n$ and
$x\in\C H^n(\infty)$. If $A$, $N$ and $AN$ are
the connected simply connected subgroups of $G$ whose Lie
algebras are $\g{a}$, $\g{n}$, and $\g{a}\oplus\g{n}$ respectively, then
$G$ turns out to be diffeomorphic to $K\times A \times N$,  $AN$ is
diffeomorphic to $\C H^n$, and $T_o\C H^n\cong\g{a}\oplus\g{n}$. In this case, $G=KAN$ is the so-called Iwasawa decomposition of $G$. The metric and complex structure of $\C H^n$ induce a left-invariant metric $\langle\,\cdot\,,\,\cdot\,\rangle$ and a complex structure $J$ on $AN$ that make $\C H^n$ and
$AN$ isometric as K\"{a}hler manifolds.

Throughout this section $B$ will be the unit left-invariant vector field of
$\g{a}$ determined by the point at infinity~$x$. That is, the geodesic through $o$ whose
initial speed is $B$ converges to~$x$. We also set
$Z=JB\in\g{g}_{2\alpha}$, and thus, $\g{a}=\R B$ and
$\g{g}_{2\alpha}=\R Z$. Moreover, $\g{g}_\alpha$ is $J$-invariant, so
it is isomorphic to $\C^{n-1}$. The Lie algebra structure on
$\g{a}\oplus\g{n}$ is given by the formulas
\begin{equation}\label{eq:brackets}
\begin{aligned}
\left[B,Z\right]&{}=\sqrt{-c}\,Z,
&2\left[B,U\right]&{}=\sqrt{-c}\,U,
&\left[U,V\right]&{}=\sqrt{-c}\,\langle JU, V\rangle Z,
&\left[Z,U\right]&{}=0,
\end{aligned}
\end{equation}
where $U$, $V\in\g{g}_\alpha$.

In Section~\ref{sec:rigidity} we will also need the group structure of the semidirect product $AN$. A standard reference for this is~\cite{BTV95}. The product structure is given by
\begin{equation}\label{eq:product_AN}
\begin{aligned}
\!&\Exp_{\g{a}\oplus\g{n}}(aB+U+xZ)\cdot \Exp_{\g{a}\oplus\g{n}}(bB+V+yZ)\\
\!&\qquad{}=\Exp_{\g{a}\oplus\g{n}}\biggl((a+b)B
+\rho\Bigl(\frac{a+b}{2}\Bigr)^{-1}\Bigl(\rho(a/2)U+e^{a/2}\rho(b/2)V\Bigr)\\
\!&\phantom{\qquad{}=\Exp_{\g{a}\oplus\g{n}}\biggl(}
+\rho(a+b)^{-1}\Bigl(
\rho(a)x+e^a\rho(b)y+\frac{1}{2}e^{a/2}\sqrt{-c}\rho(a/2)\rho(b/2)\langle JU,V\rangle\Bigr)Z\biggr)
\end{aligned}
\end{equation}
for all $a$, $b$, $x$, $y \in \R$ and $U$, $V \in \g{g}_\alpha$. Here, $\Exp_{\g{a}\oplus\g{n}}\colon\g{a}\oplus\g{n}\to AN$ denotes the Lie exponential map of $AN$, and $\rho\colon\R\to\R$ is the analytic function defined by
\[
\rho(s)=
\begin{cases}
\frac{e^s-1}{s}    & \text{if }s\neq 0,\\
1                  & \text{if }s=0.
\end{cases}
\]

The Levi-Civita connection of $AN$ is
given by
\begin{equation}\label{eq:Levi-Civita}
\begin{aligned}
{\nabla}_{aB+U+xZ}(bB+V+yZ)
&{}=\sqrt{-c}\,\Bigl\{\Bigl(\frac{1}{2}\langle U,V\rangle+xy\Bigr)B\\
&\phantom{{}=\sqrt{-c}\,\Bigl(}
-\frac{1}{2}\Bigl(bU+yJU+xJV\Bigr)
+\Bigl(\frac{1}{2}\langle JU,V\rangle-bx\Bigr) Z\Bigr\},
\end{aligned}
\end{equation}
where $a$, $b$, $x$, $y\in\R$, $U$, $V\in\g{g}_\alpha$, and all
vector fields are considered to be left-invariant.\medskip

In order to construct the examples corresponding to cases~(\ref{th:main:w}) to~(\ref{th:main:ww}) of the Main Theorem, let $\g{w}$ be a proper real subspace of $\g{g}_\alpha$, that is, a subspace of $\g{g}_\alpha$, $\g{w}\neq\g{g}_\alpha$, where $\g{g}_\alpha$ is regarded as a real vector space. We define
$\g{w}^\perp=\g{g}_\alpha\ominus\g{w}$, the orthogonal complement of
$\g{w}$ in $\g{g}_\alpha$, and write $k=\dim\g{w}^\perp$. It follows from the bracket relations above that
$\g{a}\oplus\g{w}\oplus\g{g}_{2\alpha}$ is a solvable
Lie subalgebra of $\g{a}\oplus\g{n}$. We define
\[
W_{\g{w}}=S_{\g{w}}\cdot o,\text{ where $\g{s}_{\g{w}}=\g{a}\oplus\g{w}\oplus\g{g}_{2\alpha}$},
\]
the orbit of the group $S_{\g{w}}$ through the point $o$, where $S_{\g{w}}$ is the
connected subgroup of $AN$ whose Lie algebra is $\g{s}_{\g{w}}$. Hence, $W_{\g{w}}$ is a
homogeneous submanifold of $\C H^n$; it was proved in~\cite{DD12} that $W_{\g{w}}$ is minimal and tubes around $W_{\g{w}}$ are isoparametric hypersurfaces of~$\C H^n$.

We give some more information on $W_{\g{w}}$ and its tubes. As we have seen in Subsection~\ref{sec:Kahler_angles}, we can decompose $\g{w}^\perp=\oplus_{\varphi\in\Phi}\g{w}^\perp_\varphi$ as a direct sum of complex-orthogonal subspaces of constant K\"{a}hler angle. The elements of $\Phi$ are the principal K\"{a}hler angles of $\g{w}^\perp$. Recall that $F\colon\g{w}^\perp\to\g{w}^\perp$ and $P\colon\g{w}^\perp\to\g{w}$ map any $\xi \in \g{w}^\perp$ to the orthogonal projections of $J\xi$ onto $\g{w}^\perp$ and $\g{w}$ respectively.
Let $\g{c}$ be the maximal complex subspace of $\g{s}_\g{w}$, that is,
$\g{c}=\g{a}\oplus(\g{g}_\alpha\ominus\C\g{w}^\perp)\oplus\g{g}_{2\alpha}$. Then,  $\g{s}_{\g{w}}=\g{c}\oplus
P\g{w}^\perp$ and $\g{a}\oplus\g{n}=\g{c}\oplus
P\g{w}^\perp\oplus\g{w}^\perp$. Denoting by $\g{C}$, $P\g{W}^\perp$,
and $\g{W}^\perp$ the corresponding left-invariant distributions on
$AN$, then the tangent bundle of $W_{\g{w}}$ is $TW_{\g{w}}=\g{C}\oplus P\g{W}^\perp$
and the normal bundle is $\nu W_{\g{w}}=\g{W}^\perp$.
It follows from~\cite[p.~1039]{DD12} that the second fundamental form of $W_{\g{w}}$ is determined by the trivial symmetric bilinear extension of
\[
2\II(Z,P\xi)=-\sqrt{-c}\,(JP\xi)^\perp,\ \text{ $\xi\in\nu W_{\g{w}}$,}
\]
where $(\cdot)^\perp$ denotes orthogonal projection onto $\nu W_{\g{w}}$. It can be shown that this expression for the second fundamental form implies that the complex distribution $\g{C}$ on $W_\g{w}$ is autoparallel, and hence $W_\g{w}$ is ruled by totally geodesic complex hyperbolic subspaces (see~Lemma~\ref{th:complex}).

If $k=1$, that is, if $\g{w}$ is a real hyperplane in $\g{g}_\alpha$, then the corresponding $W_{\g{w}}$ is denoted by $W^{2n-1}$ and is called the \emph{Lohnherr hypersurface}~\cite{L93}. It follows that $W^{2n-1}$ and its equidistant hypersurfaces are homogeneous hypersurfaces of $\C H^n$. These were also studied by Berndt in~\cite{Be98}, and correspond to case~(\ref{th:main:w}) of the Main Theorem. The corresponding foliation on $\C H^n$ is sometimes called the solvable foliation.

Thus, we assume from now on $k>1$. If $\g{w}^\perp$ has constant K\"{a}hler angle $\varphi=0$, then $W_{\g{w}}$ is congruent to a totally geodesic complex hyperbolic space. If $\g{w}^\perp$ has  constant K\"{a}hler angle $\varphi\in(0,\pi/2]$, then $W_{\g{w}}$ is denoted by $W^{2n-k}_\varphi$. These are the so-called \emph{Berndt-Br\"{u}ck submanifolds}, and it is proved in~\cite{BB01} that the tubes around $W^{2n-k}_\varphi$ are homogeneous. Moreover, it follows from~\cite{BT07} that a real hypersurface in $\C H^n$ is homogeneous if and only if it is congruent to one of the Hopf examples in Theorem~\ref{th:Hopf}, to $W^{2n-1}$ or one of its equidistant hypersurfaces, or to a tube around a $W^{2n-k}_\varphi$.

In general, however, a tube around a submanifold $W_{\g{w}}$ is not necessarily homogeneous. For an arbitrary $\g{w}$, the mean curvature $\mathcal{H}^r$ of the tube $M^r$ of radius $r$ around the submanifold $W_{\g{w}}$ is~\cite{DD12}
\[
\mathcal{H}^r=
\frac{\sqrt{-c}}{2\sinh\frac{r\sqrt{-c}}{2}\cosh\frac{r\sqrt{-c}}{2}} \left(k-1+2n\sinh^2\frac{r\sqrt{-c}}{2}\right).
\]
Therefore, for every $r>0$, the tube $M^r$
of radius $r$ around $W_\g{w}$ is a hypersurface with constant mean curvature, and
hence, tubes around the submanifold $W_\g{w}$ constitute an
isoparametric family of hypersurfaces in $\C H^n$.

\begin{remark}\label{rmk:W:curvatures}
With the notation as above, if $\gamma_\xi$ denotes the geodesic through a point $o\in W_{\g{w}}$ with $\dot\gamma_\xi(0)=\xi\in\nu_o W_{\g{w}}$, then the characteristic polynomial of the shape operator of $M^r$ at $\gamma_\xi(r)$ with respect to~$-\gamma_\xi'(r)$ is
\[
p_{r,\xi}(x)=(\lambda-x)^{2n-k-2}\left(-\frac{c}{4\lambda}-x\right)^{k-2}
f_{\lambda,\varphi_\xi}(x),
\]
where $\lambda=\frac{\sqrt{-c}}{2}\tanh\frac{r\sqrt{-c}}{2}$, $\varphi_\xi$ is the K\"{a}hler angle of $\xi$ respect to $\nu_o W_{\g{w}}$, and
\[
f_{\lambda,\varphi}(x)=
-x^3+\left(-\frac{c}{4\lambda}+3\lambda\right)x^2
+\frac{1}{2}\left(c-6\lambda^2\right)x
+\frac{16\lambda^4-16c\lambda^2-c^2+
(c+4\lambda^2)^2\cos(2\varphi)}{32\lambda}.
\]

As was pointed out in~\cite{DD12}, at $\gamma_\xi(r)$, $M^r$ has the same
principal curvatures, with the same multiplicities, as a tube of radius $r$ around $W^{2n-k}_{\varphi_\xi}$, $\varphi_\xi\in[0,\pi/2]$. However, in general, the
principal curvatures and the number~$g$ of principal curvatures
vary from point to point in $M^r$.
\end{remark}

\section{Lorentzian isoparametric hypersurfaces}\label{sec:isoparametric}

In this section we present the possible eigenvalue structures of the shape operator of a Lorentzian isoparametric hypersurface in the anti-De Sitter space $H^{2n+1}_1$ and use this information to deduce some algebraic properties of an isoparametric hypersurface in the complex hyperbolic space $\C H^n$.

Let $\tilde{M}$ be a Lorentzian isoparametric hypersurface in $H^{2n+1}_1$. Then we know by~\cite[Proposition~2.1]{H84} that it has constant principal curvatures with constant algebraic multiplicities. The shape operator $\tilde{\Ss}_q$ at a point $q$ is a self-adjoint endomorphism of $T_q\tilde{M}$. It is known (see for example \cite[Chapter~9]{ON83}) that there exists a basis of $T_q \tilde{M}$ where $\tilde{\Ss}_q$ assumes one of the following Jordan canonical forms:
\begin{center}
\begin{tabular}{l@{}l@{\hspace{3em}}l@{}l}
I.&
$\begin{pmatrix}
\lambda_1 & & 0\\
&\ddots&\\
0 & & \lambda_{2n}
\end{pmatrix}$
&
II.
&$\begin{pmatrix}
\lambda_1 & 0 & \\
\varepsilon&\lambda_1&\\
 & & \lambda_{2}\\
 &&&\ddots\\
 &&&&\lambda_{2n-1}
\end{pmatrix},\ \varepsilon =\pm 1$
\\
III.&
$\begin{pmatrix}
\lambda_1 & 0 &1 \\
0&\lambda_1&0\\
0 & 1 & \lambda_1\\
&&&\lambda_2\\
 &&&&\ddots\\
 &&&&&\lambda_{2n-2}
\end{pmatrix}$
&IV.&
$\begin{pmatrix}
a & -b & \\
b&a&\\
 & & \lambda_{3}\\
 &&&\ddots\\
 &&&&\lambda_{2n}
\end{pmatrix}$
\end{tabular}
\end{center}
Here, the $\lambda_i\in\R$ can be repeated and, in case IV, $\lambda_1=a+ib,\lambda_2=a-ib$ ($b\neq 0$) are the complex eigenvalues of $\tilde{\Ss}_q$. In cases I and IV the basis with respect to which $\tilde{\Ss}_q$ is represented is orthonormal (with the first vector being timelike), while in cases II and III the basis is semi-null. A semi-null basis is a basis $\{u,v,e_1,\ldots,e_{m-2}\}$ for which all inner products are zero except $\langle u,v\rangle=\langle e_i,e_i\rangle=1$, for all $i=1,\ldots, m-2$. We will say that a point $q\in\tilde{M}$ is of type~I, II, III or~IV if the canonical form of $\tilde{\Ss}_q$ is of type~I, II, III or~IV, respectively.

\begin{remark}\label{rmk:example_types}
It can be seen by direct calculation that all points of the lift of a tube around a totally geodesic $\C H^k$, $k\in\{0,\dots,n-1\}$, are of type~I. Similarly, all points of the lift of a horosphere are of type~II, and all points of the lift of a tube around a totally geodesic $\R H^n$ are of type~IV. For the Lohnherr hypersurface $W^{2n-1}$ and its equidistant hypersurfaces, or for the tubes around the Berndt-Br\"{u}ck submanifolds $W^{2n-k}_\varphi$, all points of their lifts are of type~III. Nevertheless, it is important to point out that, in general, the lift of a tube around a submanifold $W_{\g{w}}$ does not have constant type: there might be points of type~I (if $\varphi_\xi=0$ in the notation of Subsection~\ref{sec:examples:W}) and of type~III (otherwise).
\end{remark}

Cartan's fundamental formula can be generalized to semi-Riemannian space forms. See~\cite{H84}, or~\cite[Satz~2.3.6]{Bu93} for a proof:

\begin{proposition}\label{cartan_formula}
Let $\tilde{M}$ be a Lorentzian isoparametric hypersurface in the anti-De Sitter space $H^{2n+1}_1$ of curvature $c/4$. If its (possibly complex) principal curvatures are $\lambda_1,\ldots, \lambda_{\tilde{g}}$ with algebraic multiplicities $m_1,\ldots, m_{\tilde{g}}$, respectively, and if for some $i\in\{1,\ldots, \tilde{g}\}$ the principal curvature $\lambda_i$ is real and its algebraic and geometric multiplicities coincide, then:
\[
\sum_{j=1,\,j\neq i}^{\tilde{g}} m_j \frac{c+4\lambda_i\lambda_j}{\lambda_i-\lambda_j}=0.
\]
\end{proposition}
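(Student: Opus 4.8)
The plan is to imitate the classical derivation of Cartan's fundamental formula, but carefully tracking the contribution of the non-diagonalizable part of $\tilde\Ss$ to the curvature term of the Gauss equation. Fix the real principal curvature $\lambda_i$ whose algebraic and geometric multiplicities agree. Let $\{e_\alpha\}$ be a (possibly semi-null) basis of $T_q\tilde M$ adapted to the Jordan form of $\tilde\Ss_q$, and pick a unit principal vector $X$ with $\tilde\Ss X=\lambda_i X$; since the geometric multiplicity of $\lambda_i$ equals its algebraic multiplicity, $T_{\lambda_i}$ really is a genuine eigenspace and $X$ lies in a block that does not mix with anything else. The starting point is the Codazzi equation for the isoparametric hypersurface $\tilde M$ in the space form $H^{2n+1}_1$: since $\tilde M$ has constant principal curvatures with constant algebraic multiplicities, $(\nabla_X\tilde\Ss)Y-(\nabla_Y\tilde\Ss)X$ is controlled, and $\langle\bar R(X,Y)Z,\xi\rangle=0$ because the ambient space is a space form. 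Contracting the resulting identity with a suitable orthonormal/semi-null frame and using that $\tilde\Ss$ has constant eigenvalue data produces an identity of the shape $\sum_j m_j\,g(\lambda_i,\lambda_j)=0$ for an appropriate rational function $g$; the content is to show $g(\lambda_i,\lambda_j)=\dfrac{c+4\lambda_i\lambda_j}{\lambda_i-\lambda_j}$.

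Concretely I would proceed as follows. First, differentiate the shape operator along the direction $X\in T_{\lambda_i}$: for any vector field $Y$ with $\tilde\Ss Y=\lambda_j Y$ (or, in a Jordan block, $\tilde\Ss Y=\lambda_j Y + (\text{nilpotent shift})$), compute $\langle(\nabla_X\tilde\Ss)Y,Z\rangle$ using $\tilde\nabla_X(\tilde\Ss Y)=(\nabla_X\tilde\Ss)Y+\tilde\Ss\nabla_X Y$ together with the constancy of $\lambda_j$; this expresses $(\nabla_X\tilde\Ss)Y$ in terms of the connection coefficients $\langle\nabla_X e_\alpha,e_\beta\rangle$ and the $\lambda$'s. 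Second, feed this into the Codazzi equation $\langle(\nabla_X\tilde\Ss)Y-(\nabla_Y\tilde\Ss)X,Z\rangle=\langle\bar R(X,Y)Z,\xi^L\rangle=0$, obtaining linear relations among the connection coefficients weighted by differences $\lambda_i-\lambda_j$. Third, substitute these relations into the Gauss equation evaluated on the $2$-plane spanned by $X$ and $e_\alpha$: the sectional-curvature term on the ambient side is the constant $c/4$ (times the appropriate metric factor), and on the intrinsic side one gets $\langle R(X,e_\alpha)e_\alpha,X\rangle$ plus the shape-operator quadratic terms $\langle\tilde\Ss X,X\rangle\langle\tilde\Ss e_\alpha,e_\alpha\rangle - \langle\tilde\Ss X,e_\alpha\rangle^2$, which for eigenvectors reduce to $\lambda_i\lambda_j$ (the cross terms vanish because $X$ is a true eigenvector). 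Finally, summing the resulting identity over a full adapted frame and using that $R$ is the curvature of $\tilde M$ (so the intrinsic curvature contributions reorganize by the second Bianchi-type cancellation exactly as in Cartan's original computation) collapses everything to $\sum_{j\neq i} m_j\frac{c+4\lambda_i\lambda_j}{\lambda_i-\lambda_j}=0$.

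The main obstacle, and the reason the hypothesis ``algebraic and geometric multiplicities of $\lambda_i$ coincide'' is indispensable, is bookkeeping the Jordan blocks of type II and III. In those blocks $\tilde\Ss$ is not symmetric-diagonalizable and the natural basis is semi-null, so one must be careful that (a) the dual metric pairing used to raise/lower indices pairs $u$ with $v$ rather than with itself, and (b) when $e_\alpha$ belongs to a nilpotent block the quantity $\langle\tilde\Ss e_\alpha,e_\alpha\rangle$ need not equal $\lambda_\alpha\langle e_\alpha,e_\alpha\rangle$, and there are extra ``shift'' terms $\varepsilon$ or $1$ appearing. The key point is that the vector $X$ we differentiate against is chosen inside the genuine eigenspace $\ker(\tilde\Ss-\lambda_i)$ — this is exactly what the coincidence-of-multiplicities hypothesis guarantees — so all the off-diagonal nilpotent contributions enter only through the $Y$, $Z$ slots and, after the Codazzi substitution, get multiplied by factors that either vanish or cancel in pairs when summed over the semi-null basis, because the trace of a nilpotent operator is zero and the metric pairing within a block is antidiagonal. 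Verifying this cancellation case by case (type I trivially, type II with one $2\times2$ block, type III with one $3\times3$ block, type IV with the complex-conjugate pair handled by grouping $\lambda_j$ and $\bar\lambda_j$ so that $m_j\frac{c+4\lambda_i\lambda_j}{\lambda_i-\lambda_j}+m_j\frac{c+4\lambda_i\bar\lambda_j}{\lambda_i-\bar\lambda_j}$ is real) is the technical heart of the argument; conceptually it is routine, but it is where all the care goes. I would cite \cite{H84} and \cite{Bu93} for the precise form of the semi-Riemannian Codazzi computation and reproduce only the steps where the Lorentzian signature changes the classical argument.
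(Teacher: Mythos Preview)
The paper does not prove this proposition: it states the result and refers the reader to Hahn~\cite{H84} and Burth~\cite[Satz~2.3.6]{Bu93} for a proof. So there is nothing in the paper to compare your proposal against beyond those citations, which you already invoke yourself.

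As for the outline you give, it is broadly the correct strategy---Codazzi to express connection coefficients in terms of eigenvalue differences, Gauss to bring in $c/4$ and $\lambda_i\lambda_j$, then a trace over an adapted frame---and you have correctly identified why the hypothesis on $\lambda_i$ is essential: it guarantees that the vector $X$ you differentiate along is a genuine eigenvector, so the nilpotent shifts enter only through the other slots. One quibble: the phrase ``second Bianchi-type cancellation'' is misleading. No Bianchi identity is used; what actually happens (in Cartan's original argument and in Hahn's extension) is that the Codazzi equation gives $\langle\nabla_{e_\alpha}e_\alpha,X\rangle=\dfrac{1}{\lambda_i-\lambda_\alpha}\langle(\nabla_X\tilde\Ss)e_\alpha,e_\alpha\rangle$ for eigenvectors $e_\alpha$, and then the Gauss equation applied to the plane $\mathrm{span}\{X,e_\alpha\}$ produces $c/4+\lambda_i\lambda_\alpha$ directly; summing over $\alpha$ computes the trace of $\nabla X$ restricted to $T_{\lambda_i}^\perp$, which equals the divergence term that vanishes because the $T_{\lambda_i}$-leaves are minimal in the appropriate sense. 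The Jordan-block bookkeeping you describe is indeed where the work lies, and your claim that the off-diagonal contributions cancel because the trace of a nilpotent endomorphism is zero and the semi-null metric is antidiagonal is the right idea, but as written it is a sketch rather than a proof; carrying it out rigorously is exactly the content of \cite[Satz~2.3.6]{Bu93}.
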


Now let $M$ be an isoparametric real hypersurface in $\C H^n$ and~$\tilde{M}=\pi^{-1}(M)$ its lift to~$H^{2n+1}_1$. Then,~$\tilde{M}$ is a Lorentzian isoparametric hypersurface in the anti-De Sitter space. We use Cartan's fundamental formula to analyze the eigenvalue structure of~$M$.
Our approach here will be mostly based on elementary algebraic arguments.

We denote by $\xi$ a (local) unit normal vector field of $M$. For a point $q\in \tilde{M}$, the shape operator $\tilde{\Ss}$ of $\tilde{M}$ at $q$ with respect to $\xi^L_q$ can adopt one of the four possible types described above. We will analyze the possible principal curvatures of $M$ at the point $p=\pi(q)$ going through the four cases.

The following is an elementary result that we state without proof.

\begin{lemma}\label{lemma:inside_Cartan}
Let $c<0$, $p>0$, and define $\phi\colon\R\setminus\{p\}\to\R$ by $\phi(x)=\frac{c+4px}{p-x}$.
Then $\phi(x)>0$ if and only if $x>0$ and $\lvert x+\frac{c}{4x}\rvert<\lvert p+\frac{c}{4p}\rvert$.
\end{lemma}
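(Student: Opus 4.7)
The idea is that both characterizations describe the same open interval, the one bounded by $p$ and $x_0:=-c/(4p)$, both of which are positive since $c<0$ and $p>0$. I would prove this in two short stages: first a sign analysis of $\phi$, and then a monotonicity/symmetry analysis of the auxiliary function $f(x):=x+c/(4x)$.

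First, for the left-to-right direction, I would factor the sign of $\phi(x)=(c+4px)/(p-x)$. The numerator vanishes at $x=x_0=-c/(4p)>0$ and the denominator vanishes at $x=p>0$, so $\phi(x)>0$ exactly when $x$ lies strictly between $p$ and $x_0$. Check in particular that any such $x$ is positive (since both endpoints are positive), and that $x\leq 0$ forces $c+4px<0$ while $p-x>0$, so $\phi(x)<0$ for $x\leq 0$. In any case, $\phi(x)>0$ is equivalent to $x\in I$, where $I$ is the open interval with endpoints $p$ and $x_0$, contained in $(0,\infty)$.

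Second, I would analyse $f$ on $(0,\infty)$. The derivative $f'(x)=1-c/(4x^2)$ is strictly positive (since $c<0$), so $f$ is strictly increasing on $(0,\infty)$; its unique zero is $x=\sqrt{-c}/2$, because $f(\sqrt{-c}/2)=\sqrt{-c}/2+c/(2\sqrt{-c})=0$. Hence $|f|$ is strictly decreasing on $(0,\sqrt{-c}/2]$ and strictly increasing on $[\sqrt{-c}/2,\infty)$. The key algebraic identity is
\[
f\bigl(-c/(4x)\bigr)=-\frac{c}{4x}+\frac{c}{4\cdot(-c/(4x))}=-\frac{c}{4x}-x=-f(x),
\]
so $|f(p)|=|f(x_0)|$, and $p$ and $x_0$ are the two points where $|f|$ attains the value $|f(p)|$ on $(0,\infty)$ (they are distinct unless $p=\sqrt{-c}/2=x_0$, a degenerate case in which both sides of the equivalence describe the empty set). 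Because $|f|$ is unimodal on $(0,\infty)$ with minimum value $0$ at $\sqrt{-c}/2$, the inequality $|f(x)|<|f(p)|$ for $x>0$ holds exactly when $x$ lies strictly between those two preimages, i.e.\ when $x\in I$.

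Combining the two analyses, both conditions characterize the same interval $I\subset(0,\infty)$, so they are equivalent. The only nontrivial step is the symmetry identity $f(-c/(4x))=-f(x)$; once this is in hand, the rest is routine monotonicity.
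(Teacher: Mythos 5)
Your proof is correct and complete: the sign analysis identifying $\{\phi>0\}$ with the open interval between $p$ and $-c/(4p)$, the monotonicity of $f(x)=x+c/(4x)$ on $(0,\infty)$ with unique zero at $\sqrt{-c}/2$, and the symmetry $f(-c/(4x))=-f(x)$ together give exactly the stated equivalence, and you correctly dispose of the degenerate case $p=\sqrt{-c}/2$. The paper states this lemma without proof ("an elementary result that we state without proof"), so there is no argument to compare against; your write-up is a valid elementary verification of it.
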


We begin with a consequence of Cartan's fundamental formula that will be used in subsections~\ref{sec:typei},~\ref{sec:typeii} and~\ref{sec:typeiii}. See~\cite[\S2.4]{Bu93} and~\cite[Lemma~2.3]{X99}.

\begin{lemma}\label{types_I_II_III}
Let $q\in\tilde{M}$ be a point of type I, II or~III. Then the number $\tilde{g}(q)$ of constant principal curvatures at $q$ satisfies $\tilde{g}(q)\in\{1,2\}$. Moreover, if $\tilde{g}(q)=2$ and the principal curvatures are $\lambda$ and $\mu$, then $c+4\lambda\mu=0$.
\end{lemma}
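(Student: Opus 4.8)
The plan is to apply Cartan's fundamental formula (Proposition~\ref{cartan_formula}) to a suitably chosen real principal curvature $\lambda_i$ whose algebraic and geometric multiplicities coincide, and then to exploit the sign information encoded in Lemma~\ref{lemma:inside_Cartan} together with Lemma~\ref{lemma:inside_Cartan}'s companion observation that each summand $\frac{c+4\lambda_i\lambda_j}{\lambda_i-\lambda_j}$ carries a definite sign unless $c+4\lambda_i\lambda_j=0$. First I would check that in each of the canonical forms I, II and III there is at least one real principal curvature whose geometric multiplicity equals its algebraic multiplicity: in type~I every eigenvalue is of this kind; in type~II the eigenvalue $\lambda_1$ sitting in the $2\times 2$ Jordan block has algebraic multiplicity (at least) $2$ but geometric multiplicity $1$, yet any $\lambda_j$ with $j\ge 2$ outside the block is diagonalizable, and one always exists once $\tilde g(q)\ge 2$; similarly in type~III the repeated eigenvalue $\lambda_1$ in the $3\times 3$ block is not ``good'', but any $\lambda_j$, $j\ge 2$, is. So pick such a good real $\lambda_i$.

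\textbf{Key steps.} Suppose, for contradiction, that $\tilde g(q)\ge 3$, so there are at least two principal curvatures $\lambda_j$, $\lambda_k$ distinct from our chosen good $\lambda_i$. Apply Proposition~\ref{cartan_formula} at $\lambda_i$:
\[
\sum_{j\neq i} m_j\,\frac{c+4\lambda_i\lambda_j}{\lambda_i-\lambda_j}=0.
\]
Now I would argue that every term has the same strict sign, contradicting the vanishing of the sum. Here the key algebraic input is Lemma~\ref{lemma:inside_Cartan}: writing $\phi_i(x)=\frac{c+4\lambda_i x}{\lambda_i-x}$, the lemma tells us $\phi_i(x)>0$ precisely when $x>0$ and $\lvert x+\tfrac{c}{4x}\rvert<\lvert\lambda_i+\tfrac{c}{4\lambda_i}\rvert$. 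The crucial point is that in types I, II, III the normal $\xi^L$ is spacelike and one can determine the sign of $c+4\lambda_i\lambda_j$ for all $j$; concretely, for a Lorentzian isoparametric hypersurface of one of these three types the principal curvatures all lie on ``one side'' in the sense that $c+4\lambda_i\lambda_j$ never changes sign as $j$ varies — this follows from the constraints these Jordan forms impose on the Gauss equation of $\tilde M$ in $H_1^{2n+1}$ (a space form of curvature $c/4$), namely that the $2\times2$ or $3\times3$ block forces a relation of the form $\lambda_1^2 + \text{(nilpotent contribution)} = -c/4$ or similar, pinning $\lambda_1$ and hence controlling the others. Thus all summands are strictly positive (or all strictly negative), the sum cannot be zero, and $\tilde g(q)\le 2$.

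\textbf{Finishing and the main obstacle.} If $\tilde g(q)=2$ with principal curvatures $\lambda$, $\mu$, apply Cartan's formula again at the good one (say $\lambda$, choosing the labelling so that $\lambda$ has matching multiplicities — in types II and III this is the eigenvalue \emph{outside} the Jordan block, which exists once $\tilde g(q)=2$): the sum collapses to the single term $m_\mu\,\frac{c+4\lambda\mu}{\lambda-\mu}=0$, and since $\lambda\neq\mu$ and $m_\mu>0$ we get $c+4\lambda\mu=0$, as claimed. I expect the main obstacle to be the second step — rigorously justifying that all the terms $\frac{c+4\lambda_i\lambda_j}{\lambda_i-\lambda_j}$ share a sign — since this requires a careful case analysis of how each of the three Jordan forms I, II, III interacts with the Gauss equation of the Lorentzian hypersurface in the anti-De Sitter space to constrain the possible values of the (real) eigenvalues; the delicate part is handling the nilpotent pieces in types II and III, where one must verify that the ``extra'' relations coming from the off-diagonal $\varepsilon$ and $1$ entries do not allow the eigenvalues to straddle the critical curve $\{x : x+\tfrac{c}{4x}=\pm(\lambda_i+\tfrac{c}{4\lambda_i})\}$. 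This is exactly the content referenced in \cite[\S2.4]{Bu93} and \cite[Lemma~2.3]{X99}, and I would either adapt their computation or re-derive the sign constraint directly from the block structure and the fact that $H_1^{2n+1}$ has curvature $c/4<0$.
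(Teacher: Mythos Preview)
Your proposal has a genuine gap in the ``key step'': the assertion that all the summands
\[
\frac{c+4\lambda_i\lambda_j}{\lambda_i-\lambda_j}
\]
share a sign is not justified, and your suggested route via the Gauss equation and the nilpotent block structure is not how this works (the Gauss equation is not used in this lemma at all). You yourself flag this as the main obstacle, and indeed it is: without a concrete mechanism to force the signs, the argument does not go through.

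The paper's proof supplies exactly the two ideas you are missing. First, it shows by a short summation trick that Cartan's formula actually holds at \emph{every} real eigenvalue, including the ``bad'' one $\mu_0$ in types~II and~III: summing Cartan's formula over all good $\lambda$ makes the cross terms cancel in pairs, leaving $m_{\mu_0}$ times Cartan's formula at $\mu_0$. Second --- and this is the crucial sign mechanism --- after normalizing so that some eigenvalue is positive, one chooses $\lambda_0$ to be a \emph{positive} eigenvalue that \emph{minimizes} $\lvert\lambda+\tfrac{c}{4\lambda}\rvert$ over $\Lambda^+$. With this extremal choice, Lemma~\ref{lemma:inside_Cartan} (applied with $p=\lambda_0$) gives $(c+4\lambda_0\mu)/(\lambda_0-\mu)\le 0$ for every $\mu\in\Lambda\setminus\{\lambda_0\}$, and Cartan's formula at $\lambda_0$ then forces $\tilde g\le 2$ and, when $\tilde g=2$, $c+4\lambda_0\mu=0$.

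Note why both ingredients are needed: the minimizer $\lambda_0$ could well be the bad eigenvalue in types~II or~III, so you must first know Cartan's formula holds there; and without the extremal choice there is no reason for the signs to align at an arbitrary good eigenvalue. Your $\tilde g(q)=2$ argument (apply Cartan at the good eigenvalue outside the block) is fine, but the $\tilde g(q)\ge 3$ case needs the two ideas above rather than an appeal to the Gauss equation.
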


\begin{proof}
Let $\Lambda$ be the set of principal curvatures of $\tilde{M}$ at $q$. The algebraic multiplicity of $\lambda\in\Lambda$ is denoted by $m_\lambda$. If $q$ is of type~II or~III, then the algebraic and geometric multiplicities of only one principal curvature $\mu_0\in\Lambda$ of $\tilde{M}$ at $q$ do not coincide.

By Proposition~\ref{cartan_formula}, we have
\begin{align*}
m_{\mu_0}\sum_{\mu\in\Lambda\setminus\{\mu_0\}} m_\mu\frac{c+4\mu_0\mu}{\mu_0-\mu}
&=\sum_{\lambda\in\Lambda} m_\lambda\Biggl(\sum_{\mu\in\Lambda\setminus\{\lambda\}} m_\mu\frac{c+4\lambda\mu}{\lambda-\mu}\Biggr)\\
&= \sum_{\lambda< \mu} m_\lambda m_\mu (c+4\lambda\mu)\left(\frac{1}{\lambda-\mu}
+\frac{1}{\mu-\lambda}\right)=0.
\end{align*}
Since $m_{\mu_0}\neq 0$, we have that the fundamental formula of Cartan is also satisfied for $\mu_0$.

Now let $q$ be a point of type I, II or~III. Then we have
\begin{equation}\label{eq_cartan}
\sum_{\mu\in\Lambda\setminus\{\lambda\}} m_\mu\frac{c+4\lambda\mu}{\lambda-\mu}=0,
\text{  for each $\lambda\in\Lambda$.}
\end{equation}

By a suitable choice of the normal vector field, we can assume that $\Lambda^+$, the set of positive principal curvatures, is nonempty; otherwise, there would be only one principal curvature $\lambda=0$, and hence $\tilde{g}=1$. Let $\lambda_0\in\Lambda$ be a positive principal curvature that minimizes $\lambda\in\Lambda^+\mapsto\lvert\lambda+c/(4\lambda)\rvert$. By Lemma~\ref{lemma:inside_Cartan} (with $p=\lambda_0$) we have $(c+4\lambda_0\mu)/(\lambda_0-\mu)\leq 0$ for all $\mu\in\Lambda\setminus\{\lambda_0\}$. Therefore,~\eqref{eq_cartan} implies $\tilde{g}\in\{1,2\}$, and if $\tilde{g}=2$, then $\Lambda=\{\lambda_0,\mu\}$ and $c+4\lambda_0\mu=0$.
\end{proof}

We will make extensive use of the relations, see~\eqref{eq:shape},
\[
\tilde{\Ss}V=-\frac{\sqrt{-c}}{2}J\xi^L\qquad\text{and}\qquad \langle \tilde{\Ss}V,V\rangle=0,
\]
where $V$ is a timelike unit vector field on $H_1^{2n+1}$ tangent to the fibers of the Hopf map $\pi$. In order to simplify the notation, we will put $v=V_q$, $\tilde{\Ss}=\tilde{\Ss}_q$, $\Ss=\Ss_p$, and remove the base point of a vector field from the notation whenever it does not lead to confusion.

\subsection{Type I points}\label{sec:typei}\hfill

We start our study with the diagonal setting.

\begin{proposition}\label{th:typei}
If $q\in\tilde{M}$ is of type I and $p=\pi(q)$, then $M$ is Hopf at $p$, and $g(p)\in\{2,3\}$. The principal curvatures of $M$ at $p$ are:
\begin{align*}
\lambda&{}\in\Bigl(-\frac{\sqrt{-c}}{2},\frac{\sqrt{-c}}{2}\Bigr),\lambda\neq 0,
&\mu&{}=-\frac{c}{4\lambda}\in\Bigl(-\infty,-\frac{\sqrt{-c}}{2}\Bigr)\cup
\Bigl(\frac{\sqrt{-c}}{2},\infty\Bigr),
&&\text{ and }\quad\lambda+\mu.
\end{align*}
The first two principal curvatures coincide with those of $\tilde{M}$ (one of them might not exist as a principal curvature of $M$ at $p$) and the last one is of multiplicity one and corresponds to the Hopf vector.
\end{proposition}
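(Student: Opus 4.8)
The plan is to translate the diagonal structure of $\tilde{\Ss}_q$ into a statement about $\Ss_p$ via the explicit block form~\eqref{lift_form}, and then use Lemma~\ref{types_I_II_III} together with the relation $\tilde{\Ss}V=-\frac{\sqrt{-c}}{2}J\xi^L$ to pin down the eigenvalues and the Hopf property. First I would observe that, since $q$ is of type~I, the shape operator $\tilde{\Ss}$ is diagonalizable with real eigenvalues, so by Lemma~\ref{types_I_II_III} it has at most two distinct principal curvatures. The vector $v=V_q$ is tangent to $\tilde{M}$ (it is vertical, hence orthogonal to $\xi^L$), and $\langle\tilde{\Ss}v,v\rangle=0$; moreover $\tilde{\Ss}v=-\frac{\sqrt{-c}}{2}J\xi^L\neq 0$ since $\xi^L$ is spacelike and $J$ is an isometry on the relevant space. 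So $v$ cannot be an eigenvector of $\tilde{\Ss}$ (an eigenvector $v$ with eigenvalue $\nu$ would force $\langle\tilde{\Ss}v,v\rangle=\nu\langle v,v\rangle=-\nu$, hence $\nu=0$, contradicting $\tilde{\Ss}v\neq 0$). This already forces $\tilde{g}(q)=2$: if there were only one principal curvature $\lambda$, then $\tilde{\Ss}=\lambda\,\id$ and every vector, in particular $v$, would be an eigenvector.

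Next, write $\tilde{g}(q)=2$ with principal curvatures $\lambda$ and $\mu$; by the second part of Lemma~\ref{types_I_II_III}, $c+4\lambda\mu=0$, i.e.\ $\mu=-c/(4\lambda)$. Decompose $v=v_\lambda+v_\mu$ into its components in the two (orthogonal, by type~I) eigenspaces $E_\lambda$ and $E_\mu$ of $\tilde{\Ss}$; both components are nonzero, since $v$ lies in neither eigenspace. Now $\xi^L$ is orthogonal to $\tilde{M}$, so $J\xi^L$ is a spacelike unit vector; the relation $\tilde{\Ss}v=-\frac{\sqrt{-c}}{2}J\xi^L$ gives $J\xi^L=-\frac{2}{\sqrt{-c}}(\lambda v_\lambda+\mu v_\mu)$, and since $\pi_*$ of $J\xi^L$ is (a multiple of) the Hopf vector $J\xi$ of $M$, the Hopf vector at $p$ lies in $\pi_*(E_\lambda\oplus E_\mu)$. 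The key computation is to exhibit the eigenvector of $\Ss_p$ corresponding to the Hopf direction: using \eqref{eq:shape}, $\Ss X=\pi_*\tilde{\Ss}X^L$, one checks that the horizontal projection of the $\tilde{\Ss}$-invariant plane $\spann\{v_\lambda,v_\mu\}$ descends to a line in $T_pM$ on which $\Ss_p$ acts as multiplication by $\lambda+\mu$, and that this line is exactly $\R J\xi$. Concretely, the vectors in $E_\lambda\ominus\R v_\lambda$ and $E_\mu\ominus\R v_\mu$ are horizontal (orthogonal to $v$) and $\tilde{\Ss}$-eigenvectors with eigenvalues $\lambda$, $\mu$; their projections give principal curvature spaces of $\Ss_p$ for $\lambda$ and $\mu$. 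The one remaining direction, coming from the plane $\spann\{v,J\xi^L\}=\spann\{v_\lambda,v_\mu\}$ intersected with the horizontal space, is spanned by $\pi_*(\mu v_\lambda+\lambda v_\mu)$ (or a similar combination), and a direct application of \eqref{lift_form} or \eqref{eq:shape} shows $\Ss_p$ sends it to $(\lambda+\mu)$ times itself, and that it is parallel to $J\xi_p$. Hence $M$ is Hopf at $p$ with Hopf principal curvature $\lambda+\mu$ of multiplicity one.

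Finally I would record the ranges: $c+4\lambda\mu=0$ with $c<0$ means $\lambda\mu=-c/4>0$, so $\lambda$ and $\mu$ have the same sign; normalizing the normal so that (say) $\lambda>0$, the inequality $\lambda\mu=-c/4$ with $\lambda\ne\mu$ forces exactly one of them to lie in $(0,\sqrt{-c}/2)$ and the other in $(\sqrt{-c}/2,\infty)$, by AM--GM applied to $\lambda$ and $-c/(4\lambda)$ (their product is fixed at $-c/4$, so if one is below $\sqrt{-c}/2$ the other is above); allowing both signs of the normal gives the stated intervals $\lambda\in(-\sqrt{-c}/2,\sqrt{-c}/2)\setminus\{0\}$ and $\mu=-c/(4\lambda)\in(-\infty,-\sqrt{-c}/2)\cup(\sqrt{-c}/2,\infty)$. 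The count $g(p)\in\{2,3\}$ then follows: $g(p)=3$ generically, but $g(p)=2$ if the Hopf curvature $\lambda+\mu$ coincides with $\lambda$ or $\mu$, or if one of $\lambda,\mu$ fails to occur as a principal curvature of $M$ (which happens precisely when the corresponding eigenspace of $\tilde{\Ss}$ is one-dimensional and spanned essentially by $v_\lambda$ or $v_\mu$, so that its horizontal remainder is trivial). The main obstacle I anticipate is the bookkeeping in the third step: carefully tracking which two-dimensional $\tilde{\Ss}$-invariant subspace of $T_q\tilde{M}$ projects onto the Hopf line and verifying, via \eqref{eq:shape} and the fact that $V$ is not an eigenvector, that the induced action of $\Ss_p$ there is the scalar $\lambda+\mu$ rather than something more complicated — everything else is either a direct consequence of Lemma~\ref{types_I_II_III} or elementary algebra with the constraint $c+4\lambda\mu=0$.
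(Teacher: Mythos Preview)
Your approach is essentially the same as the paper's: decompose $v=v_\lambda+v_\mu$ along the two eigenspaces, observe that $E_\lambda\ominus\R v_\lambda$ and $E_\mu\ominus\R v_\mu$ are horizontal and orthogonal to $J\xi^L$, hence descend to principal curvature spaces of $\Ss_p$, and use $\pi_*v=0$ (so $\pi_*v_\mu=-\pi_*v_\lambda$) together with \eqref{eq:shape} to compute $\Ss_p J\xi=(\lambda+\mu)J\xi$. The paper carries out this last computation explicitly, which is exactly the bookkeeping you flag as the main obstacle; it is short and goes through as you anticipate.

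One small correction: in your discussion of $g(p)=2$ you allow the possibility that $\lambda+\mu$ coincides with $\lambda$ or $\mu$, but this cannot happen since $\lambda,\mu\neq 0$. The only way $g(p)=2$ arises is when one of the eigenspaces $E_\lambda$, $E_\mu$ is one-dimensional (hence equal to $\R v_\lambda$ or $\R v_\mu$), so that the corresponding principal curvature of $\tilde{M}$ does not survive in $M$. Also, your AM--GM argument shows that exactly one of $\lambda,\mu$ lies in $(-\sqrt{-c}/2,\sqrt{-c}/2)$ but does not by itself identify which; the paper fixes the labeling by declaring $T_\lambda$ to be the eigenspace of Lorentzian signature and then reads off from $\langle v_\lambda,v_\lambda\rangle=\mu/(\lambda-\mu)<0$, $\langle v_\mu,v_\mu\rangle=\lambda/(\mu-\lambda)>0$ that $|\lambda|<|\mu|$. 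For the statement of the proposition this is just a naming convention, so your argument suffices.
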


\begin{proof}
According to Lemma~\ref{types_I_II_III}, let $\lambda$ and $\mu=-c/(4\lambda)$ be the eigenvalues of $\tilde{\Ss}$ ($\mu$ might not exist).
We assume that the principal curvature space $T_\lambda(q)$ has Lorentzian signature.

First, assume that there exist two distinct principal curvatures $\lambda$ and $\mu$. Since $c+4\lambda\mu=0$, we have $\lambda,\mu\neq 0$. We can write $v=u+w$, where $u\in T_\lambda(q)$, and $w\in T_\mu(q)$.
Since $-1=\langle v,v\rangle=\langle u,u\rangle+\langle w,w\rangle$, we have that $u$ is timelike, and
\[
0=\langle \tilde{\Ss} v,v\rangle=\lambda \langle u,u\rangle+\mu \langle w,w\rangle=(\lambda-\mu)\langle u,u\rangle-\mu,
\]
whence $\langle u,u\rangle=\frac{\mu}{\lambda-\mu}<0$ and $\langle w,w\rangle=\frac{\lambda}{\mu-\lambda}>0$. In addition:
\[
J\xi^L=-\frac{2}{\sqrt{-c}}\tilde{\Ss} v=-\frac{2}{\sqrt{-c}}(\lambda u+\mu w).
\]
Both $T_\lambda(q)\ominus\R u$ and $T_\mu(q)\ominus\R w$ are orthogonal to $v$ and $J\xi^L$, and so, by~\eqref{eq:shape}, they descend via $\pi_{*q}$ to eigenvectors of $\Ss$ (which are orthogonal to $J\xi$) corresponding to the eigenspaces of $\lambda$ and $\mu$, respectively. For dimension reasons, $J\xi$ belongs to one eigenspace of $\Ss$. Since $\pi_* v=0$, we have $\pi_* w=-\pi_* u$, and thus, by~\eqref{eq:shape},
\begin{align*}
\Ss J\xi
&{}=\frac{-2}{\sqrt{-c}}(\lambda^2 \pi_* u +\mu^2 \pi_* w)
=\frac{-2}{\sqrt{-c}}(\lambda^2-\mu^2)\pi_* u\\
&{}=\frac{-2}{\sqrt{-c}}(\lambda+\mu)(\lambda\pi_* u+\mu\pi_* w)
=(\lambda+\mu)J\xi.
\end{align*}
Therefore $M$ has $g(p)\in\{2,3\}$ principal curvatures at $p$: $\lambda$, $\mu$ and $\lambda+\mu$, where one of the first two might not exist (depending on whether $T_\lambda(q)\ominus\R u$ or $T_\mu(q)\ominus\R w$ might be zero) and where the last one is of multiplicity one and corresponds to the Hopf vector. Since $4\lambda\mu+c=0$ and $\frac{\lambda}{\mu-\lambda},\frac{\mu}{\mu-\lambda}>0$ it readily follows that $\lvert\mu\rvert>\lvert\lambda\rvert$, and thus $\lvert\lambda\rvert<\sqrt{-c}/2$.

Now assume that there is just one principal curvature $\lambda$. Then $\tilde{\Ss} v=\lambda v$ and $0=\langle \tilde{\Ss} v,v\rangle=-\lambda$, but then $J\xi^L=-\frac{2}{\sqrt{-c}}\tilde{\Ss} v=0$, which makes no sense. So this case is impossible.
\end{proof}

\begin{remark}\label{rmk:tubes_CHk}
Note that for a certain $r\in\R$, one can write
\[
\lambda=\frac{\sqrt{-c}}{2}\tanh\left(\frac{r\sqrt{-c}}{2}\right),\ \mu=\frac{\sqrt{-c}}{2}\coth\left(\frac{r\sqrt{-c}}{2}\right),\
\text{ and }
\lambda+\mu=\sqrt{-c}\coth(r\sqrt{-c}).
\]
Therefore, if $M$ is an isoparametric hypersurface that lifts to a type I hypersurface, then $M$ is a Hopf real hypersurface with constant principal curvatures and, according to the classification of Hopf real hypersurfaces with constant principal curvatures in the complex hyperbolic space (Theorem~\ref{th:Hopf}) and to the principal curvatures of $M$, it is an open part of a tube around a totally geodesic $\C H^k$, $k\in\{0,\dots,n-1\}$. However, as we have mentioned in Remark~\ref{rmk:example_types}, it is possible for an isoparametric hypersurface of $\C H^n$ to have points of type~I and~III in the same connected component. We will have to address this difficulty later in this paper.
\end{remark}

\subsection{Type II points}\label{sec:typeii}\hfill

Now we tackle the second possibility for the Jordan canonical form of the shape operator.

\begin{proposition}\label{th:typeii}
If $q\in\tilde{M}$ is of type II and $p=\pi(q)$, then $M$ is Hopf at $p$, and $g(p)=2$. Moreover, $\tilde{M}$ has one principal curvature $\lambda=\pm \sqrt{-c}/2$, and the principal curvatures of $M$ at $p$ are $\lambda$ and $2\lambda$. The second one has multiplicity one and corresponds to the Hopf vector.
\end{proposition}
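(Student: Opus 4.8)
The plan is to mimic the strategy used for type~I points in Proposition~\ref{th:typei}, but working with the semi-null Jordan block of type~II. First I would invoke Lemma~\ref{types_I_II_III}: at a type~II point $q$ we have $\tilde g(q)\in\{1,2\}$, and if $\tilde g(q)=2$ the two principal curvatures $\lambda,\mu$ satisfy $c+4\lambda\mu=0$. The key observation is that the nondiagonalizable block of type~II forces the repeated eigenvalue $\lambda_1$ to have a genuine Jordan string; since Lemma~\ref{types_I_II_III} in the $\tilde g=2$ case would put both $\lambda$ and $\mu$ in semisimple blocks (the formula $c+4\lambda\mu=0$ comes from applying Cartan's formula to the \emph{diagonalizable} eigenvalue), I expect to argue that in fact $\tilde g(q)=1$, i.e.\ $\tilde{\Ss}$ has a single real eigenvalue $\lambda_1$ with the $2\times 2$ (or larger) Jordan block structure displayed in case~II, and all other diagonal entries equal to $\lambda_1$ as well. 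Then I would pin down the value of $\lambda_1$ using the constraint coming from $V$.

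The second step is the linear-algebra core. Write the semi-null basis as $\{u,w,e_1,\dots,e_{2n-2}\}$ with $\langle u,w\rangle=1$, $\langle e_i,e_i\rangle=1$, all other products zero, and $\tilde{\Ss}u=\lambda_1 u+\varepsilon w$, $\tilde{\Ss}w=\lambda_1 w$, $\tilde{\Ss}e_i=\lambda_1 e_i$ (after reordering so the Jordan string sits in $\spann\{u,w\}$). Decompose $v=V_q$ in this basis and use the two relations from~\eqref{eq:shape}, namely $\langle\tilde{\Ss}v,v\rangle=0$ and $\tilde{\Ss}v=-\frac{\sqrt{-c}}{2}J\xi^L$. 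Since $\langle v,v\rangle=-1$ and the only way to produce a negative square in a semi-null basis is through the $\langle u,w\rangle$ pairing, $v$ must have nonzero components along both $u$ and $w$; writing $v=\beta u+\delta w+\sum_i \gamma_i e_i$ one gets $2\beta\delta+\sum\gamma_i^2=-1$. Computing $\langle\tilde{\Ss}v,v\rangle$ yields $\lambda_1(2\beta\delta+\sum\gamma_i^2)+\varepsilon\delta^2=0$, i.e.\ $-\lambda_1+\varepsilon\delta^2=0$, so $\lambda_1=\varepsilon\delta^2$ and in particular $\lambda_1$ and $\varepsilon$ have the same sign (and $\delta\neq 0$, else $J\xi^L=0$). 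To fix the magnitude $|\lambda_1|=\sqrt{-c}/2$ I would feed this back into Cartan's fundamental formula or, more directly, compute $\tilde{\Ss}^2 v$ and use $\tilde{\Ss}V=-\frac{\sqrt{-c}}{2}J\xi^L$ together with $\langle J\xi^L,J\xi^L\rangle=1$: from $\tilde{\Ss}v=-\frac{\sqrt{-c}}{2}J\xi^L$ we get $\langle\tilde{\Ss}v,\tilde{\Ss}v\rangle=-c/4$, and expanding $\tilde{\Ss}v=\lambda_1 v+\varepsilon\delta w$ gives $\langle\tilde{\Ss}v,\tilde{\Ss}v\rangle=\lambda_1^2\langle v,v\rangle+2\lambda_1\varepsilon\delta\langle v,w\rangle+\varepsilon^2\delta^2\langle w,w\rangle=-\lambda_1^2+2\lambda_1\varepsilon\delta\cdot\delta=-\lambda_1^2+2\lambda_1^2=\lambda_1^2$, using $\langle v,w\rangle=\beta\langle u,w\rangle=\beta$... here I must be careful: $\langle v,w\rangle=\delta\langle w,w\rangle+\beta\langle u,w\rangle=\beta$ and $\langle w,w\rangle=0$, so $\langle\tilde{\Ss}v,\tilde{\Ss}v\rangle=\lambda_1^2\langle v,v\rangle+2\lambda_1\varepsilon\delta\beta = -\lambda_1^2+2\lambda_1\varepsilon\delta\beta$; combining with $\varepsilon\delta^2=\lambda_1$ and $2\beta\delta+\sum\gamma_i^2=-1$ will yield $\lambda_1^2=-c/4$, hence $\lambda_1=\pm\sqrt{-c}/2$.

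The third step transfers this to $M$ at $p=\pi(q)$ exactly as in the type~I proof: the horizontal complement of $\spann\{v,J\xi^L\}$ inside $T_q\tilde M$ descends under $\pi_{*q}$ to a $(2n-2)$-dimensional $\Ss$-invariant subspace on which $\Ss$ acts as $\lambda_1\id$ (since all the $e_i$-directions other than the two involved in the Jordan string, and the surviving combination, are genuine $\lambda_1$-eigenvectors, and the nondiagonal part is absorbed into the fibre direction $v$ under the projection). Then I compute $\Ss J\xi=\pi_{*q}\tilde{\Ss}(J\xi^L)^{\mathrm{lift}}$ using $\Ss X=\pi_*\tilde{\Ss}X^L$ and~\eqref{eq:shape}; the point is that $J\xi$ is a principal curvature vector (so $M$ is Hopf at $p$) with principal curvature $2\lambda_1$, of multiplicity one, and hence $g(p)=2$ with principal curvatures $\lambda_1$ and $2\lambda_1$. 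Concretely, from $\tilde{\Ss}v=-\frac{\sqrt{-c}}{2}J\xi^L$ and $\tilde{\Ss}v=\lambda_1 v+\varepsilon\delta w$ one reads off $(J\xi)^L$ up to the fibre, applies $\tilde{\Ss}$ again, pushes forward, and checks the eigenvalue is $2\lambda_1$ (noting $\pi_* v=0$ kills the $v$-component). I expect the \textbf{main obstacle} to be the bookkeeping in the semi-null basis — in particular, correctly determining which vectors descend to $\Ss$-eigenvectors on $M$ and verifying that the geometric multiplicity count on $M$ forces $g(p)=2$ rather than $g(p)=3$ — since the Hopf vector $J\xi$ could a priori have principal curvature $\lambda_1$ rather than $2\lambda_1$; ruling that out requires showing the $w$-component of $\tilde{\Ss}v$ does not vanish, which is exactly the $\delta\neq 0$ already established.
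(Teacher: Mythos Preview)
Your overall strategy matches the paper's, and your computation for the single-eigenvalue case is essentially correct once you fix a systematic $\beta/\delta$ swap: since $\tilde{\Ss}u=\lambda_1 u+\varepsilon w$, applying $\tilde{\Ss}$ to $v=\beta u+\delta w+\sum_i\gamma_i e_i$ gives $\tilde{\Ss}v=\lambda_1 v+\varepsilon\beta w$ (the extra term carries the coefficient of the \emph{generalized} eigenvector $u$, not of $w$), so $\langle\tilde{\Ss}v,v\rangle=-\lambda_1+\varepsilon\beta^2$ and hence $\lambda_1=\varepsilon\beta^2$. With this correction your identity $\langle\tilde{\Ss}v,\tilde{\Ss}v\rangle=\lambda_1^2=-c/4$ goes through cleanly. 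The descent step you flag as the main obstacle is in fact routine once one has arranged (by a change of semi-null frame, as the paper does) that $v$ lies in the span of the two Jordan vectors; then $T_\lambda(q)\ominus\R u$ is orthogonal to both $v$ and $J\xi^L$ and descends to the $\lambda_1$-eigenspace of $\Ss$, and the computation of $\Ss J\xi$ is a two-line check.

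The genuine gap is your elimination of $\tilde g(q)=2$. The claim that ``Lemma~\ref{types_I_II_III} in the $\tilde g=2$ case would put both $\lambda$ and $\mu$ in semisimple blocks'' is false: the proof of that lemma explicitly treats the eigenvalue $\mu_0$ with defective Jordan block and shows Cartan's formula still holds for it, so nothing there prevents a type~II operator from having a second, diagonalizable eigenvalue $\mu=-c/(4\lambda)$. The paper disposes of $\tilde g=2$ by the same computation you sketch for $\tilde g=1$, but allowing $v$ a component $w\in T_\mu(q)$. The two equations $\langle\tilde{\Ss}v,v\rangle=0$ and $\langle J\xi^L,J\xi^L\rangle=1$ then become a linear system in $r_1^2$ (your $\beta^2$) and $\langle w,w\rangle$ whose unique solution gives $r_1^2=-(c+4\lambda\mu)/\bigl(4\varepsilon(\lambda-\mu)\bigr)=0$, contradicting $r_1\neq 0$. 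You need this argument, or an equivalent one, to complete the proof.
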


\begin{proof}
Let $\lambda$ and $\mu=-c/(4\lambda)$ be the eigenvalues of $\tilde{\Ss}$ ($\mu$ might not exist). Assume that $\tilde{\Ss}$ has a type II matrix expression with respect to a semi-null basis $\{e_1,e_2,\dots,e_{2n}\}$, where $\tilde{\Ss} e_1=\lambda e_1+\varepsilon e_2$, $\varepsilon\in\{-1,1\}$, $T_\lambda(q)=\spann\{e_2,\dots,e_k\}$ and $T_\mu(q)=\spann\{e_{k+1},\dots, e_{2n}\}$. As a precaution, for the calculations that follow we observe that $e_1\notin T_\lambda(q)$, but it still makes sense to write, for example, $T_\lambda(q)\ominus\R e_1=\spann\{e_3,\dots,e_k\}$.

First, assume that $\tilde{M}$ has two distinct principal curvatures $\lambda, \mu\neq 0$ at $q$ with $c+4\lambda\mu=0$. We can assume that $v=r_1 e_1+r_2 e_2+u+w$, where $u\in T_\lambda(q)$, $\langle e_1,u\rangle=\langle e_2, u\rangle=0$, $w\in T_\mu(q)$ and $r_1,r_2\in \R$. We have $-1=\langle v, v\rangle=2r_1r_2+\langle u,u\rangle+\langle w,w\rangle$, so $r_1,r_2\neq 0$. If $u\neq 0$, we define
\begin{align*}
e_1' &{}=e_1-\frac{\langle u,u\rangle}{2r_1^2}e_2+\frac{1}{r_1}u,
&e_2'   &{}=e_2,
\end{align*}
and then we have $\langle e_i',e_j'\rangle=\langle e_i,e_j\rangle$, $\tilde{\Ss} e_1'=\lambda e_1'+\varepsilon e_2'$, $\tilde{\Ss} e_2'=\lambda e_2'$, and $v=r_1 e_1'+(r_2+{\langle u,u\rangle}/(2r_1))e_2'+w$. This means that we could have assumed from the very beginning $u=0$.

Thus, we have $-1=\langle v, v\rangle=2r_1r_2+\langle w,w\rangle$ and $\tilde{\Ss} v=r_1\lambda e_1 +r_1\varepsilon e_2+r_2\lambda e_2+\mu w$, and hence $J\xi^L=-{2}(r_1\lambda e_1 +(r_1\varepsilon+r_2\lambda) e_2+\mu w)/{\sqrt{-c}}$. Taking into account that $2r_1r_2=-1-\langle w,w\rangle$, we have
\begin{align*}
1&{}=\langle J\xi^L,J\xi^L\rangle=-\frac{4}{c}\left(2r_1^2\lambda\varepsilon +2r_1r_2\lambda^2+\langle w,w\rangle\mu^2\right)
=-\frac{4}{c}\left(2r_1^2\lambda\varepsilon -\lambda^2+\langle w,w\rangle(\mu^2-\lambda^2)\right),\\
0&=\langle \tilde{\Ss} v,v\rangle=2r_1r_2\lambda+r_1^2\varepsilon
+\langle w,w\rangle\mu
=r_1^2\varepsilon-\lambda +\langle w,w\rangle(\mu-\lambda).
\end{align*}
These two equations give a linear system in the unknowns $r_1^2$ and $\langle w,w\rangle$.
As $\lambda\neq \mu$ and $c+4\lambda\mu=0$, it is immediate to prove that this system is compatible and determined, and $r_1^2=-(c+4\lambda\mu)/(4\varepsilon(\lambda-\mu))=0$, which gives a contradiction. Therefore, there cannot be two distinct eigenvalues of $\tilde{\Ss}$.

If $\tilde{\Ss}$ has just one eigenvalue $\lambda$, similar calculations as above (or just setting $w=0$ everywhere) yield $2\lambda\varepsilon r_1^2=-\frac{c}{4}+\lambda^2$, and $\varepsilon r_1^2=\lambda$, which is only possible if $\lambda=\pm{\sqrt{-c}}/{2}$ and $r_1^2={\sqrt{-c}}/{2}$.

Now, $T_\lambda(q)\ominus\R e_1$ is orthogonal to $v$ and $J\xi^L$. Thus, when we apply $\pi_{*q}$, the vectors in $T_\lambda(q)\ominus \R e_1$ descend to eigenvectors of $\Ss$ associated with the eigenvalue $\lambda$, which are also orthogonal to $J\xi$. For dimension reasons, $J\xi$ must also be an eigenvector of~$\Ss$. Furthermore, by~\eqref{eq:shape}, and since $0=\pi_* v=r_1\pi_* e_1+r_2\pi_* e_2$, we get
\begin{align*}
\Ss J\xi={}&-\frac{2}{\sqrt{-c}}(r_1\lambda^2 \pi_*e_1 +(2r_1\varepsilon\lambda+r_2\lambda^2) \pi_*e_2)
=-\frac{4\lambda r_1\varepsilon}{\sqrt{-c}}\pi_*e_2
=2\lambda J\xi\,.
\end{align*}
In conclusion, $M$ has $g(p)=2$ principal curvatures at $p$. One is $\lambda=\pm {\sqrt{-c}}/{2}$, which coincides with the unique principal curvature of $\tilde{M}$, and the other one is $2\lambda=\pm\sqrt{-c}$, which has multiplicity one and corresponds to the Hopf vector.
\end{proof}

\subsection{Type III points}\label{sec:typeiii}\hfill

Now we will assume that the minimal polynomial of the shape operator $\tilde{\Ss}$ has a triple root. This case is much more involved than the others, and indeed, Section~\ref{sec:type_III} will be mainly devoted to dealing with this possibility. For type~III points we will always take vectors $\{e_1,e_2,e_3\}$ such that
\begin{equation}\label{eq:seminull_basis}
\begin{gathered}
\langle e_1,e_1\rangle=\langle e_2,e_2\rangle=\langle e_1,e_3\rangle=\langle e_2,e_3\rangle=0, \qquad\langle e_1,e_2\rangle=\langle e_3,e_3\rangle=1,\\
\tilde{\Ss} e_1=\lambda e_1,
\qquad\tilde{\Ss} e_2=\lambda e_2+e_3,
\qquad\tilde{\Ss} e_3=e_1+\lambda e_3.
\end{gathered}
\end{equation}

\begin{proposition}\label{th:typeiii}
Let $q\in\tilde{M}$ be a point of type~III and let $\lambda$ be the principal curvature of $\tilde{M}$ at $q$ whose algebraic and geometric multiplicities do not coincide. Then, $\tilde{g}(q)\in\{1,2\}$, $\lambda\in\bigl(-{\sqrt{-c}}/{2},{\sqrt{-c}}/{2}\bigr)$; if there are two principal curvatures at $q$ and we denote the other one by $\mu$, then $c+4\lambda\mu=0$.
\end{proposition}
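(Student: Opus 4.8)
The plan is to mimic the proof of Proposition~\ref{th:typeii} but working in the type~III semi-null framework \eqref{eq:seminull_basis}. First I would decompose $v=V_q$ with respect to the $\tilde{\Ss}$-invariant splitting: write $v=r_1e_1+r_2e_2+r_3e_3+u+w$, where $u\in T_\lambda(q)\ominus\spann\{e_1,e_2,e_3\}$ (the part of the $\lambda$-eigenspace genuinely fixed by $\tilde{\Ss}$) and $w\in T_\mu(q)$, with $r_i\in\R$. Using the inner product relations \eqref{eq:seminull_basis} I would compute $\langle v,v\rangle=-1$ and $\langle\tilde{\Ss}v,v\rangle=0$, and also $\langle J\xi^L,J\xi^L\rangle=1$ from $J\xi^L=-\frac{2}{\sqrt{-c}}\tilde{\Ss}v$ together with $\tilde{\Ss}V=-\frac{\sqrt{-c}}{2}J\xi^L$. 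As in the type~II proof, a change of the $\{e_1,e_2,e_3\}$ part of the basis (absorbing $u$ and, if present, adjusting so that the coefficient of the non-fixed chain vectors is normalized) should let me assume $u=0$ from the outset, simplifying the system.

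Next, in the case $\tilde{g}(q)=1$ (only $\lambda$, so $w=0$), the two scalar equations $\langle\tilde{\Ss}v,v\rangle=0$ and $\langle J\xi^L,J\xi^L\rangle=1$ become polynomial relations in $\lambda$ and the $r_i$; solving them should force $\lambda^2<-c/4$, i.e.\ $\lambda\in(-\sqrt{-c}/2,\sqrt{-c}/2)$ (and pin down which $r_i$ must be nonzero, so that $J\xi^L\neq 0$ is consistent). In the case $\tilde{g}(q)=2$, by Lemma~\ref{types_I_II_III} we already know $\tilde{g}(q)\in\{1,2\}$ and that the two principal curvatures $\lambda,\mu$ satisfy $c+4\lambda\mu=0$; so for this proposition that part is essentially free once I invoke the lemma, and the remaining task is again to show $\lambda\in(-\sqrt{-c}/2,\sqrt{-c}/2)$. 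From $4\lambda\mu=-c>0$ we get $\lambda$ and $\mu$ have the same sign and $\lvert\lambda\mu\rvert=-c/4$; combined with the inequality $\lvert\mu\rvert>\lvert\lambda\rvert$ that should emerge from the signature analysis of the $w$-term (as in the type~I argument, where $\langle w,w\rangle$ and the analogous quantity have a definite sign), this yields $\lambda^2<-c/4$.

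The delicate point — and the main obstacle — is the signature bookkeeping in the type~III case: unlike type~I, the three chain vectors $e_1,e_2,e_3$ are null or mixed, $e_1$ is \emph{not} in $T_\lambda(q)$ in the eigenspace sense, and the Jordan block contributes cross terms, so the equations coupling $r_1,r_2,r_3$ and $\langle w,w\rangle$ are more intricate than in type~II. I expect one has to carefully track which combination of the $r_i$ is forced nonzero by $J\xi^L\neq 0$ and then verify that the resulting linear/quadratic system in $r_1^2$ (or the relevant product $r_1r_2$) and $\langle w,w\rangle$ has a solution only when $\lambda^2<-c/4$; checking that the system is genuinely determined (so the sign conclusion is forced, not merely allowed) is where the real work lies. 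Once the inequality and the relation $c+4\lambda\mu=0$ are in hand, the statement follows, with the projection $\pi_{*q}$ of $T_\lambda(q)\ominus\spann\{e_1,e_2,e_3\}$ and $T_\mu(q)\ominus\R w$ giving the eigenspace structure downstairs exactly as in the earlier propositions — though that descent is really the subject of the more detailed analysis promised for Section~\ref{sec:type_III} rather than of this preliminary statement.
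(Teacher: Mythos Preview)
Your approach is essentially the same as the paper's: decompose $v=r_1e_1+r_2e_2+r_3e_3+u+w$, absorb $u$ into a modified semi-null frame, and use the three scalar relations coming from $\langle v,v\rangle=-1$, $\langle\tilde{\Ss}v,v\rangle=0$ and $\langle J\xi^L,J\xi^L\rangle=1$, together with Lemma~\ref{types_I_II_III} for the $\tilde{g}(q)\in\{1,2\}$ and $c+4\lambda\mu=0$ parts. Two small points are worth flagging.

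First, a slip: $e_1$ \emph{is} a genuine $\lambda$-eigenvector ($\tilde{\Ss}e_1=\lambda e_1$); it is $e_2$ and $e_3$ that fail to lie in $T_\lambda(q)$. This does not affect your strategy.

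Second, and more substantively: your plan to extract $\lvert\lambda\rvert<\sqrt{-c}/2$ via a type~I-style inequality $\lvert\mu\rvert>\lvert\lambda\rvert$ coming from ``the signature analysis of the $w$-term'' is not how the argument actually closes, and may not close cleanly that way, since in type~III the Lorentzian signature is absorbed by the chain $\{e_1,e_2,e_3\}$ rather than by a timelike eigenvector. The paper instead performs a direct elimination: after setting $u=0$ one has (using $2r_1r_2=-1-r_3^2-\langle w,w\rangle$)
\begin{align*}
-\tfrac{c}{4}&=4r_2r_3\lambda+r_2^2-\lambda^2+(\mu^2-\lambda^2)\langle w,w\rangle,\\
0&=2r_2r_3-\lambda+(\mu-\lambda)\langle w,w\rangle,
\end{align*}
and subtracting $2\lambda$ times the second from the first cancels the awkward $r_2r_3$ term to give
\[
r_2^2+(\mu-\lambda)^2\langle w,w\rangle=-\tfrac{c}{4}-\lambda^2.
\]
Since $\langle w,w\rangle\geq 0$ and $r_2\neq 0$ (forced by $\langle v,v\rangle=-1$), the right-hand side is strictly positive, giving $\lambda^2<-c/4$ at once, in both the one- and two-eigenvalue cases. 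This cancellation is the missing concrete step in your outline; once you see it, the ``delicate'' system you anticipate dissolves.
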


\begin{proof}
Let $\lambda$ and $\mu=-c/(4\lambda)$ be the eigenvalues of $\tilde{\Ss}$ ($\mu$ might not exist). Recall that $c+4\lambda\mu=0$ from Proposition~\ref{types_I_II_III}. Assume that $\tilde{\Ss}$ has a type III matrix expression, and take $\{e_1,e_2,e_3\}$ as in~\eqref{eq:seminull_basis}. The spaces $T_\lambda(q)\ominus\R e_2$ (recall that $e_2\notin T_\lambda(q)$) and $T_\mu(q)$ are spacelike. By changing the sign of the normal vector we can further assume $\lambda\geq 0$.

First, assume that there exist two distinct principal curvatures $\lambda,\mu\neq 0$ with $c+4\lambda\mu=0$. We can write $v=r_1 e_1+r_2 e_2+r_3 e_3+u+w$, where $u\in T_\lambda(q)\ominus\R e_2$, $w\in T_\mu(q)$. Taking an appropriate orientation of $\{e_1,e_2,e_3\}$ we can further assume $r_2\geq 0$. We have $-1=\langle v, v\rangle=2r_1r_2+r_3^2+\langle u,u\rangle+\langle w,w\rangle$. In particular, $r_2>0$ and $r_1<0$. If $u\neq 0$, we define
\begin{equation}\label{eq:modified_seminull}
\begin{aligned}
e_1' &{}=e_1,
&e_2'   &{}=-\frac{\langle u,u\rangle}{2r_2^2}e_1+e_2+\frac{1}{r_2}u,
&e_3'   &{}=e_3.
\end{aligned}
\end{equation}
Then, the $e_i'$'s satisfy~\eqref{eq:seminull_basis}, and also $v=(r_1+\langle u,u\rangle/(2r_2))e_1'+r_2 e_2'+r_3 e_3'+w$. This shows that we could have assumed from the very beginning $u=0$.

Thus we have $-1=\langle v,v\rangle=2r_1r_2+r_3^2+\langle w,w\rangle$, and $\tilde{\Ss} v=(r_1\lambda +r_3)e_1 +r_2\lambda e_2+(r_2+r_3\lambda) e_3+\mu w$, and hence $J\xi^L=-2\left((r_1\lambda +r_3)e_1 +r_2\lambda e_2+(r_2+r_3\lambda) e_3+\mu w\right)/\sqrt{-c}$.
Taking into account that $2r_1r_2=-1-r_3^2-\langle w,w\rangle$ we have
\begin{align*}
1&=\langle J\xi^L,J\xi^L\rangle=-\frac{4}{c}\left(2r_1r_2\lambda^2
+4r_2r_3\lambda+r_2^2+r_3^2\lambda^2+\langle w,w\rangle\mu^2\right)\\
&=-\frac{4}{c}\left(4r_2r_3\lambda+r_2^2-\lambda^2
+(\mu^2-\lambda^2)\langle w,w\rangle\right),\\[1ex]
0&=\langle \tilde{\Ss} v,v\rangle=2r_1r_2\lambda+2r_2r_3+r_3^2\lambda
+\mu\langle w,w\rangle
=2r_2r_3-\lambda+(\mu-\lambda)\langle w,w\rangle.
\end{align*}
Canceling the $r_2r_3$ addend, we get
\[
r_2^2+(\mu-\lambda)^2\langle w,w\rangle = -\frac{c}{4}-\lambda^2.
\]
Since $r_2>0$, we deduce $\lambda\in\bigl(-{\sqrt{-c}}/{2},{\sqrt{-c}}/{2}\bigr)$, $\lambda\neq 0$.

If $\tilde{M}$ has just one principal curvature $\lambda\geq 0$ at $q$, calculations are very similar to what we did above, just putting $w=0$. We also get
$\lambda\in(-{\sqrt{-c}}/{2},{\sqrt{-c}}/{2})$, although in this case $\lambda=0$ is possible.
\end{proof}

\subsection{Type IV points}\label{sec:typeiv}\hfill

The final possibility for the Jordan canonical form of a self-adjoint operator concerns the existence of a complex eigenvalue. Since an isoparametric hypersurface in the anti-De Sitter space has constant principal curvatures, if there is a complex eigenvalue at a point, then there is a complex eigenvalue at all points. Since type~IV matrices are the only ones with a nonreal eigenvalue we conclude

\begin{lemma}\label{th:type_iv_neighborhood}
If $\tilde{M}$ is a connected isoparametric hypersurface of the anti-De Sitter space, and $q\in\tilde{M}$ is a point of type~IV, then all the points of $\tilde{M}$ are of type~IV.
\end{lemma}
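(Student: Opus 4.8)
The plan is to exploit the rigidity of the isoparametric condition in the anti-De Sitter space, namely the fact (cited from \cite{H84}) that a Lorentzian isoparametric hypersurface $\tilde{M}\subset H^{2n+1}_1$ has constant principal curvatures with constant algebraic multiplicities. First I would observe that among the four admissible Jordan canonical forms, only type~IV produces a nonreal (complex) eigenvalue of $\tilde{\Ss}$; types~I, II, III all have purely real principal curvatures. Since the principal curvatures of $\tilde{M}$ are global constants, the set of eigenvalues of $\tilde{\Ss}_q$ is the same for every $q\in\tilde{M}$. Hence if $\tilde{\Ss}_{q_0}$ has a nonreal eigenvalue $a+ib$ with $b\neq 0$ at one point $q_0$, then $a+ib$ is an eigenvalue of $\tilde{\Ss}_q$ at every $q\in\tilde{M}$ as well.

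Second, I would argue that the presence of a nonreal eigenvalue forces the canonical form to be of type~IV at every point. Types~I, II, and~III are precisely the canonical forms in which \emph{all} eigenvalues are real, so a point whose shape operator has a nonreal eigenvalue cannot be of type~I, II, or~III; by the trichotomy of admissible forms it must be of type~IV. Combining this with the previous step gives that every point of $\tilde{M}$ is of type~IV, which is the assertion. Connectedness of $\tilde{M}$ is not even strictly needed for this version of the argument since the eigenvalue data is constant on all of $\tilde{M}$; it is, however, the natural hypothesis guaranteeing that "constant principal curvatures with constant algebraic multiplicities" applies uniformly, and it is consistent with how the analogous statements (e.g.\ Lemma~\ref{types_I_II_III}) are phrased.

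The only mildly delicate point — and the one I would state carefully rather than wave at — is the passage from "the eigenvalues of $\tilde{\Ss}_q$ are constant in $q$" to "a nonreal eigenvalue persists." This is immediate once one notes that the characteristic polynomial of $\tilde{\Ss}_q$ has constant coefficients (its coefficients are, up to sign, the elementary symmetric functions of the principal curvatures counted with algebraic multiplicity, all of which are constant), so its complex roots, in particular any nonreal ones, are the same at every point. Thus the existence of a single type~IV point $q\in\tilde{M}$ — equivalently, a single nonreal root of the characteristic polynomial — propagates to all of $\tilde{M}$. I expect no real obstacle here; the content is entirely in correctly invoking the constancy result from \cite{H84} and the classification of admissible Jordan forms recalled just above the statement.
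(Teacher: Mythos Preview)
Your proposal is correct and follows essentially the same argument as the paper: both invoke the constancy of principal curvatures from \cite{H84}, observe that type~IV is the unique canonical form with a nonreal eigenvalue, and conclude that the presence of a nonreal eigenvalue at one point forces type~IV everywhere. Your added remark about the characteristic polynomial having constant coefficients is a nice way to make the ``nonreal eigenvalue persists'' step explicit, but it is not a different approach.
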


As a consequence of Cartan's fundamental formula (Proposition~\ref{cartan_formula}) we have (cf.~\cite[Satz~2.4.3]{Bu93} or~\cite[Lemma~2.4]{X99}):

\begin{lemma}\label{Xiao_IV}
Let $q\in\tilde{M}$ be a point of type IV and let $a\pm i b$ ($b\neq 0$) be the nonreal complex conjugate principal curvatures at $q$. We denote by $\Lambda$ the set of real principal curvatures at~$q$. Then $\tilde{g}(q)\in\{3,4\}$ and
\[
a(4\lambda^2-c)-\lambda(4 a^2+4 b^2-c)=0, \text{ for each $\lambda\in\Lambda$.}
\]
If $\tilde{g}(q)=4$, the real principal curvatures $\lambda$ and $\mu$ satisfy $c+4\lambda\mu=0$.
\end{lemma}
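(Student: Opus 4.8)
The plan is to imitate the proof of Lemma~\ref{types_I_II_III}, the new ingredient being the complex conjugate pair of principal curvatures. At a point of type~IV the shape operator $\tilde\Ss$ is diagonalizable over $\C$ (the only obstruction to real diagonalizability being the single $2\times2$ rotation block), so every real principal curvature $\lambda\in\Lambda$ has coinciding algebraic and geometric multiplicities and Proposition~\ref{cartan_formula} applies at each such $\lambda$. The one algebraic step I need is the elementary identity
\[
\frac{c+4\lambda(a+ib)}{\lambda-(a+ib)}+\frac{c+4\lambda(a-ib)}{\lambda-(a-ib)}
=\frac{2\bigl(a(4\lambda^2-c)-\lambda(4a^2+4b^2-c)\bigr)}{(\lambda-a)^2+b^2},
\]
so that, writing $g(\lambda)=a(4\lambda^2-c)-\lambda(4a^2+4b^2-c)$, Cartan's formula at $\lambda\in\Lambda$ becomes
\[
\sum_{\mu\in\Lambda\setminus\{\lambda\}}m_\mu\,\frac{c+4\lambda\mu}{\lambda-\mu}+\frac{2g(\lambda)}{(\lambda-a)^2+b^2}=0 .
\]
Everything then reduces to proving that $g$ vanishes identically on $\Lambda$: since $g$ has at most two zeros this forces $\lvert\Lambda\rvert\le 2$, hence $\tilde g(q)=\lvert\Lambda\rvert+2\in\{3,4\}$ (note $\Lambda\neq\varnothing$ because $\dim\tilde M=2n\ge 4$), and, as explained below, the two zeros of $g$ multiply to $-c/4$, which yields $c+4\lambda\mu=0$ in the case $\tilde g(q)=4$.

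Before that I would normalize the signs. Replacing $\xi^L$ by $-\xi^L$ changes $(a,b,\Lambda)$ into $(-a,b,-\Lambda)$, so I may assume $a\ge 0$. First use of Lemma~\ref{lemma:inside_Cartan}: if $\Lambda^+\neq\varnothing$, pick $\lambda_0\in\Lambda^+$ minimizing $\mu\mapsto\lvert\mu+c/(4\mu)\rvert$ over $\Lambda^+$; then every term of the real sum above at $\lambda_0$ is nonpositive, so $g(\lambda_0)\ge 0$. On the other hand $g(\lambda)=\lambda\bigl(c-4(a^2+b^2)\bigr)+a(4\lambda^2-c)<0$ whenever $\lambda>0$ and $a\le 0$; comparing these two statements (and applying the first also to $-\xi^L$) shows that $a\le 0$ forces $\Lambda\subseteq(-\infty,0]$ while $a\ge 0$ forces $\Lambda\subseteq[0,\infty)$. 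In particular, if $a=0$ then $\Lambda=\{0\}$ and Cartan's formula at $0$ gives $g(0)=0$, so the lemma holds. I may therefore assume $a>0$ and $\Lambda\subseteq[0,\infty)$; evaluating Cartan's formula at $\lambda=0$ — each term $-c/\mu$ ($\mu\in\Lambda$, $\mu>0$) and the term involving $g(0)=-ac$ being positive — rules out $0\in\Lambda$, so $\Lambda\subseteq(0,\infty)$.

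For $a>0$ the polynomial $g$ is a genuine quadratic with positive leading coefficient $4a$; its discriminant $(4a^2+4b^2-c)^2+16a^2c=16(a^2+b^2)^2+8c(a^2-b^2)+c^2$ is positive (regarded as a quadratic in $c$ it has discriminant $-256a^2b^2<0$), so $g$ has two real zeros $r_1<r_2$, both positive since $r_1r_2=-c/4>0$ and $r_1+r_2=(4a^2+4b^2-c)/(4a)>0$. Since $\lambda\mapsto\lambda+c/(4\lambda)$ is strictly increasing on $(0,\infty)$ and takes the values $r_1-r_2$ at $r_1$ and $r_2-r_1$ at $r_2$, one gets for every $\lambda>0$ that $g(\lambda)\ge 0$ if and only if $\lvert\lambda+c/(4\lambda)\rvert\ge r_2-r_1$, and likewise with the inequalities reversed or replaced by equalities. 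Now let $\lambda_0$ and $\lambda_1$ in $\Lambda$ minimize, respectively maximize, $\mu\mapsto\lvert\mu+c/(4\mu)\rvert$ over $\Lambda$. By Lemma~\ref{lemma:inside_Cartan} every term of the real sum at $\lambda_0$ is nonpositive, so $g(\lambda_0)\ge 0$, i.e.\ $\lvert\lambda_0+c/(4\lambda_0)\rvert\ge r_2-r_1$; by the choice of $\lambda_0$ this gives $g\ge 0$ on all of $\Lambda$. Symmetrically every term of the real sum at $\lambda_1$ is nonnegative (the only one that might vanish corresponds to $\mu=-c/(4\lambda_1)$), so $g(\lambda_1)\le 0$, i.e.\ $\lvert\lambda_1+c/(4\lambda_1)\rvert\le r_2-r_1$, and by the choice of $\lambda_1$ this gives $g\le 0$ on all of $\Lambda$. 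Hence $g\equiv 0$ on $\Lambda$, that is $\Lambda\subseteq\{r_1,r_2\}$; this yields $\tilde g(q)\in\{3,4\}$, the stated relation for every $\lambda\in\Lambda$, and, when $\tilde g(q)=4$, $c+4\lambda\mu=c+4r_1r_2=0$ for the two real principal curvatures.

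The step I expect to be the main obstacle is exactly the passage from the weighted-sum relation $\sum_{\lambda\in\Lambda}m_\lambda\,g(\lambda)/\bigl((\lambda-a)^2+b^2\bigr)=0$ — which is all a naive summation of Cartan's formula over $\Lambda$ produces, as in Lemma~\ref{types_I_II_III} — to the pointwise vanishing of $g$ on $\Lambda$. The device that makes this work is the observation that the two zeros of $g$ have product $-c/4$, so that the sign of $g(\lambda)$ is dictated by the very same quantity $\lvert\lambda+c/(4\lambda)\rvert$ that controls the real off-diagonal terms through Lemma~\ref{lemma:inside_Cartan}; this compatibility is what allows the minimizer/maximizer argument to squeeze $g$ down to $0$ on all of $\Lambda$. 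Isolating this quadratic and verifying that its discriminant is automatically positive is the technical heart of the argument.
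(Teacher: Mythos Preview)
Your proof is correct and follows essentially the same approach as the paper: both reduce Cartan's formula to the vanishing of the quadratic $g(\lambda)=a(4\lambda^2-c)-\lambda(4a^2+4b^2-c)$ on $\Lambda$, exploit that its two roots have product $-c/4$ so the sign of $g(\lambda)$ is governed by $\lvert\lambda+c/(4\lambda)\rvert$, and then use Lemma~\ref{lemma:inside_Cartan} at an extremizer of that quantity. The only cosmetic difference is that the paper first rules out $\Lambda\cap(x_1,x_2)\neq\emptyset$ by contradiction and then applies the maximizer, whereas you run the minimizer and maximizer arguments symmetrically to sandwich $g$ between $0$ and $0$; the substance is identical.
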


\begin{proof}
Let $a+ib$, $a-ib$ ($b\neq 0$) be the two complex principal curvatures, both with multiplicity one, and as usual we denote by $m_\lambda$ the multiplicity of $\lambda\in\Lambda$. Since $n\geq 2$, we have $\Lambda\neq\emptyset$. By Proposition~\ref{cartan_formula}, for each $\lambda\in\Lambda$ we have
\begin{equation}\label{eq_cartan_IV}
2\frac{a(4\lambda^2-c)-\lambda(4a^2+4b^2-c)}{(\lambda-a)^2+b^2}
+\sum_{\mu\in\Lambda\setminus\{\lambda\}}m_\mu
\frac{c+4\lambda\mu}{\lambda-\mu}=0.
\end{equation}

We denote by $\Lambda^+$ the set of positive principal curvatures at~$q$. We define the map $f\colon\R\to\R$, $x\mapsto f(x)=a(4x^2-c)-x(4a^2+4b^2-c)$.

Assume $a\leq 0$ and $\Lambda^+\neq\emptyset$. We define $\lambda_0$ to be a positive principal curvature that minimizes $\lambda\in\Lambda^+\mapsto\lvert\lambda+c/(4\lambda)\rvert$. Then, by Lemma~\ref{lemma:inside_Cartan} we get $(c+4\lambda_0\mu)/(\lambda_0-\mu)\leq 0$ for all $\mu\in\Lambda\setminus\{\lambda_0\}$. Since $f(\lambda_0)<0$, this gives a contradiction with~\eqref{eq_cartan_IV}. Thus, there cannot be positive principal curvatures if $a\leq 0$.
Similarly, we get that all real principal curvatures are nonnegative if $a\geq 0$. In particular, if $a=0$ then $\Lambda=\{0\}$ and hence $\tilde{g}=3$.

From now on we will assume, without losing generality, that $a>0$. Then, all real principal curvatures are nonnegative. But from \eqref{eq_cartan_IV} one sees that in fact $\lambda>0$ for all $\lambda\in\Lambda$, that is, $\Lambda=\Lambda^+$.

The function $f$ is a quadratic function with discriminant $(c+4a^2-4b^2)^2+64a^2b^2>0$, so $f$ has exactly two zeroes, say $x_1$ and $x_2$. We have $x_1x_2=-{c}/{4}>0$ and $x_1+x_2=(a^2+b^2-c/4)/a>0$, so we can assume $0<x_1<x_2=-c/(4x_1)$.

If $\lambda>0$, note that $\lambda\in(x_1,x_2)$ if and only if $\lvert\lambda+c/(4\lambda)\rvert<\lvert x_1+c/(4x_1)\rvert$. If $\Lambda\cap(x_1,x_2)\neq\emptyset$, we define $\lambda_0$ to be a principal curvature that minimizes $\lambda\in\Lambda\mapsto\lvert\lambda+c/(4\lambda)\rvert$. Then $f(\lambda_0)<0$ and $(c+4\lambda_0\mu)/(\lambda_0-\mu)\leq 0$ for all $\mu\in\Lambda\setminus\{\lambda_0\}$ by Lemma~\ref{lemma:inside_Cartan} (with $p=\lambda_0$), contradiction with~\eqref{eq_cartan_IV}. Thus, let $\lambda_0$ be a principal curvature that maximizes $\lambda\in\Lambda\mapsto\lvert\lambda+c/(4\lambda)\rvert$. In this case, $f(\lambda_0)\geq 0$ and $(c+4\lambda_0\mu)/(\lambda_0-\mu)\geq 0$ for all $\mu\in\Lambda\setminus\{\lambda_0\}$ by Lemma~\ref{lemma:inside_Cartan} (with $p=\mu$). Hence, by~\eqref{eq_cartan_IV} we get $f(\lambda_0)=0$, $\Lambda\subset\{x_1,x_2\}$, and the assertion follows.
\end{proof}

Before starting an algebraic analysis of the shape operator we need to prove the following inequality, which requires obtaining information from the Codazzi and Gauss equations.

\begin{lemma}\label{th:a2b2c}
With the notation as above we have $4a^2+4b^2+c\geq 0$.
\end{lemma}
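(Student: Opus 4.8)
The plan is to work at a type~IV point $q\in\tilde M$ with $p=\pi(q)$ and extract the inequality from the Gauss and Codazzi equations of $\tilde M$ in $H^{2n+1}_1$, combined with the specific structure \eqref{eq:shape} relating $\tilde\Ss$ to $\Ss$ and the Reeb vector $J\xi$. By Lemma~\ref{th:type_iv_neighborhood} every point is of type~IV, so by Lemma~\ref{Xiao_IV} we have a complex-conjugate pair of principal curvatures $a\pm ib$ with $b\neq 0$, each of multiplicity one, spanned (in a suitable orthonormal frame with one timelike vector) by a timelike--spacelike pair $\{e_1,e_2\}$ on which $\tilde\Ss$ acts by $\tilde\Ss e_1=ae_1-be_2$, $\tilde\Ss e_2=be_1+ae_2$. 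First I would decompose the vertical vector $v=V_q$ in the eigenspace decomposition of $\tilde\Ss$, say $v=r_1e_1+r_2e_2+(\text{real eigenvector part})$, and use the two identities $\langle v,v\rangle=-1$ and $\langle\tilde\Ss v,v\rangle=0$ from \eqref{eq:shape} to pin down the components of $v$ in the complex block; the point is that $v$ must have a nonzero component in the $\{e_1,e_2\}$ plane, because otherwise $J\xi^L=-\tfrac{2}{\sqrt{-c}}\tilde\Ss v$ would be a combination of real eigenvectors only and one could mimic the earlier type arguments — in fact the genuinely Lorentzian nature of the $2$-plane $\spann\{e_1,e_2\}$ forces the timelike direction to live partly there.

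Next I would bring in the Codazzi equation $\langle\bar R(X,Y)Z,\xi\rangle=\langle(\nabla_X\tilde\Ss)Y-(\nabla_Y\tilde\Ss)X,Z\rangle$ for $\tilde M\subset H^{2n+1}_1$, where $\bar R$ here is the constant-curvature tensor of the anti-De Sitter space, $\bar R(X,Y)Z=\tfrac{c}{4}(\langle Y,Z\rangle X-\langle X,Z\rangle Y)$. Applied with $X,Y$ in the complex block and $Z=v$ (or $Z$ a real principal vector), this gives relations among the covariant derivatives $\nabla_{e_i}\tilde\Ss$ evaluated on the frame. The key observation is that the vertical vector field $V$ satisfies $\tilde\nabla_X V=\tfrac{\sqrt{-c}}{2}JX^L$ by \eqref{sub2}, so differentiating the identity $\tilde\Ss V=-\tfrac{\sqrt{-c}}{2}J\xi^L$ and using $\tilde\nabla_X\xi^L=-\tilde\Ss X$ relates $(\nabla_X\tilde\Ss)V$ to $\tilde\Ss^2 X$ and to $J\tilde\Ss X$; this is exactly where the complex structure enters and where the quantity $4a^2+4b^2+c$ will appear, because $\tilde\Ss$ acts on the complex block with $\tilde\Ss^2$ having the real part $a^2-b^2$ and one picks up $|a\pm ib|^2=a^2+b^2$ after squaring. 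I would then pair the resulting vector identity with $v$ itself (or with $e_1$), so that the left side is controlled by $\bar R$ and hence by $c$, and the right side is a sum of squared norms of vectors in $T_q\tilde M$; the Gauss equation is used in parallel to eliminate the intrinsic curvature terms, leaving a relation of the shape ``$(4a^2+4b^2+c)\cdot(\text{positive quantity})=\text{a sum of squares}\geq 0$''.

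The main obstacle, and the part requiring care, is the bookkeeping of signs in the Lorentzian inner product: the $\{e_1,e_2\}$ block is a nondegenerate Lorentzian $2$-plane, $v$ is timelike, and $J\xi^L$ is spacelike of unit length, so the quadratic forms that show up are indefinite and one must track which combinations come out with a definite sign. Concretely, I expect to have to choose the normalization of $v$ inside the complex block (solving a small linear system in $r_1^2$, $r_1r_2$, $r_2^2$ analogous to the computations in the proofs of Propositions~\ref{th:typeii} and \ref{th:typeiii}), check that the discriminant/denominator $(\lambda-a)^2+b^2$-type expressions are positive, and only then read off that the ``positive quantity'' multiplying $4a^2+4b^2+c$ is genuinely positive, forcing $4a^2+4b^2+c\geq0$. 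A secondary subtlety is that if there are two distinct real principal curvatures $\lambda,\mu$ (the $\tilde g=4$ case of Lemma~\ref{Xiao_IV}, with $c+4\lambda\mu=0$), one must confirm the argument is not disrupted by the splitting $v$ may have along those real directions; I would handle this by a preliminary frame change, as in \eqref{eq:modified_seminull}, absorbing the real-eigenvector part of $v$ into a modified basis so that one may assume $v$ lies entirely in $\spann\{e_1,e_2\}\oplus(\text{the one real line it must meet})$, reducing to the cleanest configuration before invoking Codazzi.
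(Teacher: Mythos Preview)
Your approach differs fundamentally from the paper's, and there is a genuine gap. The paper's proof never touches the Hopf map, the vertical vector $V$, or the complex structure $J$: the inequality $4a^2+4b^2+c\geq 0$ holds for any type~IV Lorentzian isoparametric hypersurface in $H_1^{2n+1}$, and is proved by a direct Gauss--Codazzi computation on $\tilde M$ alone. One takes smooth fields $E_1,E_2$ spanning the complex-eigenvalue block (with $\langle E_1,E_1\rangle=-1$, $\langle E_2,E_2\rangle=1$), applies Codazzi to the triples $(E_i,X,E_j)$ for $X\in\Gamma(T_\lambda\oplus T_\mu)$ to express every $\langle\nabla_{E_i}E_j,X\rangle$ as a multiple of $\langle\nabla_X E_1,E_2\rangle$, and then applies the Gauss equation to the Lorentzian $2$-plane $\spann\{E_1,E_2\}$:
\[
-\frac{c}{4}=\langle\tilde R(E_1,E_2)E_2,E_1\rangle
=\langle R(E_1,E_2)E_2,E_1\rangle+a^2+b^2.
\]
Expanding the intrinsic curvature and feeding in the Codazzi relations gives
\[
-\frac{c}{4}-a^2-b^2=-\sum_i\frac{8b^2}{(\nu_i-a)^2+b^2}\,\langle\nabla_{X_i}E_1,E_2\rangle^2\leq 0,
\]
with $\{X_i\}$ an orthonormal basis of $T_\lambda\oplus T_\mu$. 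That is the whole argument.

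Your plan is built instead around decomposing $v=V_q$ in the eigenframe and exploiting $\tilde{\Ss} V=-\tfrac{\sqrt{-c}}{2}J\xi^L$. But that algebraic decomposition is precisely what is carried out later in Proposition~\ref{th:typeiv}, and it yields the \emph{opposite} inequality: writing $v=r_1e_1+r_2e_2+u+w$ and imposing $\langle v,v\rangle=-1$, $\langle\tilde{\Ss} v,v\rangle=0$, $\langle J\xi^L,J\xi^L\rangle=1$, one solves for $r_1^2-r_2^2$ and obtains
\[
0\leq\langle u,u\rangle=-\frac{4a^2+4b^2+c+4\bigl((a-\mu)^2+b^2\bigr)\langle w,w\rangle}{4\bigl((a-\lambda)^2+b^2\bigr)},
\]
hence $4a^2+4b^2+c\leq 0$. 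So the $V$-decomposition route cannot by itself produce $\geq 0$; the two inequalities together are exactly what force equality in Proposition~\ref{th:typeiv}. Your additional step of differentiating $\tilde{\Ss} V=-\tfrac{\sqrt{-c}}{2}J\xi^L$ entangles $J$ with the full eigenspace decomposition of $\tilde{\Ss}$ rather than isolating $a^2+b^2$ (note that on the complex block $\tilde{\Ss}^2$ has trace $2(a^2-b^2)$, not $2(a^2+b^2)$, so the claimed appearance of $|a\pm ib|^2$ ``after squaring'' is not automatic), and nothing in the sketch indicates how a definite-sign expression emerges from it. The missing idea is to compute the sectional curvature of the $\{E_1,E_2\}$-plane via Gauss and let Codazzi turn the resulting covariant-derivative terms into a negative sum of squares indexed by the \emph{real} eigenvectors, not by $V$.
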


\begin{proof}
First, recall that by Lemma~\ref{th:type_iv_neighborhood}, $\tilde{M}$ is of type IV everywhere with the same principal curvatures. We denote by $\lambda$ and $\mu$ the real principal curvatures ($\mu$ might not exist), and by $T_\lambda$ and $T_\mu$ the corresponding smooth principal curvature distributions. We also consider smooth vector fields $E_1$ and $E_2$ such that $\tilde{\Ss} E_1=a E_1+b E_2$, $\tilde{\Ss} E_2=-bE_1+a E_2$, $\langle E_1,E_1\rangle=-1$, $\langle E_2,E_2\rangle=1$, $\langle E_1,E_2\rangle=0$.

First of all we claim
\begin{equation}\label{eq:DEiEj}
\nabla_{E_i}E_j\in\Gamma(T_\lambda\oplus T_\mu),\quad\text{ for $i,j\in\{1,2\}$}.
\end{equation}
In order to prove this, first notice that $\langle E_i,E_j\rangle$ is constant, so in particular $\langle\nabla_{E_i}E_j,E_j\rangle=0$. On the other hand, by the Codazzi equation,
\begin{align*}
0
&{}=\langle\tilde{R}(E_1,E_2)E_2,\xi^L\rangle
=\langle(\nabla_{E_1}\tilde{\Ss}) E_2,E_2\rangle-\langle(\nabla_{E_2}\tilde{\Ss})E_1,E_2\rangle\\
&{}=\langle\nabla_{E_1}\tilde{\Ss}E_2,E_2\rangle
-\langle\tilde{\Ss}\nabla_{E_1}E_2,E_2\rangle
-\langle\nabla_{E_2}\tilde{\Ss}E_1,E_2\rangle
+\langle\tilde{\Ss}\nabla_{E_2}E_1,E_2\rangle
=-2b\langle\nabla_{E_1}E_1,E_2\rangle,
\end{align*}
so $\langle\nabla_{E_1}E_1,E_2\rangle=0$. Similarly, writing the Codazzi equation with $(E_1,E_2,E_1)$ gives $\langle\nabla_{E_2}E_2,E_1\rangle=0$. Altogether this proves~\eqref{eq:DEiEj}.

Now let $X\in\Gamma(T_\nu)$, with $\nu\in\{\lambda,\mu\}$. By applying the Codazzi equation to $(E_1,X,E_2)$, $(E_2,X,E_1)$, $(E_1,X,E_1)$, and $(E_2,X,E_2)$, we obtain
\begin{align*}
(\nu-a)\langle \nabla_{E_1}E_2,X\rangle+b \langle \nabla_{E_1}E_1,X\rangle =
(\nu-a)\langle \nabla_{E_2}E_1,X\rangle-b \langle \nabla_{E_2}E_2,X\rangle ={} & 0,\\
(\nu-a)\langle \nabla_{E_1}E_1,X\rangle-b\langle \nabla_{E_1}E_2,X\rangle =
(\nu-a)\langle \nabla_{E_2}E_2,X\rangle+b\langle \nabla_{E_2}E_1,X\rangle ={}&
2b\langle \nabla_{X}E_1,E_2\rangle.
\end{align*}
From this we get the following relations:
\begin{equation}\label{eq:DEiEjX}
\begin{aligned}
\langle\nabla_{E_1}E_1,X\rangle
&{}=\langle\nabla_{E_2}E_2,X\rangle
=\frac{2b(\nu-a)}{(\nu-a)^2+b^2}\langle \nabla_{X}E_1,E_2\rangle,\\
\langle\nabla_{E_1}E_2,X\rangle
&{}=-\langle\nabla_{E_2}E_1,X\rangle
=-\frac{2b^2}{(\nu-a)^2+b^2}\langle \nabla_{X}E_1,E_2\rangle.
\end{aligned}
\end{equation}

Now we use the Gauss equation and~\eqref{eq:DEiEj} to get
\begin{align*}
-\frac{c}{4}={}
&\langle\tilde R(E_1,E_2)E_2,E_1\rangle
=\langle R(E_1,E_2)E_2,E_1\rangle-\langle \tilde{\Ss}E_2,E_2\rangle \langle \tilde{\Ss}E_1,E_1\rangle
+\langle \tilde{\Ss}E_2,E_1\rangle \langle \tilde{\Ss}E_2,E_1\rangle\\
={}& \langle\nabla_{E_1}E_2,\nabla_{E_2}E_1\rangle
-\langle\nabla_{E_1}E_1,\nabla_{E_2}E_2\rangle
-\langle\nabla_{\nabla_{E_1}E_2}E_2,E_1\rangle
+\langle\nabla_{\nabla_{E_2}E_1}E_2,E_1\rangle +a^2+b^2.
\end{align*}

Finally, let $\{X_1,\dots,X_k\}$ be an orthonormal basis of $\Gamma(T_\lambda\oplus T_\mu)$ such that $\tilde{\Ss}X_i=\nu_i X_i$, with $\nu_i\in\{\lambda,\mu\}$. Taking into account~\eqref{eq:DEiEj}, and writing the previous covariant derivatives with respect to the previous basis,~\eqref{eq:DEiEjX} implies
\begin{align*}
-\frac{c}{4}-a^2-b^2&{}=\langle\nabla_{E_1}E_2,\nabla_{E_2}E_1\rangle
-\langle\nabla_{E_1}E_1,\nabla_{E_2}E_2\rangle
-\langle\nabla_{\nabla_{E_1}E_2}E_2,E_1\rangle
+\langle\nabla_{\nabla_{E_2}E_1}E_2,E_1\rangle\\
&{}=\sum_{i=1}^{k}\langle\nabla_{E_1}E_2,X_i\rangle \langle\nabla_{E_2}E_1,X_i\rangle
-\sum_{i=1}^{k}\langle\nabla_{E_1}E_1,X_i\rangle \langle\nabla_{E_2}E_2,X_i\rangle \\
&\phantom{{}={}}-\sum_{i=1}^{k}\langle\nabla_{E_1}E_2,X_i\rangle \langle\nabla_{X_i}E_2,E_1\rangle
+\sum_{i=1}^{k}\langle\nabla_{E_2}E_1,X_i\rangle \langle\nabla_{X_i}E_2,E_1\rangle\\
&{}=-\sum_{i=1}^{k}\frac{8b^2}{(\nu_i-a)^2+b^2} \langle\nabla_{X_i}E_1,E_2\rangle^2
\leq 0,
\end{align*}
from where the result follows.
\end{proof}

\begin{proposition}\label{th:typeiv}
If $q\in\tilde{M}$ is of type~IV and $p=\pi(q)$, then $M$ is Hopf at $p$. Let $\lambda$ and $\mu$ be the real principal curvatures of $\tilde{M}$ at $q$ ($\mu$ might not exist). Then the principal curvatures of $M$ at $p$ are
\[
\lambda,\ \mu,\ \text{ and }\  2a=\frac{4c\lambda}{c-4\lambda^2}\in\bigl(-\sqrt{-c},\sqrt{-c}\bigr),
\]
where $2a$ is the principal curvature associated with the Hopf vector.
\end{proposition}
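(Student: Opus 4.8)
The plan is to follow the pattern of the type~I--III cases: work with the lift $\tilde{M}=\pi^{-1}(M)$ and the relations $\tilde{\Ss}V=-\frac{\sqrt{-c}}{2}J\xi^L$ and $\langle\tilde{\Ss}V,V\rangle=0$ from~\eqref{eq:shape}, and reduce everything to understanding where the vertical vector $v=V_q$ sits relative to the eigenstructure of $\tilde{\Ss}$. Since $q$ is of type~IV, I would fix the orthonormal basis putting $\tilde{\Ss}$ in block form: a Lorentzian $\tilde{\Ss}$-invariant plane $\mathcal{V}=\spann\{E_1,E_2\}$, with $E_1$ timelike, $E_2$ spacelike, $\tilde{\Ss}E_1=aE_1+bE_2$, $\tilde{\Ss}E_2=-bE_1+aE_2$, orthogonal to $T_\lambda(q)\oplus T_\mu(q)$ (the real eigenspaces, $\mu$ possibly absent). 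Decomposing $v=v_{\mathcal{V}}+v_\lambda+v_\mu$, one sees at once that $v_{\mathcal{V}}\neq 0$, since $\langle v,v\rangle=-1$ but $\langle v_\lambda,v_\lambda\rangle,\langle v_\mu,v_\mu\rangle\geq 0$. The heart of the proof is to upgrade this to $v_\lambda=v_\mu=0$, i.e.\ $v\in\mathcal{V}$.

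For that, write $v_{\mathcal{V}}=r_1E_1+r_2E_2$ and set $s=\langle v_{\mathcal{V}},v_{\mathcal{V}}\rangle=r_2^2-r_1^2$, $t=r_1r_2$, $p=\langle v_\lambda,v_\lambda\rangle\geq 0$, $m=\langle v_\mu,v_\mu\rangle\geq 0$. A short computation on the $2\times 2$ block gives $\langle\tilde{\Ss}v_{\mathcal{V}},v_{\mathcal{V}}\rangle=as+2bt$ and $\langle\tilde{\Ss}v_{\mathcal{V}},\tilde{\Ss}v_{\mathcal{V}}\rangle=(a^2-b^2)s+4abt$, so the identities $\langle v,v\rangle=-1$, $\langle\tilde{\Ss}v,v\rangle=0$, and $\langle\tilde{\Ss}v,\tilde{\Ss}v\rangle=-\frac{c}{4}\langle J\xi^L,J\xi^L\rangle=-\frac{c}{4}$ become
\begin{align*}
s+p+m&=-1, & as+2bt+\lambda p+\mu m&=0, & (a^2-b^2)s+4abt+\lambda^2 p+\mu^2 m&=-\tfrac{c}{4}.
\end{align*}
Eliminating $s$ and $t$ between the first two relations and inserting the result in the third collapses the system to
\[
a^2+b^2+\bigl((\lambda-a)^2+b^2\bigr)p+\bigl((\mu-a)^2+b^2\bigr)m=-\tfrac{c}{4}.
\]
As $b\neq 0$, the coefficients of $p$ and $m$ are strictly positive, so the left-hand side is $\geq a^2+b^2$ and hence $4a^2+4b^2+c\leq 0$; combined with Lemma~\ref{th:a2b2c} this forces $4a^2+4b^2+c=0$, whereupon $\bigl((\lambda-a)^2+b^2\bigr)p+\bigl((\mu-a)^2+b^2\bigr)m=0$ gives $p=m=0$, i.e.\ $v\in\mathcal{V}$.

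Once $v\in\mathcal{V}$, the conclusion is routine. Then $J\xi^L=-\frac{2}{\sqrt{-c}}\tilde{\Ss}v\in\mathcal{V}$ as well, and $v,J\xi^L$ are linearly independent (if $\tilde{\Ss}v$ were a real multiple of $v$ then $v$ would be a real eigenvector of $\tilde{\Ss}|_{\mathcal{V}}$, impossible since $b\neq 0$), so they span $\mathcal{V}$. Consequently $T_\lambda(q)\oplus T_\mu(q)$ is orthogonal to $v$ and to $J\xi^L$, hence horizontal, and as in the previous cases $\pi_{*q}$ carries $T_\lambda(q)$ and $T_\mu(q)$ onto eigenspaces of $\Ss$ with eigenvalues $\lambda$ and $\mu$, all orthogonal to $J\xi$; for dimension reasons $J\xi$ spans the remaining line of $T_pM$, so $M$ is Hopf at $p$. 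Using the Cayley--Hamilton relation $\tilde{\Ss}^2 v=2a\tilde{\Ss}v-(a^2+b^2)v$ on $\mathcal{V}$ and $\pi_{*q}v=0$,
\[
\Ss J\xi=-\tfrac{2}{\sqrt{-c}}\pi_{*q}\tilde{\Ss}^2 v=2a\Bigl(-\tfrac{2}{\sqrt{-c}}\pi_{*q}\tilde{\Ss}v\Bigr)=2a\,J\xi,
\]
so the Hopf principal curvature is $2a$. Finally, $4a^2+4b^2=-c$ turns the relation of Lemma~\ref{Xiao_IV} into $a(4\lambda^2-c)=-2c\lambda$, i.e.\ $2a=\frac{4c\lambda}{c-4\lambda^2}$, and $(2a)^2=4a^2=-c-4b^2<-c$ because $b\neq 0$, so $2a\in(-\sqrt{-c},\sqrt{-c})$.

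The step I expect to be the real obstacle is the passage from $v_{\mathcal{V}}\neq 0$ to $v\in\mathcal{V}$: the three orthogonality and normalization identities alone leave a one-parameter family of admissible $(s,t,p,m)$, and only the geometric input of Lemma~\ref{th:a2b2c}---which is where the Gauss and Codazzi equations enter---eliminates this freedom, while at the same time forcing $4a^2+4b^2=-c$, the equality that makes the explicit formula for $2a$ fall out.
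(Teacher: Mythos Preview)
Your proof is correct and follows essentially the same approach as the paper: decompose $v$ along $\mathcal{V}\oplus T_\lambda\oplus T_\mu$, use the three scalar identities coming from $\langle v,v\rangle=-1$, $\langle\tilde{\Ss}v,v\rangle=0$, and $\lVert J\xi^L\rVert=1$ to obtain a relation where Lemma~\ref{th:a2b2c} forces $4a^2+4b^2+c=0$ and $v\in\mathcal{V}$, and then read off the Hopf property and the value $2a$. Your presentation is slightly tidier in two places---you keep the roles of $p$ and $m$ symmetric rather than solving for one in terms of the other, and you invoke Cayley--Hamilton on $\tilde{\Ss}|_{\mathcal{V}}$ to compute $\Ss J\xi$ instead of expanding coordinates---but these are cosmetic; the substance and logical flow are identical to the paper's argument.
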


\begin{proof}
Let $a\pm ib$ be the nonreal complex eigenvalues of $\tilde{\Ss}$ ($b\neq 0$). Let $\lambda$ and $\mu=-c/4\lambda$ be the real eigenvalues of $\tilde{\Ss}$ ($\mu$ might not exist). Assume that $\tilde{\Ss}$ has a type IV matrix expression and let $e_1$, $e_2\in T_q\tilde{M}$ such that $\tilde{\Ss} e_1=a e_1+b e_2$, $\tilde{\Ss} e_2=-be_1+a e_2$, $\langle e_1,e_1\rangle=-1$, $\langle e_2,e_2\rangle=1$, $\langle e_1,e_2\rangle=0$.

We can assume that $v=r_1 e_1+r_2 e_2+u+w$, where $u\in T_\lambda(q)$, $w\in T_\mu(q)$, and $r_1$, $r_2\in\R$. If there is only one principal curvature $\lambda$, then $\mu$ and $T_\mu(q)$ do not exist and it suffices to put $w=0$ throughout. We have $-1=\langle v, v\rangle=-r_1^2+r_2^2+\langle u,u\rangle+\langle w,w\rangle$ and $\tilde{\Ss} v=(r_1 a-r_2 b)e_1 +(r_2 a+r_1 b) e_2+\lambda u+\mu w$, and hence $J\xi^L=-{2}((r_1 a-r_2 b)e_1 +(r_2 a+r_1 b) e_2+\lambda u+\mu w)/{\sqrt{-c}}$.
Taking into account that $\langle u,u\rangle=-1+r_1^2-r_2^2-\langle w,w\rangle$ we have
\begin{align*}
1={}&\langle J\xi^L,J\xi^L\rangle=-\frac{4}{c}\bigl((-a^2+b^2+\lambda^2)(r_1^2-r_2^2) +4abr_1r_2+(\mu^2-\lambda^2)\langle w,w\rangle-\lambda^2\bigr),\nonumber\\
0={}&\langle \tilde{\Ss} v,v\rangle =(\lambda-a)(r_1^2-r_2^2)+2br_1r_2 +(\mu-\lambda)\langle w,w\rangle-\lambda.
\end{align*}
We can view the previous two equations as a linear system in the variables $r_1^2-r_2^2$ and~$r_1r_2$. The matrix of this system has determinant $-8b((a-\lambda)^2+b^2)/c\neq 0$, and thus has a unique solution. In fact,
\[
r_1^2-r_2^2=\frac{-c-8a\lambda+4\lambda^2+4(\lambda+\mu-2a)(\lambda-\mu)
\langle w,w\rangle}{4((a-\lambda)^2+b^2)}.
\]
Then we have
\[
0\leq\langle u,u\rangle=-1+r_1^2-r_2^2-\langle w,w\rangle=
-\frac{4a^2+4b^2+c+4((a-\mu)^2+b^2)\langle w,w\rangle}{4((a-\lambda)^2+b^2)}.
\]
Hence, as we knew that $4a^2+4b^2+c\geq 0$ by Lemma~\ref{th:a2b2c}, we must have $4a^2+4b^2+c=0$, and thus $u=w=0$.

This implies that $T_\lambda(q)$ and $T_\mu(q)$ are orthogonal to $v$ and $J\xi^L$, and therefore, they descend to the $\lambda$ and $\mu$ eigenspaces of $\Ss$ respectively, and they are orthogonal to $J\xi$. Again, for dimension reasons, $J\xi$ must be an eigenvector of $\Ss$ and thus $M$ is Hopf at $p$. We also have, taking into account $0=\pi_{*q} v=r_1\pi_* e_1+r_2\pi_* e_2$ and $b^2=-a^2-c/4$,
\begin{align*}
\Ss J\xi={}&-\frac{2}{\sqrt{-c}}((r_1a^2-2r_2ab-r_1b^2)\pi_* e_1
+(2r_1ab-r_2b^2+r_2a^2) \pi_* e_2)\\
={}&-\frac{2}{\sqrt{-c}}(2a(a r_1-b r_2)\pi_* e_1+2a(b r_1+a r_2)\pi_* e_2+\frac{c}{4}\pi_* v)
=2aJ\xi.
\end{align*}
Lemma \ref{Xiao_IV} and $4a^2+4b^2+c=0$ yield $a={2c\lambda}/{(c-4\lambda^2)}$. If $\lvert 2a\rvert\geq \sqrt{-c}$, then $0=4a^2+4b^2+c\geq 4b^2$, which is impossible because $b\neq 0$. Therefore, $\lvert 2a\rvert<\sqrt{-c}$, that is, the principal curvature associated with the Hopf vector in $M$ is in $(-\sqrt{-c},\sqrt{-c})$.
\end{proof}

\begin{corollary}\label{th:typeiv_classification}
Let $M$ be a connected isoparametric hypersurface in $\C H^n$ which lifts to a type IV hypersurface in $H^{2n+1}_1$ at some point. Then $M$ is an open part of a tube around a totally geodesic $\R H^n$.
\end{corollary}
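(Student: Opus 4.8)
The plan is to invoke Lemma~\ref{th:type_iv_neighborhood} to upgrade the hypothesis: since $M$ lifts to a hypersurface $\tilde M=\pi^{-1}(M)$ with a point of type~IV, and $\tilde M$ is a connected isoparametric hypersurface of $H^{2n+1}_1$, every point of $\tilde M$ is of type~IV. Consequently, by Proposition~\ref{th:typeiv}, $M$ is Hopf at every point, so $M$ is a Hopf hypersurface. Moreover Proposition~\ref{th:typeiv} tells us that at each point the principal curvatures of $M$ are $\lambda$, $\mu$ (possibly absent) and the Hopf curvature $2a=4c\lambda/(c-4\lambda^2)$, where $\lambda$, $\mu$ are the real principal curvatures of $\tilde M$; since $\tilde M$ is isoparametric it has constant principal curvatures with constant algebraic multiplicities, and hence $\lambda$, $\mu$, $a$, $b$ are all constant on $\tilde M$. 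Therefore the principal curvatures of $M$ and their multiplicities are constant along $M$, i.e.\ $M$ is a Hopf real hypersurface with constant principal curvatures.

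Next I would apply Berndt's classification, Theorem~\ref{th:Hopf}: $M$ is holomorphically congruent to an open part of a tube around a totally geodesic $\C H^k$, a tube around a totally geodesic $\R H^n$, or a horosphere. The task is then to rule out the first and third possibilities using the extra information coming from type~IV. By Remark~\ref{rmk:example_types}, the lift of a tube around $\C H^k$ consists entirely of type~I points and the lift of a horosphere consists entirely of type~II points; since $\tilde M$ consists of type~IV points (and the four Jordan types are mutually exclusive pointwise), neither of these can occur. Hence $M$ must be an open part of a tube around a totally geodesic $\R H^n$, as claimed.

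Alternatively, and perhaps more in the spirit of ``using the principal curvatures'' alluded to in Remark~\ref{rmk:Hopf}, one could avoid citing Remark~\ref{rmk:example_types} and instead compare principal-curvature data directly: in the tube-around-$\C H^k$ and horosphere cases, the principal curvature multiplicities and the relations among $\lambda_1,\lambda_2,\lambda_3$ recorded in Remark~\ref{rmk:Hopf} are incompatible with the constraint $4a^2+4b^2+c=0$ established in the proof of Proposition~\ref{th:typeiv} together with the Hopf curvature formula $2a=4c\lambda/(c-4\lambda^2)$ and the requirement $b\neq 0$; only the $\R H^n$ tube, whose non-Hopf curvatures are $\lambda_1=\tfrac{\sqrt{-c}}{2}\tanh\tfrac{r\sqrt{-c}}{2}$ and $\lambda_2=\tfrac{\sqrt{-c}}{2}\coth\tfrac{r\sqrt{-c}}{2}$ with $c+4\lambda_1\lambda_2=0$, is consistent. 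Either way the conclusion is immediate once one knows $M$ has constant principal curvatures.

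The only mildly delicate point is the first paragraph: one must be careful that the \emph{local} unit normal $\xi$ used to define $\tilde\Ss$ extends consistently so that ``type~IV at $q$'' really does propagate, which is exactly the content of Lemma~\ref{th:type_iv_neighborhood} (connectedness of $\tilde M$ plus the fact that a nonreal eigenvalue of $\tilde\Ss$, being constant, cannot disappear). After that, everything is a direct appeal to Proposition~\ref{th:typeiv}, the constancy of the principal curvatures of Lorentzian isoparametric hypersurfaces, Theorem~\ref{th:Hopf}, and Remark~\ref{rmk:example_types} (or Remark~\ref{rmk:Hopf}); there is no real obstacle. I would write the argument in the short form: type~IV everywhere $\Rightarrow$ $M$ Hopf with constant principal curvatures $\Rightarrow$ Montiel's list $\Rightarrow$ exclude $\C H^k$-tubes and horospheres by their type, leaving the $\R H^n$-tube.
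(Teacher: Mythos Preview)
Your proposal is correct and follows essentially the same route as the paper: Lemma~\ref{th:type_iv_neighborhood} to get type~IV everywhere, Proposition~\ref{th:typeiv} plus constancy of the principal curvatures of $\tilde M$ to get that $M$ is Hopf with constant principal curvatures, then Theorem~\ref{th:Hopf}. The only minor difference is in the final discrimination step: the paper uses the bound $\lvert 2a\rvert<\sqrt{-c}$ on the Hopf principal curvature (stated in Proposition~\ref{th:typeiv}) and checks via Remark~\ref{rmk:Hopf} that only the $\R H^n$-tube satisfies this, whereas you primarily invoke Remark~\ref{rmk:example_types} on the Jordan types of the lifts; both arguments are valid and equally short.
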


\begin{proof}
By Lemma~\ref{th:type_iv_neighborhood}, every point of $\tilde{M}$ is of type~IV. From Proposition~\ref{th:typeiv} and the fact that $\tilde{M}$ has constant principal curvatures, we deduce that $M$ is Hopf and has constant principal curvatures. From the classification of Hopf hypersurfaces with constant principal curvatures in $\C H^n$ (Theorem~\ref{th:Hopf}), it follows that the unique such hypersurface whose Hopf principal curvature is less than $\sqrt{-c}$ in absolute value (see Remark~\ref{rmk:Hopf}) is a tube around a totally geodesic $\R H^n$.
\end{proof}

\subsection{Variation of the Jordan canonical form}\label{sec:different_types}\hfill

As was pointed out in Remark~\ref{rmk:example_types}, there are examples of isoparametric hypersurfaces in $\C H^n$ whose lift to the anti-De Sitter space might have varying Jordan canonical form. We clarify this a little more in the following

\begin{proposition}\label{th:different_types}
Let $M$ be a connected isoparametric hypersurface in $\C H^n$, $n\geq 2$, and denote by $\tilde{M}=\pi^{-1}(M)$ its lift to the anti-De Sitter space. Then,
\begin{enumerate}[{\rm (i)}]
\item  If a point $q\in\tilde{M}$ is of type~IV, then all the points of $\tilde{M}$ are of type~IV, and $M$ is an open part of a tube around a totally geodesic $\R H^n$ in $\C H^n$.

\item If a point $q\in\tilde{M}$ is of type~II, then all the points of $\tilde{M}$ are of type~II, and $M$ is an open part of a horosphere in $\C H^n$.

\item If there is a point $q\in\tilde{M}$ of type~III, then there is a neighborhood of $q$ where all points are of type~III.
\end{enumerate}
\end{proposition}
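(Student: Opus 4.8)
The overall strategy is to feed the pointwise descriptions obtained in Subsections~\ref{sec:typei}--\ref{sec:typeiv} into two global inputs. First, since $M$ is connected and the Hopf fibres are connected, $\tilde M=\pi^{-1}(M)$ is a connected Lorentzian isoparametric hypersurface, so by~\cite[Proposition~2.1]{H84} it has constant principal curvatures with constant algebraic multiplicities; in particular the number $\tilde g$ of distinct (possibly complex) principal curvatures is a constant on $\tilde M$ and each principal curvature is a fixed number. Second, the set of points of a given type will be analysed through polynomial conditions on the smooth endomorphism field $\tilde\Ss$. Part~(i) is then immediate: if some $q\in\tilde M$ is of type~IV, Lemma~\ref{th:type_iv_neighborhood} forces every point of $\tilde M$ to be of type~IV, and Corollary~\ref{th:typeiv_classification} identifies $M$ as an open part of a tube around a totally geodesic $\R H^n$.

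For part~(ii), assume $q$ is of type~II. By Proposition~\ref{th:typeii}, $\tilde M$ has a single principal curvature $\lambda=\pm\sqrt{-c}/2$ at $q$; by the constancy above, $\tilde g\equiv1$ and this single principal curvature equals $\pm\sqrt{-c}/2$ at every point, and after a choice of orientation we take $\lambda=\sqrt{-c}/2$. I would then rule out each of the remaining types at an arbitrary point $q'\in\tilde M$: type~IV is impossible by Lemma~\ref{th:type_iv_neighborhood} (no point of $\tilde M$ is of type~IV, since $q$ is not); type~I is impossible because, as noted in the proof of Proposition~\ref{th:typei}, a type~I point cannot have only one principal curvature; and type~III is impossible because, by Proposition~\ref{th:typeiii}, a type~III point with a single principal curvature would require $\lambda\in(-\sqrt{-c}/2,\sqrt{-c}/2)$, contrary to $\lambda=\sqrt{-c}/2$. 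Hence every point of $\tilde M$ is of type~II, and applying Proposition~\ref{th:typeii} at each point shows that $M$ is a connected Hopf hypersurface with constant principal curvatures $\sqrt{-c}/2$ and $\sqrt{-c}$, the latter simple and associated with the Hopf vector. Comparing with Theorem~\ref{th:Hopf} and the principal-curvature lists of Remark~\ref{rmk:Hopf}, the only possibility is that $M$ is an open part of a horosphere.

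For part~(iii), the point is that being of type~III is an \emph{open} condition on $\tilde\Ss$. Let $q$ be of type~III. Since $q$ is not of type~IV, Lemma~\ref{th:type_iv_neighborhood} shows that no point of $\tilde M$ is of type~IV; hence every point of $\tilde M$ is of type~I, II or~III, and by Lemma~\ref{types_I_II_III} there is either one constant principal curvature $\lambda$ or two constant principal curvatures $\lambda,\mu$ with $c+4\lambda\mu=0$. Define the smooth endomorphism field $T=(\tilde\Ss-\lambda\,\id)^2(\tilde\Ss-\mu\,\id)^2$ on $\tilde M$, omitting the factor in $\mu$ if there is only one principal curvature. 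A short computation with a semi-null basis as in~\eqref{eq:seminull_basis} shows that at type~I and type~II points the minimal polynomial of $\tilde\Ss$ divides $(x-\lambda)^2(x-\mu)^2$, so $T=0$ there, whereas at a type~III point the nondiagonalizable block has nilpotency index three, so $T\neq0$. Therefore $T_q\neq0$, hence $T\neq0$ on a neighbourhood $U$ of $q$, and by the dichotomy above every point of $U$ must be of type~III.

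The only genuinely delicate step is the choice of the distinguishing polynomial in part~(iii): it must vanish identically on types~I and~II but not on type~III, regardless of which of the (at most two) fixed principal curvatures carries the nondiagonalizable block and of which type the ``companion'' curvature is; this is why the exponents on both factors are needed, and the explicit form~\eqref{eq:seminull_basis} of the shape operator at a type~III point makes the verification routine. Everything else reduces to bookkeeping with the propositions of Section~\ref{sec:isoparametric} together with the constancy of the algebraic data of $\tilde\Ss$.
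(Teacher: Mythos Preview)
Your argument is correct. Parts~(i) and~(ii) match the paper's proof almost verbatim: both use Lemma~\ref{th:type_iv_neighborhood} together with Corollary~\ref{th:typeiv_classification} for~(i), and for~(ii) both exploit that a type~II point forces the unique constant principal curvature to be $\pm\sqrt{-c}/2$, which is incompatible with the ranges in Propositions~\ref{th:typei} and~\ref{th:typeiii}, after which Theorem~\ref{th:Hopf} and Remark~\ref{rmk:Hopf} pin down the horosphere.

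The genuine difference is in~(iii). The paper argues via semi-continuity: the difference between the algebraic and geometric multiplicities of the fixed eigenvalue $\lambda$ is lower semi-continuous (because the algebraic multiplicity is constant and the kernel dimension is upper semi-continuous), and having ruled out types~II and~IV it takes only the values $0$ (type~I) or $2$ (type~III), so the type~III locus is open. Your route is instead to build the smooth endomorphism field $T=(\tilde\Ss-\lambda\,\id)^2(\tilde\Ss-\mu\,\id)^2$ and observe that $T$ vanishes identically at type~I and type~II points (the minimal polynomial divides $(x-\lambda)^2(x-\mu)^2$) but not at type~III points, so $\{T\neq0\}$ is an open neighbourhood of~$q$ consisting of type~III points. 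Both arguments are short; the paper's is slightly slicker but tacitly uses parts~(i) and~(ii) to reduce to types~I and~III, while your polynomial handles types~I, II and~III uniformly. Your care about squaring both factors is exactly right: it makes $T$ symmetric in $\lambda,\mu$, so you need not track which eigenvalue carries the Jordan block, and it guarantees vanishing at a type~II point regardless of which eigenvalue hosts the $2\times2$ block.
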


\begin{proof}
The first statement is simply a consequence of Lemma~\ref{th:type_iv_neighborhood} and Corollary~\ref{th:typeiv_classification}.

Assume now that $q\in\tilde{M}$ is of type~II, and recall that $\tilde{M}$ has constant principal curvatures. Then, according to Proposition~\ref{th:typeii}, $\tilde{M}$ has exactly one principal curvature at $q$ that is $\pm\sqrt{-c}/2$. If $q_0\in\tilde{M}$ is another point of type~I or~III, then propositions~\ref{th:typei} and~\ref{th:typeiii} say that $\pm\sqrt{-c}/2$ cannot be a principal curvature of~$\tilde{M}$ at~$q_0$. Since $\tilde{M}$ is connected we conclude that all the points of $\tilde{M}$ are of type~II. But now the classification of Hopf real hypersurfaces with constant principal curvatures in complex hyperbolic spaces (Theorem~\ref{th:Hopf} together with Remark~\ref{rmk:Hopf}) implies that $M$ is an open part of a horosphere.

Finally, assume that $q\in\tilde{M}$ is of type~III. By definition, the difference between the algebraic and geometric multiplicities of $\lambda$ is a lower semi-continuous function on $\tilde{M}$. In our case, this function can only take the values $0$ (at points of type~I) and $2$ (at points of type~III). Hence we conclude.
\end{proof}

\section{Type~III hypersurfaces}\label{sec:type_III}

The aim of this section is to study isoparametric hypersurfaces of the anti-De Sitter space all of whose points are of type~III, and determine the extrinsic geometry of their focal submanifolds.

Let $M$ be a connected isoparametric real hypersurface in the complex hyperbolic space $\C H^n$, $n\geq 2$. We denote by $\tilde{M}=\pi^{-1}(M)$ its lift to the anti-De Sitter space. Assume that there are points in $\tilde{M}$ of type~III. According to Proposition~\ref{th:different_types}, if $q\in \tilde{M}$ is a point of type~III, then there is a neighborhood of $q$ where all points are also of type~III.

Thus, we assume that we are working on a connected open subset $\tilde{\mathcal{W}}$ of $\tilde{M}=\pi^{-1}(M)$ where all points are of type~III. We denote by $\xi$ a unit (spacelike) normal vector field along~$\tilde{\mathcal{W}}$. We know that $\tilde{M}$ has at most two distinct constant principal curvatures (see Proposition~\ref{th:typeiii}). We call $\lambda$ the principal curvature whose algebraic and geometric multiplicities do not coincide, and $\mu$ the other one, if it exists. Note that if there are two distinct principal curvatures, then $c+4\lambda\mu=0$. We denote by $T_\lambda$ and $T_\mu$ the corresponding principal curvature distributions, and choose smooth vector fields $E_1$, $E_2$, $E_3\in\Gamma(T\tilde{\mathcal{W}})$ satisfying~\eqref{eq:seminull_basis} at each point. Recall that $T_\lambda=\R E_1\oplus(T_\lambda\ominus\R E_2)$.

We also denote $m_\lambda=\dim T_\lambda+2$ and $m_\mu=\dim T_\mu$, the algebraic multiplicities of $\lambda$ and~$\mu$. Since $\tilde{\mathcal{W}}$ is isoparametric and all points are of type~III, $m_\lambda$ and $m_\mu$ are constant functions, and in principle $m_\lambda\geq 3$, $m_\mu\geq 0$. In fact, $\mu$ might not exist, and in this case, $m_\mu=0$.

\subsection{Covariant derivatives of an isoparametric hypersurface}\label{sec:codazzi}\hfill

Recall that $\xi^L$ denotes a unit normal vector field along $\tilde{\mathcal{W}}$. By $\nabla$ and $R$ we denote the Levi-Civita connection and curvature tensor of $\tilde{\mathcal{W}}$, and by $\tilde{\nabla}$ and $\tilde{R}$ the Levi-Civita connection and the curvature tensor of the anti-De Sitter spacetime, respectively. The aim of this subsection is to prove the following result:

\begin{proposition}\label{th:DE1W}
For any $W\in\Gamma(T_\mu)$ we have $\tilde{\nabla}_{E_1}W\in\Gamma(T_\mu)$.
\end{proposition}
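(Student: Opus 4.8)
The goal is to show the horizontal lift direction $\tilde\nabla_{E_1}W$ stays in $T_\mu$ for every $W\in\Gamma(T_\mu)$. The natural tool is the Codazzi equation of $\tilde{\mathcal W}$ in $H^{2n+1}_1$, exploiting that $\tilde{\mathcal W}$ is isoparametric (so $\lambda,\mu$ are constant) and that $E_1$ spans the part of the kernel of $\tilde\Ss-\lambda$ that also satisfies $\tilde\Ss E_1=\lambda E_1$ with $E_1$ null. Since the ambient space $H^{2n+1}_1$ has constant curvature $c/4$, the left-hand side $\langle\tilde R(X,Y)Z,\xi^L\rangle$ of the Codazzi equation vanishes identically; hence $(\nabla_X\tilde\Ss)Y=(\nabla_Y\tilde\Ss)X$ for all tangent $X,Y$, where $(\nabla_X\tilde\Ss)Y=\nabla_X(\tilde\Ss Y)-\tilde\Ss\nabla_XY$. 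I would apply this ``totally symmetric'' Codazzi identity with the pair $(E_1,W)$ for $W\in\Gamma(T_\mu)$, and pair the resulting equation against well-chosen test vectors from the frame $\{E_1,E_2,E_3\}\cup(T_\lambda\ominus\R E_2)\cup T_\mu$.

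\textbf{Key steps.} First I would record the basic bracket/derivative relations forced by $\tilde\Ss E_1=\lambda E_1$: expanding $(\nabla_{E_1}\tilde\Ss)W=(\nabla_W\tilde\Ss)E_1$ gives
\[
\nabla_{E_1}(\mu W)-\tilde\Ss\nabla_{E_1}W=\nabla_W(\lambda E_1)-\tilde\Ss\nabla_WE_1,
\]
i.e.\ $(\tilde\Ss-\mu)\nabla_{E_1}W=(\tilde\Ss-\lambda)\nabla_WE_1$ when $\lambda,\mu$ are constant. Decompose $\nabla_{E_1}W=\alpha_1 E_1+\alpha_2 E_2+\alpha_3 E_3+Y_\lambda+Y_\mu$ along the semi-null splitting $T_\lambda=\R E_1\oplus(T_\lambda\ominus\R E_2)$, the $E_2,E_3$ directions, and $T_\mu$ (here $Y_\lambda\in T_\lambda\ominus\R E_2$, $Y_\mu\in T_\mu$). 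Applying $\tilde\Ss-\mu$ and using $\tilde\Ss E_2=\lambda E_2+E_3$, $\tilde\Ss E_3=E_1+\lambda E_3$, the right-hand side $(\tilde\Ss-\lambda)\nabla_WE_1$ has no component in the $(\lambda-\mu)T_\mu$ summand unless $\nabla_WE_1$ itself has a $T_\mu$ component; conversely $(\tilde\Ss-\mu)\nabla_{E_1}W$ records $(\lambda-\mu)(\alpha_1E_1+\alpha_2E_2+Y_\lambda)+\alpha_2 E_3+\alpha_3 E_1$ on the $\lambda$-side and vanishes on $Y_\mu$. Matching components carefully will let me solve for all the $\alpha_i$ and for $Y_\lambda$ in terms of the $T_\mu$-component of $\nabla_WE_1$; to close the argument I then run the \emph{same} Codazzi identity with $(E_1,X)$ for $X\in T_\lambda\ominus\R E_2$ and with $(W,W')$ for $W,W'\in T_\mu$, and use the metric compatibility relations $\langle\nabla_{E_1}W,E_1\rangle+\langle W,\nabla_{E_1}E_1\rangle=0$ etc., together with the already-known facts $\nabla_{E_1}E_1=\lambda\cdot(\text{something controllable})$ coming from $(E_1,E_1,\cdot)$ Codazzi, to force $\alpha_1=\alpha_2=\alpha_3=0$ and $Y_\lambda=0$, leaving $\nabla_{E_1}W=Y_\mu\in\Gamma(T_\mu)$. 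The case $m_\mu=0$ is vacuous, and the case where $\tilde\Ss$ has only the eigenvalue $\lambda$ does not arise here since $W\in\Gamma(T_\mu)$ presupposes $\mu$ exists.

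\textbf{Main obstacle.} The delicate point is the bookkeeping around the null vectors $E_1,E_2,E_3$: because the basis is semi-null rather than orthonormal, ``taking components'' means pairing against $E_2$ to read off the $E_1$-coefficient and against $E_1$ to read off the $E_2$-coefficient, and the mixing $\tilde\Ss E_2=\lambda E_2+E_3$, $\tilde\Ss E_3=E_1+\lambda E_3$ feeds lower-triangular contributions between these three directions that must be untangled in the right order (first $E_2$-coefficient, then $E_3$-, then $E_1$-). The genuinely nontrivial input — which is why this is a Proposition and not an exercise — is that one also needs the \emph{Gauss} equation (or at least the constant-curvature structure together with the earlier analysis of $T_\lambda\ominus\R E_2$ as a spacelike space) to rule out a nonzero $Y_\lambda$ and a nonzero $T_\mu$-part of $\nabla_WE_1$: a priori the symmetric Codazzi identity only couples $\nabla_{E_1}W$ to $\nabla_WE_1$, so one must feed in an independent relation — obtained by applying Codazzi to $(E_1,W,X)$ with $X\in T_\lambda\ominus\R E_2$ and exploiting $\langle\tilde R(E_1,W)X,\xi^L\rangle=0$ — to break the symmetry and conclude. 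I expect that step, plus the careful sign bookkeeping from $\langle E_1,E_1\rangle=0$, $\langle E_1,E_2\rangle=1$, to be where all the real work lies.
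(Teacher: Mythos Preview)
Your overall architecture is right: Codazzi does most of the work, and the Gauss equation is needed to finish. But you have misidentified which component is the obstruction, and this matters because the Codazzi-only part and the Gauss-dependent part behave very differently.

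With the decomposition $\nabla_{E_1}W=\alpha_1 E_1+\alpha_2 E_2+\alpha_3 E_3+Y_\lambda+Y_\mu$ and the semi-null conventions, the Codazzi identity $(\nabla_{E_1}\tilde\Ss)W=(\nabla_W\tilde\Ss)E_1$ paired against $X\in T_\lambda$ and against $E_3$ kills $\alpha_2=\langle\nabla_{E_1}W,E_1\rangle$, $\alpha_3=\langle\nabla_{E_1}W,E_3\rangle$ and $Y_\lambda$ directly (this is the paper's Lemma~\ref{th:DE1Wa}). What Codazzi does \emph{not} kill is $\alpha_1=\langle\nabla_{E_1}W,E_2\rangle$, the coefficient along the null direction $E_1$: pairing the Codazzi identity with $E_2$ only yields the relation $(\lambda-\mu)\alpha_1=\langle\nabla_W E_1,E_3\rangle$, not $\alpha_1=0$. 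Metric compatibility does not help either, since $\langle\nabla_{E_1}W,E_2\rangle=-\langle W,\nabla_{E_1}E_2\rangle$ just trades one unknown for another. So your claim that Codazzi plus metric compatibility ``force $\alpha_1=\alpha_2=\alpha_3=0$ and $Y_\lambda=0$'' is wrong for $\alpha_1$; and conversely your statement that Gauss is needed ``to rule out a nonzero $Y_\lambda$'' is misplaced, since $Y_\lambda$ is already gone.

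The paper handles $\alpha_1$ as follows. First it proves $T_\mu$ is autoparallel (Lemma~\ref{th:DTmuTmu}), which you did not mention and which is essential. Then it evaluates the Gauss equation $\langle\tilde R(W,E_1)W,E_3\rangle=0$; using Lemma~\ref{th:DE1Wa}, autoparallelism of $T_\mu$, and three auxiliary Codazzi identities expressing $\langle\nabla_W E_1,E_3\rangle$, $\langle\nabla_{E_3}W,E_3\rangle$ and $\langle\nabla_W E_1,U\rangle$ (for $U\in T_\lambda\ominus\R E_2$) all as multiples of $\alpha_1$ or of $\langle\nabla_U W,E_3\rangle$, the Gauss equation collapses to
\[
(\lambda-\mu)\Bigl(3\,\alpha_1^{\,2}+\sum_i\langle\nabla_{U_i}W,E_3\rangle^2\Bigr)=0,
\]
a sum of squares that forces $\alpha_1=0$. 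This positivity mechanism, together with the autoparallelism of $T_\mu$, is the genuine content you are missing; without it the argument does not close.
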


We may assume $m_\mu>0$; otherwise, if $m_\mu=0$, this is trivial. We will carry out the proof in several steps. The first step almost finishes the argument except for an $E_1$-component.

\begin{lemma}\label{th:DE1Wa}
For any $W\in\Gamma(T_\mu)$ we have $\tilde{\nabla}_{E_1}W\in\Gamma(\R E_1\oplus T_\mu)$.
\end{lemma}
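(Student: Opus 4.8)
The plan is to exploit the Codazzi equation of $\tilde{\mathcal{W}}$ in the anti-De Sitter space, testing it against suitable triples built from $E_1$, a vector field $W\in\Gamma(T_\mu)$, and the null vectors $e_1,e_2,e_3$ of the type~III frame. Since the ambient curvature tensor $\tilde R$ has constant sectional curvature $c/4$, the Codazzi equation reduces to the symmetry $(\nabla_X\tilde{\Ss})Y=(\nabla_Y\tilde{\Ss})X$. The strategy is therefore to compute $\langle(\nabla_{E_1}\tilde{\Ss})W-(\nabla_W\tilde{\Ss})E_1,\,\eta\rangle=0$ for $\eta$ ranging over a spanning set of $T\tilde{\mathcal{W}}$, and to read off which components of $\tilde{\nabla}_{E_1}W$ must vanish.

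First I would record the basic covariant-derivative identities for the type~III frame. Because $\tilde{\Ss}E_1=\lambda E_1$, $\tilde{\Ss}E_2=\lambda E_2+E_3$, $\tilde{\Ss}E_3=E_1+\lambda E_3$, and $\tilde{\Ss}W=\mu W$, with $\lambda,\mu$ constant, one has $(\nabla_X\tilde{\Ss})Y=\nabla_X(\tilde{\Ss}Y)-\tilde{\Ss}\nabla_XY$, which only involves the fixed structure $(\lambda,\mu,E_1,E_2,E_3)$ and the Christoffel-type data $\langle\nabla_\cdot(\,\cdot\,),\,\cdot\,\rangle$ of the frame. I would then pair the Codazzi identity applied to $(E_1,W)$ successively against $E_1$, $E_2$, $E_3$, against a generic $X'\in\Gamma(T_\lambda\ominus\R E_2)$, and against a generic $W'\in\Gamma(T_\mu)$. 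Against $E_1$ (a null vector with $\langle E_1,E_1\rangle=0$, $\langle E_1,E_2\rangle=1$) one extracts the $E_2$-component of $\tilde{\nabla}_{E_1}W$; against the spacelike fields in $T_\lambda\ominus\R E_2$ one extracts those components, and against $E_3$ and the $T_\mu$-directions one handles the remaining ones. Using $c+4\lambda\mu=0$ (when $\mu$ exists) to guarantee $\lambda-\mu\neq0$ and $\lambda,\mu\neq0$, each of these pairings should force the corresponding component of $\tilde{\nabla}_{E_1}W$ to be zero, leaving only the possibility of an $E_1$-component plus a $T_\mu$-component; that is exactly the claim $\tilde{\nabla}_{E_1}W\in\Gamma(\R E_1\oplus T_\mu)$. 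One must also use $\langle\tilde{\nabla}_{E_1}W,\xi^L\rangle=-\langle W,\tilde{\nabla}_{E_1}\xi^L\rangle=\langle\tilde{\Ss}E_1,W\rangle=0$ to kill the normal component, so the computation genuinely lives inside $T\tilde{\mathcal{W}}$.

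The main obstacle I anticipate is the bookkeeping created by the non-orthonormal (semi-null) nature of the basis $\{e_1,e_2,e_3\}$ together with the presence of the Jordan block: lowering and raising indices swaps $e_1\leftrightarrow e_2$, and the off-diagonal $1$'s in $\tilde{\Ss}$ mean that testing Codazzi against $e_i$ mixes several frame directions at once, so the linear system one obtains for the unknown components of $\tilde{\nabla}_{E_1}W$ is coupled rather than diagonal. The delicate point will be to choose the order of the test vectors so that the system can be solved triangularly, and to confirm that the coefficient determinants are nonzero precisely because of $c+4\lambda\mu=0$ and $\lambda\in(-\sqrt{-c}/2,\sqrt{-c}/2)$ (Proposition~\ref{th:typeiii}); a secondary subtlety is that some Codazzi contractions will also produce terms like $\langle\nabla_WE_1,E_2\rangle$ or $\langle\nabla_XE_1,E_2\rangle$ which are not a priori zero, so I would need to track them and show they drop out of the particular combination that bounds $\tilde{\nabla}_{E_1}W$, rather than trying to prove they vanish individually. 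Once $\tilde{\nabla}_{E_1}W\in\Gamma(\R E_1\oplus T_\mu)$ is established, the full Proposition~\ref{th:DE1W} would follow in the subsequent lemmas by eliminating the residual $E_1$-component, presumably via a further Codazzi or Gauss argument.
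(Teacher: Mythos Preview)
Your approach is essentially the paper's: kill the normal component via $\langle\tilde{\Ss}E_1,W\rangle=0$, then use the Codazzi identity $(\nabla_{E_1}\tilde{\Ss})W=(\nabla_W\tilde{\Ss})E_1$ paired against test vectors. The paper's execution, however, is much shorter than you anticipate, and the ``coupled system'' you worry about does not materialize. The key observation you are missing is that you should test first against $X\in\Gamma(T_\lambda)$ (this includes $E_1$ itself and all of $T_\lambda\ominus\R E_2$): since both $E_1$ and $X$ are genuine $\lambda$-eigenvectors, the terms $-\lambda\langle\nabla_WE_1,X\rangle+\langle\nabla_WE_1,\tilde{\Ss}X\rangle$ cancel identically, leaving $(\mu-\lambda)\langle\nabla_{E_1}W,X\rangle=0$ with no extraneous $\nabla_WE_1$ contributions at all. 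Only then do you test against $E_3$, where the off-diagonal $\tilde{\Ss}E_3=E_1+\lambda E_3$ produces an extra $\langle\nabla_{E_1}W,E_1\rangle$ term---but that was just shown to vanish---and a $\langle\nabla_WE_1,E_1\rangle$ term, which is zero because $E_1$ is null; the result is again $(\mu-\lambda)\langle\nabla_{E_1}W,E_3\rangle=0$. There is no need to test against $E_2$ or against $W'\in\Gamma(T_\mu)$, since those components are precisely the ones allowed by the conclusion.
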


\begin{proof}
First, recall that $\tilde{\nabla}_{E_1}W=\nabla_{E_1}W+\langle\Ss E_1,W\rangle\xi^L=\nabla_{E_1}W$, so it suffices to work with~$\nabla$. Let $X\in\Gamma(\R E_3\oplus T_\lambda)$. The result follows if we show $\langle\nabla_{E_1}W,X\rangle=0$. First of all, the Codazzi equation and the fact that $\Ss$ is self-adjoint imply:
\begin{align*}
0
&{}=\langle\tilde{R}(E_1,W)X,\xi^L\rangle
=\langle(\nabla_{E_1}\Ss)W,X\rangle-\langle(\nabla_{W}\Ss)E_1,X\rangle\\
&{}=\mu\langle\nabla_{E_1}W,X\rangle-\langle\nabla_{E_1}W,\Ss X\rangle
-\lambda\langle\nabla_W E_1,X\rangle+\langle\nabla_W E_1,\Ss X\rangle.
\end{align*}
Taking $X\in\Gamma(T_\lambda)$ in this formula gives $0=(\mu-\lambda)\langle\nabla_{E_1}W,X\rangle$. In particular, $\langle\nabla_{E_1}W,E_1\rangle=0$. Using this, $\langle\nabla_W E_1,E_1\rangle=0$ (because $\langle E_1,E_1\rangle =0$), and putting $X=E_3$ in the previous equation yields
\begin{align*}
0
&{}=\mu\langle\nabla_{E_1}W,E_3\rangle-\langle\nabla_{E_1}W,E_1+\lambda E_3\rangle
-\lambda\langle\nabla_W E_1,E_3\rangle+\langle\nabla_W E_1,E_1+\lambda E_3\rangle\\
&{}=(\mu-\lambda)\langle\nabla_{E_1}W,E_3\rangle,
\end{align*}
from where the assertion follows.
\end{proof}

Thus, in order to conclude the proof of Proposition~\ref{th:DE1W} it just remains to show that $\langle\nabla_{E_1}W,E_2\rangle=0$. This will take most of the effort of this subsection. The next lemma is known (see for example~\cite[Propostion~2.6]{H84}), but we include its proof here for the sake of completeness.

\begin{lemma}\label{th:DTmuTmu}
$T_\mu$ is an autoparallel distribution: if $W_1$, $W_2\in\Gamma(T_\mu)$, then $\nabla_{W_1}W_2\in\Gamma(T_\mu)$.
\end{lemma}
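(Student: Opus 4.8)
The statement is that $T_\mu$ is autoparallel, i.e.\ $\nabla_{W_1}W_2\in\Gamma(T_\mu)$ whenever $W_1,W_2\in\Gamma(T_\mu)$. The natural tool is the Codazzi equation, since $\tilde{M}$ is isoparametric and $\mu$ is a principal curvature whose algebraic and geometric multiplicities coincide (it corresponds to a genuine eigenspace). The plan is to decompose $T_q\tilde{M}=\R E_1\oplus(T_\lambda\ominus\R E_2)\oplus\R E_2\oplus\R E_3\oplus T_\mu$ and check that $\langle\nabla_{W_1}W_2,X\rangle=0$ for $X$ ranging over each of the first four pieces; by the non-degeneracy of the induced metric on $\tilde{M}$ and the semi-null structure $\langle E_1,E_2\rangle=\langle E_3,E_3\rangle=1$, $\langle E_i,E_j\rangle=0$ otherwise, this suffices.

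\textbf{Key steps.} First I would record, as usual, that $\tilde{\nabla}_{W_1}W_2=\nabla_{W_1}W_2+\langle\Ss W_1,W_2\rangle\xi^L=\nabla_{W_1}W_2+\mu\langle W_1,W_2\rangle\xi^L$, so the tangential part $\nabla_{W_1}W_2$ is all that matters, and $\langle\nabla_{W_1}W_2,W_2'\rangle$ for $W_2'\in T_\mu$ is unconstrained (that component lies in $T_\mu$ already). Next, for $X$ in the orthogonal part, the Codazzi equation reads
\[
0=\langle\tilde R(W_1,W_2)X,\xi^L\rangle=\langle(\nabla_{W_1}\Ss)W_2-(\nabla_{W_2}\Ss)W_1,X\rangle,
\]
since $\tilde R(\,\cdot\,,\,\cdot\,)\xi^L$ vanishes in a space form with spacelike unit normal in the $\xi^L$-direction (more precisely $\langle\tilde R(Y,Z)X,\xi^L\rangle=0$ for $X,Y,Z$ tangent). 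Expanding $(\nabla_{W_i}\Ss)W_j=\mu\nabla_{W_i}W_j-\Ss\nabla_{W_i}W_j$ and pairing with $X$, one gets $\mu\langle\nabla_{W_1}W_2-\nabla_{W_2}W_1,X\rangle=\langle\nabla_{W_1}W_2-\nabla_{W_2}W_1,\Ss X\rangle$, i.e.\ $\mu\langle[W_1,W_2],X\rangle=\langle[W_1,W_2],\Ss X\rangle$. Taking $X\in T_\lambda\ominus\R E_2$ or $X=E_1$ (where $\Ss X=\lambda X$) gives $(\mu-\lambda)\langle[W_1,W_2],X\rangle=0$, hence $\langle[W_1,W_2],X\rangle=0$ for all such $X$ since $\lambda\neq\mu$. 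This already shows $\langle\nabla_{W_1}W_2,X\rangle$ is symmetric in $(W_1,W_2)$ along those directions, but to kill it outright I combine this with the fact that $\langle\nabla_{W_1}W_2,X\rangle+\langle W_2,\nabla_{W_1}X\rangle=W_1\langle W_2,X\rangle=0$ and then reduce to showing $\langle\nabla_{W_1}X,W_2\rangle=0$, or alternatively argue directly: since the bracket term is handled, it remains to treat the symmetric part $\langle\nabla_{W_1}W_2+\nabla_{W_2}W_1,X\rangle$, which for $X=E_1,E_3$ and $X\in T_\lambda\ominus\R E_2$ I would dispatch using once more the Codazzi relation with the nilpotent block: feeding $(W_1,E_2,E_3)$ and $(W_1,E_3,E_2)$ type triples into Codazzi produces the extra relations needed to pin down the $E_2$-component of $\nabla_{W_1}W_2$, which is the subtle one because $\Ss E_2=\lambda E_2+E_3$ mixes the semi-null vectors.

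\textbf{Expected main obstacle.} The routine part is the $T_\lambda\ominus\R E_2$ and $E_1$ components, handled cleanly by $\lambda\neq\mu$. The delicate point is the $E_2$-component $\langle\nabla_{W_1}W_2,E_2\rangle$, precisely because $E_2\notin T_\lambda$ and the shape operator acts on it by the nondiagonal block $\tilde\Ss E_2=\lambda E_2+E_3$; the naive Codazzi pairing produces a term involving $\langle[W_1,W_2],E_3\rangle$, so I must first establish $\langle\nabla_{W_1}W_2,E_3\rangle=0$ (which follows from the previous paragraph's computation with $X=E_3$, noting $\Ss E_3=E_1+\lambda E_3$ and that the $E_1$-component has already been shown to vanish) and only then can I isolate the $E_2$-component. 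This is the same mechanism seen in Lemma~\ref{th:DE1Wa}, applied now with both slots in $T_\mu$, so I expect the argument to be short once the order of the computations (first $E_3$, then $E_2$) is respected.
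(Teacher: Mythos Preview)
Your overall strategy (use Codazzi, then cascade through $T_\lambda\ni E_1\to E_3\to E_2$ exploiting the nilpotent block) matches the paper's proof. However, there is a genuine gap in your execution: you apply the Codazzi equation to the triple $(W_1,W_2,X)$, which yields
\[
0=\langle(\nabla_{W_1}\tilde\Ss)W_2-(\nabla_{W_2}\tilde\Ss)W_1,X\rangle
=\langle(\mu I-\tilde\Ss)[W_1,W_2],X\rangle,
\]
so you only obtain information about $[W_1,W_2]$, not about $\nabla_{W_1}W_2$ itself. Your proposed fixes do not close this gap: reducing to $\langle\nabla_{W_1}X,W_2\rangle=0$ is equivalent to what you want (so it is circular), and the triples ``$(W_1,E_2,E_3)$'' you suggest do not involve $W_2$ at all, hence cannot constrain $\nabla_{W_1}W_2$.

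The fix, which is exactly what the paper does, is to apply Codazzi with $X$ in one of the \emph{derivative} slots, i.e.\ to the triple $(X,W_1,W_2)$:
\[
0=\langle(\nabla_X\tilde\Ss)W_1,W_2\rangle-\langle(\nabla_{W_1}\tilde\Ss)X,W_2\rangle.
\]
Since $W_1,W_2\in\Gamma(T_\mu)$, the first term vanishes identically, and the second term rewrites (using $\langle\tilde\Ss X,W_2\rangle=\mu\langle X,W_2\rangle=0$) as $\langle\nabla_{W_1}W_2,\tilde\Ss X-\mu X\rangle=0$. Now your cascade works verbatim: for $X\in\Gamma(T_\lambda)$ this gives $(\lambda-\mu)\langle\nabla_{W_1}W_2,X\rangle=0$; in particular $\langle\nabla_{W_1}W_2,E_1\rangle=0$, whence $X=E_3$ gives $(\lambda-\mu)\langle\nabla_{W_1}W_2,E_3\rangle=0$, and finally $X=E_2$ gives $(\lambda-\mu)\langle\nabla_{W_1}W_2,E_2\rangle=0$.
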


\begin{proof}
Let $X\in\Gamma(\R E_2\oplus\R E_3\oplus T_\lambda)$. It suffices to prove that $\langle\nabla_{W_1}W_2,X\rangle=0$. Since $\Ss$ is self-adjoint and $\Ss X$ is orthogonal to $T_\mu$, the Codazzi equation implies
\begin{align*}
0
&{}=\langle\tilde{R}(X,W_1)W_2,\xi^L\rangle
=\langle(\nabla_X\Ss)W_1,W_2\rangle-\langle(\nabla_{W_1}\Ss)X,W_2\rangle\\
&{}=-\langle\nabla_{W_1}\Ss X,W_2\rangle+\langle\Ss\nabla_{W_1}X,W_2\rangle
=\langle\nabla_{W_1}W_2,\Ss X-\mu X\rangle.
\end{align*}
Taking $X\in\Gamma(T_\lambda)$ in this formula yields $0=(\lambda-\mu)\langle\nabla_{W_1}W_2,X\rangle=0$. In particular, $\langle\nabla_{W_1}W_2,E_1\rangle=0$. This, and setting $X=E_3$ above yields
\[
0=\langle\nabla_{W_1}W_2,E_1+\lambda E_3-\mu E_3\rangle=(\lambda-\mu)\langle\nabla_{W_1}W_2,E_3\rangle.
\]
This equation, and setting $X=E_2$ in the previous equation yields
\[
0=\langle\nabla_{W_1}W_2,\lambda E_2+E_3-\mu E_2\rangle=(\lambda-\mu)\langle\nabla_{W_1}W_2,E_2\rangle,
\]
as we wanted to show.
\end{proof}

In order to finish the proof of Proposition~\ref{th:DE1W} we use the Gauss equation to get
\begin{align}
0
&{}=\langle\tilde{R}(W,E_1)W,E_3\rangle
=\langle{R}(W,E_1)W,E_3\rangle\nonumber\\
&\phantom{{}={}}+\langle\Ss W,W\rangle\langle\Ss E_1,E_3\rangle
-\langle\Ss E_1,W\rangle\langle\Ss W,E_3\rangle\nonumber\\
&{}=\langle\nabla_{W}\nabla_{E_1} W,E_3\rangle
-\langle\nabla_{E_1}\nabla_{W}W,E_3\rangle
-\langle\nabla_{\nabla_{W}E_1}W,E_3\rangle
+\langle\nabla_{\nabla_{E_1}W}W,E_3\rangle.\label{eq:gauss1w3w}
\end{align}

Lemma~\ref{th:DE1Wa} yields $\nabla_{E_1}W\in\Gamma(\R E_1\oplus T_\mu)$. Write $\nabla_{E_1}W=\langle\nabla_{E_1}W,E_2\rangle E_1+(\nabla_{E_1}W)_{T_\mu}$ accordingly. By Lemma~\ref{th:DTmuTmu}, $\nabla_W(\nabla_{E_1}W)_{T_\mu}\in\Gamma(T_\mu)$, and thus $\langle\nabla_W(\nabla_{E_1}W)_{T_\mu},E_3\rangle=0$. Since $\langle E_1,E_3\rangle=0$, this implies $\langle\nabla_{W}\nabla_{E_1} W,E_3\rangle=\langle\nabla_{E_1}W,E_2\rangle\langle\nabla_W E_1,E_3\rangle$.

From Lemma~\ref{th:DTmuTmu} we have $\nabla_WW\in\Gamma(T_\mu)$, and thus Lemma~\ref{th:DE1Wa} yields $\nabla_{E_1}\nabla_W W\in\Gamma(\R E_1\oplus T_\mu)$. Hence, $\langle\nabla_{E_1}\nabla_{W}W,E_3\rangle=0$.

Lemma~\ref{th:DE1Wa} yields $\nabla_{E_1}W\in\Gamma(\R E_1\oplus T_\mu)$, which together with lemmas~\ref{th:DE1Wa} and~\ref{th:DTmuTmu} gives $\langle\nabla_{\nabla_{E_1}W}W,E_3\rangle=0$.

Hence, \eqref{eq:gauss1w3w} now reads
\begin{equation}\label{eq:gauss2}
0=\langle\nabla_{E_1}W,E_2\rangle\langle\nabla_W E_1,E_3\rangle
-\langle\nabla_{\nabla_{W}E_1}W,E_3\rangle.
\end{equation}

\begin{lemma}
Let $U\in\Gamma(T_\lambda\ominus\R E_2)$ and $W\in\Gamma(T_\mu)$. Then,
\begin{align}
\langle\nabla_W E_1,E_3\rangle={}
&(\lambda-\mu)\langle\nabla_{E_1}W,E_2\rangle,\label{eq:DWE1E3}\\
\langle\nabla_{E_3}W,E_3\rangle={}
&-2\langle\nabla_{E_1}W,E_2\rangle,\label{eq:DE3WE3}\\
\langle\nabla_W E_1,U\rangle={}
&-(\lambda-\mu)\langle\nabla_U W,E_3\rangle.\label{eq:DWE1U}
\end{align}
\end{lemma}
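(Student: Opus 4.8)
The plan is to derive all three identities from the Codazzi equation of the Lorentzian hypersurface $\tilde{\mathcal{W}}$ in $H^{2n+1}_1$. Since $H^{2n+1}_1$ has constant curvature, $\langle\tilde{R}(X,Y)Z,\xi^L\rangle=0$ whenever $X$, $Y$, $Z$ are tangent to $\tilde{\mathcal{W}}$, so the Codazzi equation reduces to $\langle(\nabla_X\Ss)Y,Z\rangle=\langle(\nabla_Y\Ss)X,Z\rangle$ for all tangent $X$, $Y$, $Z$. I would expand this relation for a handful of well-chosen triples from $\{E_1,E_2,E_3,U,W\}$, using \eqref{eq:seminull_basis}, the constancy of $\lambda$ and $\mu$, the orthogonality relations among the chosen frame that follow from \eqref{eq:seminull_basis} and the self-adjointness of $\Ss$ (in particular that $E_1$ is null, that $E_1\perp E_3$, and that $E_1$, $E_3$ and $U$ are all orthogonal to $T_\mu$), and the Leibniz rule applied to the constant functions $\langle E_i,E_j\rangle$, $\langle U,E_i\rangle$, $\langle W,E_i\rangle$.

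For \eqref{eq:DWE1E3} I would take the triple $(E_1,W,E_2)$: after using $\Ss W=\mu W$, $\Ss E_1=\lambda E_1$ and $\Ss E_2=\lambda E_2+E_3$, the $\langle\nabla_W E_1,E_2\rangle$ terms cancel and the $\langle\nabla_{E_1}W,E_2\rangle$ terms combine, leaving $(\mu-\lambda)\langle\nabla_{E_1}W,E_2\rangle-\langle\nabla_{E_1}W,E_3\rangle+\langle\nabla_W E_1,E_3\rangle=0$. Lemma~\ref{th:DE1Wa} gives $\nabla_{E_1}W\in\Gamma(\R E_1\oplus T_\mu)$, which is orthogonal to $E_3$, so $\langle\nabla_{E_1}W,E_3\rangle=0$ and \eqref{eq:DWE1E3} drops out.

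For \eqref{eq:DWE1U} I would first record the auxiliary identity $\langle\nabla_U W,E_1\rangle=0$, obtained by applying Codazzi to $(U,W,E_1)$: using $\Ss U=\lambda U$, $\Ss W=\mu W$ and $\Ss E_1=\lambda E_1$, everything collapses to $(\mu-\lambda)\langle\nabla_U W,E_1\rangle=0$. Applying Codazzi to $(U,W,E_3)$ and using $\Ss E_3=E_1+\lambda E_3$, the $\lambda\langle\nabla_W U,E_3\rangle$ terms cancel and what remains is $(\mu-\lambda)\langle\nabla_U W,E_3\rangle-\langle\nabla_U W,E_1\rangle+\langle\nabla_W U,E_1\rangle=0$; substituting $\langle\nabla_U W,E_1\rangle=0$ and $\langle\nabla_W U,E_1\rangle=-\langle\nabla_W E_1,U\rangle$ (differentiate $\langle U,E_1\rangle=0$ along $W$) produces \eqref{eq:DWE1U}. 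For \eqref{eq:DE3WE3} I would proceed analogously: Codazzi at $(E_3,W,E_1)$ together with $\langle\nabla_W E_1,E_1\rangle=0$ reduces to $(\mu-\lambda)\langle\nabla_{E_3}W,E_1\rangle=0$, and then Codazzi at $(E_3,W,E_3)$, using $\Ss E_3=E_1+\lambda E_3$, the vanishing of $\langle\nabla_{E_3}W,E_1\rangle$, and $\langle\nabla_W E_3,E_1\rangle=-\langle\nabla_W E_1,E_3\rangle$, collapses to $(\mu-\lambda)\langle\nabla_{E_3}W,E_3\rangle-2\langle\nabla_W E_1,E_3\rangle=0$; combined with \eqref{eq:DWE1E3} and $\mu\neq\lambda$ this is \eqref{eq:DE3WE3}.

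There is no serious obstacle here: the work is entirely the bookkeeping of which frame components survive each expansion, and the one idea worth isolating is that any relation of the shape $(\mu-\lambda)\langle\,\cdot\,\rangle=0$ immediately forces the bracketed quantity to vanish because $\lambda\neq\mu$; this is precisely what yields the two auxiliary facts $\langle\nabla_U W,E_1\rangle=0$ and $\langle\nabla_{E_3}W,E_1\rangle=0$. No Gauss equation is required at this stage — equation~\eqref{eq:gauss2} enters only afterwards, in combination with these three identities, to finish the proof of Proposition~\ref{th:DE1W}.
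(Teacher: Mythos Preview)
Your proposal is correct and follows essentially the same approach as the paper: both derive all three identities by expanding the Codazzi equation on well-chosen triples from $\{E_1,E_2,E_3,U,W\}$, using the auxiliary vanishings $\langle\nabla_{E_3}W,E_1\rangle=0$ and (equivalently) $\langle\nabla_U W,E_1\rangle=0$ or $\langle\nabla_U E_1,W\rangle=0$ along the way. Your route to \eqref{eq:DWE1U} via the pair $(U,W,E_1)$, $(U,W,E_3)$ is even slightly more economical than the paper's three-step argument through $(E_1,U,W)$, $(E_3,U,W)$, $(E_3,W,U)$, but this is a minor variation within the same method.
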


\begin{proof}
The Codazzi equation and Lemma~\ref{th:DE1Wa} imply
\begin{align*}
0
&{}=\langle\tilde{R}(E_1,W)E_2,\xi^L\rangle
=\langle(\nabla_{E_1}\Ss)W,E_2\rangle-\langle(\nabla_W \Ss)E_1,E_2\rangle\\
&{}=\mu\langle\nabla_{E_1}W,E_2\rangle
-\langle\nabla_{E_1}W,\Ss E_2\rangle
-\lambda\langle\nabla_W E_1,E_2\rangle
+\langle\nabla_W E_1,\Ss E_2\rangle\\
&{}=(\mu-\lambda)\langle\nabla_{E_1}W,E_2\rangle+\langle\nabla_W E_1,E_3\rangle,
\end{align*}
from where we get~\eqref{eq:DWE1E3}.

We also have
\begin{align*}
0
&{}=\langle\tilde{R}(E_3,W)E_1,\xi^L\rangle
=\langle(\nabla_{E_3}\Ss)W,E_1\rangle-\langle(\nabla_W \Ss)E_3,E_1\rangle
=(\mu-\lambda)\langle\nabla_{E_3}W,E_1\rangle.
\end{align*}
Thus, $\langle\nabla_{E_3}W,E_1\rangle=0$. This, the Codazzi equation, and~\eqref{eq:DWE1E3} yield
\begin{align*}
0
&{}=\langle\tilde{R}(E_3,W)E_3,\xi^L\rangle
=\langle(\nabla_{E_3}\Ss)W,E_3\rangle-\langle(\nabla_W \Ss)E_3,E_3\rangle\\
&{}=(\mu-\lambda)\langle\nabla_{E_3}W,E_3\rangle
-\langle\nabla_{E_3}W,E_1\rangle
-2\langle\nabla_W E_1,E_3\rangle\\
&{}=-(\lambda-\mu)\langle\nabla_{E_3}W,E_3\rangle
-2(\lambda-\mu)\langle\nabla_{E_1}W,E_2\rangle,
\end{align*}
which gives~\eqref{eq:DE3WE3}.

Now, the Codazzi equation and Lemma~\ref{th:DE1Wa} imply
\begin{align*}
0
&{}=\langle\tilde{R}(E_1,U)W,\xi^L\rangle
=\langle(\nabla_{E_1}\Ss)U,W\rangle-\langle(\nabla_U \Ss)E_1,W\rangle\\
&{}=(\lambda-\mu)\langle\nabla_{E_1}U,W\rangle
-(\lambda-\mu)\langle\nabla_U E_1,W\rangle
=-(\lambda-\mu)\langle\nabla_U E_1,W\rangle,
\end{align*}
and thus we get $\langle\nabla_U E_1,W\rangle=0$. This implies
\begin{align*}
0
&{}=\langle\tilde{R}(E_3,U)W,\xi^L\rangle
=\langle(\nabla_{E_3}\Ss)U,W\rangle-\langle(\nabla_U \Ss)E_3,W\rangle\\
&{}=(\lambda-\mu)\langle\nabla_{E_3}U,W\rangle
-\langle\nabla_{U}E_1,W\rangle
-(\lambda-\mu)\langle\nabla_U E_3,W\rangle\\
&{}=(\lambda-\mu)(\langle\nabla_{E_3}U,W\rangle-\langle\nabla_U E_3,W\rangle),
\end{align*}
from where we obtain $\langle\nabla_{E_3}W,U\rangle=\langle\nabla_U W,E_3\rangle$.
Finally, this equation gives
\begin{align*}
0
&{}=\langle\tilde{R}(E_3,W)U,\xi^L\rangle
=\langle(\nabla_{E_3}\Ss)W,U\rangle-\langle(\nabla_W \Ss)E_3,U\rangle\\
&{}=(\mu-\lambda)\langle\nabla_{E_3}W,U\rangle
-\langle\nabla_{W}E_1,U\rangle
=-(\lambda-\mu)\langle\nabla_{U}W,E_3\rangle
-\langle\nabla_{W}E_1,U\rangle,
\end{align*}
which concludes the proof of the lemma.
\end{proof}

Now we come back to~\eqref{eq:gauss2} and finish the proof of Proposition~\ref{th:DE1W}.

Using Lemma~\ref{th:DTmuTmu} we see that $\nabla_W E_1\in\Gamma(\R E_1\oplus\R E_3\oplus T_\lambda)$. Take $\{U_1,\dots,U_k\}$ an orthonormal basis of vector fields of the distribution $T_\lambda\ominus \R E_2$. Thus, we can write, taking into account~\eqref{eq:DWE1E3} and~\eqref{eq:DWE1U},
\begin{equation}\label{eq:DWE1}
\begin{aligned}
\nabla_W E_1
&{}=\langle\nabla_W E_1,E_2\rangle E_1+
\langle\nabla_W E_1,E_3\rangle E_3
+\sum_{i=1}^k\langle\nabla_W E_1,U_i\rangle U_i\\[-2ex]
&{}=\langle\nabla_W E_1,E_2\rangle E_1+
(\lambda-\mu)\langle\nabla_{E_1}W,E_2\rangle E_3
-(\lambda-\mu)\sum_{i=1}^k\langle\nabla_{U_i}W,E_3\rangle U_i.
\end{aligned}
\end{equation}
Hence, using~\eqref{eq:DWE1}, Lemma~\ref{th:DE1Wa},~\eqref{eq:DWE1E3} and~\eqref{eq:DE3WE3}, Equation~\eqref{eq:gauss2} becomes
\begin{align*}
0
&{}=(\lambda-\mu)\langle\nabla_{E_1}W,E_2\rangle^2
-\langle\nabla_W E_1,E_2\rangle\langle\nabla_{E_1}W,E_3\rangle
-(\lambda-\mu)\langle\nabla_{E_1}W,E_2\rangle
\langle\nabla_{E_3}W,E_3\rangle\\[-1ex]
&\phantom{{}={}}+(\lambda-\mu)\sum_{i=1}^k
\langle\nabla_{U_i}W,E_3\rangle\langle\nabla_{U_i}W,E_3\rangle\\[-2ex]
&{}=(\lambda-\mu)\Bigl(3\langle\nabla_{E_1}W,E_2\rangle^2
+\sum_{i=1}^k \langle\nabla_{U_i}W,E_3\rangle^2\Bigr).
\end{align*}
Since the addends are all nonnegative, we must have
\[
\langle\nabla_{E_1}W,E_2\rangle=0, \text{ and } \langle\nabla_U W,E_3\rangle=0
\text{ for any $U\in\Gamma(T_\lambda\ominus\R E_2)$ and $W\in\Gamma(T_\mu)$},
\]
which is what was left to finish the proof of Proposition~\ref{th:DE1W}.

%

\subsection{Parallel hypersurfaces and the focal manifold}\label{sec:jacobi}\hfill

We continue to denote by $\tilde{\mathcal{W}}$ a connected open subset of the Lorentzian isoparametric hypersurface $\tilde{M}=\pi^{-1}(M)$ of the anti-De Sitter space $H_1^{2n+1}$ where all points are of type~III, and let $\mathcal{W}=\pi(\tilde{\mathcal{W}})\subset M$. If $\xi$ denotes a unit normal vector field along $\mathcal{W}$, then $\xi^{L}$ is a local unit vector field along $\tilde{\mathcal{W}}$. As a matter of notation, $\tilde{\gamma}_{q}$ will be the geodesic in $H_1^{2n+1}$ such that $\tilde{\gamma}_{q}(0) =q\in\tilde{\mathcal{W}}$ and ${\tilde{\gamma}}'_{q} (0) =  \xi^L_q$. Accordingly, we write $\gamma_p=\pi\circ\tilde{\gamma}_q$ for the geodesic in $\mathbb{C}H^n$ with initial conditions $\gamma_p(0) = p =\pi(q)$ and ${\gamma}'_p(0)= \xi_p$.

Recall from Section~\ref{sec:submanifold} the definition of the map $\tilde{\Phi}^t\colon\tilde{\mathcal{W}}\to H_1^{2n+1}$, given by $\tilde{\Phi}^t(q)=\exp_q(t\xi^L)=\tilde{\gamma}_q(t)$, where $\exp$ is the Riemannian exponential map. We also consider the vector field $\eta^t$ along $\tilde{\Phi}^t$ defined by $\eta^t(q)=\tilde{\gamma}'_q(t)$.

The differential of $\tilde{\Phi}^t$ is given by $\tilde{\Phi}^t_{*q}(X)=\zeta_X(t)$, where $\zeta_X$ is a Jacobi vector field along $\tilde{\gamma}_q$ with initial conditions $\zeta_X(0)=X\in T_q\tilde{\mathcal{W}}$, and $\zeta_X'(0)=-\tilde{\Ss} X$, where $(\cdot)'$ stands for covariant differentiation along $\tilde{\gamma}_q$ (see~\cite[\S8.2]{BCO03}). Since $H_1^{2n+1}$ is a space of constant sectional curvature $c/4$ and $\tilde{\gamma}'$ is spacelike, it follows that the Jacobi equation is written as $4\zeta_X''+c\zeta_X=0$.

Let $\mathcal{P}_X(t)$ denote the parallel translation of $X\in T_q\tilde{\mathcal{W}}$ along $\tilde{\gamma}_q$. For $\nu\in\R$, we also define
\[
g_\nu(t)=\cosh \Bigl(\frac{t \sqrt{-c}}{2}\Bigr) - \frac{2\nu}{\sqrt{-c}} \sinh\Bigl(\frac{t \sqrt{-c}}{2}\Bigr)\quad \text{ and }\quad
h(t)=-\frac{2}{\sqrt{-c}}\sinh\Bigl(\frac{t \sqrt{-c}}{2}\Bigr).
\]
Solving the Jacobi equation we get
\begin{equation}\label{eq:Jacobi}
\begin{aligned}
\zeta_X(t)&{}=g_\lambda(t)\mathcal{P}_X(t),\ \text{ if $X\in T_\lambda(q)$,}&
\zeta_X(t)&{}=g_\mu(t)\mathcal{P}_X(t),\ \text{ if $X\in T_\mu(q)$,}\\
\zeta_{E_2(q)}(t)&{}=g_\lambda(t)\mathcal{P}_{E_2(q)}(t)
+h(t)\mathcal{P}_{E_3(q)}(t),&
\zeta_{E_3(q)}(t)&{}=h(t)\mathcal{P}_{E_1(q)}(t)
+g_\lambda(t)\mathcal{P}_{E_3(q)}(t).
\end{aligned}
\end{equation}

Since we are denoting by $\lambda$ the principal curvature whose geometric and algebraic multiplicities do not coincide, it follows from Proposition~\ref{th:typeiii} that $\lvert\lambda\rvert<\sqrt{-c}/2$. We assume, changing the orientation if necessary, that $\lambda\geq 0$. Recall that, if a second distinct principal curvature $\mu$ exists, then $c+4\lambda\mu=0$, which implies $\lambda,\mu\neq0$. We may choose $r\geq 0$ such that
\begin{equation}\label{eq:r}
\lambda = \frac{\sqrt{-c}}{2} \tanh\Bigl(\frac{r \sqrt{-c}}{2}\Bigr)\quad
\text{ and }\quad
\mu=\frac{\sqrt{-c}}{2} \coth\Bigl(\frac{r \sqrt{-c}}{2}\Bigr).
\end{equation}

Coming back to the differential of $\tilde{\Phi}^t$, it now follows from $\tilde{\Phi}^t_{*}(X)=\zeta_X(t)$ and \eqref{eq:Jacobi} that, if $t\in [0,r)$, then $\tilde{\Phi}^t_{*}$ is an isomorphism for each $q\in\tilde{\mathcal{W}}$. This is simply because $g_\lambda,g_\mu>0$ in $[0,r)$. Therefore, by making $\tilde{\mathcal{W}}$ smaller if necessary, we conclude that $\tilde{\mathcal{W}}^t=\tilde{\Phi}^t(\tilde{\mathcal{W}})$ is an equidistant hypersurface to~$\tilde{\mathcal{W}}$  for each $t\in[0,r)$, and $\eta^t$ can be seen as a unit normal vector field along~$\tilde{\mathcal{W}}^t$.

We now determine the extrinsic geometry of the hypersurface $\tilde{\mathcal{W}}^t$.
For each $t\in[0,r)$ it is known that the shape operator $\tilde{\Ss}^t$ of $\tilde{\mathcal{W}}^t$ at $\tilde{\Phi}^t(q)$ with respect to $\eta^t(q)$ is determined by the formula $\tilde{\Ss}^t\tilde{\Phi}^t_{*q}X=-\zeta_X'(t)$ for each $X\in T_q\tilde{\mathcal{W}}$ (again, see \cite[\S8.2]{BCO03}). Before using the explicit expressions of the Jacobi vector fields in terms of the parallel translation obtained above, we define the functions
\begin{equation}\label{eq:functions}
\begin{aligned}
\lambda(t) &{}= \frac{\sqrt{-c}}{2}\tanh\Bigl(\frac{ \sqrt{-c}}{2}  (r-t)\Bigr),\qquad
\mu(t) = \frac{\sqrt{-c}}{2}\coth\Bigl(\frac{ \sqrt{-c}}{2}  (r-t)\Bigr),\\
\alpha(t) &{}=\frac{2}{\sqrt{-c}}  \cosh^3\Bigl( \frac{r \sqrt{-c}}{2}\Bigr)
\sech^3\Bigl(\frac{ \sqrt{-c}}{2}(r-t)\Bigr) \sinh\Bigl(\frac{t \sqrt{-c}}{2}\Bigr),\\
\beta(t) &{}=\cosh^2\Bigl(\frac{ r \sqrt{-c}}{2}\Bigr)
\sech^2\Bigl(\frac{ \sqrt{-c}}{2}(r-t)\Bigr),
\end{aligned}
\end{equation}
which are positive for each $t\in [0,r)$, and the vector fields along $\tilde{\Phi}^t$
\begin{equation}\label{eq:seminull_basis_t}
\begin{aligned}
E_1^t(q)&{}=\beta(t)\mathcal{P}_{E_1(q)}(t),\\[1ex]
E_2^t(q)&{}=-\frac{\alpha(t)^2}{8\beta(t)^3}\mathcal{P}_{E_1(q)}(t)
+\frac{1}{\beta(t)}\mathcal{P}_{E_2(q)}(t)
-\frac{\alpha(t)}{2\beta(t)^2}\mathcal{P}_{E_3(q)}(t),\\
E_3^t(q)&{}=\frac{\alpha(t)}{2\beta(t)}\mathcal{P}_{E_1(q)}(t)+
\mathcal{P}_{E_3(q)}(t).
\end{aligned}
\end{equation}
Now, using~\eqref{eq:Jacobi} and $\tilde{\Ss}^t\tilde{\Phi}^t_{*q}X=-\zeta_X'(t)$, it follows after some calculations that $\tilde{\mathcal{W}}^t$ has principal curvatures $\lambda(t)$ and $\mu(t)$ with algebraic multiplicities $m_\lambda$ and $m_\mu$, and the tangent vectors $E_1^t$, $E_2^t$, $E_3^t$ satisfy~\eqref{eq:seminull_basis} at each point (with $\lambda(t)$ instead of $\lambda$). Moreover, the principal curvature spaces of $\tilde{\mathcal{W}}^t$ are obtained by parallel translation of $T_\lambda$ and $T_\mu$ along the geodesics~$\tilde{\gamma}_q$, that is, $T_{\lambda(t)}=\mathcal{P}_{T_\lambda}(t)$ and $T_{\mu(t)}=\mathcal{P}_{T_\mu}(t)$. In particular, $\tilde{\mathcal{W}}^t$ is isoparametric for all $t\in[0,r)$, and all points of $\tilde{\mathcal{W}}^t$ are of type~III.

Finally, we show that the $S^1$-fiber of $\pi$ is tangent to $\tilde{\mathcal{W}}^t$ for each $t\in[0,r)$. This follows from the fact that the vertical vector field $V$ satisfies
\[
\langle\tilde{\gamma}_q'(0),V_{\tilde{\gamma}_q(0)}\rangle=0\quad
\text{ and }\quad
\frac{d}{dt}\langle\tilde{\gamma}_q',V\rangle
=\langle\tilde{\gamma}_q',\tilde{\nabla}_{\tilde{\gamma}_q'(t)}V\rangle=0,
\]
for all $t$, because $V$ is a Killing vector field (and thus $\tilde{\nabla}V$ is skew-symmetric with respect to the metric).

We can summarize the information obtained about $\tilde{\mathcal{W}}^t$ so far as follows

\begin{proposition}\label{th:wt}
If $t\in[0,r)$, then the $S^1$-fibers of $\pi$ are tangent to the parallel hypersurface $\tilde{\mathcal{W}}^t$, which has constant principal curvatures $\lambda(t)$ and $\mu(t)$ with algebraic multiplicities $m_\lambda$ and $m_\mu$. All points of $\tilde{\mathcal{W}}^t$ are of type~III, $\{E_1^t,E_2^t,E_3^t\}$ are three tangent vector fields satisfying~\eqref{eq:seminull_basis} at each point (with $\lambda(t)$ instead of $\lambda$), and the spaces $T_{\lambda(t)}\ominus\R E_2^t$ and $T_{\mu(t)}$ are obtained by parallel translation of $T_\lambda\ominus \R E_2$ and $T_\mu$ along normal geodesics.
\end{proposition}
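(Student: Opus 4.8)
The proposition collects what the calculations sketched just above establish, and the plan is to make those explicit by reading everything off the Jacobi fields~\eqref{eq:Jacobi} through $\tilde{\Phi}^t_{*q}X=\zeta_X(t)$ and $\tilde{\Ss}^t\tilde{\Phi}^t_{*q}X=-\zeta_X'(t)$, and to deal with the $S^1$-fibers separately. First I would note that $\tilde{\Phi}^t$ is an immersion for $t\in[0,r)$: with respect to the decomposition $T_q\tilde{\mathcal{W}}=\spann\{E_1(q),E_2(q),E_3(q)\}\oplus(T_\lambda(q)\ominus\R E_2(q))\oplus T_\mu(q)$ the differential $\tilde{\Phi}^t_{*q}$ is block diagonal, equal to multiplication by $g_\mu(t)$ on $T_\mu(q)$, to multiplication by $g_\lambda(t)$ on $T_\lambda(q)\ominus\R E_2(q)$, and, on the first block, to a $3\times3$ matrix of determinant $g_\lambda(t)^3$; since $g_\lambda,g_\mu>0$ on $[0,r)$ this is an isomorphism, so (shrinking $\tilde{\mathcal{W}}$ if necessary) $\tilde{\mathcal{W}}^t$ is an embedded hypersurface with unit normal $\eta^t$. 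For $X\in T_\nu(q)$ with $\nu\in\{\lambda,\mu\}$ we have $\zeta_X(t)=g_\nu(t)\mathcal{P}_X(t)$, hence $\tilde{\Ss}^t\mathcal{P}_X(t)=-(g_\nu'(t)/g_\nu(t))\mathcal{P}_X(t)$; the hyperbolic addition formulas together with~\eqref{eq:r} give $g_\lambda(t)=\cosh\bigl(\frac{\sqrt{-c}}{2}(r-t)\bigr)\sech\bigl(\frac{r\sqrt{-c}}{2}\bigr)$ and $g_\mu(t)=\sinh\bigl(\frac{\sqrt{-c}}{2}(r-t)\bigr)\csch\bigl(\frac{r\sqrt{-c}}{2}\bigr)$, whence $-g_\lambda'/g_\lambda=\lambda(t)$ and $-g_\mu'/g_\mu=\mu(t)$ as in~\eqref{eq:functions}. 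Thus parallel translation along the normal geodesics $\tilde{\gamma}_q$ carries $T_\lambda$ and $T_\mu$ into the $\lambda(t)$- and $\mu(t)$-eigenspaces of $\tilde{\Ss}^t$.

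The subtle point is the three-dimensional block spanned by $\mathcal{P}_{E_1}(t)$, $\mathcal{P}_{E_2}(t)$, $\mathcal{P}_{E_3}(t)$: differentiating the formulas for $\zeta_{E_2(q)}$ and $\zeta_{E_3(q)}$ in~\eqref{eq:Jacobi} and using $\tilde{\Ss}^t\zeta_X(t)=-\zeta_X'(t)$ shows that $\tilde{\Ss}^t$ restricted to this block is, in that basis, a fixed matrix whose only eigenvalue is $\lambda(t)$, occurring in a single $3\times3$ Jordan block. To rewrite this as a type~III configuration I would pass to the frame $\{E_1^t,E_2^t,E_3^t\}$ of~\eqref{eq:seminull_basis_t}; since $\mathcal{P}$ is a linear isometry we have $\langle\mathcal{P}_{E_i}(t),\mathcal{P}_{E_j}(t)\rangle=\langle E_i,E_j\rangle$, so the semi-null relations~\eqref{eq:seminull_basis} follow from the coefficients $\alpha(t),\beta(t)$, and the functions $\alpha,\beta$ of~\eqref{eq:functions} are chosen precisely so that $\tilde{\Ss}^tE_1^t=\lambda(t)E_1^t$, $\tilde{\Ss}^tE_2^t=\lambda(t)E_2^t+E_3^t$, and $\tilde{\Ss}^tE_3^t=E_1^t+\lambda(t)E_3^t$. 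As each $\mathcal{P}_X(t)$ with $X\in T_\lambda\ominus\R E_2$ is orthogonal to $\mathcal{P}_{E_1}(t)$, $\mathcal{P}_{E_2}(t)$, $\mathcal{P}_{E_3}(t)$, a dimension count gives $T_{\lambda(t)}\ominus\R E_2^t=\mathcal{P}_{T_\lambda\ominus\R E_2}(t)$ and $T_{\mu(t)}=\mathcal{P}_{T_\mu}(t)$; hence the algebraic multiplicities remain $m_\lambda$ and $m_\mu$, the principal curvatures of $\tilde{\mathcal{W}}^t$ are the constants $\lambda(t)$ and $\mu(t)$, $\tilde{\mathcal{W}}^t$ is isoparametric, and all of its points are of type~III.

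It remains to see that the $S^1$-fibers are tangent to $\tilde{\mathcal{W}}^t$, for which I would use that $V$ generates the $S^1$-action by isometries, hence is a Killing field, so $\tilde{\nabla}V$ is skew-adjoint and $\frac{d}{dt}\langle\tilde{\gamma}_q'(t),V\rangle=\langle\tilde{\gamma}_q'(t),\tilde{\nabla}_{\tilde{\gamma}_q'(t)}V\rangle=0$; at $t=0$ this inner product equals $\langle\xi^L_q,V_q\rangle=0$ because $\xi^L$ is horizontal and $V$ is vertical. Therefore $V_{\tilde{\gamma}_q(t)}$ is orthogonal to $\eta^t(q)=\tilde{\gamma}_q'(t)$ for all $t\in[0,r)$, i.e.\ $V$ is tangent to $\tilde{\mathcal{W}}^t$ at every point $\tilde{\gamma}_q(t)$, and these exhaust $\tilde{\mathcal{W}}^t$. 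The main obstacle is the bookkeeping of the previous paragraph: checking that the frame~\eqref{eq:seminull_basis_t} with the functions~\eqref{eq:functions} simultaneously satisfies the semi-null relations and conjugates $\tilde{\Ss}^t$ into the standard type~III normal form with $\lambda(t)$ in place of $\lambda$; everything else is a routine consequence of Jacobi field theory in a space of constant curvature.
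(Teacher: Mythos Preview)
Your proposal is correct and follows essentially the same approach as the paper: the paper carries out exactly these Jacobi-field computations in the text immediately preceding the proposition (which is stated as a summary), deriving the eigenvalue structure from $\tilde{\Ss}^t\tilde{\Phi}^t_{*q}X=-\zeta_X'(t)$, introducing the frame~\eqref{eq:seminull_basis_t} with the functions~\eqref{eq:functions} for the three-dimensional block, and using the identical Killing-field argument for the $S^1$-fibers. Your explicit identification of $g_\lambda(t)$ and $g_\mu(t)$ via the hyperbolic addition formulas is a nice detail that the paper leaves implicit.
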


Now we focus our attention on $t=r$. Recall from Proposition~\ref{th:typeiii} that if $\lambda=0$, then $\mu$ does not exist and $m_\mu=0$. In general, it follows from~\eqref{eq:Jacobi} that $\ker\tilde{\Phi}_*^r=T_\mu$, and thus, $\tilde{\Phi}^r$ has constant rank $2n-m_\mu$. Hence, making $\tilde{\mathcal{W}}$ smaller if necessary, we deduce that $\tilde{\mathcal{W}}^r$ is an embedded submanifold of $H_1^{2n+1}$ of codimension $m_\mu+1$.

Let $q_r\in\tilde{\mathcal{W}}^r$. The map $\eta^r\colon (\tilde{\Phi}^r)^{-1}(q_r)\to \nu_{q_r}^1 \tilde{\mathcal{W}}^r$, $q\mapsto\eta^r(q)$, from $(\tilde{\Phi}^r)^{-1}(q_r)\subset \tilde{\mathcal{W}}$ to the unit normal space $\nu_{q_r}^1 \tilde{\mathcal{W}}^r$ of $\tilde{\mathcal{W}}^r$ at $q_r$ is differentiable. By~\eqref{eq:Jacobi},
\[
\tilde{\nabla}_X \eta^r=\zeta_X'(r)=
-\frac{\sqrt{-c}}{2}\csch\Bigl(\frac{r\sqrt{-c}}{2}\Bigr)\mathcal{P}_X(t),
\]
for each $X\in T_\mu(q)$ with $q\in(\tilde{\Phi}^r)^{-1}(q_r)$. Since $T_\mu(q)$ is the tangent space of $(\tilde{\Phi}^r)^{-1}(q_r)$ at~$q$, it follows that $\eta^r((\tilde{\Phi}^r)^{-1}(q_r))$ is open in $\nu_{q_r}^1 \tilde{\mathcal{W}}^r$.

As we have seen above, $\langle\tilde{\gamma}'_q(t),V\rangle=0$ for all $t$ and all $q\in\tilde{\mathcal{W}}$. Setting $t=r$ we get $\langle\eta^r,V\rangle=0$ for all $q_r\in\tilde{\mathcal{W}}^r$, and since $\eta^r$ maps $\tilde{\mathcal{W}}$ to an open subset of the unit normal bundle of $\tilde{\mathcal{W}}^r$ we get that $V$ is orthogonal to $\nu\tilde{\mathcal{W}}^r$, and thus tangent to $\tilde{\mathcal{W}}^r$. This implies that $\tilde{\mathcal{W}}^r$ contains locally the $S^1$-fiber of the submersion $\pi\colon H_1^{2n+1}\to \C H^n$.

On the other hand, the tangent space $T_{q_r}\tilde{\mathcal{W}}^r
=\tilde{\Phi}^r_{*q}(T_\lambda(q)\oplus\R E_2(q)\oplus\R E_3(q))$ is, according to~\eqref{eq:Jacobi}, precisely the parallel translation of $T_\lambda(q)\oplus\R E_2(q)\oplus\R E_3(q)$ along the geodesic $\tilde{\gamma}_q$ for $q\in\tilde{\mathcal{W}}$. Again by~\eqref{eq:Jacobi}, $(\nu_{q_r}\tilde{\mathcal{W}}^r)\ominus\R \eta^r(q)$ is obtained by parallel translation of $T_\mu(q)$ along $\tilde{\gamma}_q$.

In order to determine the geometry of the submanifold $\tilde{\mathcal{W}}^r$, we take $q\in\tilde{\mathcal{W}}$ and calculate the shape operator $\tilde{\Ss}^r_{\eta^r(q)}$  of $\tilde{\mathcal{W}}^r$ at $q_r=\tilde{\Phi}^r(q)$ with respect to $\eta^r(q)$. It is known that $\tilde{\Ss}_{\eta^r(q)}^r\tilde{\Phi}^r_{*q}X=-(\zeta_X'(t))^\top$ for each $X\in T_q\tilde{\mathcal{W}}$, where $(\cdot)^\top$ denotes orthogonal projection onto the tangent space $T\tilde{\mathcal{W}}^r$.

Taking this into account, and using~\eqref{eq:functions} and~\eqref{eq:seminull_basis_t} for $t=r$, one can see that $\tilde{\Ss}^r_{\eta^r(q)}$ has exactly one principal curvature $\lambda(r)=0$, and $\{E_1^r(q),E_2^r(q),E_3^r(q)\}$ are vectors satisfying the same relations as in~\eqref{eq:seminull_basis} for $\tilde{\Ss}^r_{\eta^r(q)}$ at~$q_r$ (with $\lambda=0$ in~\eqref{eq:seminull_basis}). The parallel translation of $T_\lambda(q)\ominus\R E_2(q)$ along the normal geodesic $\tilde{\gamma}_q$ is in the kernel of $\tilde{\Ss}^r_{\eta^r(q)}$.

In particular it follows that $(\tilde{\Ss}_{\eta^r(q)}^r)^2\neq 0$ and $(\tilde{\Ss}_{\eta^r(q)}^r)^3=0$ for each $q\in\tilde{\mathcal{W}}$. Since $\eta^r(\tilde{\mathcal{W}})$ is open in the unit normal bundle of $\tilde{\mathcal{W}}^r$, the analiticity of $(\tilde{\Ss}_{\eta}^r)^3$ with respect to $\eta$ implies that $(\tilde{\Ss}_\eta^r)^3= 0$ for any $\eta\in\nu\tilde{\mathcal{W}}^r$.

We summarize these results in the following

\begin{proposition}\label{th:wr}
The submanifold $\tilde{\mathcal{W}}^r$ has codimension $m_\mu+1$ in $H_1^{2n+1}$ and the $S^1$-fibers of $\pi$ are tangent to it. Moreover, if $q_r=\tilde{\Phi}^r(q)$, with $q\in\tilde{\mathcal{W}}$, then $(\nu_{q_r}\tilde{\mathcal{W}}^r)\ominus\R\eta^r(q)$ is obtained by parallel translation of $T_\mu(q)$ along a geodesic normal to $\tilde{\mathcal{W}}$ through $q$. For any $\eta\in\nu_{q_r}^1\tilde{\mathcal{W}}^r$, the shape operator $\tilde{\Ss}^r_\eta$ is $3$-step nilpotent, and its kernel is obtained by parallel translation of $T_\lambda(q)$ along a geodesic normal to $\tilde{\mathcal{W}}$ through $q$.
\end{proposition}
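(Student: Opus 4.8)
The plan is to assemble the assertions from the Jacobi-field study of the equidistant hypersurfaces $\tilde{\mathcal{W}}^t$ (culminating in Proposition~\ref{th:wt}) and to push that analysis to the focal parameter $t=r$. First I would read off the codimension: the explicit solutions~\eqref{eq:Jacobi} of the Jacobi equation show that $\zeta_X(r)=0$ exactly when $X\in T_\mu(q)$, since $g_\mu(r)=0$ while $g_\lambda(r)$ and $h(r)$ do not vanish; hence $\ker\tilde{\Phi}^r_{*q}=T_\mu(q)$, and as $\tilde{M}$ is isoparametric with all points of type~III the integer $m_\mu$ is constant, so $\tilde{\Phi}^r$ has locally constant rank $2n-m_\mu$ and the constant-rank theorem (after shrinking $\tilde{\mathcal{W}}$) makes $\tilde{\mathcal{W}}^r$ an embedded submanifold of codimension $m_\mu+1$ in $H_1^{2n+1}$.

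Next I would treat the tangency of the $S^1$-fibers. Since $V$ is Killing, $\langle\tilde{\gamma}'_q(t),V\rangle$ is constant in $t$ and vanishes at $t=0$ (there $\tilde{\gamma}'_q(0)=\xi^L_q\in\nu_q\tilde{M}$ while $V$ is tangent to $\tilde{M}$), so $\langle\eta^r(q),V\rangle=0$ for every $q$. The relation $\tilde{\nabla}_X\eta^r=\zeta'_X(r)$, evaluated on $X\in T_\mu(q)$, equals a nonzero multiple of $\mathcal{P}_X(r)$; since $T_\mu(q)$ is the tangent space of the fiber $(\tilde{\Phi}^r)^{-1}(q_r)$, the map $\eta^r$ carries that fiber onto an open subset of the unit normal sphere $\nu^1_{q_r}\tilde{\mathcal{W}}^r$. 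Together with $\langle\eta^r(q),V\rangle=0$ this forces $V\perp\nu_{q_r}\tilde{\mathcal{W}}^r$, i.e.\ $V$ is tangent to $\tilde{\mathcal{W}}^r$. For the normal space, \eqref{eq:Jacobi} identifies $T_{q_r}\tilde{\mathcal{W}}^r$ with the parallel translate of $T_\lambda(q)\oplus\R E_2(q)\oplus\R E_3(q)$ along $\tilde{\gamma}_q$; taking orthogonal complements in $T_{q_r}H_1^{2n+1}$ and removing $\R\eta^r(q)$ shows that $(\nu_{q_r}\tilde{\mathcal{W}}^r)\ominus\R\eta^r(q)$ is the parallel translate of $T_\mu(q)$.

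For the shape operator I would use $\tilde{\Ss}^r_{\eta^r(q)}\tilde{\Phi}^r_{*q}X=-(\zeta'_X(r))^\top$ together with the rescaled quantities~\eqref{eq:functions} and the frame~\eqref{eq:seminull_basis_t} specialized to $t=r$, where $\lambda(r)=0$: one checks that $\tilde{\Ss}^r_{\eta^r(q)}$ annihilates the parallel translate of $\R E_1(q)\oplus(T_\lambda(q)\ominus\R E_2(q))=T_\lambda(q)$ and sends $E_2^r\mapsto E_3^r$, $E_3^r\mapsto E_1^r$, so that $(\tilde{\Ss}^r_{\eta^r(q)})^2\neq 0$ while $(\tilde{\Ss}^r_{\eta^r(q)})^3=0$, with kernel the parallel translate of $T_\lambda(q)$. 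Finally, since $\eta^r(\tilde{\mathcal{W}})$ is open in $\nu^1\tilde{\mathcal{W}}^r$ and, at a fixed point $q_r$, the map $\eta\mapsto\tilde{\Ss}^r_\eta$ is linear in $\eta$ (so $\eta\mapsto(\tilde{\Ss}^r_\eta)^3$ is a polynomial, hence real-analytic, map on $\nu_{q_r}\tilde{\mathcal{W}}^r$ that vanishes on an open subset of the unit sphere), the identity $(\tilde{\Ss}^r_\eta)^3=0$ propagates to all unit normals and then, by homogeneity, to all of $\nu\tilde{\mathcal{W}}^r$.

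The hard part will be the shape-operator computation in the third step. One first has to verify that the corrected seminull frame $\{E_1^t,E_2^t,E_3^t\}$ continues to satisfy the relations~\eqref{eq:seminull_basis} — with $\lambda$ replaced by $\lambda(t)$ — as $t$ varies, which is precisely what dictates the particular factors $\alpha(t)$ and $\beta(t)$, and then check that the limit $t\to r$ stays nonsingular on the directions tangent to $\tilde{\mathcal{W}}^r$ and yields a genuine type~III block (that is, $3$-step and not merely $2$-step nilpotent). The algebraic constraint $c+4\lambda\mu=0$ forced by Proposition~\ref{th:typeiii}, which ties $\lambda(t)$ and $\mu(t)$ to the single focal parameter $r$, is what makes all of this fit together; once that calculation is in place, the remaining assertions follow at once.
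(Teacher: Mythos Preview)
Your proposal is correct and follows essentially the same route as the paper: the paper also reads off $\ker\tilde{\Phi}^r_*=T_\mu$ from~\eqref{eq:Jacobi} to get the codimension, uses the Killing property of $V$ together with the openness of $\eta^r((\tilde{\Phi}^r)^{-1}(q_r))$ in $\nu^1_{q_r}\tilde{\mathcal{W}}^r$ to show $V$ is tangent, identifies the tangent and normal spaces by parallel translation, computes $\tilde{\Ss}^r_{\eta^r(q)}$ via $-(\zeta'_X(r))^\top$ and the frame~\eqref{eq:seminull_basis_t} at $t=r$, and finally extends $(\tilde{\Ss}^r_\eta)^3=0$ to every normal direction by analyticity in $\eta$. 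Your remark that $\eta\mapsto(\tilde{\Ss}^r_\eta)^3$ is polynomial makes this last step explicit.
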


It is worthwhile to emphasize that, although $E_1^r(q)$, $E_2^r(q)$, $E_3^r(q)$ are tangent vectors at $q_r\in\tilde{\mathcal{W}}^r$, these depend on $q\in\tilde{\mathcal{W}}$. The next subsection is devoted to a more thorough study of the geometry of the focal submanifold~$\tilde{\mathcal{W}}^r$.


\subsection{Algebraic study of the focal submanifold}\label{sec:focal}\hfill

Let $q_r\in\tilde{\mathcal{W}}^r$. The main idea in what follows is to prove Proposition~\ref{th:z}, which implies that a certain vector
does not depend on the choice of $q\in(\tilde{\Phi}^r)^{-1}(q_r)$. This vector will be fundamental to determine the geometry of $\pi(\tilde{\mathcal{W}}^r)$, which is the aim of this subsection. We continue using the notation introduced in Section~\ref{sec:jacobi}.

\begin{proposition}\label{th:z}
Let $q_r\in\tilde{\mathcal{W}}^r$. Then, the map
\[
(\tilde{\Phi}^r)^{-1}(q_r)\to T_{q_r}\tilde{\mathcal{W}}^r,\quad q\mapsto -\frac{1}{\langle V_{q_r},E_1^r(q)\rangle}E_1^r(q)-V_{q_r},
\]
is constant in $(\tilde{\Phi}^r)^{-1}(q_r)$.
\end{proposition}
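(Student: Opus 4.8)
The plan is to show that the displayed vector $X(q) := -\frac{1}{\langle V_{q_r},E_1^r(q)\rangle}E_1^r(q)-V_{q_r}$ is constant along the fiber $(\tilde{\Phi}^r)^{-1}(q_r)$ by differentiating it in the direction of the fiber and proving the derivative vanishes. Fix $q_r\in\tilde{\mathcal{W}}^r$; by Proposition~\ref{th:wr} the fiber $(\tilde{\Phi}^r)^{-1}(q_r)$ is (an open subset of) a submanifold of $\tilde{\mathcal{W}}$ whose tangent space at $q$ is $T_\mu(q)$, since $\ker\tilde{\Phi}^r_{*q}=T_\mu(q)$ by \eqref{eq:Jacobi}. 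So it suffices to fix $W\in\Gamma(T_\mu)$ along the fiber and compute $\tilde{\nabla}_W X$, where the covariant derivative is taken in $H_1^{2n+1}$; in fact, since $X(q)\in T_{q_r}\tilde{\mathcal{W}}^r$ for every $q$ and the fiber is mapped by $\tilde\Phi^r$ to the single point $q_r$, I should think of $X$ as a curve of vectors all living in the fixed vector space $T_{q_r}\tilde{\mathcal{W}}^r$, so that "$X$ is constant" literally means its derivative along the fiber is zero.

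The key computational ingredients are: (a) the formula $E_1^r(q)=\beta(r)\mathcal{P}_{E_1(q)}(r)$ from \eqref{eq:seminull_basis_t}, so that, along the fiber, $\tilde{\nabla}_W E_1^r$ is governed by $\tilde{\nabla}_W E_1$ transported to $q_r$ — here I use that $\beta(r)$ is a constant (it does not depend on $q$), and that parallel translation along the normal geodesics $\tilde\gamma_q$ commutes appropriately with $\tilde\nabla_W$ up to curvature terms which, in the constant-curvature space $H_1^{2n+1}$, are explicit; (b) Proposition~\ref{th:DE1W}, which gives $\tilde{\nabla}_{E_1}W\in\Gamma(T_\mu)$, hence by skew-symmetry of the relevant relations (or by a direct Codazzi argument as in Subsection~\ref{sec:codazzi}) controls $\tilde{\nabla}_W E_1$; (c) the Hopf-map relations \eqref{sub1}–\eqref{sub2}, especially $\tilde{\nabla}_W V=\frac{\sqrt{-c}}{2}JW^{?}$ type formulas, together with the fact that $V$ is Killing (already used in Subsection~\ref{sec:jacobi}) so $\langle\tilde\nabla_W V,\cdot\rangle$ is skew; and (d) the function $q\mapsto \langle V_{q_r},E_1^r(q)\rangle$, whose derivative along $W$ I must compute and which appears in the quotient, so I will need to differentiate $\frac{1}{\langle V_{q_r},E_1^r(q)\rangle}$ by the quotient rule. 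Assembling (a)–(d), $\tilde{\nabla}_W X$ becomes a combination of $\tilde{\nabla}_W E_1^r$, of $E_1^r$ times the derivative of the scalar coefficient, and of $\tilde\nabla_W V$; the claim is that these cancel. I expect the cancellation to be forced by the two facts that $\tilde\nabla_W E_1^r$ is proportional to $E_1^r$ modulo $T_\mu$-directions (from Proposition~\ref{th:DE1W} and Lemma~\ref{th:DE1Wa}, suitably transported) and that the $T_\mu$-directions are exactly the normal directions to $\tilde{\mathcal{W}}^r$ at $q_r$ other than $\eta^r$, so they are killed upon viewing everything inside $T_{q_r}\tilde{\mathcal{W}}^r$; what survives in $T_{q_r}\tilde{\mathcal{W}}^r$ is a multiple of $E_1^r(q)$ which is precisely compensated by the derivative of the normalizing scalar, and the $V$-term is handled because $\tilde\nabla_W V$ is, by \eqref{sub2}, essentially $JW$ which has no component along $E_1^r$ or along $X$ for the reasons already exploited in Subsection~\ref{sec:focal} (this is where the type-III structure, via \eqref{eq:seminull_basis}, pins down the relative position of $JW$, $E_1^r$, and $V$).

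The main obstacle I anticipate is bookkeeping the parallel-transport terms correctly: $E_1^r(q)$ is defined via parallel translation of $E_1(q)$ along a $q$-dependent geodesic, so differentiating it in a fiber direction $W$ is not simply "$\mathcal{P}_{\tilde\nabla_W E_1}(r)$" — there is an extra term coming from the variation of the geodesic itself, which is a Jacobi-field/curvature contribution. In constant curvature this is explicitly computable (the relevant Jacobi fields are the $g_\nu$, $h$ of \eqref{eq:Jacobi}), so the honest way to proceed is to write $W$ at $q_r$ as $\frac{1}{g_\mu(r)}$ times the image under $\tilde\Phi^r_*$ of the corresponding $T_\mu$-vector at $q$ — wait, that vanishes; rather, one extends $W$ to a $\tilde\Phi^t$-related vector field and uses the variation-of-geodesics formalism of \cite[\S8.2]{BCO03} to express $\tilde\nabla_W(\,\cdot\,)$ at $q_r$ in terms of data at $q$ plus curvature terms. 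Threading this carefully, and then checking that every piece lands either in the span of $E_1^r(q)$ (absorbed by the scalar) or in the $T_\mu$-normal directions (invisible in $T_{q_r}\tilde{\mathcal{W}}^r$) or in the $JW$-line (disposed of by \eqref{sub2} and the type-III relations), is the crux; the rest is routine trigonometric-hyperbolic simplification using \eqref{eq:functions}.
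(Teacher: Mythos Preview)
Your approach is genuinely different from the paper's, and it can be made to work, but two points need sharpening.

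First, item (c) is a red herring: $V_{q_r}$ is a \emph{fixed} vector in $T_{q_r}H_1^{2n+1}$, independent of $q$, so its derivative along the fiber is zero. You never need $\tilde\nabla_W V$; the only nontrivial term to differentiate is $E_1^r(q)$ and the scalar $\langle V_{q_r},E_1^r(q)\rangle$.

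Second, the ``main obstacle'' you identify is much milder than you fear. Set up the two-parameter family $F(s,t)=\tilde\gamma_{\sigma(s)}(t)$ with $\sigma'(0)=W\in T_\mu(q)$, and let $Y(s,t)=\mathcal P_{E_1(\sigma(s))}(t)$. Then $\frac{D}{\partial t}\frac{DY}{\partial s}=\tilde R\bigl(\frac{\partial F}{\partial t},\frac{\partial F}{\partial s}\bigr)Y$, and in constant curvature this equals $\frac{c}{4}\bigl(\langle\frac{\partial F}{\partial s},Y\rangle\frac{\partial F}{\partial t}-\langle\frac{\partial F}{\partial t},Y\rangle\frac{\partial F}{\partial s}\bigr)$. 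Both inner products vanish (since $E_1\perp\xi^L$ and $E_1\perp W$), so $\frac{DY}{\partial s}$ is parallel in $t$ and hence equals $\mathcal P_{\nabla_W E_1}(t)$. Now the computations in the proof of Proposition~\ref{th:DE1W} (specifically \eqref{eq:DWE1} combined with the final vanishing $\langle\nabla_{E_1}W,E_2\rangle=\langle\nabla_U W,E_3\rangle=0$) give $\nabla_W E_1=\langle\nabla_W E_1,E_2\rangle E_1$; note you need this byproduct of the proof, not just the statement. Hence $\frac{d}{ds}\big|_0 E_1^r(\sigma(s))=f\,E_1^r(q)$ for a scalar $f$, and the quotient rule kills the $E_1^r$ contribution exactly, proving the map is constant.

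By contrast, the paper avoids differentiating along the fiber altogether. It first shows the algebraic fact that $\tilde\Ss^r_\eta E_1^r(q)=0$ for \emph{every} $\eta\in\nu_{q_r}\tilde{\mathcal W}^r$ (using Proposition~\ref{th:DE1W} on each $\tilde{\mathcal W}^t$ and passing to the limit $t\to r$). Then for two points $q,\hat q$ in the fiber it writes $E_1^r(\hat q)$ in the basis attached to $q$, applies $\tilde\Ss^r_{\eta^r(q)}$ and nullity of $E_1^r(\hat q)$, and reads off $E_1^r(\hat q)\in\R E_1^r(q)$. Your route is a direct ODE/variation computation; the paper's is a pointwise linear-algebra argument. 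Yours is shorter once the curvature-term vanishing is noticed, but it relies on a fact ($\nabla_W E_1\in\R E_1$) buried in the proof of Proposition~\ref{th:DE1W}; the paper's argument uses only the stated proposition and yields, as a bonus, the geometrically useful equation~\eqref{eq:SE1}.
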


\begin{proof}
Let $q\in(\tilde{\Phi}^r)^{-1}(q_r)$ and let $\zeta_{q_r}\in\nu_{q_r}\tilde{\mathcal{W}}^r\ominus\R\eta^r(q)$ be a unit vector. We calculate $\tilde{\Ss}^r_{\zeta_{q_r}} E_1^r(q)$. Let $\sigma$ be an integral curve of $E_1$ in $\tilde{\mathcal{W}}$ and extend $\zeta_{q_r}$ to a smooth vector field $\zeta$ along $s\mapsto\tilde{\Phi}^r(\sigma(s))$ in such a way that $\langle\zeta_{\tilde{\Phi}^r(\sigma(s))},
\eta^r_{\tilde{\Phi}^r(\sigma(s))}\rangle=0$. Then, there exists a unique vector field $Y\in \Gamma(\sigma^* T_\mu)$ along $\sigma$ tangent to $T_\mu$ such that $\mathcal{P}_{Y_{\sigma(s)}}(r)=\zeta_{\tilde{\Phi}^r(\sigma(s))}$ for all $s$ by Proposition~\ref{th:wr}. We define the geodesic variation $F(s,t)=\exp_{\sigma(s)}(t\xi_{\sigma(s)}^L)$, where $\xi^L$ is the unit normal vector of $\tilde{\mathcal{W}}$ that was fixed at the beginning of Subsection~\ref{sec:jacobi}.
We use Proposition~\ref{th:wt} twice, and Proposition~\ref{th:DE1W} applied to $\tilde{\mathcal{W}}^t$, $t\in[0,r)$, to conclude that $\mathcal{P}_{Y_{\sigma(s)}}(t)\in T_{\mu(t)}(F(s,t))$ and $\tilde{\nabla}_{E_1^t({\sigma(s)})}\mathcal{P}_{Y_{\sigma(s)}}(t)\in T_{\mu(t)}(F(s,t))$. By Proposition~\ref{th:wt} we have that the principal curvature distribution of $\tilde{\mathcal{W}}^t$ associated with $\mu(t)$ at $F(s,t)$ is the parallel translation of $T_{\mu}(\sigma(s))$ along a normal geodesic, that is, $T_{\mu(t)}(F(s,t))=\mathcal{P}_{T_{\mu}(\sigma(s))}(t)$. By continuity we get $\tilde{\nabla}_{E_1^r({\sigma(s)})}\mathcal{P}_{Y_{\sigma(s)}}(r)\in \mathcal{P}_{T_{\mu}(\sigma(s))}(r)$. Combining this with $\zeta_{\tilde{\Phi}^r(\sigma(s))}=\mathcal{P}_{Y_{\sigma(s)}}(r)$ and Proposition~\ref{th:wr} yields $\tilde{\nabla}_{E_1^r({\sigma(s)})}\zeta\in (\nu_{\tilde{\Phi}^r(\sigma(s))}\tilde{\mathcal{W}}^r)\ominus
\R\eta^r_{\sigma(s)}$. Therefore,
\[
\tilde{\Ss}^r_{\zeta_{q_r}}E_1^r(q)=-(\tilde{\nabla}_{E_1^r(q)}\zeta)^\top=0,
\]
as we wanted to calculate.

Since $\zeta_{q_r}\in\nu_{q_r}\tilde{\mathcal{W}}^r\ominus\R\eta^r(q)$ was arbitrary and we already had $\tilde{\Ss}_{\eta^r(q)}E_1^r(q)=0$ by Proposition~\ref{th:wr} and~\eqref{eq:seminull_basis_t}, we conclude that
$\tilde{\Ss}^r_\eta E_1^r(q)=0$, for any $\eta\in\nu_{q_r}\tilde{\mathcal{W}}^r$.
Since $q$ is also arbitrary, we get
\begin{equation}\label{eq:SE1}
\tilde{\Ss}^r_\eta E_1^r(q)=0,\text{ for any $\eta\in\nu_{q_r}\tilde{\mathcal{W}}^r$,
and any $q\in(\tilde{\Phi}^r)^{-1}(q_r)$.}
\end{equation}

Now take another point $\hat{q}\in(\tilde{\Phi}^r)^{-1}(q_r)$. According to Proposition~\ref{th:wr}, we can write $E_1^r(\hat{q})=a_1 E_1^r(q)+a_2 E_2^r(q)+a_3 E_3^r(q)+u$, with $a_i\in\R$, and $u\in(\ker\tilde{\Ss}^r_{\eta^r(q)})\ominus\R E_2^r(q)$. By~\eqref{eq:SE1} we have
\[
0=\tilde{\Ss}^r_{\eta^r(q)}E_1^r(\hat{q})=
a_2 E_3^r(q)+a_3 E_1^r(q).
\]
Thus, $a_2=a_3=0$. On the other hand, since $E_1^r(\hat{q})$ is a null vector, we also obtain
$0=\langle E_1^r(\hat{q}),E_1^r(\hat{q})\rangle=\langle u,u\rangle$, and as $u$ is spacelike, we get $u=0$. Thus, $E_1^r(\hat{q})=a_1 E_1^r(q)$, which easily implies the result.
\end{proof}

Recall that the submanifold $\tilde{\mathcal{W}}^t$ contains locally the $S^1$-fiber of the semi-Riemannian submersion $\pi\colon H_1^{2n+1}\to\C H^n$ as we have seen in propositions~\ref{th:wt} and~\ref{th:wr}. If we denote $\mathcal{W}^t=\pi(\tilde{\mathcal{W}}^t)$, $t\in[0,r]$, and consider the map $\Phi^t\colon\mathcal{W}\to\C H^n$, $p\mapsto\Phi^t(p)=\exp_p(t\xi_p)$, then it follows that $\Phi^t(\pi(\tilde{\mathcal{W}}))=\pi(\tilde{\Phi}^t(\tilde{\mathcal{W}}))$, that is, $\Phi^t(\mathcal{W})=\pi(\tilde{\mathcal{W}}^t)$, or in other words, the Hopf map commutes with the parallel displacement map.

Coming back to the study of the geometry of the submanifold $\mathcal{W}^r$, we write $V_{q_r}=s_1(q)E_1^r(q)+s_2(q)E_2^r(q)+s_3(q)E_3^r(q)+u_q$, for $s_i(q)\in\R$ and $u_q\in T_{q_r}\tilde{\mathcal{W}}^r\ominus(\R E_1^r(q)\oplus\R E_2^r(q)\oplus\R E_3^r(q))=(\ker\tilde{\Ss}^r_{\eta^r(q)})\ominus\R E_2^r(q)$. Arguing as in~\eqref{eq:modified_seminull}, we can assume $u_q=0$. Note that the procedure at the beginning of the proof of Proposition~\ref{th:typeiii} which leads to~\eqref{eq:modified_seminull} does not change the vector $E_1^r(q)$. Thus, $-1=\langle V,V\rangle =2s_1s_2+s_3^2$, which immediately implies $s_1,s_2\neq 0$. We can assume, changing the signs of $E_1(q)$, $E_2(q)$ and $E_3(q)$, that $s_2 >0$.

If $\xi$ is now a unit normal vector field of $\mathcal{W}^r$, we write $J\xi=P\xi+F\xi$, where $P\xi$ is the orthogonal projection of $J\xi$ onto~$T{\mathcal{W}}^r$ and $F\xi$ is the orthogonal projection of $J\xi$ onto~$\nu{\mathcal{W}}^r$. We also write $J\xi^L=P\xi^L+F\xi^L$ for $\tilde{\mathcal{W}}^r$, accordingly. Notice that $P(\xi^L)=(P\xi)^L$ and $F(\xi^L)=(F\xi)^L$. From~\eqref{sub2} we get $\tilde{\Ss}^r_{\xi^L} V =- (\tilde{\nabla}_{V} \xi^L)^{\top} = - ({\sqrt{-c}}/{2})P \xi^L$. Hence, taking $\xi\in\Gamma(\nu\mathcal{W}^r)$ such that $\xi_{q_r}^L=\eta^r(q)$ we get
\[
0 = -\frac{\sqrt{-c}}{2}\langle P\xi^L,V\rangle=\langle \tilde{\Ss}^r_{\eta^r} V , V \rangle
= \langle s_2 E_3^r(q)+s_3 E_1^r(q),V\rangle
= 2 s_2 s_3,
\]
which implies $s_3 =0$. We may also write
\begin{equation}\label{eq:Jeta}
J \eta^r =-\frac{2}{\sqrt{-c}}\tilde{\Ss}^r_{\eta^r} V + F\eta^r=-\frac{2s_2}{\sqrt{-c}}\,E_3^r(q)+F\eta^r.
\end{equation}
Thus, $1 = \langle J\eta^r, J\eta^r \rangle = -({4}/{c})s^2_2 + \langle F\eta^r,F\eta^r\rangle$, and consequently we can choose a real number $\varphi(q) \in (0, {\pi}/{2}]$, such that
\begin{align*}
s_2(q) &{}= \frac{\sqrt{-c}}{2} \sin(\varphi(q)),
&\langle F\eta^r(q),F\eta^r(q)\rangle &{}= \cos^2(\varphi(q)).
\end{align*}

If $\Ss^r_\xi$ denotes the shape operator of $\mathcal{W}^r$ with respect to $\xi\in\Gamma(\nu\mathcal{W}^r)$, then~\eqref{sub1} implies
\[
\tilde{\Ss}^r_{\xi^L}X^L= (\Ss^r_{\xi}X)^L+ \frac{\sqrt{-c}}{2}\langle J\xi^L,X^L\rangle V\
\text{ and }\
\Ss^r_\xi X=\pi_{*}\tilde{\Ss}^r_{\xi^L}X^L,\ \text{ for each $X\in T\mathcal{W}^r$}.
\]
The vectors in $(\ker\tilde{\Ss}^r_{\eta^r(q)})\ominus\R E_2^r(q)$ are orthogonal to $J\eta^r(q)$ and $V_{q_r}$ by~\eqref{eq:Jeta}, and by the previous equation, project bijectively onto $\ker\Ss^r_{\pi_*\eta^r(q)}$. For dimension reasons, there are only two eigenvectors left to determine $\Ss^r_{\pi_*\eta^r(q)}$ completely.

In view of Proposition~\ref{th:z} we can define
\[
Z_{\pi(q_r)}=\pi_*\Bigl(-\frac{1}{\langle V_{q_r},E_1^r(q)\rangle}E_1^r(q)-V_{q_r}\Bigr)
=-\frac{1}{\langle V_{q_r},E_1^r(q)\rangle}\pi_*E_1^r(q),
\text{ for $q\in(\tilde{\Phi}^r)^{-1}(q_r)$}.
\]
Note that this vector field is smooth because $E_1^t$ is smooth along the map $\tilde{\Phi}^t$ by the smooth dependence on the initial conditions of solutions to an ordinary differential equation.
For the subsequent calculations, we consider $\xi\in \nu_{\pi(q_r)}\mathcal{W}^r$ such that its lift to $\nu_{q_r}\tilde{\mathcal{W}}^r$ satisfies $\xi^L=\eta^r(q)$. Thus we can write $P\xi^L=P\eta^r$. We have
\begin{align*}
Z^L_{q_r}&{}=-\frac{1}{\langle V,E_1^r(q)\rangle}E_1^r(q)-V_{q_r},
&P\xi^L&{}=-\sin(\varphi(q))E_3^r(q).
\end{align*}
These two vectors are tangent to $\tilde{\mathcal{W}}^r$ and orthogonal to $V$. Thus they are mapped isometrically to $Z$ and $P\xi$ respectively; in particular, $\lVert P\xi\rVert=\sin(\varphi(q))$. Furthermore, by~\eqref{eq:Jeta} we also have $\langle Z^L_{q_r},J\eta^r(q)\rangle=0$ for any $q\in(\tilde{\Phi}^r)^{-1}(q_r)$. Since $\eta^r((\tilde{\Phi}^r)^{-1}(q_r))$ is open in $\nu_{q_r}^1 \tilde{\mathcal{W}}^r$, we deduce that $Z^L$ is orthogonal to $J\nu\tilde{\mathcal{W}}^r$, and hence, $Z$ is orthogonal to $J\nu\mathcal{W}^r$. Thus, we have that $T\mathcal{W}^r\ominus P\nu\mathcal{W}^r$ is the maximal complex distribution of $T\mathcal{W}^r$ and $Z$ is tangent to~it.

Using the above formulas we obtain
\begin{align*}
\Ss^r_{\xi}Z &{}= \pi_{*q_r} \tilde{\Ss}^r_{\xi^L}Z^L=  -\pi_{*q_r} \tilde{\Ss}^r_{\xi^L}V =\frac{\sqrt{-c}}{2}\pi_{*q_r} P\xi^L
=\frac{\sqrt{-c}}{2}P\xi,\\
\Ss^r_{\xi} P\xi &{}= \pi_{*q_r} \tilde{\Ss}^r_{\xi^L}P\xi^L
= -\sin(\varphi(q))\pi_{*q_r}E_1^r(q)
=\sin(\varphi(q))s_2(q)Z=\frac{\sqrt{-c}}{2}\sin^2(\varphi(q))Z.
\end{align*}
Therefore, by analiticity of $\Ss^r_\xi$ with respect to $\xi$,
\[
\langle \II (Z, P\xi), \eta \rangle = \langle \Ss^r_{\eta} Z, P \xi \rangle
= \frac{\sqrt{-c}}{2}\langle P \eta , P \xi \rangle
= -\frac{\sqrt{-c}}{2} \langle \eta, JP\xi \rangle,
\]
for all $\xi$, $\eta\in\nu\mathcal{W}^r$. We can summarize the results obtained so far in

\begin{proposition}\label{th:focal}
The vector field $Z$ is tangent to the maximal complex distribution of $T\mathcal{W}^r$. The second fundamental form of $\mathcal{W}^r$ is determined by the trivial symmetric bilinear extension of
\[
2\,\II(Z, P \xi) = - \sqrt{-c}\, (JP\xi)^{\perp},
\]
for any $\xi\in\nu\mathcal{W}^r$.
\end{proposition}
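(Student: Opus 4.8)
The plan is to assemble the computations already carried out in Subsection~\ref{sec:focal} into a single clean argument, organized in three steps.

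\emph{Step 1: $Z$ is well defined and complex-tangent.} By Proposition~\ref{th:z} the map $q\mapsto -\langle V_{q_r},E_1^r(q)\rangle^{-1}E_1^r(q)-V_{q_r}$ is constant on $(\tilde{\Phi}^r)^{-1}(q_r)$, so $Z_{\pi(q_r)}=-\langle V_{q_r},E_1^r(q)\rangle^{-1}\pi_*E_1^r(q)$ is a well-defined tangent vector of $\mathcal{W}^r$; it varies smoothly because $E_1^t$ depends smoothly on the base point, by smooth dependence of the solution of the Jacobi equation on its initial data. To show that $Z$ lies in the maximal complex distribution of $T\mathcal{W}^r$, I would first observe from~\eqref{eq:Jeta} that $\langle Z^L_{q_r},J\eta^r(q)\rangle=0$ for every $q\in(\tilde{\Phi}^r)^{-1}(q_r)$, since $Z^L_{q_r}$ is a combination of $E_1^r(q)$ and $V_{q_r}$, both orthogonal to $E_3^r(q)$ and to $F\eta^r(q)$; because $\eta^r\bigl((\tilde{\Phi}^r)^{-1}(q_r)\bigr)$ is open in $\nu_{q_r}^1\tilde{\mathcal{W}}^r$ (Subsection~\ref{sec:jacobi}), this forces $Z^L$ to be orthogonal to all of $J\nu\tilde{\mathcal{W}}^r$, hence $Z$ to be orthogonal to $J\nu\mathcal{W}^r$, i.e.\ $Z\in T\mathcal{W}^r\ominus P\nu\mathcal{W}^r$, which is the maximal $J$-invariant subspace of $T\mathcal{W}^r$.

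\emph{Step 2: the shape operator of $\mathcal{W}^r$.} Fixing $q_r$, I would expand $V_{q_r}$ in the frame $\{E_1^r(q),E_2^r(q),E_3^r(q)\}$ completed by a basis of $(\ker\tilde{\Ss}^r_{\eta^r(q)})\ominus\R E_2^r(q)$; the identity $\langle\tilde{\Ss}V,V\rangle=0$ from~\eqref{eq:shape} kills the $E_3^r(q)$-coefficient, and a change of semi-null frame as in~\eqref{eq:modified_seminull}, which does not affect $E_1^r(q)$, removes the kernel part, so that $V_{q_r}=s_1(q)E_1^r(q)+s_2(q)E_2^r(q)$. The normalizations $\langle V,V\rangle=-1$ and $\langle J\eta^r,J\eta^r\rangle=1$ then produce an angle $\varphi(q)\in(0,\pi/2]$ with $s_2(q)=\tfrac{\sqrt{-c}}{2}\sin\varphi(q)$ and $\langle F\eta^r(q),F\eta^r(q)\rangle=\cos^2\varphi(q)$. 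Using $\Ss^r_\xi X=\pi_*\tilde{\Ss}^r_{\xi^L}X^L$ together with~\eqref{sub1},~\eqref{sub2} and~\eqref{eq:seminull_basis}, the vanishing $\tilde{\Ss}^r_{\eta^r(q)}E_1^r(q)=0$ from~\eqref{eq:SE1}, and the relation $P\xi^L=-\sin\varphi(q)\,E_3^r(q)$, a short computation yields, for the unit normal $\xi$ with $\xi^L_{q_r}=\eta^r(q)$, the identities $\Ss^r_\xi Z=\tfrac{\sqrt{-c}}{2}P\xi$ and $\Ss^r_\xi P\xi=\tfrac{\sqrt{-c}}{2}\sin^2\varphi(q)\,Z$, while $\Ss^r_\xi$ annihilates the remaining directions $\pi_*\bigl((\ker\tilde{\Ss}^r_{\eta^r(q)})\ominus\R E_2^r(q)\bigr)$.

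\emph{Step 3: the second fundamental form.} From $\langle\II(Z,P\xi),\eta\rangle=\langle\Ss^r_\eta Z,P\xi\rangle=\tfrac{\sqrt{-c}}{2}\langle P\eta,P\xi\rangle=-\tfrac{\sqrt{-c}}{2}\langle\eta,JP\xi\rangle$, valid first for $\xi$ with $\xi^L=\eta^r(q)$ and then for all $\xi,\eta\in\nu\mathcal{W}^r$ by analyticity of $\xi\mapsto\Ss^r_\xi$ and symmetry of $\II$, one reads off $2\,\II(Z,P\xi)=-\sqrt{-c}\,(JP\xi)^\perp$. That $\II$ vanishes on every other pair follows from Step~2 by linear algebra: $T\mathcal{W}^r=\R Z\oplus P\nu\mathcal{W}^r\oplus\mathcal{D}$ with $\mathcal{D}=\pi_*\bigl((\ker\tilde{\Ss}^r_{\eta})\ominus\R E_2^r\bigr)\subset\ker\Ss^r_\eta$ for every $\eta$; since $\Ss^r_\xi$ maps $Z$ into $P\nu\mathcal{W}^r$ and $P\nu\mathcal{W}^r$ into $\R Z$, and $Z\perp P\nu\mathcal{W}^r$, one checks $\langle\II(X,Y),\eta\rangle=\langle\Ss^r_\eta X,Y\rangle=0$ for all pairs of frame vectors $X,Y$ other than a pair $(Z,P\xi)$, which is precisely the claim that $\II$ is the trivial symmetric bilinear extension of the displayed formula. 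The point I expect to require the most care is not any single computation but the chaining of inputs: above all, the identity~\eqref{eq:SE1}, i.e.\ that $\tilde{\Ss}^r_\eta E_1^r(q)=0$ for \emph{every} normal $\eta$ (where the geodesic-variation argument of Proposition~\ref{th:z}, Proposition~\ref{th:DE1W} applied to the $\tilde{\mathcal{W}}^t$, and Propositions~\ref{th:wt} and~\ref{th:wr} must all be combined), and the passage from the single normal direction $\eta^r(q)$ to arbitrary normals via analyticity of the shape operator; once these are secured, the shape-operator identities and the formula for $\II$ follow routinely from the semi-null linear algebra.
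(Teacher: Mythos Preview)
Your proposal is correct and follows essentially the same approach as the paper: the text of Subsection~\ref{sec:focal} carries out precisely these computations (expanding $V$ in the semi-null frame, deriving $s_3=0$ and the angle $\varphi(q)$, computing $\Ss^r_\xi Z$ and $\Ss^r_\xi P\xi$, and passing to all normals by analyticity) and then records the conclusion as Proposition~\ref{th:focal}. One small imprecision: in Step~3 your decomposition should read $T\mathcal{W}^r=\R Z\oplus\R P\xi\oplus\mathcal{D}_q$ for each fixed $\xi$ with $\xi^L=\eta^r(q)$, rather than $\R Z\oplus P\nu\mathcal{W}^r\oplus\mathcal{D}$ with a $\xi$-independent $\mathcal{D}$, but this does not affect the argument.
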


\section{Rigidity of the focal submanifold}\label{sec:rigidity}

In this section we prove that a submanifold of $\C H^n$ under the conditions of Proposition~\ref{th:focal} is congruent to an open part of a submanifold $W_{\g{w}}$ defined in Subsection~\ref{sec:examples:W}. The precise statement is as follows.

\begin{theorem}\label{th:rigidity}
Let $M$ be a connected $(2n-k)$-dimensional submanifold of $\C H^n$, $n\geq 2$. Assume
that there exists a smooth unit vector field $Z$ tangent to the maximal
complex distribution of $M$ such that the second fundamental form
$\II$ of $M$ is given by the trivial symmetric bilinear extension of
\begin{equation}\label{eq:II}
2\,\II(Z,P\xi)=-\sqrt{-c}\,(JP\xi)^\perp,
\end{equation}
for $\xi\in \nu M$,
where $P\xi$ is the tangential component of $J\xi$, and $(\cdot)^\perp$ denotes orthogonal projection onto the normal space $\nu M$.
Then, a point $o\in M$ and $B_o=-J Z_o$ determine an Iwasawa decomposition $\g{su}(1,n)=\g{k}\oplus\g{a}\oplus\g{g}_\alpha\oplus\g{g}_{2\alpha}$ of the Lie algebra of the isometry group of $\C H^n$, such that
$M$ is congruent to
an open part of the minimal submanifold $W_{\g{w}}$, where $\g{w}=T_o M\ominus(\R B_o\oplus\R Z_o)\subset\g{g}_\alpha$.
\end{theorem}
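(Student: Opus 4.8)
The plan is to reconstruct the algebraic structure of $\g{su}(1,n)$ directly from the geometric data at the point $o$, and then to identify $M$ with an open part of the homogeneous orbit $W_{\g{w}}$ by matching second fundamental forms along a suitable foliation. First I would set $B_o = -JZ_o$ and $Z_o = JB_o$, note that $\|Z_o\|=1$ forces $B_o$ to be a unit vector, and use $o\in M$ together with $B_o$ as the direction to infinity to fix an Iwasawa decomposition $\g{g}=\g{k}\oplus\g{a}\oplus\g{g}_\alpha\oplus\g{g}_{2\alpha}$ with $\g{a}=\R B_o$ and $\g{g}_{2\alpha}=\R Z_o$, identifying $T_o\C H^n\cong\g{a}\oplus\g{g}_\alpha\oplus\g{g}_{2\alpha}$ via the model $AN$ as in Subsection~\ref{sec:examples:W}. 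Under this identification I would check, using~\eqref{eq:II} evaluated at $o$, that $T_oM$ contains $B_o$ and $Z_o$: the key point is that the tangential part $P\xi$ of $J\xi$ for normal $\xi$ must lie in $\g{g}_\alpha\cap T_oM$ and that~\eqref{eq:II} says $\II(Z_o,\cdot)$ annihilates everything except $P\nu_oM$, mirroring exactly the formula $2\II(Z,P\xi)=-\sqrt{-c}\,(JP\xi)^\perp$ that~\cite{DD12} gives for $W_{\g{w}}$. This lets me define $\g{w}=T_oM\ominus(\R B_o\oplus\R Z_o)$, a real subspace of $\g{g}_\alpha$, so that $\g{s}_{\g{w}}=\g{a}\oplus\g{w}\oplus\g{g}_{2\alpha}$ and $\nu_oM=\g{w}^\perp=\g{g}_\alpha\ominus\g{w}$.

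Next I would exploit the more geometric construction of $W_{\g{w}}$ promised in Proposition~\ref{th:construction} (assumed from the excerpt): both $M$ and $W_{\g{w}}$ should be realizable, near $o$, as unions of totally geodesic complex hyperbolic subspaces — the leaves of their maximal complex distributions $\g{C}$ — moving along an integral curve of a distinguished vector field built from $Z$. Concretely, I would show that~\eqref{eq:II} forces the maximal complex distribution $\g{C}$ of $M$ to be autoparallel (this is essentially Lemma~\ref{th:complex}, since $\II$ vanishes on $\g{C}\times\g{C}$ and more), so $M$ is ruled by totally geodesic $\C H^{(2n-k-\dim P\nu)/2}$'s; and I would identify the integral curve of $Z$ through $o$ with the geodesic/one-parameter subgroup generated by $Z_o\in\g{g}_{2\alpha}$, respectively $B_o$. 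The strategy is then a standard "moving frame / uniqueness of solutions to an ODE" argument: build an explicit isometry $g$ of $\C H^n$ (an element of $AN$ times an element of $K$) sending $o$ to $o$, $T_oM$ to $T_oW_{\g{w}}$, and the second fundamental form of $M$ at $o$ to that of $W_{\g{w}}$ at $o$; since both submanifolds have second fundamental form given by the \emph{same} universal formula~\eqref{eq:II}, the full covariant jets of $\II$ agree, and by the Cartan–Ambrose–Hicks-type rigidity (or by integrating the Gauss–Codazzi data) $g(M)$ and $W_{\g{w}}$ coincide on a neighborhood of $o$.

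To make the jet-matching rigorous I would proceed inductively along the rulings: on each totally geodesic complex leaf the submanifold is determined, and~\eqref{eq:II} prescribes exactly how the leaf through $o$ propagates in the $P\nu M$ and $\g{a}\oplus\g{g}_{2\alpha}$ directions, i.e.\ it prescribes $\nabla^\perp\II$ and hence all normal derivatives. I would verify that the derived quantities $F$, $P$ acting on $\g{w}^\perp$ (the skew map and the projection from Subsection~\ref{sec:Kahler_angles}) are parallel along these rulings in both $M$ and $W_{\g{w}}$, so the decomposition $\g{w}^\perp=\oplus_{\varphi\in\Psi}\g{w}^\perp_\varphi$ into constant-K\"ahler-angle pieces is transported consistently. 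Then a connectedness argument upgrades the local congruence to: $M$ is an open part of $W_{\g{w}}$.

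The main obstacle, I expect, is the rigidity/uniqueness step itself: showing that the pointwise data (a point, a tangent space, and a second fundamental form obeying~\eqref{eq:II}) determines the submanifold germ. Unlike the homogeneous case, here I cannot simply invoke homogeneity of $M$; I must genuinely integrate the structure equations, and the subtlety is that the normal bundle $\nu M$ is not parallel (only $P\nu M$ inside $TM$ is controlled), so one needs a careful bookkeeping of how $J$ interchanges tangent and normal directions along the rulings — precisely the "study of the normal bundle / reduction of codimension" alluded to in Subsection~\ref{sec:normal}. Getting the normal connection under control, and checking that $\nabla^\perp$ restricted to the $\g{w}^\perp_\varphi$-summands behaves as in the model $W_{\g{w}}$, is where the real work lies; the identification of the Iwasawa data and the formula-matching at the single point $o$ are comparatively formal.
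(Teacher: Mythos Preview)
Your outline has the right architecture (set up the Iwasawa data at $o$, use the autoparallel complex distribution, invoke the horocycle description of $W_{\g{w}}$), but the central step --- the ``jet-matching / Cartan--Ambrose--Hicks'' argument --- has a genuine gap. The formula~\eqref{eq:II} is not a pointwise constraint on $\II$ alone: it involves the globally defined unit field $Z$, which is part of the hypothesis on $M$, not something canonically attached to the ambient geometry. Saying ``both submanifolds satisfy the same universal formula, hence all covariant jets of $\II$ agree'' presupposes that your isometry $g$ carries the field $Z$ on $M$ to the field $Z$ on $W_{\g{w}}$, and that is exactly what has to be proved. In particular, the Iwasawa decomposition you fix at $o$ depends on the direction $B_o=-JZ_o$; at a nearby point $p\in M$ the vector $B_p$ determines \emph{a priori} a different point at infinity, hence a different Iwasawa decomposition and a different model $W_{\g{w}'}$. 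Nothing in your outline rules this out.

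The paper closes this gap not by jet-matching but by proving two geometric facts directly from~\eqref{eq:II}. First (Lemma~\ref{th:B}), the integral curves of $B=-JZ$ are geodesics of $\C H^n$ and \emph{all converge to the same point} $x\in\C H^n(\infty)$; this is shown by a Jacobi-field estimate and is what makes the Iwasawa choice at $o$ globally meaningful. Second, and this is the real work, one must show that through every point of the complex leaf $\mathcal{L}_o$ and every direction in $P\nu M$ there is a \emph{horocycle centred at $x$} that stays inside $M$. This is done by writing down a curve in $M$ satisfying $\nabla_{\gamma'}\gamma'=\tfrac{\sqrt{-c}}{2}B$, proving via a system of ODEs (and a Ricci-equation computation for $R^\perp$) that it remains tangent to $P\nu M$ with unit speed, and then checking it is a horocycle in a totally geodesic $\R H^2$ (Lemmas~\ref{th:U} and the surrounding argument). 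Only then does Proposition~\ref{th:construction} apply. Your proposal gestures at the normal-bundle bookkeeping but does not isolate either of these two steps; without them the rigidity does not follow.
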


Before beginning the proof, we start with a more geometric construction of the submanifolds $W_{\g{w}}$. This will make use of several Lie theoretic concepts that were introduced in Subsection~\ref{sec:examples:W}. See~\cite{BTV95} for further details.

\begin{proposition}\label{th:construction}
Let $k \in \{1,\ldots,n-1\}$, fix a totally geodesic $\C H^{n-k}$ in $\C H^n$ and points $o \in \C H^{n-k}$ and $x \in
\C H^{n-k}(\infty)$. Let $KAN$ be the Iwasawa decomposition of
$SU(1,n)$ with respect to $o$ and $x$, and let $\hat{H}$ be the
subgroup of $AN$ that acts simply transitively on $\C H^{n-k}$.
Now, let $\g{v}$ be a proper subspace of $\nu_o\C H^{n-k}$ such that $\g{v}\cap J\g{v} =0$. Left translation of $\g{v}$ by $\hat{H}$ to all points of
$\C H^{n-k}$ determines a subbundle $\mathfrak{V}$ of the normal
bundle $\nu \C H^{n-k}$. At each point $p \in \C H^{n-k}$ attach the
horocycles determined by $x$ and the linear lines in $\mathfrak{V}_p$. The resulting subset $M$ of $\C H^n$ is
congruent to the submanifold $W_{\g{w}}$, where $\g{w}=(\hat{\g{h}}\ominus(\g{a}\oplus\g{g}_{2\alpha}))\oplus\g{v}
\subset\g{g}_\alpha$.
\end{proposition}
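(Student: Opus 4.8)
The plan is to carry out the whole construction inside the solvable Lie group $AN$, identified with $\C H^n$ by sending $o$ to the identity $e$ as in Subsection~\ref{sec:examples:W}, and to reduce the statement to the group identity $S_{\g{w}}\cdot o=\hat H\cdot\Exp_{\g{a}\oplus\g{n}}(\g{v})\cdot o$. First I would pin down the algebraic data. Because $\C H^{n-k}$ is a totally geodesic complex submanifold through $o$ whose ideal boundary contains $x$, its tangent space at $o$ contains $B$ and, being $J$-invariant, also $Z=JB$; hence under $T_o\C H^n\cong\g{a}\oplus\g{n}$ it has the form $\hat{\g{h}}=\g{a}\oplus\hat{\g{w}}\oplus\g{g}_{2\alpha}$ with $\hat{\g{w}}=\hat{\g{h}}\cap\g{g}_\alpha$ a complex subspace. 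One checks with \eqref{eq:brackets} (or invokes \cite{BTV95}) that $\hat{\g{h}}$ is a subalgebra, so its connected subgroup $\hat H\subseteq AN$ acts simply transitively on $\C H^{n-k}=\hat H\cdot o$; consequently $\nu_o\C H^{n-k}=\g{g}_\alpha\ominus\hat{\g{w}}$, and, since $\hat H$ acts by isometries, the bundle $\mathfrak{V}$ is the restriction to $\C H^{n-k}$ of the left-invariant distribution generated by $\g{v}$.

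Next I would describe the attached horocycles group-theoretically. For $0\neq v\in\g{v}$, the relations \eqref{eq:brackets} show that $\R B\oplus\R v$ is a two-dimensional subalgebra (and a Lie triple system), so its orbit through $o$ is a totally geodesic $\R H^2\subseteq\C H^n$ with $x$ in its boundary, inside which the horocycle centered at $x$ through $o$ is the orbit $\Exp_{\g{a}\oplus\g{n}}(\R v)\cdot o$. Since every element of $AN$, in particular of $\hat H$, acts as an isometry fixing the boundary point $x$, translating by $\hat H$ and letting the line range over $\g{v}$ yields $M=\hat H\cdot\Exp_{\g{a}\oplus\g{n}}(\g{v})\cdot o$; here the argument is insensitive to the chosen representative of a line, so that attaching the lines of $\mathfrak{V}_p$ and attaching the whole space $\mathfrak{V}_p$ produce the same $M$.

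It then remains to prove that $\hat H\cdot\Exp_{\g{a}\oplus\g{n}}(\g{v})=S_{\g{w}}$ for $\g{w}=\hat{\g{w}}\oplus\g{v}$, so that $\g{s}_{\g{w}}=\hat{\g{h}}\oplus\g{v}$. Since $\hat{\g{w}}$ is complex and $\g{v}\perp\hat{\g{w}}$, the bracket relations give $[\hat{\g{w}},\g{v}]=0$, whence $[\hat{\g{h}},\g{v}]=[\g{a},\g{v}]\subseteq\g{v}$ and $\g{v}$ is $\Ad(\hat H)$-invariant; moreover $[\g{v},\g{v}]\subseteq\g{g}_{2\alpha}\subseteq\hat{\g{h}}$ and $[\g{g}_\alpha,\g{g}_{2\alpha}]=0$. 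This last identity makes the Baker--Campbell--Hausdorff series on $\g{v}\oplus\g{g}_{2\alpha}$ terminate, so $\Exp(v)\Exp(v')\in\Exp(\g{g}_{2\alpha})\cdot\Exp(\g{v})\subseteq\hat H\cdot\Exp(\g{v})$; combined with the $\Ad(\hat H)$-invariance of $\g{v}$, one verifies that $\hat H\cdot\Exp(\g{v})$ is closed under products and inverses, hence a connected subgroup whose Lie algebra contains $\hat{\g{h}}+\g{v}=\g{s}_{\g{w}}$ and is contained in $\g{s}_{\g{w}}$, so it coincides with $S_{\g{w}}$. Hence $M=S_{\g{w}}\cdot o=W_{\g{w}}$, and $o$ together with $B_o=-JZ_o=B$ recovers the asserted Iwasawa decomposition, with $\g{w}=T_oM\ominus(\R B_o\oplus\R Z_o)$.

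The step I expect to be the main obstacle is this last one — checking that $\hat H\cdot\Exp(\g{v})$ is honestly a subgroup — which hinges on the termination of Baker--Campbell--Hausdorff and on the $\Ad(\hat H)$-invariance of $\g{v}$, both consequences of the complex structure of $\hat{\g{w}}$ and of the root-space relations \eqref{eq:brackets}. The condition $\g{v}\cap J\g{v}=0$ enters only to make the description non-redundant: it guarantees that $\C H^{n-k}$ is precisely the maximal totally geodesic complex submanifold of $W_{\g{w}}$ through $o$, rather than playing an essential role in the identification $M=W_{\g{w}}$ itself.
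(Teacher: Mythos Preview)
Your argument is correct and takes a somewhat more structural route than the paper's. Both proofs rest on identifying the horocycle through $o$ centered at $x$ and tangent to $v\in\g{v}$ with the one-parameter orbit $\Exp_{\g{a}\oplus\g{n}}(\R v)\cdot o$; the paper establishes this by a direct computation with the Levi-Civita connection \eqref{eq:Levi-Civita} (showing that an integral curve of the left-invariant field $v$ satisfies the horocycle equation), whereas you invoke it as the standard fact that in the solvable model of $\R H^2$ the $N$-orbits are the horocycles centered at the distinguished boundary point. Beyond that, the paper handles the two inclusions $W_{\g{w}}\subset M$ and $M\subset W_{\g{w}}$ separately: the first by factoring a generic element $s\in S_{\g{w}}$ as $s=gh$ with $g\in\Exp(\g{v})$ and $h\in\hat H$ via the explicit product formula \eqref{eq:product_AN}, the second by the horocycle computation and $\hat H$-equivariance. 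You instead prove the single group-theoretic identity $\hat H\cdot\Exp(\g{v})=S_{\g{w}}$ by checking closure under products and inverses, using the $\Ad(\hat H)$-invariance of $\g{v}$ and the two-step nilpotency that makes the Baker--Campbell--Hausdorff series terminate. Your approach bypasses the somewhat opaque formula \eqref{eq:product_AN} and makes the underlying algebraic mechanism more transparent; the paper's version is more computational but entirely self-contained, not appealing to BCH or to prior knowledge of horocycles in $\R H^2$.
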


\begin{proof}
Let $W_{\g{w}}$ be the minimal submanifold of $\C H^n$ constructed
from the Iwasawa decomposition $KAN$ associated with $o$ and $x$ and
from $\g{w}=(\hat{\g{h}}\ominus(\g{a}\oplus\g{g}_{2\alpha}))\oplus\g{v}$, as described in Subsection~\ref{sec:examples:W}. We recall that $T_o\C H^n$ is now identified with $\g{a}\oplus\g{n}$ and we denote by $\g{w}^\perp=\g{g}_\alpha\ominus\g{w}$ the orthogonal complement of $\g{w}$ in $\g{g}_\alpha$. We have that the Lie algebra of $\hat{H}$ is $\hat{\g{h}}=\g{s}_{\g{w}}\ominus P\g{w}^\perp$, with $\g{s}_\g{w}=\g{a}\oplus\g{w}\oplus\g{g}_{2\alpha}$, and where, as usual, $P\xi$ denotes the orthogonal projection of $J\xi$ on $\g{w}$ for each $\xi\in\g{w}^\perp$. Since $\g{v}\cap J\g{v}=0$, we have that $\hat{\g{h}}$ is the maximal complex subspace of $\g{s}_\g{w}$.

Let $p \in W_{\g{w}}$. By definition, there exists an isometry $s\in S_{\g{w}}$ with $p = s(o)$.
There is a unique vector $X$ in the Lie algebra $\g{s}_{\g{w}}$ of $S_{\g{w}}$ such
that $s = \Exp_{\g{a}\oplus\g{n}}(X)$. We can write $X = aB + U + W + xZ$
with $a$, $x \in \R$, $U \in \hat{\g{h}} \ominus (\g{a}\oplus\g{g}_{2\alpha})$, and $W \in \g{v}$. Since $U$ and $W$ are complex-orthogonal, we get $[U,W]=0$ by~\eqref{eq:brackets} from Subsection~\ref{sec:examples}. Using this notation we can define the elements
$g=\Exp_{\g{a}\oplus\g{n}}(\rho(a/2)W)$
and $h=\Exp_{\g{a}\oplus\g{n}}(aB+U+xZ)\in \hat{H}$. Using~\eqref{eq:product_AN} we obtain,
\[
gh
=\Exp_{\g{a}\oplus\g{n}}\Bigl(\rho\Bigl(\frac{a}{2}\Bigr)W\Bigr)\cdot
\Exp_{\g{a}\oplus\g{n}}(aB+U+xZ)=\Exp_{\g{a}\oplus\g{n}}(aB+U+W+xZ)=s.
\]
By construction, $h(o) \in \C H^{n-k}$, and $s(o) = g(h(o))$ is in the
horocycle through $h(o)$, tangent to $\R W$, and with center $x$ at infinity. Hence, $p=s(o)\in M$ and we conclude that $W_{\g{w}} \subset M$.

Now we prove the converse. Let $\sigma$ be a horocycle such that $\sigma(0)=o$, $\sigma'(0)=U\in\g{v}$, $\lVert U\rVert=1$, and $2(\bar{\nabla}_{\sigma'}\sigma')(0)=\sqrt{-c}\,B$. We show that $\sigma$ is contained in $W_{\g{w}}$. First, using~\eqref{eq:Levi-Civita}, we get $\bar{\nabla}_B B=\bar{\nabla}_B U=0$, $2\bar{\nabla}_U B=-\sqrt{-c}\,U$ and $2\bar{\nabla}_U U=\sqrt{-c}\,B$. Hence, it follows that the distribution generated by $B$ and $U$ is autoparallel and its integral submanifolds are totally geodesic real hyperbolic spaces $\R H^2$ of curvature $c/4$.
Now, we denote by $\tau$ an integral curve of the left-invariant vector field $U$ such that $\tau(0)=o$. Using~\eqref{eq:Levi-Civita} we get $\bar{\nabla}_U\bar{\nabla}_U U+\langle\bar{\nabla}_U U,\bar{\nabla}_U U\rangle U=0$. Thus, $\tau$ is a cycle in a totally geodesic $\R H^2$ of curvature $c/4$, and since $2(\bar{\nabla}_{\tau'}\tau')(0)=\sqrt{-c} B$, it follows that $\tau$ is a horocycle determined by $o$, $U$ and the point at infinity $x$. By uniqueness of solutions to ordinary differential equations we get $\tau=\sigma$, and thus $\sigma$ is contained in $W_{\g{w}}$.

If $\sigma$ is an arbitrary horocycle determined by initial conditions $p\in \C H^{n-k}$, $U_p\in\g{V}_p$ and $\sqrt{-c}\,B_p/2$, then there is a unique $h\in\hat{H}$ such that $p=h(o)$. Since $h$ is an isometry of $\C H^n$, it is easy to see that $h^{-1}\circ\sigma$ satisfies the conditions of horocycle in the previous paragraph. Hence, $h^{-1}\circ\sigma$ is contained in $W_{\g{w}}$, from where it follows that $\sigma$ is contained in $W_{\g{w}}$ because $h\in\hat{H}\subset S_{\g{w}}$. This shows that $M\subset W_{\g{w}}$ and finishes the proof of the proposition.
\end{proof}

The rest of this Section is devoted to the proof of the rigidity result given by Theorem~\ref{th:rigidity}. In what follows, $M$ will denote a submanifold of $\C H^n$ under the assumptions of Theorem~\ref{th:rigidity}.

\subsection{The structure of the normal bundle}\label{sec:normal}\hfill

For $\xi\in\nu M$ recall that $J\xi=P\xi+F\xi$, where $P\xi$ and $F\xi$ denote the orthogonal projections of $J\xi$ onto $TM$ and $\nu M$ respectively. The maps $P\colon\nu M\to T M$ and $F\colon\nu M\to\nu M$ are vector bundle homomorphisms. We will use some of their properties in the rest of the paper. We start with

\begin{lemma}\label{th:DF}
The endomorphism $F$ of $\nu M$ is parallel with respect to the normal connection of $M$, that is, $\nabla^\perp F=0$.
\end{lemma}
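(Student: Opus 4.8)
The claim is that $F$ is parallel in the normal bundle, i.e.\ $(\nabla_X^\perp F)\eta = \nabla_X^\perp(F\eta) - F(\nabla_X^\perp\eta) = 0$ for all $X\in\Gamma(TM)$, $\eta\in\Gamma(\nu M)$. The natural route is to differentiate the relation $J\eta = P\eta + F\eta$ using the ambient Levi-Civita connection $\bar\nabla$ and the fact that $\bar\nabla J = 0$ (since $\C H^n$ is Kähler), and then compare tangential and normal components via the Gauss and Weingarten formulas. First I would write, for $\eta\in\Gamma(\nu M)$ and $X\in\Gamma(TM)$,
\[
\bar\nabla_X(J\eta) = J\bar\nabla_X\eta = J(-\Ss_\eta X + \nabla_X^\perp\eta) = -JP\,\text{-type terms},
\]
more precisely, splitting each of $J(\Ss_\eta X)$ and $J(\nabla_X^\perp\eta)$ into tangential plus normal parts, and likewise expanding $\bar\nabla_X(P\eta + F\eta)$ using Gauss for $P\eta\in\Gamma(TM)$ and Weingarten for $F\eta\in\Gamma(\nu M)$. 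Equating normal components of both sides yields
\[
(\nabla_X^\perp F)\eta = \II(X,P\eta) - (J\Ss_\eta X)^{\perp},
\]
where $(\,\cdot\,)^\perp$ is projection onto $\nu M$; equivalently $\II(X,P\eta) + (\Ss_\eta X)^{\text{as a tangent vector, then } J, \text{ then }\perp}$ — I would track signs carefully. So the identity to prove reduces to the symmetric-looking statement $\II(X,P\eta) = (J\Ss_\eta X)^\perp$ for all $X\in\Gamma(TM)$, $\eta\in\Gamma(\nu M)$.

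The key observation is that both sides of $\II(X,P\eta) = (J\Ss_\eta X)^\perp$ are determined by the hypothesis~\eqref{eq:II}. Since $\II$ is the trivial symmetric bilinear extension of $2\,\II(Z,P\xi) = -\sqrt{-c}\,(JP\xi)^\perp$, the second fundamental form vanishes whenever neither argument is a multiple of $Z$; concretely, writing $TM = \R Z \oplus \mathcal{Z}$ with $\mathcal{Z} = TM\ominus\R Z$, we have $\II(\mathcal{Z},\mathcal{Z}) = 0$, $\II(Z,Z) = 0$, and $\II(Z,W) = -\tfrac{\sqrt{-c}}{2}(JW)^\perp$ for $W\in\mathcal{Z}$ — but only when $W$ is of the form $P\xi$; one must first check that $P\nu M \subseteq \mathcal{Z}$ (indeed $\langle P\xi, Z\rangle = \langle J\xi, Z\rangle = -\langle \xi, JZ\rangle$, and since $Z$ is tangent to the maximal complex distribution, $JZ$ is tangent, so this vanishes) and that $\II(Z, W) = -\tfrac{\sqrt{-c}}{2}(JW)^\perp$ for \emph{all} $W\in\mathcal{Z}$, not just those in $P\nu M$ — this needs the decomposition $\mathcal{Z} = P\nu M \oplus (\text{complex part})$ together with $\II$ vanishing on the complex part. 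With this in hand I would compute the shape operator: $\langle \Ss_\eta Z, W\rangle = \langle \II(Z,W),\eta\rangle = -\tfrac{\sqrt{-c}}{2}\langle JW,\eta\rangle = \tfrac{\sqrt{-c}}{2}\langle W, J\eta\rangle = \tfrac{\sqrt{-c}}{2}\langle W, P\eta\rangle$ for $W\in\mathcal{Z}$, and $\langle\Ss_\eta Z, Z\rangle = 0$, so $\Ss_\eta Z = \tfrac{\sqrt{-c}}{2}P\eta$; and $\Ss_\eta W = 0$ for $W\in\mathcal{Z}\ominus P\nu M$ while $\langle\Ss_\eta(P\xi), Z\rangle = \langle\II(Z,P\xi),\eta\rangle = -\tfrac{\sqrt{-c}}{2}\langle JP\xi,\eta\rangle$. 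Then I would verify $\II(X,P\eta) = (J\Ss_\eta X)^\perp$ on each piece $X = Z$ and $X\in\mathcal{Z}$ separately, using $J = P + F$ on normal vectors and the analogous splitting of $J$ restricted to the tangential directions $PF\nu M$, chasing the identities $FP = -PF^{?}$ — actually I would instead use directly $JP\xi = (JP\xi)^{\text{tan}} + (JP\xi)^\perp$ and the fact that the tangential part of $JP\xi$ lies in $\R Z$ (since $P\xi\in\mathcal{Z}$ and $J$ maps the complex distribution to itself, the only tangential direction $JP\xi$ can acquire outside the complex distribution is $Z$; concretely $\langle JP\xi, W\rangle = 0$ for $W\in\mathcal{Z}\ominus P\nu M$ complex, leaving a $Z$-component and a normal component).

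The main obstacle I anticipate is bookkeeping rather than conceptual: keeping straight the three-way splitting $TM = \R Z\oplus (\mathcal{Z}\cap\text{complex})\oplus P\nu M$ and the two-way splitting $\nu M = F\nu M \oplus (\ker F)$, together with the signs coming from $\langle JA,B\rangle = -\langle A,JB\rangle$, and ensuring that $\II$'s ``trivial bilinear extension'' really does vanish on all the cross-terms one expects. A secondary subtlety is that the statement of~\eqref{eq:II} only prescribes $\II(Z,P\xi)$, so one genuinely needs the hypothesis that $\II$ is the \emph{trivial} extension to kill $\II(W_1,W_2)$ for $W_1,W_2\in\mathcal{Z}$; I would state this explicitly at the outset. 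Once $\Ss_\eta$ is computed in closed form on all of $TM$, the identity $\II(X,P\eta)=(J\Ss_\eta X)^\perp$ becomes a short verification, and $\nabla^\perp F = 0$ follows immediately from the normal-component equation above. I would close by remarking that the same computation also records the tangential component, which gives $(\nabla_X P)\eta = \Ss_{F\eta}X + \II(X,\cdot)$-type terms — useful later — but that is not needed for this lemma.
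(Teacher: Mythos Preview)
Your approach is correct and essentially the same as the paper's: both differentiate $J\eta = P\eta + F\eta$ using $\bar\nabla J = 0$ and compare normal components, reducing $(\nabla_X^\perp F)\eta = 0$ to an identity relating $\II(X,P\eta)$ and $\Ss_\eta X$. The paper's execution is considerably shorter, however: rather than computing $\Ss_\eta$ explicitly on a three-fold decomposition of $TM$ and verifying $\II(X,P\eta) = \pm(J\Ss_\eta X)^\perp$ case by case, the paper observes that, tested against any $\xi\in\nu M$, this identity is precisely the symmetry $\langle\II(X,P\xi),\eta\rangle = \langle\II(X,P\eta),\xi\rangle$, which follows in one line from~\eqref{eq:II} for $X=Z$ (namely $\langle\II(Z,P\xi),\eta\rangle = \tfrac{\sqrt{-c}}{2}\langle P\xi,P\eta\rangle$, symmetric in $\xi,\eta$) and then from the trivial-extension hypothesis for general $X$; no decomposition of $TM$ or explicit formula for $\Ss_\eta$ is needed. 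Also note your displayed identity has the wrong sign: the correct relation is $(\nabla_X^\perp F)\eta = -\II(X,P\eta) - (J\Ss_\eta X)^\perp$.
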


\begin{proof}
Let $\xi$, $\eta\in\Gamma(\nu M)$ and $X\in\Gamma(T M)$. Using~\eqref{eq:II} we get
\[
\langle\II(Z,P\xi),\eta\rangle
=-\frac{\sqrt{-c}}{2}\langle JP\xi,\eta\rangle
=\frac{\sqrt{-c}}{2}\langle P\xi,P\eta\rangle
=-\frac{\sqrt{-c}}{2}\langle \xi,JP\eta\rangle
=\langle\II(Z,P\eta),\xi\rangle.
\]
This relation yields $\langle\II(X,P\xi),\eta\rangle=\langle\II(X,P\eta),\xi\rangle$ using the fact that $\II$ is obtained by the trivial symmetric bilinear extension of~\eqref{eq:II}. Since $\C H^n$ is K\"{a}hler,
\begin{align*}
\langle\nabla_X^\perp F\xi,\eta\rangle
&{}=\langle\bar{\nabla}_X J\xi,\eta\rangle-\langle\bar{\nabla}_X P\xi,\eta\rangle
=-\langle\bar{\nabla}_X \xi,P\eta+F\eta\rangle-\langle\II(X,P\xi),\eta\rangle\\
&{}=\langle\II(X,P\eta),\xi\rangle
-\langle{\nabla}^\perp_X \xi,F\eta\rangle-\langle\II(X,P\xi),\eta\rangle
=-\langle{\nabla}^\perp_X \xi,J\eta\rangle=\langle F{\nabla}^\perp_X \xi,\eta\rangle.
\end{align*}
Hence, $(\nabla_X^\perp F)\xi=\nabla_X^\perp F\xi-F\nabla_X^\perp\xi=0$, as we wanted to show.
\end{proof}

For each $p\in M$, the normal space $\nu_p M$ is a real vector subspace of the complex vector space $T_p\C H^n$. According to Subsection~\ref{sec:Kahler_angles}, $\nu_p M$ has a decomposition as a sum of subspaces of constant K\"{a}hler angle. These angles are called the principal K\"{a}hler angles of $\nu_p M$. We show that they do not depend on $p\in M$.

\begin{proposition}\label{th:const_Kahler_angles}
The principal K\"{a}hler angles of $\nu M$ and their multiplicities are constant along $M$.
\end{proposition}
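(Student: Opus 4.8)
The plan is to deduce everything from Lemma~\ref{th:DF}. Recall from Subsection~\ref{sec:Kahler_angles} that the principal K\"{a}hler angles of $\nu_p M$ and their multiplicities are encoded by the self-adjoint endomorphism $F^2$ of $\nu_p M$: one has the decomposition $\nu_p M=\oplus_{\varphi}(\nu_p M)_\varphi$ with $(\nu_p M)_\varphi=\ker\bigl(F^2+\cos^2(\varphi)\,\id\bigr)$, so that $\varphi\in[0,\pi/2]$ is a principal K\"{a}hler angle of $\nu_p M$ if and only if $-\cos^2(\varphi)$ is an eigenvalue of $F^2$ at $p$, and the multiplicity of $\varphi$ equals the multiplicity of that eigenvalue. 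Since $\varphi\mapsto-\cos^2(\varphi)$ is injective on $[0,\pi/2]$, the proposition is equivalent to the assertion that the eigenvalues of $F^2$ and their multiplicities are independent of $p\in M$.

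To prove this, note first that since $\nabla^\perp F=0$ by Lemma~\ref{th:DF}, also $\nabla^\perp F^2=0$. Now fix $p$, $q\in M$ and a curve $c$ in $M$ from $p$ to $q$, and let $\tau\colon\nu_p M\to\nu_q M$ denote parallel transport along $c$ with respect to $\nabla^\perp$. For any $\nabla^\perp$-parallel normal vector field $\xi$ along $c$ we have $\nabla^\perp_{c'}(F^2\xi)=(\nabla^\perp_{c'}F^2)\xi+F^2(\nabla^\perp_{c'}\xi)=0$, so $F^2\xi$ is parallel along $c$ as well; evaluating at the endpoints gives $\tau\circ F^2_p=F^2_q\circ\tau$. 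Because $\nabla^\perp$ is a metric connection, $\tau$ is a linear isometry, so $F^2_q=\tau\circ F^2_p\circ\tau^{-1}$ is conjugate to $F^2_p$ and hence has the same eigenvalues with the same multiplicities. As $M$ is connected, any two of its points can be joined by such a curve, and the proposition follows.

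There is no serious obstacle here; the statement is essentially a formal consequence of Lemma~\ref{th:DF}. The only point that requires a little care is the translation between the spectral data of $F^2$ and the principal K\"{a}hler angles with their multiplicities, which is exactly the content of the structure theory recalled in Subsection~\ref{sec:Kahler_angles} (in particular, that $(\nu_p M)_\varphi$ is precisely the $-\cos^2(\varphi)$-eigenspace of $F^2$, so that geometric multiplicities of eigenvalues match multiplicities of K\"{a}hler angles), together with the elementary fact that a parallel self-adjoint bundle endomorphism over a connected base has locally constant spectrum.
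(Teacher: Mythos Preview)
Your proof is correct and takes essentially the same approach as the paper: both arguments use Lemma~\ref{th:DF} to show that the relevant data are invariant under $\nabla^\perp$-parallel transport along a curve joining two arbitrary points. The only cosmetic difference is that the paper transports an orthonormal basis of principal K\"{a}hler vectors and observes that the Gram matrix $\langle F\xi_i,F\xi_j\rangle$ stays constant, whereas you phrase the same thing in terms of the conjugacy of the parallel endomorphism $F^2$; since $\langle F\xi,F\eta\rangle=-\langle F^2\xi,\eta\rangle$, these are equivalent packagings of the same computation.
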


\begin{proof}
Let $p$, $q\in M$ be two arbitrary points, and let $\sigma\colon [0,1]\to M$ be a smooth curve in $M$ such that $\sigma(0)=p$ and $\sigma(1)=q$. We take a basis $\{\xi_1,\dots,\xi_k\}$ of principal K\"{a}hler vectors, that is, an orthonormal basis of $\nu_p M$ such that $\langle F\xi_i(p),F\xi_j(p)\rangle=\cos^2(\varphi_i(p))\delta_{ij}$, for $i$, $j\in\{1,\dots,k\}$ (see Subsection~\ref{sec:Kahler_angles}). We extend this basis to a $\nabla^\perp$-parallel orthonormal basis $\{\xi_1(t),\dots,\xi_k(t)\}$ of smooth vector fields along~$\sigma$. Since $F$ is parallel by Lemma~\ref{th:DF}, it follows that $\langle F\xi_i,F\xi_j\rangle$ is constant along~$\sigma$. Therefore, $\{\xi_1(1),\dots,\xi_k(1)\}$ is also a basis of principal K\"{a}hler vectors of $\nu_q M$, and it follows that the principal K\"{a}hler angles and their multiplicities of $\nu M$ at $p$ and $q$ coincide.
\end{proof}

Let $\Phi$ be the set of constant principal K\"{a}hler angles of $\nu M$. We write
$\nu_p M=\oplus_{\varphi\in\Phi}\g{W}_\varphi^\perp(p)$ as in Subsection~\ref{sec:Kahler_angles}, where each $\g{W}_\varphi^\perp(p)$ has constant K\"{a}hler angle $\varphi$. Since the principal K\"{a}hler angles are constant, $\g{W}_\varphi^\perp$ is a smooth vector subbundle of $\nu M$. If $\g{W}_0^\perp$ is nonzero we can simplify matters because there is a reduction of codimension.

\begin{proposition}\label{th:reduction}
If $\g{W}_0^\perp\neq 0$ there exists a totally geodesic $\C H^k$ in $\C H^n$ containing $M$ where $0$ is no longer principal K\"{a}hler angle of $M$ in $\C H^k$, the normal bundle of $M$ is obtained by inclusion, and the second fundamental form is obtained by restriction.
\end{proposition}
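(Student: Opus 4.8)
The plan is to use the classical reduction-of-codimension theorem (in the spirit of Erbacher), for which one needs a parallel, totally geodesic, first-osculating subbundle of the normal bundle. The natural candidate is $\g{W}_0^\perp$, the complex part of $\nu M$. First I would observe that, since $\g{W}_0^\perp$ is a complex subbundle of $\nu M$, we have $P\xi=0$ for every $\xi\in\Gamma(\g{W}_0^\perp)$, so by~\eqref{eq:II} and the symmetric bilinear extension, $\II(X,Y)$ has no component in $\g{W}_0^\perp$ for any $X,Y\in\Gamma(TM)$; equivalently, $\Ss_\xi=0$ for all $\xi\in\Gamma(\g{W}_0^\perp)$. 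This already shows that the first normal space of $M$ is contained in the $\nabla^\perp$-orthogonal complement $\g{W}_0^{\perp\perp}:=\bigoplus_{\varphi\in\Phi\setminus\{0\}}\g{W}_\varphi^\perp$.

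Second I would check that $\g{W}_0^\perp$ is parallel with respect to $\nabla^\perp$. This is where Lemma~\ref{th:DF} enters: since $\nabla^\perp F=0$ and $\g{W}_0^\perp=\ker(F^2+\id)^{\perp}$ — more precisely, $\g{W}_0^\perp$ is the eigenbundle of the parallel, self-adjoint endomorphism $-F^2$ of $\nu M$ for the eigenvalue $1$ (recall $F^2\xi=-\cos^2(\varphi)\xi$ on $\g{W}_\varphi^\perp$, so $-F^2=\id$ exactly on the complex part) — it is invariant under $\nabla^\perp$, as are all the $\g{W}_\varphi^\perp$. Hence $\g{W}_0^\perp$ is a parallel subbundle of $\nu M$ which contains the first normal space and is invariant under $J$ (being complex). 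By the standard reduction-of-codimension result for submanifolds of space forms (and its extension to K\"ahler space forms, using that $\g{W}_0^{\perp\perp}\oplus TM$ is $J$-invariant: $J(TM)\subseteq TM\oplus\bigoplus_{\varphi\neq 0}\g{W}_\varphi^\perp$ since $J\g{W}_\varphi^\perp\subseteq \g{W}_\varphi\oplus\g{W}_\varphi^\perp$ for $\varphi\neq 0$ and these are inside $TM\oplus\g{W}_0^{\perp\perp}$), the subbundle $TM\oplus\g{W}_0^{\perp\perp}$ is autoparallel in $T\C H^n$ along $M$ and $J$-invariant, so its integral submanifold through $o$ is a totally geodesic complex submanifold $\C H^k\subseteq\C H^n$ containing $M$, where $2k=\dim M+\dim\g{W}_0^{\perp\perp}=2n-\dim\g{W}_0^\perp$.

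Third, once $M\subseteq\C H^k$, the normal bundle of $M$ in $\C H^k$ is precisely $\g{W}_0^{\perp\perp}$, obtained from $\nu M$ (inside $\C H^n$) by dropping the complex summand $\g{W}_0^\perp$, and the second fundamental form of $M$ in $\C H^k$ coincides with that in $\C H^n$ because $\II$ already took values in $\g{W}_0^{\perp\perp}$; this is exactly the assertion of the proposition. Finally, the K\"ahler angles of $\nu M$ in $\C H^k$ are the nonzero elements of $\Phi$ together with, possibly, $\pi/2$-components, but crucially $0$ is no longer a principal K\"ahler angle — because we removed precisely the complex part. One should also remark that the hypotheses of Theorem~\ref{th:rigidity} are preserved: $Z$ is still tangent to the maximal complex distribution of $M$ (now viewed inside $\C H^k$), and~\eqref{eq:II} still holds with $(\cdot)^\perp$ now denoting projection onto $\nu M\cap T\C H^k=\g{W}_0^{\perp\perp}$, since $P\xi$ for $\xi\in\g{W}_0^{\perp\perp}$ is unchanged and $F$ restricted there has no kernel issues.

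The main obstacle I expect is making the reduction-of-codimension argument fully rigorous in the K\"ahler (rather than merely constant-curvature) setting: one must verify that the candidate osculating subbundle $TM\oplus\g{W}_0^{\perp\perp}$ is not only parallel for $\nabla^\perp$ but genuinely autoparallel for $\bar\nabla$ (i.e.\ $\bar\nabla_X Y\in\Gamma(TM\oplus\g{W}_0^{\perp\perp})$ for sections $X,Y$ of it), which requires controlling the normal connection components landing in $\g{W}_0^\perp$ — these vanish by parallelism of $\g{W}_0^\perp$ — and also that $\bar\nabla_X\zeta$ for $\zeta\in\Gamma(\g{W}_0^{\perp\perp})$ and $X\in\Gamma(TM)$ stays in the subbundle, which follows since $\Ss_\zeta X\in TM$ and $\nabla^\perp_X\zeta\in\g{W}_0^{\perp\perp}$ by parallelism. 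The $J$-invariance then upgrades the totally geodesic submanifold to a complex one, i.e.\ a $\C H^k$; alternatively one can invoke the known classification of totally geodesic submanifolds of $\C H^n$ to identify it.
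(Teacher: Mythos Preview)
Your proposal is correct and follows essentially the same route as the paper: use $\nabla^\perp F=0$ and the eigenbundle characterization $F^2\xi=-\cos^2(\varphi)\xi$ to see that the $\g{W}_\varphi^\perp$ (in particular $\g{W}_0^\perp$) are $\nabla^\perp$-parallel, observe that $\II$ has no $\g{W}_0^\perp$-component, and conclude that the $J$-invariant bundle $TM\oplus\bigoplus_{\varphi\neq 0}\g{W}_\varphi^\perp$ is $\bar\nabla$-autoparallel, giving the totally geodesic $\C H^k$. The only cosmetic difference is that the paper packages the last step by invoking Reckziegel's reduction-of-codimension theorem~\cite{R80} rather than verifying autoparallelism by hand as you do.
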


\begin{proof}
We first show that each distribution $\g{W}_\varphi^\perp$ is parallel with respect to the normal connection.
Let $\varphi\in\Phi$, $\xi\in\Gamma(\g{W}_\varphi^\perp)$ and $X\in\Gamma(TM)$. As we argued in Subsection~\ref{sec:Kahler_angles}, we have $F^2\xi=-\cos^2(\varphi)\xi$. Since $\nabla^\perp F=0$ by Lemma~\ref{th:DF}, we get
\[
F^2\nabla_X^\perp\xi=\nabla_X^\perp F^2\xi
=\nabla_X^\perp(-\cos^2(\varphi)\xi)
=-\cos^2(\varphi)\nabla_X^\perp\xi,
\]
and again from the results in Subsection~\ref{sec:Kahler_angles} it follows that $\nabla_X^\perp\xi\in\Gamma(\g{W}_\varphi^\perp)$. Therefore
\begin{equation}\label{eq:DWvarphi}
\nabla^\perp\g{W}_\varphi^\perp\subset\g{W}_\varphi^\perp\quad
\text{ for each $\varphi\in\Phi$}.
\end{equation}

Recall from Subsection~\ref{sec:Kahler_angles} that we can decompose $T M=\g{W}_0\oplus
(\oplus_{\varphi\in\Phi\setminus\{0\}}\g{W}_\varphi)$ with $\C\g{W}_\varphi^\perp=\g{W}_\varphi^\perp\oplus\g{W}_\varphi$ and $\dim\g{W}_\varphi^\perp=\dim\g{W}_\varphi$ for all $\varphi\in\Phi\setminus\{0\}$.
Now we consider the bundle
\[
\mathcal{F}=T M\oplus
\Biggl(\bigoplus_{\varphi\in\Phi\setminus\{0\}}\g{W}_\varphi^\perp\Biggr)=
\g{W}_0\oplus\Biggl(\bigoplus_{\varphi\in\setminus\{0\}}
\C\g{W}_\varphi^\perp\Biggr)
\]
along $M$. Then, $\mathcal{F}$ is a complex vector bundle and, at a point $p\in M$, $\mathcal{F}_p$ is the tangent space of a totally geodesic complex hyperbolic space $\C H^{n-m_0^\perp}$, $m_0^\perp=\dim_\C\g{W}_0^\perp$, in $\C H^n$. Using~\eqref{eq:II} and~\eqref{eq:DWvarphi} we get $\bar{\nabla}_X \phi=\nabla_X^\mathcal{F}\phi$ for each $\phi\in\Gamma(\mathcal{F})$ and where $\nabla^\mathcal{F}$ denotes the connection on $\mathcal{F}$ induced from $\bar{\nabla}$. Hence, by~\cite[Theorem~1 (with $h=0$ in the notation of this paper)]{R80} we conclude that $M$ is contained in the totally geodesic $\C H^{n-m_0^\perp}$ mentioned above.
\end{proof}

In other words, what Proposition~\ref{th:reduction} states is that we can, and we will, assume from now on that $\g{W}_0^\perp=0$. Otherwise, we just take a smaller complex hyperbolic space where this condition is fulfilled.

\subsection{Proof of Theorem~\ref{th:rigidity}}\label{sec:rigidity:proof}\hfill

In order to prove Theorem~\ref{th:rigidity} we use the construction of
$W_{\g{w}}$ as described in Proposition~\ref{th:construction}. Part of the proof goes along the lines of the rigidity result in~\cite{BD09}, although the argument here is more involved.

As we have just seen in Subsection~\ref{sec:normal}, we may assume that the normal bundle $\nu M$ does not contain a nonzero complex subbundle.
We decompose the tangent bundle $TM$ of $M$ orthogonally into $TM =
\g{C}\oplus\g{D}$, where $\g{C}$ is the maximal
complex subbundle of $TM$. Thus, $\g{D}\cap J\g{D}=0$. For each $\xi \in \Gamma(\nu M)$ we
have $J\xi = P\xi + F\xi$, where $P\xi
\in \Gamma(\g{D})$ and $F\xi \in \Gamma(\nu M)$. Since $\g{D}=P\nu M$,  then we argued in Subsection~\ref{sec:Kahler_angles} that $\g{D}$ has the same K\"{a}hler angles, with the same multiplicities as $\nu M$ (note that $0$ is not a K\"{a}hler angle of $\nu M$ by the assumption we have made after Subsection~\ref{sec:normal}). Since the principal K\"{a}hler angles are never $0$, it follows that $P\colon\nu M\to\g{D}$ is an isomorphism of vector bundles.

\begin{lemma}\label{th:complex}
The distribution $\g{C}$ is autoparallel and each integral submanifold is an open part of a totally geodesic complex hyperbolic space $\C H^{n-k}$ in $\C H^n$.
\end{lemma}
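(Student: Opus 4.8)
The plan is to show that $\g{C}$ is autoparallel by verifying $\bar\nabla_XY\in\Gamma(\g{C})$ for all $X,Y\in\Gamma(\g{C})$, and then to conclude via the de Rham--type criterion that the integral submanifolds are totally geodesic complex submanifolds of $\C H^n$, hence copies of $\C H^{n-k}$. The key identity to exploit is the second fundamental form~\eqref{eq:II}: since $\II$ is the trivial symmetric bilinear extension of $2\,\II(Z,P\xi)=-\sqrt{-c}\,(JP\xi)^\perp$, it vanishes whenever one of its arguments lies in $\g{C}\ominus\R Z$, and $\II(Z,\cdot)$ is supported on $\g{D}=P\nu M$. Thus $\II(\g{C},\g{C})=0$, so the shape operator annihilates $\g{C}$; this already tells us that, at the ambient level, $\bar\nabla_XY=\nabla_XY$ for $X,Y\in\Gamma(\g{C})$, and it remains to see that $\nabla_XY$ has no $\g{D}$-component.

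First I would record, from $\II(\g{C},\g{C})=0$ and the fact that $\C H^n$ is K\"ahler (so $\bar\nabla J=0$), that for $X,Y\in\Gamma(\g{C})$ and $\xi\in\Gamma(\nu M)$ one has $\langle\bar\nabla_XY,\xi\rangle=0$ and $\langle\bar\nabla_X(JY),\xi\rangle=\langle J\bar\nabla_XY,\xi\rangle=-\langle\bar\nabla_XY,J\xi\rangle=-\langle\nabla_XY,P\xi\rangle$. Since $JY\in\Gamma(\g{C})$ as well, the left-hand side is $-\langle\II(X,JY),\xi\rangle=0$, hence $\langle\nabla_XY,P\xi\rangle=0$ for all $\xi\in\nu M$. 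As $P\colon\nu M\to\g{D}$ is an isomorphism of vector bundles (using $\g{W}_0^\perp=0$, as arranged after Subsection~\ref{sec:normal}), this says precisely that the $\g{D}$-component of $\nabla_XY$ vanishes, i.e.\ $\nabla_XY\in\Gamma(\g{C})$. Combined with $\II(X,Y)=0$ this gives $\bar\nabla_XY\in\Gamma(\g{C})\subset\Gamma(TM)$, so $\g{C}$ is autoparallel in $M$ and, moreover, totally geodesic in $\C H^n$ (its second fundamental form as a submanifold of $\C H^n$, which combines $\II|_{\g{C}\times\g{C}}$ with the $\g{D}$-part of $\nabla$, vanishes).

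Next I would observe that the leaves of $\g{C}$ are complex submanifolds: $\g{C}$ is $J$-invariant by definition, and an autoparallel $J$-invariant distribution has $J$-invariant, totally geodesic integral manifolds; a totally geodesic complex submanifold of $\C H^n$ of complex dimension $n-k$ is holomorphically isometric to $\C H^{n-k}$ (standard, e.g.\ via the classification of totally geodesic submanifolds of $\C H^n$, or directly because it is a complete totally geodesic K\"ahler submanifold of constant holomorphic curvature $c$). The main obstacle is really the bookkeeping in the previous paragraph --- making sure that the vanishing of the $\g{D}$-component of $\nabla_XY$ follows cleanly from~\eqref{eq:II} and the isomorphism property of $P$, and that no contribution from $\II(Z,\cdot)$ sneaks in; once $\II(\g{C},\g{C})=0$ is in hand this is routine but must be done carefully because $Z\in\Gamma(\g{C})$ itself and $\II(Z,P\xi)\neq 0$ in general. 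There is no genuine analytic difficulty, only the care needed in separating the roles of $\g{C}\ominus\R Z$, $Z$, and $\g{D}$.
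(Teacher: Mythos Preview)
Your proof is correct and follows essentially the same approach as the paper's: both show that $\bar\nabla_UV$ is orthogonal to $\nu M$ and to $J\nu M$ (equivalently, to $\g{D}=P\nu M$) by using $\II(\g{C},\g{C})=0$ and $\bar\nabla J=0$, and then conclude that the leaves are totally geodesic complex submanifolds, hence copies of $\C H^{n-k}$. One cosmetic point: when you write ``the left-hand side is $-\langle\II(X,JY),\xi\rangle=0$'', the sign should be positive (Gauss formula gives $\langle\bar\nabla_X(JY),\xi\rangle=\langle\II(X,JY),\xi\rangle$), but since the quantity is zero this does not affect the argument.
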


\begin{proof}
For all $U,V \in \Gamma(\g{C})$ and $\xi \in \Gamma(\nu M)$ we have,
using~\eqref{eq:II} and $\bar{\nabla} J=0$,
\[
\langle\bar{\nabla}_U V,\xi\rangle=\langle\II(U,V),\xi\rangle = 0,
\quad\text{ and }\quad
\langle\bar{\nabla}_UV,J\xi\rangle = -\langle J\bar{\nabla}_UV,\xi\rangle
= - \langle \II(U,JV),\xi \rangle = 0.
\]
Thus $\g{C}$ is autoparallel and as $\g{C}$ is a complex subbundle of complex rank $n-k$, each of
its integral manifolds is an open part of a totally geodesic
$\C H^{n-k}$ in $\C H^{n}$.
\end{proof}

From now on we fix $o \in M$ and let $\mathcal{L}_o$ be the leaf of $\g{C}$ through
$o$, which is an open part of a totally geodesic $\C H^{n-k}$ in
$\C H^{n}$ by Lemma~\ref{th:complex}. We have

\begin{lemma}\label{th:normal}
If $\gamma\colon I \to\mathcal{L}_o$ is a curve with $\gamma(0) =
o$ then the normal spaces of $M$ along $\gamma$ are
uniquely determined by the differential equation
\begin{equation}\label{eq:diffeq}
2\bar{\nabla}_{\gamma'}\eta +\sqrt{-c}\langle \gamma' , Z \rangle J\eta = 0
\end{equation}
for $\eta\in\Gamma(\gamma^*\nu\mathcal{L}_o)$, where $\gamma^*\nu\mathcal{L}_o$ is the bundle of vectors along $\gamma$ that are orthogonal to $\mathcal{L}_o$.
\end{lemma}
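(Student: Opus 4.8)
\emph{The plan.} I would recast the statement as a parallelism assertion and then verify a single pointwise identity for the shape operator. Since $\gamma$ lies in the leaf $\mathcal{L}_o$, which by Lemma~\ref{th:complex} is an open part of a totally geodesic complex $\C H^{n-k}$, the normal bundle $\nu\mathcal{L}_o$ is $J$-invariant, and, $\mathcal{L}_o$ being totally geodesic, $\bar\nabla_{\gamma'}\eta\in\Gamma(\gamma^*\nu\mathcal{L}_o)$ whenever $\eta\in\Gamma(\gamma^*\nu\mathcal{L}_o)$. Hence the formula $\mathcal{D}_{\gamma'}\eta=\bar\nabla_{\gamma'}\eta+\frac{\sqrt{-c}}{2}\langle\gamma',Z\rangle J\eta$ defines a connection along $\gamma$ on $\gamma^*\nu\mathcal{L}_o$, which is metric since $\eta\mapsto\langle\gamma',Z\rangle J\eta$ is skew-adjoint. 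Now~\eqref{eq:diffeq} is exactly the equation $\mathcal{D}_{\gamma'}\eta=0$, and, since $\gamma\subset\mathcal{L}_o\subset M$ gives $\nu_{\gamma(t)}M\subset\nu_{\gamma(t)}\mathcal{L}_o$, it suffices to prove that $\gamma^*\nu M$ is a $\mathcal{D}$-parallel subbundle of $\gamma^*\nu\mathcal{L}_o$: then $\nu_{\gamma(t)}M$ is the image of $\nu_oM$ under $\mathcal{D}$-parallel transport along $\gamma$, hence uniquely determined by~\eqref{eq:diffeq} together with the initial value $\nu_oM$.

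\emph{The core computation.} Decompose $\nu\mathcal{L}_o|_M=\g{D}\oplus\nu M$ orthogonally, where $\g{D}=P\nu M$, and let $(\,\cdot\,)^{\g{D}}$ denote orthogonal projection onto $\g{D}$. Fixing $\eta\in\Gamma(\gamma^*\nu M)$, I must show $(\mathcal{D}_{\gamma'}\eta)^{\g{D}}=0$. Writing $\bar\nabla_{\gamma'}\eta=-\Ss_\eta\gamma'+\nabla^\perp_{\gamma'}\eta$ by the Weingarten formula, with $\nabla^\perp_{\gamma'}\eta\in\nu M$, and $J\eta=P\eta+F\eta$ with $F\eta\in\nu M$, $P\eta\in\g{D}$, the vanishing of the $\g{D}$-component reduces to the identity $\Ss_\eta\gamma'=\frac{\sqrt{-c}}{2}\langle\gamma',Z\rangle P\eta$. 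To prove it, note first that the trivial symmetric bilinear extension of~\eqref{eq:II} vanishes on $\g{C}\times\g{C}$, so $\langle\Ss_\eta\gamma',X\rangle=\langle\II(\gamma',X),\eta\rangle=0$ for every $X\in\Gamma(\g{C})$, whence $\Ss_\eta\gamma'\in\g{D}$. Next, for $W\in\Gamma(\g{D})$ the extension of~\eqref{eq:II} gives $\II(\gamma',W)=-\frac{\sqrt{-c}}{2}\langle\gamma',Z\rangle(JW)^\perp$, so $\langle\Ss_\eta\gamma',W\rangle=\langle\II(\gamma',W),\eta\rangle=-\frac{\sqrt{-c}}{2}\langle\gamma',Z\rangle\langle JW,\eta\rangle=\frac{\sqrt{-c}}{2}\langle\gamma',Z\rangle\langle W,J\eta\rangle=\frac{\sqrt{-c}}{2}\langle\gamma',Z\rangle\langle W,P\eta\rangle$, using $\langle W,F\eta\rangle=0$ in the last step; since both $\Ss_\eta\gamma'$ and $P\eta$ lie in $\g{D}$, this yields the identity.

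\emph{Conclusion and main obstacle.} Choosing a basis of $\nu_oM$ and solving~\eqref{eq:diffeq} with these initial data produces, by the $\mathcal{D}$-parallelism of $\gamma^*\nu M$, a frame of $\gamma^*\nu M$ along $\gamma$; since $\mathcal{D}$ is metric this frame stays linearly independent, so $\nu_{\gamma(t)}M$ is the span of these solutions, which proves the lemma. I do not expect a deep difficulty here; the points requiring care are the correct reformulation of ``uniquely determined by~\eqref{eq:diffeq}'' as the $\mathcal{D}$-parallelism of $\gamma^*\nu M$, the observation that $\mathcal{D}$ genuinely lives on $\gamma^*\nu\mathcal{L}_o$ (which uses that $\mathcal{L}_o$ is both totally geodesic and complex), and the bookkeeping of $P\eta$, $F\eta$, $JW$ against the orthogonality relations $\g{D}\perp\nu M$ and $TM\perp\nu M$. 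Once the identity $\Ss_\eta\gamma'=\frac{\sqrt{-c}}{2}\langle\gamma',Z\rangle P\eta$ is in hand, the rest is formal.
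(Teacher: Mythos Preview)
Your argument is correct and is essentially the paper's proof repackaged in the language of connections: the identity $\Ss_\eta\gamma'=\frac{\sqrt{-c}}{2}\langle\gamma',Z\rangle P\eta$ is exactly the paper's equation \eqref{eq:diffeq2}, and your $\mathcal{D}$-invariance of $\gamma^*\nu M$ is equivalent to the paper's step showing that the $P\nu M$-component of a solution satisfies a homogeneous linear ODE and hence vanishes. The one place where you are slightly more efficient is that you dispose of the $\g{C}$-component up front by observing that $\mathcal{L}_o$ is totally geodesic and complex (so $\bar\nabla_{\gamma'}$ and $J$ both preserve $\nu\mathcal{L}_o$), whereas the paper decomposes a solution as $X=U+J\eta+\xi$ with $U\in\Gamma(\gamma^*\g{C})$ and then proves $U=0$ by a separate ODE argument; your observation makes that step unnecessary.
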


\begin{proof}
Let $X\in\Gamma(TM)$ and
$\xi\in\Gamma(\nu M)$. Using~\eqref{eq:II} we get
\[
-\langle
\bar{\nabla}_{{\gamma'}}\xi,X \rangle = \langle
\II({\gamma'},X),\xi \rangle = \langle {\gamma'},Z \rangle
\frac{\langle X,P\xi \rangle}{\langle
P\xi,P\xi\rangle} \langle \II(Z,P\xi),\xi \rangle
= \frac{\sqrt{-c}}{2}\langle {\gamma'},Z \rangle \langle
P\xi,X \rangle,
\]
which implies
\begin{equation}\label{eq:diffeq2}
\bar{\nabla}_{{\gamma'}}\xi = -\frac{\sqrt{-c}}{2}\langle {\gamma'},Z
\rangle P\xi + \nabla_{{\gamma'}}^\perp \xi,
\end{equation}
where $\nabla^\perp$ is the normal connection of $M$. Now, we take a vector field $X$
along $\gamma$ with $X_0\in\nu_o M$ and satisfying
\eqref{eq:diffeq}. We write $X=U+J\eta+\xi$, where we have
$U\in\Gamma(\gamma^*\g{C})$, $\xi$, $\eta\in\Gamma(\gamma^*\nu M)$ and
$U_0=\eta_0=0$. Using~\eqref{eq:diffeq2} and taking into account that $\bar{\nabla} J=0$, we
obtain
\begin{align*}
0
&{}= 2\bar{\nabla}_{\gamma'}X+\sqrt{-c}\langle\gamma',Z\rangle JX\\
&{}= 2\bar{\nabla}_{\gamma'}U
+2J\bar{\nabla}_{\gamma'}\eta+2\bar{\nabla}_{\gamma'}\xi
+\sqrt{-c}\langle\gamma',Z\rangle JU
+\sqrt{-c}\langle\gamma',Z\rangle J^2\eta
+\sqrt{-c}\langle\gamma',Z\rangle J\xi\\
&{}= 2\bar{\nabla}_{\gamma'}U+\sqrt{-c}\langle\gamma',Z\rangle JU
+P\left(2\nabla_{\gamma'}^\perp\eta
    +\sqrt{-c}\langle\gamma',Z\rangle F\eta\right)\\
&\phantom{{}={}}
+2\nabla_{\gamma'}^\perp\xi+\sqrt{-c}\langle\gamma',Z\rangle F\xi
+F\left(2\nabla_{\gamma'}^\perp\eta
    +\sqrt{-c}\langle\gamma',Z\rangle F\eta\right).
\end{align*}
We have that $2\bar{\nabla}_{\gamma'}U+\sqrt{-c}\langle\gamma',Z\rangle JU$
is tangent to $\g{C}$ since $\g{C}$ is a complex autoparallel
distribution. Thus, it follows that
$2\bar{\nabla}_{\gamma'}U+\sqrt{-c}\langle\gamma',Z\rangle JU=0$. Since
$U_0=0$, the uniqueness of solutions to ordinary differential
equations implies $U_{t}=0$ for all $t$, and thus $X\in\Gamma(\gamma^*\nu\mathcal{L}_o)$. Similarly, the component
tangent to $P\nu M$ in the previous equation yields $2\nabla_{{\gamma'}}^\perp \eta +
\sqrt{-c}\langle {\gamma'},Z \rangle F\eta = 0$ and since
$\eta_{0} = 0$ we have $\eta_{t}=0$ for any~$t$ by uniqueness of solution.
Hence, $X_{t}\in \nu_{\gamma(t)}M$ for all~$t$, which proves
our assertion.
\end{proof}

We define $B=-JZ$.

The point $o\in M$ and the tangent vector $B_o$ uniquely determine a point at infinity $x\in\C H^n(\infty)$ and thus, a corresponding Iwasawa decomposition $\g{k}\oplus\g{a}\oplus\g{n}=\g{k}\oplus\g{a}\oplus\g{g}_\alpha\oplus
\g{g}_{2\alpha}$ of the isometry group of $\C H^n$, where $\g{a}=\R B_o$ and $\g{g}_{2\alpha}=\R Z_o$. We define the subspace $\g{w}=T_o M\ominus(\R B_o\oplus\R Z_o)\subset\g{g}_\alpha$ and consider the submanifold $W_{\g{w}}$ defined by this Iwasawa decomposition and $\g{w}$. As we have already seen, the integral submanifold $\mathcal{L}_o$ is an open part of a totally geodesic $\C H^{n-k}$ contained in $\C H^n$ that is tangent to the maximal complex distribution of $W_{\g{w}}$ at $o$. Since by Lemma~\ref{th:normal} the normal bundle is uniquely determined by the ordinary differential equation~\eqref{eq:diffeq}, and both $M$ and $W_{\g{w}}$ satisfy the hypotheses of Theorem~\ref{th:rigidity}, it follows that $\nu_p M=\nu_p W_{\g{w}}$ for each $p\in\mathcal{L}_o$. As a consequence, $\nu_p M$ is obtained by left translation of $\nu_o M$ by the subgroup of $AN$ that acts simply transitively on~$\mathcal{L}_o$. In view of Proposition~\ref{th:construction} it only remains to prove that for each $p\in\mathcal{L}_o$ the horocycles determined by the point at infinity $x$ and the lines of $P\nu_p M$ are locally contained in $M$.

Before continuing our argument we need to calculate certain covariant derivatives of some vector fields.

\begin{lemma}
Let $X\in\Gamma(TM\ominus\R B)$ and $\xi\in\Gamma(\nu M)$. Then
\begin{align}
\bar{\nabla}_XB
&{}=-\frac{\sqrt{-c}}{2}X-\frac{\sqrt{-c}}{2}\langle X,Z\rangle Z,\label{eq:DXB}\\[0.5ex]
\bar{\nabla}_BP\xi
&{}=P\nabla_B^\perp\xi,\label{eq:DBPxi}\\
\bar{\nabla}_{P\xi}P\xi
&{}=\frac{\sqrt{-c}}{2}\langle P\xi,P\xi\rangle B
+P\nabla_{P\xi}^\perp\xi.\label{eq:DPxiPxi}
\end{align}
\end{lemma}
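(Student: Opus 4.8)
The three formulas are all consequences of the structure equations of $M$ together with the prescribed form~\eqref{eq:II} of the second fundamental form, so the plan is to compute each covariant derivative by splitting $\bar\nabla_XY=\nabla_XY+\II(X,Y)$ and identifying the tangential and normal parts separately, using that $\C H^n$ is K\"ahler, that $B=-JZ$ with $Z$ a unit section of the maximal complex distribution $\g{C}$, and that $P\xi\in\Gamma(\g{D})$ with $\g{D}=P\nu M$ and $P\colon\nu M\to\g{D}$ an isomorphism. The key auxiliary facts I would extract first are: (a) $\langle\II(X,P\xi),\eta\rangle=\langle\II(X,P\eta),\xi\rangle$ for all $X\in\Gamma(TM)$, $\xi,\eta\in\Gamma(\nu M)$ — already proved inside Lemma~\ref{th:DF} — and, by the same trivial-symmetric-bilinear-extension argument applied to~\eqref{eq:II}, that $\II(X,Y)=0$ whenever neither $X$ nor $Y$ has a $Z$-component, i.e. $\II$ is entirely concentrated on the pair $(Z,P\nu M)$; (b) $\langle\II(B,\cdot),\cdot\rangle=0$ since $B=-JZ$ is orthogonal to $Z$, so $\bar\nabla_BW$ has no $\xi^L$-type normal contribution beyond what $\nabla^\perp$ gives; and (c) that $JB=Z$, $JZ=-B$.

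For~\eqref{eq:DXB}: write $\bar\nabla_XB=-\bar\nabla_X JZ=-J\bar\nabla_XZ$. Decompose $\bar\nabla_XZ=\nabla_XZ+\II(X,Z)$. From~\eqref{eq:II}, $\langle\II(X,Z),\eta\rangle=-\tfrac{\sqrt{-c}}{2}\langle X,P\eta\rangle$ when $X\in\Gamma(\g{D})$ and $0$ otherwise, so $\II(X,Z)=-\tfrac{\sqrt{-c}}{2}P^{-1}$ applied to the $\g{D}$-part of $X$; one then checks $JP^{-1}(\text{$\g{D}$-part of }X)$ recombines, using $J\xi=P\xi+F\xi$ and the definition of $P,F$, into $-\tfrac{\sqrt{-c}}{2}(X-\langle X,Z\rangle Z-\langle X,B\rangle B)$ plus the tangential piece $-J\nabla_XZ$; since $\langle X,B\rangle=0$ and $\nabla_XZ$ must be forced to vanish (this is the one point needing a short separate argument: pair $\langle\nabla_XZ,Y\rangle=-\langle Z,\nabla_XY\rangle$ and use that $\g{C}$ is autoparallel by Lemma~\ref{th:complex}, so $Z$ has constant length and $\nabla_XZ\perp\g{C}$, while pairing against $\g{D}$ uses a Codazzi computation — alternatively observe $\bar\nabla_XB$ is already determined once we know $\II$), we land on~\eqref{eq:DXB}.

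For~\eqref{eq:DBPxi} and~\eqref{eq:DPxiPxi}: use $\bar\nabla_XP\xi=\bar\nabla_X(J\xi-F\xi)=J\bar\nabla_X\xi-\bar\nabla_XF\xi$ and expand $\bar\nabla_X\xi=-\Ss_\xi X+\nabla^\perp_X\xi$, $\bar\nabla_XF\xi=-\Ss_{F\xi}X+\nabla^\perp_XF\xi$, invoking Lemma~\ref{th:DF} ($\nabla^\perp F=0$) to write $\nabla^\perp_XF\xi=F\nabla^\perp_X\xi$. For $X=B$: $\Ss_\xi B=0$ and $\Ss_{F\xi}B=0$ by fact (b), and $J(\nabla^\perp_B\xi)-F\nabla^\perp_B\xi=P\nabla^\perp_B\xi$, giving~\eqref{eq:DBPxi} directly. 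For $X=P\xi$: now $\Ss_\xi P\xi$ and $\Ss_{F\xi}P\xi$ are the non-trivial terms; using $\langle\Ss_\xi P\xi,Z\rangle=\langle\II(Z,P\xi),\xi\rangle=-\tfrac{\sqrt{-c}}{2}\langle JP\xi,\xi\rangle=\tfrac{\sqrt{-c}}{2}\langle P\xi,P\xi\rangle$ and $\langle\Ss_\xi P\xi,Y\rangle=0$ for $Y\perp Z$ (fact (a) plus concentration of $\II$), one gets $\Ss_\xi P\xi=\tfrac{\sqrt{-c}}{2}\langle P\xi,P\xi\rangle Z$; similarly $\langle\Ss_{F\xi}P\xi,Z\rangle=\langle\II(Z,P\xi),F\xi\rangle=-\tfrac{\sqrt{-c}}{2}\langle JP\xi,F\xi\rangle$, which by $J\xi=P\xi+F\xi$, $J^2=-\id$, reduces to $0$ after using $\langle P\xi,P\xi\rangle+\langle F\xi,F\xi\rangle$ bookkeeping — so $\Ss_{F\xi}P\xi=0$. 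Assembling: $\bar\nabla_{P\xi}P\xi=-J\Ss_\xi P\xi+J\nabla^\perp_{P\xi}\xi+\Ss_{F\xi}P\xi-F\nabla^\perp_{P\xi}\xi=-\tfrac{\sqrt{-c}}{2}\langle P\xi,P\xi\rangle JZ+P\nabla^\perp_{P\xi}\xi=\tfrac{\sqrt{-c}}{2}\langle P\xi,P\xi\rangle B+P\nabla^\perp_{P\xi}\xi$, which is~\eqref{eq:DPxiPxi}.

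\textbf{Expected main obstacle.} The routine part is the K\"ahler bookkeeping with $P$ and $F$; the only genuinely delicate point is establishing that the \emph{tangential} components $\nabla_XZ$ (equivalently $\nabla_XB$) behave as claimed — i.e. that $\bar\nabla_XB$ has no spurious tangential term beyond $-\tfrac{\sqrt{-c}}{2}(X+\langle X,Z\rangle Z)$. I expect to handle this by a Codazzi/Gauss argument analogous to the one giving Lemma~\ref{th:complex}: the identity $\langle\II(X,P\xi),\eta\rangle=\langle\II(X,P\eta),\xi\rangle$, combined with the fact that $\g{C}$ is autoparallel and $\g{D}=P\nu M$, pins down $\bar\nabla_XB$ componentwise against $\g{C}$, against $\R B$, against $Z$, and against $\g{D}$. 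Everything else is a direct substitution.
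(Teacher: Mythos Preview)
Your arguments for \eqref{eq:DBPxi} and \eqref{eq:DPxiPxi} are correct and in fact slightly slicker than the paper's: writing $P\xi=J\xi-F\xi$, applying Weingarten to $\bar\nabla_X\xi$ and $\bar\nabla_X F\xi$, and invoking $\nabla^\perp F=0$ gives both formulas directly, without having to check components against $\g{C}$, $\nu M$, and $\g{D}$ separately as the paper does. The paper's route is the same computation unpacked into inner products.

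For \eqref{eq:DXB}, however, there is a real gap. Your claim that $\nabla_X Z$ ``must be forced to vanish'' is false: for instance, once \eqref{eq:DXB} is established one reads off $\bar\nabla_Z Z=J\bar\nabla_Z B=-\tfrac{\sqrt{-c}}{2}J(2Z)=\sqrt{-c}\,B$, and since $\II(Z,Z)=0$ this equals $\nabla_Z Z\neq 0$; more generally $\nabla_X Z=-\tfrac{\sqrt{-c}}{2}(JX-\langle X,Z\rangle B)$ for $X\in\g{C}$. The autoparallel property of $\g{C}$ gives $\nabla_X Z\in\Gamma(\g{C})$ for $X\in\Gamma(\g{C})$, not $\nabla_X Z\perp\g{C}$, so that line of reasoning does not help. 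Your alternative suggestion that $\bar\nabla_X B$ ``is already determined once we know $\II$'' is also not right: the Gauss formula only fixes the \emph{normal} part $\II(X,B)=0$; the tangential part $\nabla_X B$ is intrinsic and is not encoded in $\II$ alone.

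What the paper actually does---and what your ``expected main obstacle'' paragraph correctly anticipates but does not carry out---is use the Codazzi equation together with the explicit curvature tensor of $\C H^n$, and this is the substantial part of the proof. The components $\langle\bar\nabla_X B,\eta\rangle$, $\langle\bar\nabla_X B,B\rangle$, and $\langle\bar\nabla_X B,P\eta\rangle$ follow easily from $\II$, but the component $\langle\bar\nabla_X B,Y\rangle$ for $Y\in\g{C}\ominus\R B$ requires two separate Codazzi computations: one evaluating $\langle\bar R(X,P\eta)JY,\eta\rangle$ when $X\in\g{C}\ominus\R B$, and another evaluating $\langle\bar R(P\xi,JY)P\xi,\xi\rangle$ when $X=P\xi\in\g{D}$. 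In each case the ambient curvature term is what produces the coefficient $-\tfrac{\sqrt{-c}}{2}$. These steps are not a short aside; they are where the geometry of $\C H^n$ (as opposed to an arbitrary K\"ahler ambient space) genuinely enters the proof of \eqref{eq:DXB}.
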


\begin{proof}
Let $\eta\in\Gamma(\nu M)$ be a local unit vector field. Using
\eqref{eq:II} we obtain
$\langle\bar{\nabla}_X B,\eta\rangle=\langle\II(X,B),\eta\rangle=0$. Moreover, $\langle\bar{\nabla}_XB,B\rangle=0$. Next,~\eqref{eq:II} yields
\begin{equation}\label{eq:DXBPxi}
\begin{aligned}
2\langle\bar{\nabla}_XB,P\eta\rangle
&{}= -2\langle\bar{\nabla}_{X}JZ,J\eta-F\eta\rangle
     = -2\langle\II(X,Z),\eta\rangle
            -2\langle\II(X,B),F\eta\rangle\\
&{}= -2\langle X,P\eta\rangle
        \langle\II(P\eta,Z),\eta\rangle/\langle P\eta,P\eta\rangle
    =-\sqrt{-c}\langle X,P\eta\rangle.
\end{aligned}
\end{equation}
Now, let $Y\in\Gamma(\g{C}\ominus\R B)$ and assume that
$X\in\Gamma(\g{C}\ominus\R B)$. For any $\xi\in\Gamma(\nu M)$ we have
$\langle\nabla_{P\eta}JY,P\xi\rangle=\langle\II(P\eta,Y),\xi\rangle
-\langle\II(P\eta,JY),F\xi\rangle={\sqrt{-c}}\langle
Y,Z\rangle\langle P\eta,P\xi\rangle/2$. This, the explicit expression for
the curvature tensor $\bar{R}$ of $\C H^n$, the Codazzi equation,~\eqref{eq:II} and $\bar\nabla J = 0$
imply
\begin{align*}
c\langle P\eta,P\eta\rangle\langle X,Y\rangle
&{}= 4\langle\bar{R}(X,P\eta)JY,\eta\rangle
    = 4\langle(\nabla_X^\perp\II)(P\eta,JY)
        -(\nabla_{P\eta}^\perp\II)(X,JY),\eta\rangle\\
&{}= -4\langle\II(P\eta,\nabla_XJY),\eta\rangle
    +4\langle\II(X,\nabla_{P\eta}JY),\eta\rangle\\
&{}= -4\langle\nabla_XJY,Z\rangle\langle\II(P\eta,Z),\eta\rangle
    +4\langle X,Z\rangle\langle\II(Z,\nabla_{P\eta}JY),\eta\rangle\\
&{}= 2\sqrt{-c}\langle P\eta,P\eta\rangle\langle\bar{\nabla}_XB,Y\rangle
    -c\langle P\eta,P\eta\rangle\langle X,Z\rangle\langle Z,Y\rangle.
\end{align*}
Thus, if $X\in\Gamma(\g{C}\ominus\R B)$ we have, taking into account
$\bar{\nabla}_XB\in\Gamma(\g{C})$, that $2\bar{\nabla}_XB=-\sqrt{-c}\,(X+\langle
X,Z\rangle Z)$.

Next we assume that $X\in\Gamma(P\nu M)$ and we put $X=P\xi$ with
$\xi\in\Gamma(\nu M)$. Then, we have $\langle \nabla_{JY}P\xi , Z \rangle
= - \langle \bar\nabla_{JY}Z , J\xi - F\xi \rangle = - \langle
\II(JY,B),\xi \rangle + \langle \II(JY,Z),F\xi \rangle=0$. This,
together with the expression for $\bar{R}$, the Codazzi
equation,~\eqref{eq:II} and $\bar\nabla J = 0$ yields
\begin{align*}
0 & {}= 2\langle\bar{R}(P\xi,JY)P\xi,\xi\rangle
=2\langle
(\nabla_{P\xi}^\perp\II)(JY,P\xi)
-(\nabla_{JY}^\perp\II)(P\xi,P\xi),\xi \rangle \\
& {}= -2\langle \II (\nabla_{P\xi} JY,P\xi),\xi \rangle
+ 4 \langle \II(\nabla_{JY}P\xi,P\xi),\xi\rangle \\
& {}= -2\langle \nabla_{P\xi} JY , Z \rangle \langle \II
(Z,P\xi),\xi \rangle + 4 \langle \nabla_{JY}P\xi , Z \rangle
\langle \II(Z,P\xi),\xi\rangle \\
& {}=  -\sqrt{-c}\langle P\xi,P\xi\rangle\langle \bar{\nabla}_{P\xi}JY , Z \rangle =
\sqrt{-c}\langle P\xi,P\xi\rangle\langle \bar{\nabla}_{P\xi}B , Y \rangle.
\end{align*}
Hence $\langle \bar{\nabla}_{P\xi}B,Y\rangle=0$, and using~\eqref{eq:DXBPxi} we get $2\bar{\nabla}_{P\xi}B=-\sqrt{-c}\,P\xi$.
Altogether we get~\eqref{eq:DXB}.

Now we prove~\eqref{eq:DBPxi}. Let
$\xi,\zeta\in\Gamma(\nu M)$ and $Y\in\Gamma(\g{C})$. As $\g{C}$
is autoparallel, we have $\langle\bar{\nabla}_BP\xi,Y\rangle=0$. Using~\eqref{eq:II} we get
$\langle\bar{\nabla}_BP\xi,\zeta\rangle=\langle\II(B,P\xi),\zeta\rangle=0$.
Moreover, using~\eqref{eq:II}, we obtain
$\Ss_\xi B=0$ and thus
\begin{align*}
\langle\bar{\nabla}_B P\xi,P\zeta\rangle
&{}=\langle\bar{\nabla}_B(J-F)\xi,P\zeta\rangle
=-\langle\bar{\nabla}_B\xi,JP\zeta\rangle+\langle F\xi,\bar{\nabla}_BP\zeta\rangle\\
&{}=\langle \Ss_\xi B,JP\zeta\rangle-\langle\nabla_B^\perp\xi,JP\zeta\rangle
=\langle P\nabla_B^\perp\xi,P\zeta\rangle.
\end{align*}
This implies~\eqref{eq:DBPxi}.

Finally, if $Y\in\Gamma(\g{C})$, using again~\eqref{eq:II} we have
\begin{align*}
2\langle \bar{\nabla}_{P\xi}P\xi ,Y\rangle &{}= -2\langle
\bar{\nabla}_{P\xi}Y, J\xi-F\xi \rangle =2\langle JY,Z \rangle\langle
\II(P\xi,Z),\xi \rangle
    + 2\langle Y,Z \rangle\langle \II(P\xi,Z),F\xi \rangle \\
&{}= -\sqrt{-c}\langle P\xi,P\xi\rangle\langle JZ,Y \rangle
    -\sqrt{-c}\langle Y,Z \rangle \langle JP\xi,F\xi \rangle
=\sqrt{-c}\langle P\xi,P\xi \rangle\langle B,Y \rangle,
\end{align*}
where we have used $\langle JP\xi,F\xi\rangle=\langle JP\xi,J\xi-P\xi\rangle
=\langle P\xi,\xi\rangle-\langle JP\xi,P\xi\rangle=0$.
Obviously,~\eqref{eq:II} implies
$\langle\bar{\nabla}_{P\xi}P\xi,\zeta\rangle
=\langle\II(P\xi,P\xi),\zeta\rangle=0$. Using~\eqref{eq:II} we obtain
\begin{align*}
\langle\bar{\nabla}_{P\xi}P\xi,P\zeta\rangle
&{}=\langle\bar{\nabla}_{P\xi}(J-F)\xi,P\zeta\rangle
=-\langle\bar{\nabla}_{P\xi}\xi,JP\zeta\rangle
+\langle\bar{\nabla}_{P\xi}P\zeta,F\xi\rangle\\
&{}=\langle\Ss_\xi P\xi,JP\zeta\rangle
-\langle\nabla_{P\xi}^\perp\xi,JP\zeta\rangle
=\langle P\nabla_{P\xi}^\perp\xi,P\zeta\rangle.
\end{align*}
Altogether this yields~\eqref{eq:DPxiPxi}.
\end{proof}

The next lemma basically says that the point at infinity determined by $B$ does not depend on the point $o\in M$ that was chosen.

\begin{lemma}\label{th:B}
The vector field $B$ is a geodesic vector field
and all its integral curves are pieces of geodesics in $\C H^{n}$ converging to
the point $x\in \C H^n(\infty)$.
\end{lemma}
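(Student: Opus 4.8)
The plan is to show two things: that $\bar{\nabla}_B B = 0$, so that $B$ is a geodesic vector field and its integral curves are (pieces of) geodesics; and that these geodesics all converge to the same point at infinity $x$, namely the one that $B_o$ determines at the chosen base point $o$. The first part is essentially immediate from the covariant derivative formulas just established. Applying \eqref{eq:DXB} would require $B\in\Gamma(TM\ominus\R B)$, which is false, so instead I would argue directly: since $B = -JZ$ and $Z$ is a unit vector field, $\langle\bar{\nabla}_B B, B\rangle = 0$; for $\xi\in\Gamma(\nu M)$, using $\bar\nabla J = 0$ and \eqref{eq:II} one computes $\langle\bar{\nabla}_B B,\xi\rangle = -\langle\bar{\nabla}_B JZ,\xi\rangle = -\langle J\bar{\nabla}_B Z,\xi\rangle = \langle\bar{\nabla}_B Z, J\xi\rangle = \langle\bar{\nabla}_B Z, P\xi\rangle + \langle \bar{\nabla}_B Z, F\xi\rangle$, and both terms vanish because $\Ss_\xi B = 0$ (as noted in the proof of \eqref{eq:DBPxi}) and because $\langle\bar\nabla_B Z, F\xi\rangle = -\langle Z, \bar\nabla_B F\xi\rangle$ can be controlled via \eqref{eq:DBPxi} and the parallelism of $F$; similarly $\langle\bar{\nabla}_B B, P\xi\rangle = 0$ using \eqref{eq:DBPxi} and $\langle B, P\xi\rangle = 0$. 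Finally $\langle\bar{\nabla}_B B, Y\rangle = 0$ for $Y\in\Gamma(\g{C}\ominus\R B)$ follows from \eqref{eq:DXB} applied with $X = Y$ together with $\langle Y, B\rangle = \langle Y, Z\rangle = 0$ and the skew-symmetry trick $\langle\bar\nabla_B B, Y\rangle = -\langle B, \bar\nabla_B Y\rangle$, or more directly by differentiating $\langle B, Y\rangle = 0$. Collecting these, $\bar{\nabla}_B B = 0$.

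Once $B$ is geodesic, each integral curve $\gamma$ of $B$ with $\gamma(0) = p$ is a unit-speed geodesic of $\C H^n$ with $\gamma'(0) = B_p$. To see that all these geodesics are asymptotic to a single point at infinity, I would fix the base point $o\in M$ and the point $x\in\C H^n(\infty)$ determined by $o$ and $B_o$ via the Iwasawa decomposition, and show that for any other $p\in M$ the geodesic through $p$ in the direction $B_p$ also converges to $x$. The cleanest route is to use the identification of $\C H^n$ with $AN$ from Subsection~\ref{sec:examples:W}: under this identification $B_o$ is precisely the left-invariant vector field $B\in\g{a}$ determined by $x$, and the formula \eqref{eq:DXB} shows that the vector field $B$ on $M$ agrees with (the restriction to $M$ of) the left-invariant vector field $B$ on $AN$ — indeed \eqref{eq:DXB} reproduces the Levi-Civita connection formula \eqref{eq:Levi-Civita} for $\nabla_{X}B$ when $X\perp B$, and we have just shown $\nabla_B B = 0$, which also matches \eqref{eq:Levi-Civita}. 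Hence $B$ is the restriction to $M$ of the left-invariant field that generates the geodesics converging to $x$, and so every integral curve of $B$ in $M$ is a piece of such a geodesic.

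Alternatively, and perhaps more self-containedly, one can argue as follows: for a geodesic $\gamma$ with $\gamma'= B$, the statement ``$\gamma$ converges to $x$'' is equivalent to saying that $\gamma$ is asymptotic to the fixed geodesic $\gamma_o$ through $o$ with $\gamma_o' = B_o$, i.e. that $\lim_{t\to\infty} d(\gamma(t),\gamma_o)<\infty$ or, better, that the Busemann function $b_x$ associated to $x$ satisfies $b_x\circ\gamma(t) = b_x\circ\gamma(0) - t$. Since $\operatorname{grad} b_x = -B$ (the left-invariant field, on all of $\C H^n\cong AN$), and we have identified the vector field $B$ on $M$ with the restriction of this gradient field, we get $(b_x\circ\gamma)'(t) = \langle\operatorname{grad} b_x, \gamma'\rangle = -\langle B,B\rangle = -1$, so $b_x\circ\gamma(t) = b_x(p) - t$, which exactly says $\gamma(t)\to x$ as $t\to\infty$. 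Either way the conclusion is the same.

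The main obstacle, and the only genuinely delicate point, is verifying that the vector field $B$ on $M$ really is the restriction of the \emph{global} left-invariant field $B$ on $AN$ (equivalently $-\operatorname{grad} b_x$), rather than merely having the correct covariant derivatives along directions tangent to $M$. The covariant-derivative formulas \eqref{eq:DXB}–\eqref{eq:DPxiPxi} only pin down $\bar\nabla_X B$ for $X$ tangent to $M$, so a priori $B$ and $-\operatorname{grad} b_x$ could differ away from, or even along, $M$ in their ambient behaviour; one must use that both are determined along $M$ by an ODE of the form $\bar\nabla_X(\,\cdot\,) = (\text{explicit})$ with the same initial value $B_o = -\operatorname{grad}_o b_x$ (this uses the earlier fact, from the discussion after Lemma~\ref{th:normal}, that the Iwasawa data attached to $o$ and $B_o$ are precisely those defining $W_{\g{w}}$). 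Concretely, I would note that along any curve in $M$ the vector field $B$ satisfies a linear first-order ODE determined by \eqref{eq:DXB} (plus $\bar\nabla_B B = 0$), that $-\operatorname{grad} b_x$ satisfies the same ODE along that curve by \eqref{eq:Levi-Civita}, and that they agree at $o$; uniqueness of solutions then forces $B = -\operatorname{grad} b_x$ along all of $M$ (which is connected), and the geodesic statement follows.
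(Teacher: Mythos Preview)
Your argument for $\bar\nabla_B B=0$ has a genuine gap in the $\g{C}\ominus\R B$ component. None of the three devices you suggest closes it: \eqref{eq:DXB} with $X=Y$ computes $\bar\nabla_Y B$, not $\bar\nabla_B B$; differentiating $\langle B,Y\rangle=0$ along $B$ only gives $\langle\bar\nabla_B B,Y\rangle=-\langle B,\bar\nabla_B Y\rangle$, and you have no control over $\bar\nabla_B Y$ for a general $Y\in\Gamma(\g{C}\ominus\R B)$; and the ``skew-symmetry trick'' is just a restatement of the same identity. What is actually needed here is the Codazzi equation: the paper computes $0=\langle\bar R(B,P\eta)JX,\eta\rangle$ for $X\in\Gamma(\g{C}\ominus\R B)$, unwinds it via \eqref{eq:II}, and arrives at $\langle\bar\nabla_B JX,Z\rangle=0$, which is equivalent to $\langle\bar\nabla_B B,X\rangle=0$. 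This is the missing step, and it is not obtainable from the covariant-derivative formulas alone.

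For the convergence statement your route is different from the paper's and, while plausible, is heavier than necessary. The paper does not identify $B$ with the restriction of the left-invariant field at this stage; instead it runs a direct Jacobi-field argument: for any unit $X\in\Gamma(TM\ominus\R B)$ and integral curve $\gamma$ of $X$, the geodesic variation $F(s,t)=\exp_{\gamma(s)}(tB_{\gamma(s)})$ has transversal Jacobi field $\zeta$ whose initial conditions are supplied by \eqref{eq:DXB}, and one solves explicitly (using that $Z$ is parallel along the $B$-geodesics) to get $\lVert\zeta(s,t)\rVert\to 0$, hence $d(F(s_1,t),F(s_2,t))\to 0$. This is self-contained and avoids your ``main obstacle''. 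Your ODE-identification idea can be made to work (note the ODE is not linear in $B$, since $Z=JB$ enters), but it presupposes $\bar\nabla_B B=0$, so the gap above propagates into it.
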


\begin{proof}
Since $B\in\Gamma(\g{C})$ we have $\bar{\nabla}_B B\in\Gamma(\g{C})$. Clearly, $\langle\bar{\nabla}_B B,B\rangle=0$. Let $X \in
\Gamma(\g{C} \ominus {\mathbb R}B)$ and $\eta \in
\Gamma(\nu M)$ be a local unit normal vector field. Using
the expression for $\bar{R}$, the Codazzi equation,
\eqref{eq:II} and $\bar{\nabla} J = 0$ we obtain
\begin{align*}
0 &{}= 2\langle \bar{R}(B,P\eta) JX, \eta\rangle = 2\langle
(\nabla_B^\perp\II)(P\eta,JX)-(\nabla_{P\eta}^\perp\II)(B,JX),\eta
\rangle \\
& {}=  -2\langle \II (P\eta,\nabla_BJX),\eta \rangle
 =  -2\langle\nabla_BJX,Z\rangle\langle\II(P\eta,Z),\eta\rangle\\
& {}= \sqrt{-c}\langle JP\eta,\eta\rangle\langle \bar{\nabla}_BJX , Z \rangle =
\sqrt{-c}\langle P\eta,P\eta\rangle\langle \bar{\nabla}_BB , X \rangle.
\end{align*}
This yields $\langle\bar{\nabla}_BB,X\rangle=0$ and hence
$\bar{\nabla}_BB=0$. This implies that the integral curves of $B$ are
geodesics in $\C H^{n}$.

Now let $X\in\Gamma(TM\ominus\R B)$ be a unit vector field, and $\gamma$ an integral curve
of $X$. We define the geodesic variation
$F(s,t)=\exp_{\gamma(s)}(tB_{\gamma(s)})$, where $F_s(t)=F(s,t)$ are integral curves of $B$. We
prove that $d(F(s_1,t),F(s_2,t))$ tends to $0$ as $t$ goes to
infinity, where $d$ stands for the Riemannian distance function of $\C H^n$.

The transversal vector field of $F$,
$\zeta(s,t)=(\partial F/\partial s)(s,t)$, is a Jacobi field along
each $F_s$ satisfying
\begin{align*}
4\frac{\partial^2\zeta}{\partial t^2}+c\zeta+3c\langle \zeta,Z\rangle Z&{}=0,&
\zeta(s,0)&{}=X_{\gamma(s)},&
\frac{\partial \zeta}{\partial t}(s,0)&{}=\bar{\nabla}_{X_{\gamma(s)}} B.
\end{align*}
If $\mathcal{P}_X$ denotes $\bar\nabla$-parallel translation
of $X$ along $F_s$, one can directly show that
\[
\zeta(s,t)=e^{-t\sqrt{-c}/2}\mathcal{P}_X(s,t)
+(e^{-t\sqrt{-c}}-e^{-t\sqrt{-c}/2})\langle X_{F_s(0)},Z_{F_s(0)}\rangle Z_{F_s(t)},
\]
where we have used~\eqref{eq:DXB} and the fact that $Z$ is a parallel vector field along $F_s$
since $\bar{\nabla}_{B_{F(s,t)}}Z=J\bar{\nabla}_{B_{F(s,t)}}B=0$. It is easy to see
that $\lim_{t\to\infty}\lVert \zeta(s,t) \rVert = 0$. Using the mean value theorem of
integral calculus we get
\[
d(F(s_1,t),F(s_2,t))\leq
\int_{s_1}^{s_2}\lVert\frac{\partial F}{\partial s}(s,t)\rVert\,ds=
\int_{s_1}^{s_2}\lVert\zeta(s,t)\rVert\,ds=(s_2-s_1)\lVert\zeta(s_*,t)\rVert
\to 0,
\]
for some $s_*\in(s_1,s_2)$.
Therefore the integral curves of $B$ are geodesics
converging to the point $x\in \C H^n(\infty)$ at infinity.
\end{proof}

Now take $p\in\mathcal{L}_o$ and let $\xi_p\in\nu_pM$ be a unit vector.
As we argued before, the theorem will follow if we prove that the horocycle determined
by $P\xi_p/\lVert P\xi_p\rVert$ and the point $x\in \C H^n(\infty)$ is
locally contained in $M$. To this end we will construct a local unit
vector field $\xi\in\Gamma(\nu M)$ such that the aforementioned
horocycle is an integral curve of $P\xi/\lVert P\xi\rVert$.

Let $\gamma\colon I\to M$ be a curve satisfying the initial value problem
\begin{equation}\label{eq:gamma}
\nabla_{\gamma'}\gamma'
=\frac{\sqrt{-c}}{2}\langle\gamma',\gamma'\rangle B,\qquad
\gamma'(0)=P\xi_p/\lVert P\xi_p\rVert.
\end{equation}

\begin{lemma}
A curve $\gamma$ satisfying~\eqref{eq:gamma} is parametrized by arc length and remains
tangent to $P\nu M$.
\end{lemma}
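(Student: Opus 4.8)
The plan is to push everything through scalar ordinary differential equations with vanishing initial data. Write the tangential decomposition $\gamma'=\gamma'_{\g{C}}+\gamma'_{\g{D}}$ relative to $TM=\g{C}\oplus\g{D}$ with $\g{D}=P\nu M$, and set $p=\langle\gamma',B\rangle$ and $q=\langle\gamma',Z\rangle$. Since $\gamma'(0)=P\xi_p/\lVert P\xi_p\rVert\in\g{D}$ while $B,Z\in\g{C}$, we have $p(0)=q(0)=0$, $\gamma'_{\g{C}}(0)=0$ and $\langle\gamma'(0),\gamma'(0)\rangle=1$. Because $Z$ lies in the complex distribution $\g{C}$ while $\g{D}=P\nu M$ is $\g{C}$-orthogonal, the description of $\II$ as the trivial symmetric bilinear extension of~\eqref{eq:II} makes $\II(\gamma',\gamma')$ collapse to $-\sqrt{-c}\,q\,(J\gamma'_{\g{D}})^\perp$: only the pairing of the $Z$-direction with $\g{D}=P\nu M$ survives (in particular $\II$ vanishes on $\g{C}\times\g{C}$, as in Lemma~\ref{th:complex}). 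Combining this with the Gauss formula and~\eqref{eq:gamma} gives
\[
\bar{\nabla}_{\gamma'}\gamma'=\frac{\sqrt{-c}}{2}\langle\gamma',\gamma'\rangle B-\sqrt{-c}\,q\,(J\gamma'_{\g{D}})^\perp .
\]

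Next I would extend~\eqref{eq:DXB} from $TM\ominus\R B$ to all of $TM$ by means of $\bar{\nabla}_BB=0$ (Lemma~\ref{th:B}), obtaining $\bar{\nabla}_{\gamma'}B=-\frac{\sqrt{-c}}{2}(\gamma'-pB)-\frac{\sqrt{-c}}{2}qZ$, and hence $\bar{\nabla}_{\gamma'}Z=-J\bar{\nabla}_{\gamma'}B$ by $\bar{\nabla}J=0$. Pairing these with $\gamma'$, using that $(J\gamma'_{\g{D}})^\perp\in\nu M$ is orthogonal to $B,Z\in TM$, that $\langle J\gamma',\gamma'\rangle=0$, and that $\lVert B\rVert=1$, a short computation produces the autonomous system
\[
p'=\frac{\sqrt{-c}}{2}\bigl(p^2-q^2\bigr),\qquad q'=-\sqrt{-c}\,pq,\qquad p(0)=q(0)=0 .
\]
The second equation is linear in $q$ for fixed $p$, so $q\equiv 0$, and then $p'=\frac{\sqrt{-c}}{2}p^2$ with $p(0)=0$ forces $p\equiv 0$ by uniqueness of solutions. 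Feeding $p\equiv 0$ into $\frac{d}{dt}\langle\gamma',\gamma'\rangle=2\langle\bar{\nabla}_{\gamma'}\gamma',\gamma'\rangle=\sqrt{-c}\,\langle\gamma',\gamma'\rangle\,p$ (the normal term drops against the tangent vector $\gamma'$, and $q=0$) gives $\langle\gamma',\gamma'\rangle\equiv\langle\gamma'(0),\gamma'(0)\rangle=1$, so $\gamma$ is parametrized by arc length; moreover the displayed formula for $\bar{\nabla}_{\gamma'}\gamma'$ now reduces to $\bar{\nabla}_{\gamma'}\gamma'=\frac{\sqrt{-c}}{2}B$.

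It remains to show $\gamma'_{\g{C}}\equiv 0$, which will give $\gamma'=\gamma'_{\g{D}}\in P\nu M$. The crux is the identity $\bigl(\bar{\nabla}_{\gamma'}\gamma'_{\g{D}}\bigr)_{\g{C}}=\frac{\sqrt{-c}}{2}\lVert\gamma'_{\g{D}}\rVert^2 B$. Extending $\gamma'_{\g{C}},\gamma'_{\g{D}}$ to local sections of $\g{C},\g{D}$ I would split $\bar{\nabla}_{\gamma'}\gamma'_{\g{D}}=\bar{\nabla}_{\gamma'_{\g{C}}}\gamma'_{\g{D}}+\bar{\nabla}_{\gamma'_{\g{D}}}\gamma'_{\g{D}}$; the first summand is $\g{C}$-orthogonal because $\g{C}$ is autoparallel (Lemma~\ref{th:complex}), since $\langle\bar{\nabla}_{\gamma'_{\g{C}}}\gamma'_{\g{D}},Y\rangle=-\langle\gamma'_{\g{D}},\bar{\nabla}_{\gamma'_{\g{C}}}Y\rangle=0$ for $Y\in\Gamma(\g{C})$; for the second, writing $\gamma'_{\g{D}}=\sum_i f_iP\xi_i$ in a local orthonormal frame of $\nu M$ and polarizing~\eqref{eq:DPxiPxi}, whose $\g{C}$-component reads $\bigl(\bar{\nabla}_{P\xi_i}P\xi_j+\bar{\nabla}_{P\xi_j}P\xi_i\bigr)_{\g{C}}=\sqrt{-c}\,\langle P\xi_i,P\xi_j\rangle B$, symmetrization yields $\bigl(\bar{\nabla}_{\gamma'_{\g{D}}}\gamma'_{\g{D}}\bigr)_{\g{C}}=\frac{\sqrt{-c}}{2}\lVert\gamma'_{\g{D}}\rVert^2B$. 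Subtracting this from $\bar{\nabla}_{\gamma'}\gamma'=\frac{\sqrt{-c}}{2}B$ and using $\lVert\gamma'_{\g{C}}\rVert^2+\lVert\gamma'_{\g{D}}\rVert^2=1$ gives $\bigl(\bar{\nabla}_{\gamma'}\gamma'_{\g{C}}\bigr)_{\g{C}}=\frac{\sqrt{-c}}{2}\lVert\gamma'_{\g{C}}\rVert^2B$, hence $\frac{d}{dt}\lVert\gamma'_{\g{C}}\rVert^2=2\langle\bar{\nabla}_{\gamma'}\gamma'_{\g{C}},\gamma'_{\g{C}}\rangle=\sqrt{-c}\,\lVert\gamma'_{\g{C}}\rVert^2\langle\gamma'_{\g{C}},B\rangle=0$, because $\langle\gamma'_{\g{C}},B\rangle=p=0$. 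Since $\gamma'_{\g{C}}(0)=0$ we conclude $\gamma'_{\g{C}}\equiv 0$. The only non-formal ingredient is this last identity, which forces us to polarize~\eqref{eq:DPxiPxi} and to invoke the autoparallelism of $\g{C}$; all remaining steps are uniqueness for linear or quadratic scalar ODEs vanishing at $t=0$.
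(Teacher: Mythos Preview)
Your argument is correct and follows the same ODE-uniqueness strategy as the paper, but the organization differs in a way worth noting. The paper decomposes $\gamma'=aB+xZ+X+P\eta$ into four pieces, first kills $x$, then $X$, and only afterwards sets up a coupled three-variable system in $(a,\langle\gamma',\gamma'\rangle,\langle P\eta,P\eta\rangle)$ to force $a\equiv 0$ and $\langle\gamma',\gamma'\rangle\equiv 1$. You instead observe that the pair $(p,q)=(\langle\gamma',B\rangle,\langle\gamma',Z\rangle)$ satisfies a closed two-variable system \emph{before} anything else is known, because the $\langle\gamma',\gamma'\rangle$-terms cancel in $p'$; this is a genuine economy. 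For the remaining step of killing $\gamma'_{\g{C}}$ you use the autoparallelism of $\g{C}$ together with a polarization of~\eqref{eq:DPxiPxi}, which is more structural than the paper's collection of explicit inner-product identities but amounts to the same content.

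One typo: since $Z=JB$ and $\bar\nabla J=0$, one has $\bar\nabla_{\gamma'}Z=J\bar\nabla_{\gamma'}B$, not $-J\bar\nabla_{\gamma'}B$; consequently the correct equation is $q'=\sqrt{-c}\,pq$ (matching the paper's $x'=\sqrt{-c}\,ax$). This does not affect the conclusion $q\equiv 0$, so the proof stands.
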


\begin{proof}
Write $\gamma'=aB+xZ+X+P\eta$ for certain differentiable functions
$a,x\colon I\to\R$, and vector fields
$X\in\Gamma(\gamma^*(\g{C}\ominus(\R B\oplus\R Z)))$ and
$\eta\in\Gamma(\gamma^*\nu M)$. As $Z=JB$, the definition of
$\gamma$ and~\eqref{eq:DXB} show
\[
\frac{dx}{dt}
=\frac{d}{dt}\langle\gamma',Z\rangle
=\langle\nabla_{\gamma'}\gamma',Z\rangle
    +\langle\nabla_{\gamma'}Z,\gamma'\rangle
=\sqrt{-c}\,\langle xB-\frac{1}{2}JX-\frac{1}{2}JP\eta,\gamma'\rangle
=\sqrt{-c}\,ax.
\]
Since $x(0)=0$, the uniqueness of solutions to ordinary differential
equations gives $x(t)=0$ for all $t$.

Let $Y\in\Gamma(\R B\oplus P\nu M)$ and $\zeta\in\Gamma(\nu M)$. Then,
\eqref{eq:II} yields
$\langle\bar{\nabla}_YX,\zeta\rangle=\langle\II(Y,X),\zeta\rangle=0$ and
$\langle\bar{\nabla}_YX,J\zeta\rangle=-\langle\II(Y,JX),\zeta\rangle=0$.
Since $\bar{\nabla}_YB\in\Gamma(P\nu M)$ by~\eqref{eq:DXB}, we
have $\langle\bar{\nabla}_YX,B\rangle=-\langle\bar{\nabla}_YB,X\rangle=0$.
Moreover, $2\langle\bar{\nabla}_XX,B\rangle=-2\langle\bar{\nabla}_XB,X\rangle
=\sqrt{-c}\langle X,X\rangle$ and
$\langle\bar{\nabla}_X X,P\eta\rangle=-\langle\II(X,JX),\eta\rangle
-\langle\II(X,X),F\eta\rangle=0$. Hence, using also the definition of the curve $\gamma$,
\begin{align*}
\frac{d}{dt}\langle X,X\rangle
&{}= \frac{d}{dt}\langle\gamma',X\rangle
= \langle\nabla_{\gamma'}\gamma',X\rangle
    +\langle\nabla_{\gamma'}X,\gamma'\rangle\\
&{}= a\langle\bar{\nabla}_{\gamma'}X,B\rangle
    +\langle\bar{\nabla}_{\gamma'}X,X\rangle
    +\langle\bar{\nabla}_{\gamma'}X,P\eta\rangle
= \langle\bar{\nabla}_{\gamma'}X,X\rangle
    +a\langle\bar{\nabla}_{X}X,B\rangle\\
&{}= \langle\nabla_{\gamma'}X,X\rangle
    +\frac{a\sqrt{-c}}{2}\langle X,X\rangle
= \frac{1}{2}\frac{d}{dt}\langle X,X\rangle
    +\frac{a\sqrt{-c}}{2}\langle X,X\rangle.
\end{align*}
This gives $(d/dt)\langle X,X\rangle=a\sqrt{-c}\langle X,X\rangle$. Since
$\langle X(0),X(0)\rangle=0$ we get $\langle X(t),X(t)\rangle=0$
for all $t$, and thus $X=0$.

The definition of $\gamma$ gives
\[
\frac{d}{dt}\langle\gamma',\gamma'\rangle
=2\langle\nabla_{\gamma'}\gamma',\gamma'\rangle
=a\sqrt{-c}\langle\gamma',\gamma'\rangle.
\]

Using again the definition of $\gamma$, the fact that $B$ is geodesic and
\eqref{eq:DXB}, we get
\begin{align*}
\frac{da}{dt}&{}=\frac{d}{dt}\langle\gamma',B\rangle
=\langle\nabla_{\gamma'}\gamma',B\rangle
    +\langle\nabla_{\gamma'}B,\gamma'\rangle\\
&{}=\frac{\sqrt{-c}}{2}\bigl(\langle\gamma',\gamma'\rangle
    -\langle P\eta,\gamma'\rangle\bigr)
=\frac{\sqrt{-c}}{2}\bigl(\langle\gamma',\gamma'\rangle
    -\langle P\eta,P\eta\rangle\bigr).
\end{align*}

Finally, from~\eqref{eq:DBPxi} and~\eqref{eq:DPxiPxi} we obtain
\begin{align*}
\frac{d}{dt}\langle P\eta,P\eta\rangle
&{}= \frac{d}{dt}\langle\gamma',P\eta\rangle
    = \langle\nabla_{\gamma'}\gamma',P\eta\rangle
    +\langle\nabla_{\gamma'}P\eta,\gamma'\rangle\\
&{}= \frac{a\sqrt{-c}}{2}\langle P\eta,P\eta\rangle
    +a\langle P\nabla_B^\perp\eta,P\eta\rangle
    +\langle P\nabla_{P\eta}^\perp\eta,P\eta\rangle\\
&{}= \frac{a\sqrt{-c}}{2}\langle P\eta,P\eta\rangle
    +\langle \bar{\nabla}_{\gamma'}P\eta,P\eta\rangle
    = \frac{a\sqrt{-c}}{2}\langle P\eta,P\eta\rangle
    +\frac{1}{2}\frac{d}{dt}\langle P\eta,P\eta\rangle,
\end{align*} and thus
\[
\frac{d}{dt}\langle P\eta,P\eta\rangle=a\sqrt{-c}\langle P\eta,P\eta\rangle.
\]

If we define $b=\langle\gamma',\gamma'\rangle$ and $h=\langle
P\eta,P\eta\rangle$, we get the initial value problem:
\[
a'=\frac{\sqrt{-c}}{2}(b-h),\quad b'=\sqrt{-c}\,ab,\quad h'=\sqrt{-c}\,ah,\quad a(0)=0,
\quad b(0)=h(0)=1.
\]
Again, by uniqueness of solution we deduce
$a(t)=0$, $b(t)=h(t)=1$ for all $t$. Hence,
$\langle\gamma'(t),\gamma'(t)\rangle=1$ and $\gamma'(t)\in
P\nu M$ for all $t$ as we wanted to show.
\end{proof}

Let us assume then that $\gamma\colon I\to M$ is a curve satisfying
equation~\eqref{eq:gamma}. Since the map $P\colon\nu M\to \g{D}=P\nu M$ is an isomorphism of vector bundles, there exists a smooth unit normal vector field
$\eta$ of $M$ in a neighborhood of $p$ such that
$\gamma'(t)=P\eta_{\gamma(t)}/\lVert P\eta_{\gamma(t)}\rVert$ for all sufficiently
small $t$. Since $B$ is a unit vector field and $\gamma$ is orthogonal to $B$,
we can find a hypersurface $\mathcal{N}$ in $M$ containing $\gamma$ and
transversal to $B$ in a small neighborhood of $p$. The restriction of
$\eta$ to this hypersurface $\mathcal{N}$ is a smooth unit normal vector field along $\mathcal{N}$. We
define $\xi$ to be the unit normal vector field on a neighborhood of
$p$ such that $\xi=\eta$ on $\mathcal{N}$, and such that $\xi$ is obtained by
$\nabla^\perp$-parallel translation along the integral curves of
$B$. It follows that $\xi$ is smooth by the smooth dependence on initial conditions of ordinary
differential equations, and by definition
$\nabla_B^\perp\xi=0$.

The definition of $\xi$ and equations~\eqref{eq:DXB} and
\eqref{eq:DBPxi} imply
$2[B,P\xi]=2\bar{\nabla}_BP\xi-2\bar{\nabla}_{P\xi}B={\sqrt{-c}}\,P\xi$. Thus,
the distribution generated by $B$ and $P\xi$ is integrable. We denote by $\mathcal{U}$
the integral submanifold through $p$.

\begin{lemma}\label{th:U}
We have:
\begin{enumerate}[{\rm (i)}]
\item The norm of $P\xi$ is constant along the integral curves of $P\xi$, that is, $P\xi(\lVert P\xi\rVert)=0$.
\item $\bar{\nabla}_{P\xi}P\xi=\sqrt{-c}\langle P\xi,P\xi\rangle B$.
\item The submanifold $\mathcal{U}$ is an
open part of a totally geodesic $\R H^2$ in $\C H^{n}$.
\end{enumerate}
\end{lemma}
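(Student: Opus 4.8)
The plan is to reduce everything to a single parallelism statement: the normal field $\xi$ constructed in the paragraph preceding the lemma is $\nabla^\perp$-parallel over the whole integral surface $\mathcal{U}$. Recall that $\mathcal{U}$ integrates the distribution spanned by $B$ and $P\xi$, that its $B$-integral curves are pieces of geodesics of $\C H^n$ by Lemma~\ref{th:B}, and that the curve $\gamma$ of~\eqref{eq:gamma} lies in $\mathcal{U}$ and, having velocity $P\xi/\lVert P\xi\rVert\in\g{D}$, is transversal to $B$ there; hence the $B$-geodesics issuing from the points of $\gamma$ fill a neighbourhood of $p$ in $\mathcal{U}$. Note also that $\bar{\nabla}_BP\xi=P\nabla^\perp_B\xi=0$ by~\eqref{eq:DBPxi} and the construction $\nabla^\perp_B\xi=0$, so $\lVert P\xi\rVert$ is constant along these geodesics. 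Granting that $\nabla^\perp_X\xi=0$ for every $X$ tangent to $\mathcal{U}$, then~\eqref{eq:DPxiPxi} gives $P\xi\langle P\xi,P\xi\rangle=2\langle\bar{\nabla}_{P\xi}P\xi,P\xi\rangle=2\langle P\nabla^\perp_{P\xi}\xi,P\xi\rangle=0$, which is~(i), and substituting $\nabla^\perp_{P\xi}\xi=0$ into~\eqref{eq:DPxiPxi} yields~(ii).

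To prove the parallelism I would first establish it along $\gamma$. Since $\gamma'=P\xi/\lVert P\xi\rVert$ is a section of $\g{D}=P\nu M$ and $\II$ is the trivial symmetric bilinear extension of~\eqref{eq:II}, we have $\II(\gamma',\gamma')=0$; as $\gamma$ is unit-speed by the preceding lemma and satisfies~\eqref{eq:gamma}, this gives $\bar{\nabla}_{\gamma'}\gamma'=\frac{\sqrt{-c}}{2}B$. Expanding the left-hand side by the Leibniz rule and substituting~\eqref{eq:DPxiPxi} forces $P\nabla^\perp_{P\xi}\xi=\bigl(P\xi(\lVert P\xi\rVert)/\lVert P\xi\rVert\bigr)P\xi$ along $\gamma$; since $P$ is injective on $\nu M$, $\nabla^\perp_{P\xi}\xi$ is a scalar multiple of the unit field $\xi$, hence it vanishes and moreover $P\xi(\lVert P\xi\rVert)=0$ along $\gamma$. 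Next I would show that the normal bundle of $M$ is flat over $\mathcal{U}$: using $\Ss_\eta B=0$ (which follows from~\eqref{eq:II}), hence $[\Ss_\eta,\Ss_\zeta]B=0$, together with the curvature tensor of $\C H^n$, which gives $\bar{R}(B,P\xi)\eta=-\frac{c}{4}\langle P\xi,P\eta\rangle Z\in\Gamma(TM)$, the Ricci equation yields $R^\perp(B,P\xi)=0$. Since $\xi$ is $\nabla^\perp$-parallel along the $B$-geodesics (by construction) and along $\gamma$ (just shown), and these curves generate the simply connected surface $\mathcal{U}$ on which $\nabla^\perp$ is flat, $\xi$ coincides with the $\nabla^\perp$-parallel transport of $\xi_p$ over all of $\mathcal{U}$. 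This is the desired parallelism, and with it (i) and (ii) hold.

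For~(iii), once (i) and (ii) are available the orthonormal frame $\{B,E\}$ of $\mathcal{U}$, with $E=P\xi/\lVert P\xi\rVert$, satisfies $\bar{\nabla}_BB=0$ (Lemma~\ref{th:B}), $\bar{\nabla}_BE=0$ (because $\bar{\nabla}_BP\xi=0$ and $\lVert P\xi\rVert$ is constant along $B$), $\bar{\nabla}_EB=-\frac{\sqrt{-c}}{2}E$ by~\eqref{eq:DXB}, and $\bar{\nabla}_EE=\frac{\sqrt{-c}}{2}B$ by~(ii) and~(i). All four are tangent to $T\mathcal{U}=\spann\{B,E\}$, so $\mathcal{U}$ is autoparallel, hence totally geodesic, in $\C H^n$; computing its curvature from these structure equations gives constant sectional curvature $c/4$, so $\mathcal{U}$ is an open part of a totally geodesic $\R H^2$ in $\C H^n$. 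The step I expect to be the main obstacle is the parallelism of $\xi$ over $\mathcal{U}$: finding the exact combination of~\eqref{eq:gamma} with~\eqref{eq:DPxiPxi} that forces $\nabla^\perp_{\gamma'}\xi=0$, and the curvature bookkeeping (via the Ricci equation and the curvature tensor of $\C H^n$) that makes the normal connection flat over $\mathcal{U}$.
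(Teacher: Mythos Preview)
Your proposal is correct and follows essentially the same route as the paper: first force $\nabla^\perp_{\gamma'}\xi=0$ along $\gamma$ by comparing~\eqref{eq:gamma} with~\eqref{eq:DPxiPxi} (using that $P$ is injective and $\lVert\xi\rVert=1$), then use the Ricci equation to get $R^\perp(B,P\xi)=0$, and finally propagate the parallelism of $\xi$ from $\gamma$ along the $B$-curves to all of $\mathcal{U}$. The only cosmetic difference is that the paper writes this last step as the linear ODE $2\nabla_B^\perp(\nabla_{P\xi}^\perp\xi)=\sqrt{-c}\,\nabla_{P\xi}^\perp\xi$ with zero initial data, whereas you phrase it as comparing $\xi$ with the globally $\nabla^\perp$-parallel section guaranteed by flatness on a simply connected surface; part~(iii) is then handled identically via the frame $\{B,P\xi/\lVert P\xi\rVert\}$.
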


\begin{proof}
We calculate $\bar{\nabla}_{P\xi}P\xi$. Equation~\eqref{eq:II}
implies that $\Ss_\eta B=0$ for all $\eta\in\nu M$. Then, for any
$\eta,\zeta\in\nu M$ the Ricci equation of $M$ yields
\[
\langle R^\perp(B,P\xi)\eta,\zeta\rangle
=\langle\bar{R}(B,P\xi)\eta,\zeta\rangle
+\langle[\Ss_\eta,\Ss_\zeta]B,P\xi\rangle=0,
\]
where $R^\perp$ denotes the curvature tensor of the normal
connection $\nabla^\perp$. This, $2[B,P\xi]=\sqrt{-c}\,P\xi$,
and the definition of $\xi$ give
\[
0=R^\perp(B,P\xi)\xi
=\nabla_B^\perp\nabla_{P\xi}^\perp\xi
    -\nabla_{P\xi}^\perp\nabla_B^\perp\xi
    -\nabla_{[B,P\xi]}^\perp\xi
=\nabla_B^\perp\nabla_{P\xi}^\perp\xi
    -\frac{\sqrt{-c}}{2}\nabla_{P\xi}^\perp\xi,
\]
and therefore,
\begin{equation}\label{eq:DperpPxixi}
2\nabla_B^\perp\nabla_{P\xi}^\perp\xi=\sqrt{-c}\,\nabla_{P\xi}^\perp\xi.
\end{equation}

By definition of $\xi$, along $\gamma$ we have $\gamma'(t)=P\xi_{\gamma(t)}/\lVert P\xi_{\gamma(t)}\rVert$, and thus,
along $\gamma$ we get
\begin{align*}
\bar{\nabla}_{P\xi}P\xi
&{}=\bar{\nabla}_{\lVert P\xi\rVert\gamma'}\bigl(\lVert P\xi\rVert\gamma'\bigr)
=\lVert P\xi\rVert\Bigl(\gamma'(\lVert P\xi\rVert)\gamma'
+\lVert P\xi\rVert\bar{\nabla}_{\gamma'}\gamma'\Bigr)\\
&{}=\gamma'(\lVert P\xi\rVert)P\xi
+\frac{\sqrt{-c}}{2}\langle P\xi,P\xi\rangle B.
\end{align*}
Comparing this equation with~\eqref{eq:DPxiPxi} yields
$P\nabla^\perp_{P\xi}\xi=\gamma'(\lVert P\xi\rVert)P\xi$, and since $P\colon\nu M\to \g{D}=P\nu M$ is an isomorphism of vector bundles we get $\nabla_{P\xi}^\perp \xi=\gamma'(\lVert P\xi\rVert)\xi$. Finally, $\langle\xi,\xi\rangle=1$ implies $\langle\nabla^\perp_{P\xi}\xi,\xi\rangle=0$. Thus, $\gamma'(\lVert P\xi\rVert)=0$, which is our first assertion, and hence $\nabla_{P\xi}^\perp\xi=0$
along $\gamma$.

Now, let $\alpha$ be an integral curve of $B$
such that $\alpha(0)=\gamma(s)$. We have just shown that
$\nabla_{P\xi}^\perp\xi\,_{\vert_{\alpha(0)}}
=\nabla_{P\xi}^\perp\xi \,_{\vert_{\gamma(s)}}=0$. Next, from~\eqref{eq:DperpPxixi} and since $\Ss_\eta B=0$ for each
$\eta\in\nu M$, we get
\[
2\bar{\nabla}_{\alpha'}\nabla_{P\xi}^\perp\xi\,_{\vert_{t}}
=2\nabla_B^\perp\nabla_{P\xi}^\perp\xi\,_{\vert_{\alpha(t)}}
    -2\Ss_{\nabla_{P\xi}^\perp\xi}B\,_{\vert_{\alpha(t)}}
=\sqrt{-c}\,\nabla_{P\xi}^\perp\xi\,_{\vert_{\alpha(t)}}.
\]
Hence, by the uniqueness of solutions to differential equations
we get $\nabla_{P\xi}^\perp\xi\,_{\vert_{\alpha(t)}}=0$ for all~$t$,
and consequently $2\bar{\nabla}_{P\xi}P\xi=\sqrt{-c}\,\langle P\xi,P\xi\rangle
B$ along the integral submanifold $\mathcal{U}$. This is our second assertion.

Since $B$ is a geodesic vector field we have $\bar{\nabla}_BB=0$. By~\eqref{eq:DXB}
we have $2\bar{\nabla}_{P\xi}B=-\sqrt{-c}\,P\xi$, and by definition of $\xi$ and~\eqref{eq:DBPxi} we get $\bar{\nabla}_BP\xi=P\bar{\nabla}_B^\perp\xi=0$. Together with~(ii) we deduce that $\mathcal{U}$ is an open part of
a totally geodesic $\R H^2\subset\C H^{n}$.
\end{proof}

We define $\bar P\xi = P\xi/\lVert P\xi \rVert$
along $\mathcal{U}$. From Lemma~\ref{th:U} we obtain $2\bar{\nabla}_{\bar
P\xi}\bar P\xi = \sqrt{-c}\,B$. Using this and~\eqref{eq:DXB} we obtain
\[
\bar{\nabla}_{\bar P\xi}\bar{\nabla}_{\bar P\xi}\bar P\xi+ \langle
\bar{\nabla}_{\bar P\xi}\bar P\xi, \bar{\nabla}_{\bar P\xi}\bar P\xi\rangle
\bar P\xi= \frac{\sqrt{-c}\,}{2}\bar{\nabla}_{\bar P\xi}B - \frac{c}{4}\langle
B,B\rangle\bar P\xi= 0.
\]
Therefore, the integral curves of $\bar{P}\xi$ are horocycles contained in $\mathcal{U}$ with center $x\in\C H^n(\infty)$, where $\mathcal{U}$ is an open part of a totally geodesic real hyperbolic plane in $\C H^n$. The rigidity of totally geodesic
submanifolds of Riemannian manifolds (see e.g.~\cite[p.~230]{BCO03}), and of horocycles in real hyperbolic planes (see e.g.~\cite[pp.~24--26]{BCO03}), together with the construction method described in Proposition~\ref{th:construction}, imply that a neighborhood of any $o$ in $M$ is congruent to an open part of a submanifold $W_\g{w}$ determined by the point  $o\in\C H^n$, $x\in\C H^n(\infty)$ and $\g{w}=T_o M\ominus(\R B_o\oplus\R Z_o)$.

The argument above was local, so we still need to prove that the connected submanifold $M$ is contained in the $W_{\g{w}}$ stated above. Since $W_{\g{w}}$ is an orbit of a Lie group  action on an analytic manifold, it follows that $W_{\g{w}}$ is analytic and complete. Since $M$ is a smooth minimal submanifold in an analytic Riemannian manifold, it is well known that $M$ is also an analytic submanifold of $\C H^n$. As an open neighborhood of $M$ is contained in $W_{\g{w}}$ it follows that $M$ is an open part of the submanifold $W_{\g{w}}$.

\section{Proofs of the Main Theorem and Theorem~\ref{th:congruence}}\label{sec:proof}

We are now ready to summarize our arguments and conclude the proofs of the main theorems of this paper.

\begin{proof}[Proof of the Main Theorem]
Assume that $M$ is a connected isoparametric hypersurface in the complex hyperbolic space $\C H^n$, $n\geq 2$. Then, its lift to the anti-De Sitter space $\tilde{M}=\pi^{-1}(M)$ is also  an isoparametric hypersurface. If at some point the shape operator of $\tilde{M}$ is of type~II or of type~IV, then by Proposition~\ref{th:different_types} we have that $M$ is an open part of a horosphere or a tube around a totally geodesic real hyperbolic space $\R H^n$ in $\C H^n$, respectively. This corresponds to cases~(\ref{th:main:horosphere}) and~(\ref{th:main:rhn}) of the Main Theorem. If all points of $\tilde{M}$ are of type~I, then Remark~\ref{rmk:tubes_CHk} implies that $M$ is an open part of a tube around a totally geodesic $\C H^k$ in $\C H^n$ (Main Theorem~(\ref{th:main:chk})).

Finally, if there is a point $q\in\tilde{M}$ of type~III, then there is a neighborhood $\tilde{\mathcal{W}}$ of $q$ where all points are of type~III by Proposition~\ref{th:different_types}. Then, by the results of Section~\ref{sec:type_III}, there is $r\geq 0$ such that the parallel displacement at distance $r$, that is, $\mathcal{W}^r=\Phi^r(\pi(\tilde{\mathcal{W}}))$, is a submanifold of $\C H^n$ such that its second fundamental form is given by the trivial symmetric bilinear extension of $2\II(Z,P\xi)=-\sqrt{-c}\,(JP\xi)^\perp$, $\xi\in\nu \mathcal{W}^r$, where $Z$ is a vector field tangent to the maximal complex distribution of $\mathcal{W}^r$, and $(\cdot)^\perp$ denotes orthogonal projection on $\nu\mathcal{W}^r$. Using Theorem~\ref{th:rigidity} we conclude that there exists an Iwasawa decomposition $\g{k}\oplus\g{a}\oplus\g{g}_\alpha\oplus\g{g}_{2\alpha}$ of the Lie algebra of the isometry group of $\C H^n$ and a subspace $\g{w}$ of $\g{g}_\alpha$, such that $\mathcal{W}^r$ is an open part of $W_{\g{w}}$.

Therefore, we have proved that there is an open subset of $M$ that is an open part of a tube of radius $r$ around the submanifold $W_{\g{w}}$. Since both $M$ and the tubes around $W_\g{w}$ are smooth hypersurfaces with constant mean curvature, they are real analytic hypersurfaces of $\C H^n$. Thus, we conclude that $M$ is an open part of a tube of radius $r$ around $W_{\g{w}}$. Note that $W_{\g{w}}$ is minimal, as shown in~\cite{DD12}, and ruled by totally geodesic complex hyperbolic subspaces, as follows from Lemma~\ref{th:complex}.

If $\g{w}$ is a hyperplane, $W_{\g{w}}$ is denoted by $W^{2n-1}$, and we get one of the examples in Main Theorem~(\ref{th:main:w}). In this case we can have $r=0$ and we get exactly $W^{2n-1}$. Both $W^{2n-1}$ and its equidistant hypersurfaces are homogeneous  (see for example~\cite{Be98}).

If $\g{w}^\perp$ has constant K\"{a}hler angle $\varphi\in(0,\pi/2]$, then $W_{\g{w}}$ is denoted by $W_\varphi^{2n-k}$, where $k$ is the codimension. If $\varphi\neq \pi/2$, then $k$ is even~\cite{BB01}. In any case the tubes around $W_\varphi^{2n-k}$ are homogeneous as was shown in~\cite{BB01}. These correspond to case~(\ref{th:main:wphi}) of the Main Theorem.

If $\g{w}^\perp$ does not have constant K\"{a}hler angle, then the tubes around $W_{\g{w}}$ are isoparametric but not homogeneous~\cite{DD12}. These remaining examples correspond to case~(\ref{th:main:ww}) of the Main Theorem.
\end{proof}

\begin{proof}[Proof of Theorem~\ref{th:congruence}]
An isoparametric family corresponding to cases~(\ref{th:main:horosphere}) or~(\ref{th:main:w}) in the Main Theorem cannot be congruent to a family in one of the other four cases, since the former are regular Riemannian foliations, whereas the latter families always have a singular leaf. Foliations in cases~(\ref{th:main:horosphere}) and~(\ref{th:main:w}) give rise to exactly two congruence classes. Indeed, the family in~(\ref{th:main:w}) has a minimal leaf $W^{2n-1}$ whereas the family in~(\ref{th:main:w}) does not (see Remark~\ref{rmk:Hopf}). Furthermore, all horosphere foliations are mutually congruent, as well as all solvable foliations. Now, any  family in~(\ref{th:main:chk}) and~(\ref{th:main:rhn}) has a totally geodesic singular leaf, whereas the singular leaf $W_\g{w}$ in~(\ref{th:main:wphi}) and~(\ref{th:main:ww}) is not totally geodesic. Moreover, the classification of totally geodesic submanifolds of $\C H^n$ allows to distinguish between cases~(\ref{th:main:chk}) and~(\ref{th:main:rhn}).

In order to finish the proof it is convenient to consider the families~(\ref{th:main:chk}),~(\ref{th:main:w}),~(\ref{th:main:wphi}) and~(\ref{th:main:ww}) as tubes around a submanifold $W_{\g{w}}$ as described in Subsection~\ref{sec:examples:W}. Thus, a totally geodesic $\C H^k$, $k\in\{1,\dots,n-1\}$, corresponds to a submanifold $W_{\g{w}}$, where $\g{w}\subset\g{g}_\alpha$ is complex, a Lohnherr submanifold $W^{2n-1}$ corresponds to a hyperplane $\g{w}$ in $\g{g}_\alpha$, and a Berndt-Br\"{u}ck submanifold $W^{2n-k}_\varphi$ corresponds to a subspace $\g{w}^\perp$ of $\g{g}_\alpha$ of constant K\"{a}hler angle. Thus, the congruence classes of isoparametric families of hypersurfaces in $\C H^n$ are parametrized by the disjoint union of the singular foliation by geodesic spheres $\mathcal{F}_o$, the horosphere foliation $\mathcal{F}_H$, the singular foliation $\mathcal{F}_{\R H^n}$ of tubes around a totally geodesic $\R H^n$, and the congruence classes of isoparametric families of tubes around the submanifolds~$W_{\g{w}}$, which we still have to determine.

The submanifold $W_{\g{w}}$ depends on the choice of a root space decomposition. Since any two such decompositions are conjugate by an element of $SU(1,n)$, it suffices to take a fixed root space decomposition $\g{g}=\g{g}_{-2\alpha}\oplus\g{g}_{-\alpha}\oplus\g{k}_0\oplus\g{a}
\oplus\g{g}_\alpha\oplus\g{g}_{2\alpha}$, real subspaces $\g{w}_1$, $\g{w}_2\subset\g{g}_\alpha$ and determine when the family of tubes around $W_{\g{w}_1}$ and $W_{\g{w}_2}$ are congruent. By dimension reasons, and by the minimality of $W^{2n-1}$ if both $\g{w}_1$, $\g{w}_2$ are hyperplanes, such families are congruent if and only if the two submanifolds $W_{\g{w}_1}=S_1\cdot o$ and $W_{\g{w}_2}=S_2\cdot o$ are congruent, where  $S_i$ is the connected Lie subgroup of $SU(1,n)$ with Lie algebra $\g{s}_i=\g{a}\oplus\g{w}_i\oplus\g{g}_{2\alpha}$, $i=1,2$.

Let $\phi$ be an isometry of $\C H^n$ such
that $\phi(W_{\g{w}_1})=W_{\g{w}_2}$, and assume, without loss of generality, that $\phi(o)=o$. The identification $T_o\C H^n\cong\g{a}\oplus\g{n}$ thus allows us to deduce that $\phi_*(\g{a}\oplus\g{w}_1\oplus\g{g}_{2\alpha})=\g{a}\oplus\g{w}_2\oplus\g{g}_{2\alpha}$. We consider the K\"{a}hler angle decompositions
$\g{w}_i=\oplus_{\varphi\in\Phi_i}\g{w}_{i,\varphi}$ as described in Subsection~\ref{sec:Kahler_angles}. Since $\phi$ is an isometry of $\C H^n$ fixing $o$, it follows that $\phi_*$ is a unitary or anti-unitary transformation of $T_o\C H^n\cong\g{a}\oplus\g{n}\cong \C^n$. Hence, it maps subspaces of constant K\"{a}hler angle to subspaces of the same constant K\"{a}hler angle, and thus we have $\Phi:=\Phi_1=\Phi_2$ and
\[
\phi_*(\g{a}\oplus\g{w}_{1,0}\oplus\g{g}_{2\alpha})
=(\g{a}\oplus\g{w}_{2,0}\oplus\g{g}_{2\alpha}),\
\phi_*(\g{w}_{1,\varphi})
=\g{w}_{2,\varphi},
\text{ for all $\varphi\in\Phi\setminus\{0\}$.}
\]
Therefore, $\g{w}_1$ and $\g{w}_2$ have the same K\"{a}hler angles with the same multiplicities. Now set $\g{k}_0=\g{g}_0\cap\g{k}$, where $\g{k}$ is the Lie algebra of $K$, the isotropy group at $o$. It is known (see e.g.~\cite{DDK}) that $\g{k}_0$ is a Lie subalgebra of $\g{g}$ and that the connected subgroup $K_0$ of $G=SU(1,n)$ whose Lie algebra is $\g{k}_0$ acts on $\g{g}_\alpha$, and its action is equivalent to the standard action of $U(n-1)$ on $\C^{n-1}$. The action of $K_0$ on $\g{a}$ and on $\g{g}_{2\alpha}$ is trivial. Since $\g{w}_1$ and $\g{w}_2$ are subspaces of $\g{g}_\alpha$ with the same K\"{a}hler angles and the same multiplicities, it follows that there exists $k\in K_0$ such that $\Ad(k)\g{w}_1=\g{w}_2$ (see the end of Subsection~\ref{sec:Kahler_angles} or~\cite[Remark~2.10]{DDK} for further details), and thus, $k(W_{\g{w}_1})=W_{\g{w}_2}$.

As a consequence, we have proved that the congruence classes of the submanifolds of type $W_{\g{w}}$ are in one-to-one correspondence with proper real subspaces of $\g{g}_\alpha\cong\C^{n-1}$ modulo the action of $K_0=U(n-1)$. Altogether this implies Theorem~\ref{th:congruence}.
\end{proof}


\end{document}